\documentclass[reqno,10pt,a4paper]{amsart}

\usepackage[a4paper,left=24mm,right=24mm,top=27mm,bottom=30mm]{geometry}
\usepackage{amsmath,amssymb,mathrsfs}
\usepackage{graphicx}
\usepackage{enumerate,color,xcolor}
\usepackage{microtype}
\usepackage{showkeys}

\usepackage[normalem]{ulem}
\usepackage[colorlinks,
            linkcolor=red,
            anchorcolor=blue,
            citecolor=green
            ]{hyperref}
\newtheorem{theorem}{Theorem}[section]
\newtheorem{proposition}[theorem]{Proposition}
\newtheorem{lemma}[theorem]{Lemma}
\newtheorem{corollary}[theorem]{Corollary}
\newtheorem{definition}[theorem]{Definition}
\newtheorem{remark}[theorem]{Remark}

\numberwithin{equation}{section}
\allowdisplaybreaks[2]

\newcommand{\beq}{\begin{equation}}
\newcommand{\eeq}{\end{equation}}
\newcommand{\ben}{\begin{eqnarray}}
\newcommand{\een}{\end{eqnarray}}
\newcommand{\beno}{\begin{eqnarray*}}
\newcommand{\eeno}{\end{eqnarray*}}
\newcommand{\normm}[1]{{ \vert\kern-0.25ex \vert\kern-0.25ex \vert #1
		\vert\kern-0.25ex \vert\kern-0.25ex \vert}}
\let\f=\frac

\newcommand{\be}{\begin{equation} \label}
	\newcommand{\ee}{\end{equation}}
\newcommand{\bea}{\begin{eqnarray}\label}
	\newcommand{\eea}{\end{eqnarray}}
\newcommand{\bas}{\begin{eqnarray*}}
	\newcommand{\eas}{\end{eqnarray*}}
\newcommand{\bit}{\begin{itemize}}
	\newcommand{\eit}{\end{itemize}}

\newcommand{\N}{{\mathbb N}}

\newcommand{\R}{{\mathbb R}}

\newcommand{\pa}{\partial}
\newcommand{\eps}{\varepsilon}

\newcommand{\ba}{\begin{aligned}}
\newcommand{\ea}{\end{aligned}}
 \def\na{\nabla}

\let\pa=\partial

\let\lam=\lambda

\let\f=\frac

\let\D=\Delta
\let\Lam=\Lambda
\let\Om=\Omega

\def\pa{\partial}

\def\virgp{\raise 2pt\hbox{,}}
\def\cdotpv{\raise 2pt\hbox{;}}

\def\C{\mathop{\mathbb C\kern 0pt}\nolimits}
\def\DD{\mathop{\mathbb D\kern 0pt}\nolimits}
\def\EE{\mathop{{\mathbb E \kern 0pt}}\nolimits}
\def\K{\mathop{\mathbb K\kern 0pt}\nolimits}
\def\N{\mathop{\mathbb N\kern 0pt}\nolimits}
\def\Q{\mathop{\mathbb Q\kern 0pt}\nolimits}
\def\R{\mathop{\mathbb R\kern 0pt}\nolimits}
\def\SS{\mathop{\mathbb S\kern 0pt}\nolimits}

\def\<{\langle}
\def\>{\rangle}

\def\ga{\gamma}
\def\al{\alpha}
\def\be{\beta}
\def\de{\delta}

\def\gs{\gtrsim}
\def\ls{\lesssim}
\def\S{\mathbb{S}}

\begin{document}

\title[Boltzmann equation with radial anharmonic potentials]{Equilibria and linear stability for the Boltzmann equation with radial anharmonic confining potentials}

\author{Ling-Bing He}
\address[Ling-Bing He]{Department of Mathematical Sciences, Tsinghua University, Beijing 100084, P.R. China}
\email{hlb@tsinghua.edu.cn}
\author{Jie Ji}
\address[J. Ji]{School of Mathematics, Nanjing University  of Aeronautics and Astronautics\\
	Nanjing 211106,  P.R. China.}
\email{jij\_24@nuaa.edu.cn}
\author{Wu-Wei Li}
\address[Wu-Wei Li]{Department of Mathematical Sciences, Tsinghua University, Beijing 100084, P.R. China}
\email{ww-li20@mails.tsinghua.edu.cn}

\begin{abstract}
We study the Boltzmann equation in the whole space under the radial anharmonic confining potentials $\Phi(x)=|x|^p/p$ for $p>2$ and $\Phi(x)=\langle x\rangle^p/p$ for $1<p<2$. We first classify all positive finite-mass-and-energy entropy-invariant, equivalently zero-entropy-production, solutions. For $p>2$, the nonlinear equilibrium manifold is parametrized by mass, temperature, and the three components of angular momentum; for $1<p<2$, integrability excludes rotating equilibria and only mass and energy remain as equilibrium parameters. We then identify the five-dimensional stationary space of the equation linearized about an arbitrary equilibrium and construct an explicit projection determined by the conserved moments. After normalization, the collision term takes the form $C_Me^{-\widetilde\Phi(x)}\mathsf L$, so its microscopic coercivity degenerates at spatial infinity. A far-field weight-transfer estimate compensates for this degeneracy. After subtracting the stationary projection, the corresponding semigroup solution converges algebraically in exponentially weighted $L^2$ spaces. The rate is governed by the growth exponent $p$ and the gap between the two weights, up to an arbitrarily small loss. For $1<p<2$, the mismatch between the two-dimensional nonlinear equilibrium manifold and the five-dimensional linear stationary space yields a conditional obstruction to nonlinear asymptotic attraction for perturbations carrying nonzero angular momentum.
\end{abstract}

\keywords{Boltzmann equation, confining potential, equilibrium, angular momentum, hypocoercivity, algebraic decay}

\subjclass[2020]{35Q20, 35B35, 35B40, 82C40}

\hypersetup{
  pdftitle={Equilibria and linear stability for the Boltzmann equation with radial anharmonic confining potentials},
  pdfauthor={Ling-Bing He, Jie Ji, and Wu-Wei Li}
}

\maketitle
\setcounter{tocdepth}{1}
\tableofcontents
%%%%%%%%%%%%%%%%%%%%%%%%%%%%%%%%%%%%%%%%%%%%%%%%%%
	
\section{Introduction}
External confinement changes both the invariant states and the relaxation mechanism of an inhomogeneous kinetic equation. For the Boltzmann equation in the whole space, the isotropic harmonic potential is exceptional: its Hamiltonian flow supports nontrivial time-periodic Maxwellian modes in addition to the static Maxwell--Boltzmann state. This phenomenon goes back to Boltzmann \cite{Boltzmann_1876}; a modern classification of the corresponding special macroscopic modes is given in \cite{carrapatoso_special_2024}, and a nonlinear stability theory for the Landau equation with harmonic confinement was developed in \cite{cao_landau_2024}.

The purpose of this paper is to understand what remains of this structure when the quadratic confinement is replaced by a radial anharmonic potential. We consider
\[
\Phi(x)=\frac1p|x|^p,\qquad p>2,
\qquad\text{and}\qquad
\Phi(x)=\frac1p\langle x\rangle^p,\qquad 1<p<2.
\]
The two regimes have sharply different equilibrium geometries. Superquadratic confinement is strong enough to make rotating Maxwellians integrable, whereas subquadratic confinement excludes every nonzero rotation. At the linear level, however, radial symmetry still produces three stationary angular-momentum modes in both cases. A distinction from the uniformly coercive linear models studied in \cite{carrapatoso_special_2024} is that linearization about the full equilibrium introduces a spatial multiplier comparable to $e^{-\Phi(x)}$ in front of the collision operator. The microscopic coercivity therefore degenerates at infinity, and exponential hypocoercive relaxation is replaced by a quantitative algebraic decay mechanism.

\subsection{The Boltzmann equation}
	Consider the Boltzmann equation 
		\begin{equation}\label{Boltzmann}
		\partial_tF + v \cdot \nabla_xF - \nabla_x\Phi \cdot \nabla_vF = Q(F, F).
	\end{equation}
	Here $F(t,x,v)\ge0$ is the particle distribution at time $t\ge0$, position $x\in\mathbb R^3$, and velocity $v\in\mathbb R^3$. In \eqref{Boltzmann}, $-\nabla_x\Phi$ is the acceleration generated by the external confining potential $\Phi=\Phi(x)$.
	The Boltzmann collision operator $Q$ acts only on the variable $v$, and is defined as
	\begin{equation*}
		Q(F, G) = \int_{\mathbb{R}^3 \times \mathbb{S}^2}B(v - v_*, \sigma)[F'_*G' - F_*G]d\sigma dv_*,
	\end{equation*}
	where we use the short hand
	\begin{equation*}
		G = G(t, x, v), \:\:\: G' = G(t, x, v'), \:\:\: F_* = F(t, x, v_*), \:\:\: F'_* = F(t, x, v'_*),
	\end{equation*}
	and $v', v'_*$ are defined in terms of $ v, v_*,\sigma$ by
	\begin{equation}\label{v'v'*}
		v' = \frac{v + v_*}{2} + \frac{|v - v_*|}{2}\sigma, \:\:\: v'_* = \frac{v + v_*}{2} - \frac{|v - v_*|}{2}\sigma,\quad\sigma\in\S^2.
	\end{equation}
	This follows the conservation law of momentum and energy of the collision:
	\begin{equation}\label{conserv}
		v' + v'_* = v + v_*, \:\:\: |v'|^2 + |v'_*|^2 = |v|^2 + |v_*|^2.
	\end{equation}
	We impose the following assumptions on the Boltzmann collision kernel $B=B(v-v_*,\sigma)$:
	\begin{itemize}
\item[$\mathbf{(A1)}$] The Boltzmann kernel $B$ has the product form
		\begin{equation*}
		B(v-v_*,\sigma)=|v-v_*|^\gamma b(\cos\theta),
		\qquad \gamma\in(-3,1],
		\qquad \cos\theta=\frac{v-v_*}{|v-v_*|}\cdot\sigma.
		\end{equation*}
	\item[$\mathbf{(A2)}$] The nonnegative angular kernel satisfies either Grad's cutoff condition
	\[
	\int_{\mathbb S^2}b(\widehat z\cdot\sigma)\,d\sigma<\infty,
	\qquad \widehat z\in\mathbb S^2,
	\]
	or the non-cutoff condition
	\[
	c_b\theta^{-2-2s}\le b(\cos\theta)\le C_b\theta^{-2-2s},
	\qquad 0<\theta\le\frac{\pi}{2},
	\]
	for some $s\in(0,1)$ and constants $0<c_b\le C_b<\infty$.
		\item[$\mathbf{(A3)}$] Without loss of generality, we assume that $b(\cos\theta)$ is supported in $0\le\theta\le\pi/2$, equivalently $(v-v_*)\cdot\sigma\ge0$. Otherwise, $b$ is replaced by its symmetrized form
	\begin{equation*}
		\overline b(\widehat z\cdot\sigma)
		:=\big[b(\widehat z\cdot\sigma)+b(-\widehat z\cdot\sigma)\big]
		\mathbf 1_{\{\widehat z\cdot\sigma\ge0\}},
		\qquad \widehat z=\frac{v-v_*}{|v-v_*|},
	\end{equation*}
	where $\mathbf{1}_A$ is the characteristic function of the set $A$.
	\end{itemize}
\begin{remark}
For long-range interactions generated by an inverse power law between particles, let $q>2$ denote the exponent of the intermolecular potential. Then $\gamma=(q-5)/(q-1)$ and $s=1/(q-1)$, so that $\gamma+4s=1$. We use $q$ here to avoid confusion with the exponent $p$ of the external confining potential.
The Grad's cutoff assumption refers to the hypothesis that the interaction between particles is short-ranged. This assumption simplifies the mathematical analysis by avoiding the complications associated with long-range interactions and singularities in the collision operator.
\end{remark}
%%%%%%%%%%%%%%%%%%%%%%%%%%%%%%%%%%%%%%%%%%%%%%%%%%

\subsection{Entropy-invariant solutions}
The equilibrium classification is naturally tied to Boltzmann's \(H\)-theorem. If the entropy $\mathscr{H}(F)$ and the entropy dissipation $\mathscr{D}(F)$ are defined by
	\begin{equation}\label{entropy}
		\mathscr{H}(F)(t):=\int_{\R^6}F\log Fdvdx,\quad
		\mathscr{D}(F)(t):=-\int_{\R^6}Q(F,F)\log F dvdx,
	\end{equation} then the $H$-theorem states that
	\begin{equation}\label{H-theorem}
		\f d{dt}\mathscr{H}(F)(t)+\mathscr{D}(F)(t)=0.
	\end{equation}
Whenever the entropy identity is justified, a constant-entropy trajectory has zero entropy production. For a positive solution, $\mathscr D(F)\equiv0$ is equivalent to $Q(F,F)\equiv0$, and the equation then reduces to the Hamiltonian transport equation
	\ben\label{transportequation}
	\partial_tF + v \cdot \nabla_xF - \nabla_x\Phi \cdot \nabla_vF = 0.
	\een
This motivates the following terminology.
\begin{definition}\label{Defoentropy-invariant}
	A sufficiently regular positive function $M=M(t,x,v)>0$ is called an \emph{entropy-invariant solution}, or equivalently a \emph{zero-entropy-production solution}, of \eqref{Boltzmann} if
\ben\label{integcondi}\int_{\R^6}(\Phi(x)+ |v|^2 + 1)M(t, x, v)dxdv < +\infty,\een
and, for every $t\ge0$,
	\begin{equation}\label{equationforeisol}
		\partial_tM + v \cdot \nabla_xM - \na_x\Phi \cdot \nabla_vM = 0 =Q(M, M).
	\end{equation}
\end{definition}
\begin{remark}
If $\mathscr H(M)$ is finite and the $H$-theorem is valid for $M$, Definition \ref{Defoentropy-invariant} is equivalent to constancy of the entropy. We use the definition above because the classification argument only requires the two identities in \eqref{equationforeisol}. The integrability condition \eqref{integcondi} guarantees finite mass and energy. Angular momentum is an additional conserved moment whenever $\int_{\R^6}\|x\wedge v\|_2M\,dx\,dv<\infty$; it is not part of the definition.
\end{remark}

It follows that if \( Q(M, M) = 0 \), then \( M \) must be a local Maxwellian, that is,
\[
M = A \exp\left\{ -\frac{\beta}{2} |v - u|^2 \right\},
\]
where \( A \), \( \beta \) and the average velocity \( u \) are functions of \( x \) and \( t \). Since the local Maxwellian \( M \) also satisfies the first equation in \eqref{equationforeisol}, it can be rigorously shown for a sufficiently general external potential \( \Phi(x) \) that \( M \) should be the Maxwell-Boltzmann distribution, i.e.,
\ben\label{Maxwell-Boltzmann distrubution} M=A_0\exp\bigg\{-\beta\bigg(\f{|v|^2}2+\Phi(x)\bigg)\bigg\}, \een
where $A_0$ and $\beta$ are constants independent of $x$ and $t$, determined by the mass and total energy. For the isotropic harmonic potential, additional time-periodic Maxwellian solutions exist; see \cite{Boltzmann_1876,carrapatoso_special_2024,cao_landau_2024}. This raises the corresponding classification problem for radial nonquadratic potentials.

\subsection{Conservation laws and the conserved-moment map}
Throughout this paper, we focus on the following external potentials:
\begin{equation}\label{potential in this work}
	\:\:\: \Phi(x) = \frac{1}{p}|x|^p, \:\:\: \text{with} ~\: p \in (2, \infty);\quad\Phi(x) = \frac{1}{p}\<x\>^{p}, \:\:\: \text{with}~ \: p \in (1, 2) ,
\end{equation}
where $\<\cdot\>:=(1+|\cdot|^2)^{1/2}$. For $1<p<2$, the regularization $\langle x\rangle^p$ avoids the singularity of the second derivatives of $|x|^p$ at the origin. Both potentials are radial, and hence the Hamiltonian transport preserves angular momentum in addition to mass and total energy.
For these potentials, the following conservation laws hold:
	\begin{lemma}\label{CLaw}
		Let $F = F(t, x, v)$ be a sufficiently regular global solution to \eqref{Boltzmann} with finite mass and energy. Then, for all $t>0$,
		\begin{equation*}
			\frac{d}{dt}\int_{\R^6}(1, \Phi(x) + \frac{1}{2}|v|^2)F(t, x, v)dxdv = 0.
		\end{equation*}
		If in addition $\int_{\R^6}\|x\wedge v\|_2F(t,x,v)\,dx\,dv<\infty$, then
		\begin{equation*}
			\frac{d}{dt}\int_{\R^6}(x\wedge v)F(t,x,v)\,dx\,dv=0,
		\end{equation*}
		where $a \wedge b = ab^\tau - ba^\tau=(a_ib_j-a_jb_i)_{3\times 3}$ for $a,b\in\R^3$.

%		\noindent$\bullet$ H-Theorem:
%		\begin{equation*}
%			\frac{d}{dt}\mathscr{H}[F] + \mathscr{D}[F] = 0,
%		\end{equation*}
%		where
%		\begin{equation*}
%			\mathscr{H}[F] = \int F\ln Fdxdv; \:\:\: \mathscr{D}[F] = \int Q(F, F)\ln Fdxdv \ge 0.
%		\end{equation*}
	\end{lemma}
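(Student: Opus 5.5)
The approach is to test the equation \eqref{Boltzmann} against the three weights $\psi(x,v)\in\{1,\ \Phi(x)+\tfrac12|v|^2,\ x_iv_j-x_jv_i\}$ and integrate over $\R^6$. Each weight has two properties. It is a collision invariant in $v$ for fixed $x$. It is also annihilated by the Liouville vector field $v\cdot\nabla_x-\nabla_x\Phi\cdot\nabla_v$. The collision term and the transport term therefore both drop out after integration by parts.

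\textbf{Collision term.} For fixed $(t,x)$, the pre-post collisional change of variables $(v,v_*,\sigma)\mapsto(v',v_*',\cdot)$ (unit Jacobian) and the exchange $v\leftrightarrow v_*$ give the weak form
\[
\int_{\R^3}Q(F,F)\psi\,dv=\tfrac12\int_{\R^6\times\S^2}BFF_*\big(\psi'+\psi'_*-\psi-\psi_*\big)\,d\sigma\,dv_*\,dv .
\]
We apply it with $\psi(v)=1$, $|v|^2$, and $a\cdot v$ for any fixed $a\in\R^3$. By \eqref{conserv} the bracket vanishes in each case. Since $x$ is a parameter here, adding $\Phi(x)$ to the energy weight, and taking $a=x$ componentwise for $x_iv_j-x_jv_i$, still gives zero. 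Under (A1)–(A3) with non-cutoff kernels, the splitting into gain and loss is not separately convergent. We will use the weak form directly, which is legitimate for smooth test functions of at most quadratic growth and for sufficiently regular $F$ with the stated moments. This is the standard weak formulation, and we take it for granted in the ``sufficiently regular'' framework.

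\textbf{Transport term.} Writing $\mathcal T=v\cdot\nabla_x-\nabla_x\Phi\cdot\nabla_v$, this field is divergence-free in $(x,v)$. Integrating by parts gives $\int\psi\,\mathcal TF=-\int F\,\mathcal T\psi$, and we compute $\mathcal T\psi$ for each weight:
\begin{itemize}
\item $\mathcal T1=0$.
\item $\mathcal T(\Phi+\tfrac12|v|^2)=v\cdot\nabla\Phi-\nabla\Phi\cdot v=0$.
\item $\mathcal T(x_iv_j-x_jv_i)=v_iv_j-v_jv_i-x_i\partial_j\Phi+x_j\partial_i\Phi$.
\end{itemize}
For radial $\Phi$ we have $\nabla\Phi(x)=\phi(|x|)x$ with $\phi(r)=r^{p-2}$ or $\phi(r)=\langle r\rangle^{p-2}$. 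The last expression therefore equals $-\phi(x_ix_j-x_jx_i)=0$.

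Combining the two terms gives $\frac{d}{dt}\int\psi F=-\int\psi\,\mathcal TF+\int\psi\,Q(F,F)=0$.

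The main obstacle is justifying the manipulations: differentiating under the integral and dropping boundary terms at infinity. The plan is to insert a cutoff $\chi_R(x,v)=\chi(|x|/R)\chi(|v|/R)$ and run the computation with $\psi\chi_R$. The collision part still vanishes because $\chi_R$ depends on $v$. We will handle this either by keeping $\chi_R$ only in $x$ and using the velocity decay of regular $F$, or by bounding the commutator by the moments. The error from the transport part is then $\int F\,\psi\,\mathcal T\chi_R$. Here $|\mathcal T\chi_R|\lesssim R^{-1}(|v|+|\nabla\Phi|)\mathbf 1_{\{|x|\sim R\}}$. This is the delicate point, since $|\nabla\Phi|\sim|x|^{p-1}$ is not controlled by the energy alone. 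The remedy is to choose the spatial cutoff as a function of $\Phi(x)$ and the velocity cutoff as a function of $|v|^2/2$, i.e.\ $\chi_R=\chi(\Phi(x)/R)\chi(|v|^2/(2R))$. Then $\mathcal T\chi_R=R^{-1}v\cdot\nabla\Phi\,[\chi'\chi-\chi\chi']$, and on its support both $\Phi\lesssim R$ and $|v|^2\lesssim R$. This gives $|v\cdot\nabla\Phi|\lesssim R^{1/2}|\nabla\Phi|$, and for our potentials $|\nabla\Phi|\lesssim\Phi^{(p-1)/p}+1$, which should be enough to make the error $o(1)$ after time integration.

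If this bound falls short, the fallback is to use the simpler choice $\chi_R=\chi(H/R)$ with $H=\Phi+\tfrac12|v|^2$. Since $\mathcal TH=0$, we get $\mathcal T\chi_R=0$ exactly, and the transport error disappears altogether. The only remaining issue is then the collision term with the non-invariant cutoff. That term is controlled by dominated convergence, using finite mass and energy for the first two laws and $\int\|x\wedge v\|_2F<\infty$ for angular momentum.
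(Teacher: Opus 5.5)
Your proposal is correct and is essentially the paper's argument. The paper simply tests \eqref{Boltzmann} against $1$, $\Phi+\tfrac12|v|^2$ and $x\wedge v$, which are collision invariants for fixed $x$ and are annihilated by $v\cdot\nabla_x-\nabla_x\Phi\cdot\nabla_v$ (the last because $x\wedge\nabla\Phi=0$ for radial $\Phi$), and it leaves all integrability issues to the ``sufficiently regular'' hypothesis.

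Your cutoff discussion goes beyond the paper, with two caveats. First, only the invariant cutoff $\chi(H/R)$ actually closes: the split cutoff gives $|\mathcal T\chi_R|\lesssim R^{1/2-1/p}$, which leaves an energy-law error of order $o(R^{1/2-1/p})$, and this need not vanish for $p>2$ under finite energy alone. Second, the dominated-convergence step with $\chi(H/R)$ requires integrability of $(1+|\psi|)\,Q(F,F)$, not merely finite moments of $F$; this is again supplied by the regularity assumption.
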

	The identities follow by testing the equation against the collision invariants $1$ and $|v|^2/2$, and against the transport invariants $\Phi(x)+|v|^2/2$ and $x\wedge v$. The last invariant is a consequence of rotational symmetry: $x\wedge\nabla\Phi(x)=0$ for a radial potential. This motivates the following conserved-moment map.
\begin{definition}Let $F=F(t,x,v)$ be a global solution to \eqref{Boltzmann} with initial data $F_0=F_0(x,v)$ and finite angular momentum. $\Psi$ is called the conserved-moment map if
\begin{equation}\label{ConservedMap}
	\Psi(F(t)) := (\Psi_1(F)(t),\Psi_2(F)(t),\Psi_3(F)(t)):=\int_{\R^6}(1, \Phi(x) + \frac{1}{2}|v|^2, x \wedge v)F(t, x, v)dxdv.
\end{equation}
The conservation laws give $\Psi(F(t))=\Psi(F_0)$ for all $t>0$; we therefore write simply $\Psi(F)$ or $\Psi(F_0)$ when no time dependence needs to be emphasized.
\end{definition}
\begin{remark}
The components $\Psi_1(F)$, $\Psi_2(F)$, and $\Psi_3(F)$ represent mass, total energy, and angular momentum, respectively. The three independent entries of the skew-symmetric matrix $x\wedge v$ are equivalent, up to the standard identification, to the components of the vector $x\times v$.
\end{remark}

\subsection{Related works}

The stability and convergence rate to the equilibrium for \eqref{Boltzmann} is an important and old problem. Given the vast literature, we will briefly review previous works closely related to our research.

$\bullet$ In the absence of an external potential, prior research has explored diverse methods to study global dynamics. Within the framework of perturbation theory, we refer readers to \cite{alexandre_global_2011, guo_landau_2002, guo_boltzmann_2004, gressman_global_2011, liu_energy_2004,duan_global_2021} and the references therein, which investigate micro-macro decompositions inspired by Grad's 13 moments method (see \cite{grad_boltzmanns_1965}). Additionally, \cite{desvillettes_trend_2005} introduces the entropy-production method, which involves analyzing an appropriate set of ordinary differential systems. Meanwhile, \cite{gualdani_factorization_2017, villani_hypocoercivity_2009} examines the hypocoercivity method in exponentially weighted spaces and discusses the theory of space enlargement in polynomially weighted spaces (see also related developments in \cite{cao_propagation_2024-1, carrapatoso_landau_2017}). When the spatial variable \( x \) is in \(\mathbb{R}^3\), we refer readers to \cite{bardos_global_2016, chaturvedi_stability_2021, luk_stability_2019} and the references therein for stability results, which are enabled by the dispersion effect.

%Without an external potential, previous studies have focused on various approaches to the global dynamics. In the context of perturbation theory, we direct readers to \cite{alexandre_global_2011,guo_landau_2002,guo_boltzmann_2004,gressman_global_2011,liu_energy_2004} and references therein, which delve into the micro-macro decomposition inspired by Grad’s 13 moments method (see \cite{grad_boltzmanns_1965}). Additionally, \cite{desvillettes_trend_2005} presents the entropy-production method, analyzing a suitable set of ordinary differential systems, while \cite{gualdani_factorization_2017,villani_hypocoercivity_2009} explores the hypocoercivity method in exponentially weighted spaces and discusses the theory of space enlargement in polynomially weighted spaces (see also developments in \cite{cao_propagation_2024-1,carrapatoso_landau_2017}). When the spatial variable $x$ is in $\R^3$, we refer readers to \cite{bardos_global_2016, chaturvedi_stability_2021, luk_stability_2019}and references therein for the stability  results thanks to the dispersion effect.

$\bullet$ In the presence of an external potential, significant attention has been devoted to the linearized version of \eqref{Boltzmann}. We define the Maxwell-Boltzmann distribution and the Maxwell distribution as follows:
\begin{equation}\label{MBd}
	\mathcal{M} := (2\pi)^{-\frac{3}{2}} e^{-(\Phi(x) + \frac{1}{2}|v|^2)}, \quad \mu := (2\pi)^{-\frac{3}{2}} e^{-\frac{1}{2}|v|^2}.
\end{equation}
For the linear stability analysis of \(\mathcal{M}\), we refer to \cite{bosi_bgk_2009} for insights into the BGK model, and to \cite{tabata_decay_1993, tabata_decay_1994, desvillettes_trend_2001, dolbeault_hypocoercivity_2015, helffer_hypoelliptic_2005, herau_isotropic_2004, carrapatoso_special_2024} and the references therein for advancements in the Fokker-Planck and linear Boltzmann models. It is important to note that these linear stability studies are conducted within the following frameworks:
\begin{equation*}
	\begin{aligned}
			\pa_t F+v\cdot\nabla_xF-x\cdot\nabla_vF=\mathcal{M}^{-\f12}(Q(\mathcal{M},\mathcal{M}^{\f12}F)+Q(\mathcal{M}^{\f12}F,\mathcal{M}))	\end{aligned}
\end{equation*}
or 
\begin{equation}\label{Linearinkineticeqs}
	\begin{aligned}
		\pa_t F+v\cdot\nabla_xF-x\cdot\nabla_vF= Q(\mu,F)+Q(F,\mu).
	\end{aligned}
\end{equation}
$\bullet$ The work most directly related to our linear analysis is Carrapatoso et al.\ \cite{carrapatoso_special_2024}. In an abstract class of linear kinetic equations with local conservation laws, they classify all special macroscopic modes and prove quantitative exponential convergence toward those modes. Their argument provides both a micro--macro route and a commutator route to hypocoercivity. We use the same classification perspective and part of the micro--macro architecture, in particular the separation of Poincar\'e--Korn-coercive fields from a finite-dimensional family of modes. These structural ingredients are part of the framework developed in \cite{carrapatoso_weighted_2022,carrapatoso_special_2024} and are not claimed here as new.

There are, however, two precise differences. First, after a general equilibrium is normalized, the linearized equation in this paper contains
\begin{equation}\label{IntroDegenerateCollision}
 C_Me^{-\widetilde\Phi(x)}\mathsf L
\end{equation}
instead of a collision operator whose microscopic coercivity is uniform in $x$. The prefactor in \eqref{IntroDegenerateCollision} tends to zero at spatial infinity. Appendix B of \cite{carrapatoso_special_2024} allows a weaker velocity-space coercivity estimate, but the collision operator there still has no confining prefactor that vanishes with $x$; it therefore does not directly cover \eqref{IntroDegenerateCollision}. The new analytical input of the present paper is the far-field weight-transfer mechanism that compensates for this spatial loss of dissipation and leads to algebraic, rather than exponential, relaxation.

Second, the nonlinear equilibrium statement must be distinguished from the linear classification of special modes. Appendix C.5 of \cite{carrapatoso_special_2024} already observes that suitable special macroscopic modes can be exponentiated to produce solutions of the nonlinear Boltzmann equation. Our classification result is complementary: it proves that, for the two anharmonic potentials considered here, every positive finite-mass-and-energy zero-entropy-production solution is of this form, determines exactly which formal rotations are integrable, and parametrizes the admissible equilibria by their conserved moments. More explicitly, Proposition \ref{propcha} gives
\[
\log M\in\operatorname{span}\left\{1,\Phi(x)+\frac12|v|^2,x\wedge v\right\},
\]
whereas the corresponding linear stationary modes satisfy
\begin{equation}\label{coef1}
\frac{f}{\mathcal M}\in\operatorname{span}\left\{1,\Phi(x)+\frac12|v|^2,x\wedge v\right\}.
\end{equation}
Thus the finite-dimensional formulas are deliberately consistent with \cite{carrapatoso_special_2024}; the completeness, the subquadratic/superquadratic integrability dichotomy, the moment parametrization, and the spatially degenerate decay theory are the points specific to the present work.

The harmonic-potential Landau equation was studied nonlinearly in \cite{cao_landau_2024}, including the classification and stability of time-periodic Maxwell--Boltzmann states. That work also identifies the high-order energy estimates required by a nonlinear theory. In the anharmonic setting considered here, uniform-in-time propagation of the corresponding high-order Sobolev norms is already open for the linearized flow. We therefore restrict the present stability result to the linear equation.

\medskip

\subsection{Main difficulties and contributions}
It is useful to separate the framework adapted from earlier hypocoercivity arguments from the inputs specific to the present equation. The macro--micro decomposition, the use of weighted Poincar\'e and Poincar\'e--Korn inequalities, and the isolation of finitely many macroscopic modes follow the general strategy of \cite{carrapatoso_weighted_2022,carrapatoso_special_2024}. In this paper these tools must be reorganized after normalization around an arbitrary, possibly rotating, equilibrium and combined with a new estimate for a collision strength that vanishes at spatial infinity.

The principal contributions are the following.

\underline{(1). Complete equilibrium classification and moment parametrization.}
We classify all positive finite-mass-and-energy entropy-invariant solutions, not merely a prescribed family of Maxwellian ansatzes. For $p>2$, rotating equilibria are integrable and form a five-parameter family; for $1<p<2$, every nonzero rotation destroys spatial integrability and the equilibrium family is two-dimensional. We then prove the corresponding parametrization by mass, energy, and, when admissible, angular momentum.

\underline{(2). Stationary projection around an arbitrary equilibrium.}
The full linearized generator has five stationary directions in both regimes. The Gram matrix gives an explicit projection onto these directions for arbitrary initial moments. Its role is finite-dimensional bookkeeping rather than a new hypocoercive principle. The substantive step is to normalize a general rotating equilibrium by an invertible change of variables while tracking the equation, all five conserved moments, and the exponential norms.

\underline{(3). Spatially degenerate coercivity and algebraic relaxation.}
For the static Maxwell--Boltzmann state, the normalized equation has the schematic form
\ben\label{pertubofcauchylandau}
\partial_tf+v\cdot\nabla_xf-\nabla_x\Phi\cdot\nabla_vf
=e^{-\Phi(x)}\mathsf Lf,
\qquad
\mathsf Lf=Q(\mu,f)+Q(f,\mu).
\een
Thus the standard velocity-space coercivity of $\mathsf L$ is multiplied by a coefficient tending to zero as $|x|\to\infty$. At the macroscopic level, thirteen local coefficients must be controlled using only five global conservation laws. We adapt the weighted Poincar\'e--Korn closure to the transformed moment system, use evolution equations to control the remaining finite-dimensional coefficients, and remove the final five directions by the conserved moments. The far-field weight-transfer inequalities of Section 4 then exchange part of the velocity weight for confinement weight along the Hamiltonian flow. Interpolation between two exponential weights yields the algebraic rate in Theorem \ref{linearstabilityp<2}, up to an arbitrarily small loss. No sharpness or matching lower bound is claimed.

\underline{(4). A nonlinear geometric consequence in the subquadratic regime.}
For $1<p<2$, the two-dimensional nonlinear equilibrium manifold does not contain the three angular-momentum directions that remain stationary for the linearized equation. We formulate this mismatch as a quantitative failure of asymptotic attraction for any global nonlinear trajectory carrying nonzero angular momentum in a topology that controls that moment. This is an obstruction theorem, not a claim of nonlinear Lyapunov instability or a construction of global nonlinear solutions.

\subsection{Notation and functional setting}
 $(i)$  We use the notations $a\ls b(a\gs b)$ and $a\ls_c b(a\gs_c b)$ to indicate that there is a constant $C$ which is uniform or depends on parameter $c$ and may be different on different lines, such that $a\leq Cb(a\geq Cb)$. We use the notation $a\sim b$($a\sim_c b$) whenever $a\ls b$ and $b\ls a$($a\ls_c b$ and $a\gs_cb$).
 
 $(ii)$ We denote $C_{a_1,a_2,\cdots,a_n}$(or $C(a_1,a_2,\cdots,a_n)$) by a constant depending on parameters $a_1,a_2,\cdots,a_n$. Moreover,  parameter $\varepsilon$ is used  to represent different positive numbers much less than 1 and determined in different cases.
 
 $(iii).$ The Japanese bracket is defined as $\<x\>:=(1+|x|^2)^{1/2}$. $\R^+$ is used to denote the set $\{x\ge0|x\in\R\}$ and we also specify the set $\R^+_{>0}:=\{x>0|x\in\R\}$.
 $\mathbf{1}_\Om$ is the characteristic function of the set $\Om$. We use $(\cdot,\cdot)_{\R^3},(\cdot,\cdot)_{L^2_v}$, and $(\cdot,\cdot)_{L^2_{x, v}}$ to denote the inner product in $\R^3$, $L^2_{\R^3_v}$, and $L^2_{\R^3_v\times\R^3_x}$, respectively. Occasionally, we employ $(\cdot,\cdot)$ to represent the inner product briefly without ambiguity.
 
 $(v).$ %$\mathbb{M}_3(\R)$ represents the set of $3\times3$ matrices with all components in $\R$. 
 We use $\mathbb{I}$ to represent the unit matrix or identity operator. $\mathbb{I}_{3\times 3}$ is used to emphasize that it is a $3\times3$ unit matrix.  If $A=(a_{ij})_{m\times n}$, then $\|A\|_2^2:=\sum_{i=1}^m\sum_{j=1}^n |a_{ij}|^2$ and $|A|:=\sqrt{A^\tau A}$. Here $A^\tau$ denotes the transpose of $A$.  If $T,S$ are two operators, then  the commutator $[T,S]$ is defined by $[T,S]=TS-ST$.

Other notations will be provided as required throughout the paper.

%\subsubsection{Function spaces} 

\subsection{Main results}
\subsubsection{Equilibria and conserved-moment parametrization}
Our first result gives the complete classification of positive entropy-invariant solutions.
	\begin{theorem}\label{EISform}
		Let $M=M(t,x,v)>0$ be an entropy-invariant solution of \eqref{Boltzmann}. \\
		$(1)$ If $p\in(2,\infty)$ and $\Phi(x)=|x|^p/p$, then $M$ is necessarily of the form
		\begin{equation}\label{071}
			M(t, x, v) = m\exp\{-\alpha(\Phi(x) + \frac{1}{2}|v|^2)+ Rx \cdot v\}, \:\:\: \text{with} \ \: (m,\al,R)\in\R^+_{>0}\times\R^+_{>0}\times\mathbb{R}^{3 \wedge 3}.
		\end{equation}
		Here $\mathbb{R}^{3 \wedge 3}$ is the space of all real $3\times3$ skew-symmetric matrices. Conversely, every function of the form \eqref{071} is a positive entropy-invariant solution. \\
		$(2)$ If $p\in(1,2)$ and $\Phi(x)=\langle x\rangle^p/p$, then $M$ is necessarily of the form
		\begin{equation}\label{072}
			M(t, x, v) = m\exp\{-\alpha(\Phi(x) + \frac{1}{2}|v|^2)\}, \:\:\: \text{with}\ \: (m, \alpha )\in \mathbb{R}^+_{>0}\times\R^+_{>0}.
		\end{equation}
		Conversely, every function of the form \eqref{072} is a positive entropy-invariant solution.
	\end{theorem}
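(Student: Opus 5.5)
Since $Q(M,M)=0$ and $M>0$, the standard characterization of collision equilibria gives that $\log M$ is, for each $(t,x)$, a collision invariant in $v$:
\[
\log M(t,x,v)=a(t,x)+b(t,x)\cdot v+c(t,x)|v|^2 .
\]
Integrability in $v$ from \eqref{integcondi} forces $c<0$. Substituting into the transport equation $\partial_t\log M+v\cdot\nabla_x\log M-\nabla\Phi\cdot\nabla_v\log M=0$ yields a polynomial in $v$ of degree three, and we equate its coefficients to zero. From the cubic term we get $\nabla_x c=0$, so $c=c(t)$. The quadratic term gives $\partial_t c|v|^2+v\cdot(\nabla_x b)v=0$, that is, the symmetric part of $\nabla_x b$ equals $-\partial_t c\,\mathbb I$. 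The linear term gives $\partial_t b+\nabla_x a-2c\nabla\Phi=0$, and the constant term gives $\partial_t a-b\cdot\nabla\Phi=0$. This is the classical Killing-type system. Its first two equations force
\[
b(t,x)=-\dot c(t)\,x+R(t)x+d(t),\qquad R(t)\in\R^{3\wedge3}.
\]

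Next we insert this form of $b$ into the remaining two equations. Taking the curl (antisymmetric derivative) of $\partial_t b=-\nabla_x a+2c\nabla\Phi$ kills the gradients, which gives $\dot R=0$. The remaining part reads $-\ddot c\,x+\dot d=-\nabla_x a+2c\nabla\Phi$, so
\[
a=2c\,\Phi+\tfrac12\ddot c|x|^2-\dot d\cdot x+e(t).
\]
Substituting this into $\partial_t a=b\cdot\nabla\Phi$ with $\nabla\Phi=\phi'(r)x/r$ radial, the $Rx\cdot\nabla\Phi$ term vanishes, and we obtain the identity
\[
2\dot c\,\Phi+\tfrac12\dddot c|x|^2-\ddot d\cdot x+\dot e=-\dot c\, x\cdot\nabla\Phi+d\cdot\nabla\Phi .
\]
The functions $\Phi$, $x\cdot\nabla\Phi$, $|x|^2$, $x_i$, $\partial_i\Phi$ and $1$ are linearly independent when $p\neq2$. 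For $|x|^p$ we have $x\cdot\nabla\Phi=p\Phi$, so this identity gives $(2+p)\dot c=0$; for $\langle x\rangle^p$ the same conclusion holds by comparing asymptotics or by taking a finite number of points. Hence $\dot c=0$, then $\dddot c=0$ trivially, and $d=0$ because $\partial_i\Phi$ is not affine in $x$. It follows that $\ddot d=0$, $\dot e=0$, and $a=2c\Phi+\text{const}$. Writing $c=-\alpha/2$ produces
\[
\log M=\log m-\alpha\Big(\Phi+\tfrac12|v|^2\Big)+Rx\cdot v .
\]
We note that $d$ need not be time-independent a priori; the coefficient matching handles this directly. The step requiring the most care is this linear-independence argument in the regularized case $\langle x\rangle^p$. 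There I would use the expansions at infinity, namely $\partial_i\Phi\sim |x|^{p-2}x_i$ plus lower order terms, with $p-2\notin\{0\}$, together with the fact that $x\cdot\nabla\Phi-p\Phi$ tends to a nonzero non-polynomial function.

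The remaining step is integrability. We complete the square,
\[
-\tfrac\alpha2|v|^2+Rx\cdot v=-\tfrac\alpha2\big|v-\alpha^{-1}Rx\big|^2+\tfrac1{2\alpha}|Rx|^2 .
\]
After integrating in $v$, the $x$-density is proportional to $\exp\{-\alpha\Phi(x)+|Rx|^2/(2\alpha)\}$. For $p>2$ this is integrable together with $\Phi+|v|^2$ weights for every $R$. For $1<p<2$, if $R\neq0$ then $|Rx|^2\ge\lambda|x|^2$ on a cone around the plane orthogonal to $\ker R$, which dominates $\alpha\langle x\rangle^p$, so the mass is infinite. Hence $R=0$ in that case. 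The positivity of $\alpha$ and $m$ follows from $c<0$ and $M>0$.

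For the converse, $M$ is a local Maxwellian, so $Q(M,M)=0$. Moreover $\Phi+\tfrac12|v|^2$ and $Rx\cdot v$ are invariants of the Hamiltonian flow: the first is the energy, and the second is a combination of angular momentum components since $x\wedge\nabla\Phi=0$. Therefore the transport equation holds, and the integrability just discussed gives \eqref{integcondi}.
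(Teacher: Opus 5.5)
Your proposal is correct and follows essentially the same route as the paper. In both, $\log M$ is a collision invariant in $v$, and matching coefficients of the transported cubic polynomial gives the Killing-type system. The curl step, radiality and $p\neq 2$ then force $\dot R=0$, $\dot c=0$ and $d=0$, and completing the square in $v$ decides integrability in the two regimes.

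Your bookkeeping is in fact slightly more careful than the paper's. You keep the term $-\dot c\,x\cdot\nabla\Phi$ in the zeroth-order equation, which the paper's displayed computation drops without affecting the conclusion, and you verify the independence for $\langle x\rangle^p$ explicitly. However, your blanket claim that $\Phi$ and $x\cdot\nabla\Phi$ are linearly independent is false for $|x|^p$, as your very next line shows.
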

	\begin{remark}
	Unlike the harmonic case treated in \cite{cao_landau_2024}, the entropy-invariant solutions in Theorem \ref{EISform} are time independent. In the rest of the paper we therefore refer to them simply as equilibria.
	\end{remark}
	\begin{remark}
The rotation matrix $R$ is absent when $p<2$ because the integrability condition \eqref{integcondi} forces $R=0$; see \eqref{inteR=0}.
	\end{remark}
	
For the harmonic potential, the thirteen conserved quantities parametrize the thirteen-dimensional family of entropy-invariant solutions; see Theorem 1.2 of \cite{cao_landau_2024}. For $p>2$, the equilibrium family has five parameters, matching mass, energy, and the three independent components of angular momentum. For $1<p<2$, only mass and energy parametrize nonlinear equilibria. The precise statement is as follows.
	\begin{theorem}\label{onetoone}
		Recall that $\Psi$ is the conserved mapping defined in \eqref{ConservedMap}. Let
		\ben
		&&\mathscr{M}:=\bigg\{ M(t,x,v)\big| M\,\, \mbox{is an equilibrium with $m>0$}\bigg\},\label{setofM} \\
		&& \mathscr{I}_{p>2}:=\bigg\{\mathsf{V}= \Psi(F_0)\bigg|F_0(x,v)\ge0, 0<\int_{\R^6} (\Phi(x)+|v|^2+1)F_0dxdv<\infty\bigg\},\label{setofF0}
		\\
		&& \notag\mathscr{I}_{p<2}:=\bigg\{(\mathsf{V}_1,\mathsf{V}_2)= (\Psi_1(F_0),\Psi_2(F_0))\bigg|F_0(x,v)\ge0, 0<\int_{\R^6} (\Phi(x)+|v|^2+1)F_0dxdv<\infty\bigg\}.
		\een

		$(1)$ If $p\in(2,\infty)$ and $\Phi(x)=|x|^p/p$, there exists a bijection $\widetilde{\mathscr F}:\mathscr I_{p>2}\to\mathscr M$ such that $\Psi(\widetilde{\mathscr F}(\mathsf V))=\mathsf V$ for every $\mathsf V\in\mathscr I_{p>2}$.
		
		$(2)$ If $p\in(1,2)$ and $\Phi(x)=\langle x\rangle^p/p$, there exists a bijection $\widetilde{\mathscr F}:\mathscr I_{p<2}\to\mathscr M$ such that the mass and energy components of $\Psi(\widetilde{\mathscr F}(\mathsf V_1,\mathsf V_2))$ equal $(\mathsf V_1,\mathsf V_2)$ for every $(\mathsf V_1,\mathsf V_2)\in\mathscr I_{p<2}$.
	\end{theorem}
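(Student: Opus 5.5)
The plan is to reduce everything to the explicit families of Theorem \ref{EISform} and to read the parametrization as a Legendre duality statement for an exponential family. Write $H(x,v)=\Phi(x)+\frac12|v|^2$ and $T(x,v)=(H,\,x\wedge v)$, which takes values in $\R\times\R^{3\wedge3}$. Pair a skew $R$ with $x\wedge v$ so that the exponent in \eqref{071} reads $-\alpha H+\langle R,x\wedge v\rangle$; this only rescales $R$ by a constant factor.

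Define the partition function
\[
Z(\alpha,R)=\int_{\R^6}\exp\{-\alpha H+\langle R,x\wedge v\rangle\}\,dx\,dv .
\]
Then every equilibrium is $M=m\,e^{-\alpha H+\langle R,x\wedge v\rangle}$, and its moments are
\[
\Psi_1(M)=m Z,\qquad (\Psi_2,\Psi_3)(M)=m Z\,\nabla_{(-\alpha,R)}\log Z .
\]
In particular $m$ only scales the moments. So it suffices to show that the normalized moment map $\Theta:=\nabla\log Z$ is a bijection from the parameter domain onto the set of ratios $(\mathsf V_2/\mathsf V_1,\mathsf V_3/\mathsf V_1)$ realized by admissible $F_0$. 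Given such a ratio, $(\alpha,R)$ is then fixed by inverting $\Theta$, and $m=\mathsf V_1/Z(\alpha,R)$ follows.

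\textbf{Step 1: the domain.} Complete the square in $v$:
\[
-\alpha H+\langle R,x\wedge v\rangle=-\tfrac\alpha2|v-\alpha^{-1}Ax|^2-\alpha\Phi(x)+\tfrac1{2\alpha}|Ax|^2,
\]
where $A$ is a fixed multiple of $R$. Hence for $p>2$ the integral $Z$ is finite exactly on the open set $\{\alpha>0\}\times\R^{3\wedge3}$, because $|x|^p$ beats the quadratic term. This is consistent with Theorem \ref{EISform}(1). For $p<2$ only $R=0$ is admissible, and the parameter is $\alpha>0$ alone. In either case the domain of $Z$ is open, so the exponential family is regular.

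\textbf{Step 2: injectivity.} The function $\log Z$ is strictly convex on its domain. Indeed, its Hessian is the covariance matrix of $T$ under the normalized density, and this is positive definite because no nontrivial affine combination $aH+\langle B,x\wedge v\rangle$ is constant on a set of positive measure. Hence $\Theta$ is injective. For $1<p<2$ this is simply the strict monotonicity of $\alpha\mapsto-\partial_\alpha\log Z$, i.e.\ of the mean energy.

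\textbf{Step 3: the image of $\Theta$.} By the standard theorem on regular (hence steep) exponential families, $\Theta$ maps the domain onto the interior of the closed convex hull $\mathcal K$ of $T(\R^6)$.

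\textbf{Step 4: matching with $\mathscr I$.} I must show that the admissible ratio set is exactly $\operatorname{int}\mathcal K$.

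For the inclusion $\operatorname{int}\mathcal K\subseteq$ ratio set, note that every interior point is attained by an equilibrium density, which is itself an admissible $F_0$.

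For the reverse inclusion, take $F_0\ge0$ integrable with finite moments. Its mean of $T$ lies in $\mathcal K$. If it lay on the boundary, there would be a supporting hyperplane $\ell$ with $\ell\circ T\le c$ on $\R^6$ and $\int(\ell\circ T-c)F_0=0$. This forces $F_0$ to be supported on the zero set $\{\ell\circ T=c\}$ of a nonconstant real-analytic function, which has measure zero. Then $F_0=0$ a.e., contradicting positive mass.

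For $p<2$ this says concretely that the set of energy ratios is $(\min H,\infty)=(1/p,\infty)$. Moreover $\Theta$ sweeps this interval, since the mean energy tends to $1/p$ as $\alpha\to\infty$ (Laplace concentration at $x=v=0$) and to $\infty$ as $\alpha\to0$. This gives (2), with the angular momentum of the image simply being $0$.

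\textbf{Main obstacle.} I expect the hard part to be the surjectivity in (1), namely steepness near the boundary $\alpha\to0^+$ of the domain and as $|R|\to\infty$ with $\alpha$ fixed. I would first try to cite the Barndorff-Nielsen regular-family theorem directly; this needs only openness of the domain, which Step 1 provides.

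If a self-contained argument is preferred, I would proceed variationally. For a target ratio $y\in\operatorname{int}\mathcal K$, minimize the strictly convex function $\log Z(\theta)-\langle\theta,y\rangle$ over the domain. Coercivity holds along rays inside the domain because $y$ is interior. At the boundary $\alpha\to0^+$ we have $Z\to\infty$, which follows from Fatou's lemma since the integral diverges at $\alpha=0$. Together these force an interior minimizer, and its Euler--Lagrange equation is exactly $\Theta(\theta)=y$.
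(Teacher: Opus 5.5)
Your argument is correct, but it is organized quite differently from the paper's.

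\textbf{The paper's route.} The paper proves the theorem by explicit computation.
For $1<p<2$ it shows via Cauchy--Schwarz that the mean energy $\frac{J}{pI}+\frac{3}{2\alpha}$ decreases strictly from $+\infty$ to $1/p$. It uses $p\Phi>1$ a.e.\ to locate the range.

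For $p>2$ it proceeds in four steps:
\begin{enumerate}
\item It derives the necessary condition $r_*<(\frac{2p}{p+2})^{\frac12+\frac1p}m_*^{\frac12-\frac1p}\alpha_*^{\frac12+\frac1p}$ from Young and H\"older.
\item It conjugates by an orthogonal $U$ to bring $R$ and $R_*$ simultaneously to the canonical $2\times2$ block (Lemma \ref{sim001}). This forces the equilibrium's rotation to be proportional to $R_*$.
\item It derives integration-by-parts identities for $\mathcal I(|x|^p)$, $\mathcal I(|x|^{2p})$, \dots{}, which allow elimination of $\tilde m$ and $\alpha$. What remains is the single scalar equation \eqref{011}.
\item Solvability of that equation rests on the monotonicity of $\mathcal J$, which needs the Lagrange-type sharpening \eqref{100} of Cauchy--Schwarz. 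It also rests on the Laplace-type limit $\mathcal J(\infty)=1$ (Lemma \ref{010}).
\end{enumerate}

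\textbf{Your route.} You replace all of this by exponential-family duality.
\begin{itemize}
\item Injectivity comes from strict convexity of $\log Z$.
\item Surjectivity onto the interior of the closed convex hull $\mathcal K$ of $T(\mathbb R^6)$ comes from regularity of the family. You get it either from the Barndorff-Nielsen/Brown mean-value theorem or from minimizing $\log Z-\langle\theta,y\rangle$.
\item The admissible moment ratios are identified with $\operatorname{int}\mathcal K$ by a supporting-hyperplane argument.
\end{itemize}

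\textbf{Comparison.} Your route is uniform in $p$ and needs no special integral identities. It also explains structurally why the range of the moment map is the cone over an open convex set. In fact, the paper's inequality on $r_*$ is exactly an explicit description of $\operatorname{int}\mathcal K$, whose boundary is the equality case of Young's inequality. The paper's route, in exchange, is self-contained (it cites no exponential-family theorem) and constructive: it reduces the inversion to one monotone scalar equation. It also exhibits explicit structural facts your abstract argument does not display, namely the concrete range inequality and the alignment of $R$ with $R_*$.

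\textbf{One small repair.} In Step 4, $|x|^p$ is not real-analytic at $x=0$ unless $p$ is an even integer. Justify that $\{\ell\circ T=c\}$ is null by Fubini instead: for a.e.\ fixed $x$, the map $v\mapsto aH+\langle B,x\wedge v\rangle-c$ is a nonconstant polynomial in $v$. Indeed, it is quadratic if $a\neq0$, and affine with nonzero gradient for $x\notin\ker B$ if $a=0$, $B\neq0$. Alternatively, use analyticity on $\{x\neq0\}\times\mathbb R^3$.
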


\begin{remark}
For $p>2$, Theorem \ref{onetoone} says that the conserved moments uniquely determine the compatible equilibrium. The $H$-theorem motivates convergence toward this state, but proving such nonlinear convergence is outside the scope of the present paper.
\end{remark}

\begin{remark}
	For $1<p<2$, every equilibrium has zero angular momentum, whereas initial data may have nonzero angular momentum. The nonlinear equilibrium manifold is therefore parametrized only by mass and energy. The resulting mismatch with the five-dimensional stationary space of the linearized equation is described in Theorem \ref{linearstabilityp<2} and Section 8.
	
%	Therefore, the linear stability in this case will be somewhat different(see Theorem \ref{Conv} and Theorem \ref{linearstabilityp<2}).
\end{remark}

%	Note that in the case $p \in (1, 2)$, Theorem \ref{EISform} implies that for any entropy-invariant solution $M = M(x, v)$, there holds
%	\begin{equation*}
%		\int(x \wedge v)M(x, v)dxdv = 0.
%	\end{equation*}
%	Then Theorem \ref{onetoone} immediately leads to the following corollary:
%	\begin{corollary}\label{Coroonetoone}
%		For initial data $F(0) = F(0, x, v) \ge 0$ satisfying
%		\begin{equation*}
%			0 < \int(\Phi(x) + |x|^2 + |v|^2 + 1)F(0, x, v)dxdv < +\infty.
%		\end{equation*}
%		If $p \in (1, 2)$, we further require that
%		\begin{equation*}
%			\int(x \wedge v)F(0, x, v)dxdv = 0.
%		\end{equation*}
%		Then there exists an unique entropy-invariant solution $M = M(x, v)$ satisfying
%		\begin{equation*}
%			\Psi(F(0)) = \Psi(M).
%		\end{equation*}
%		Here $\Psi$ is the conserved map defined in \eqref{ConservedMap}.
%	\end{corollary}

\subsubsection{Gram projection and general linear stability}
Let $M$ be any positive equilibrium of the form \eqref{071} or \eqref{072}. We study the equation linearized about this fixed equilibrium,
\ben\label{Lineareqf}
\partial_t f + v \cdot \nabla_x f - \nabla_x \Phi \cdot \nabla_v f = Q(M, f) + Q(f, M), \quad f(0) = f_0.
\een
The five moments in \eqref{ConservedMap} are conserved by \eqref{Lineareqf}. For general initial data they need not vanish, and the solution need not converge to zero because the full linearized generator has a five-dimensional stationary space. We encode these moments by a Gram matrix and thereby identify the stationary component explicitly. Set
\[
E(x,v)=\Phi(x)+\frac12|v|^2,
\qquad
\ell(x,v)=
\begin{pmatrix}
x_1v_2-x_2v_1\\
x_1v_3-x_3v_1\\
x_2v_3-x_3v_2
\end{pmatrix},
\qquad
\chi(x,v)=(1,E,\ell^\tau)^\tau\in\mathbb R^5.
\]
The five entries of $\int_{\R^6}\chi f\,dx\,dv$ are the mass, energy, and three independent angular-momentum components of $f$.

\begin{definition}[Gram matrix]\label{GramDefinition}
Let $M$ be a positive equilibrium of the form \eqref{071} or \eqref{072} with $m>0$. The Gram matrix associated with the conserved quantities is
\begin{equation}\label{GramMatrix}
G_M:=\int_{\mathbb R^6}\chi(x,v)\chi(x,v)^\tau M(x,v)\,dx\,dv\in\mathbb R^{5\times5}.
\end{equation}
Thus $G_M$ is the matrix of the five conserved modes with respect to the weighted pairing induced by $M$.
\end{definition}

\begin{proposition}[Stationary moment projection]\label{onetoone2}
The matrix $G_M$ is symmetric positive definite. If $f$ is a possibly signed function satisfying
\[
\int_{\mathbb R^6}(1+E+|\ell|)|f|\,dx\,dv<\infty,
\]
then
\begin{equation}\label{StationaryProjection}
\Pi_Mf=(c(f)\cdot\chi)M,
\qquad
c(f)=G_M^{-1}\int_{\mathbb R^6}\chi f\,dx\,dv,
\end{equation}
is the unique element of $\operatorname{span}\{M,EM,\ell_1M,\ell_2M,\ell_3M\}$ having the same conserved moments as $f$. Equivalently,
\begin{equation}\label{coef2}
\int_{\mathbb R^6}\chi\,\Pi_Mf\,dx\,dv
=\int_{\mathbb R^6}\chi f\,dx\,dv.
\end{equation}
Moreover, $\Pi_Mf$ lies in the kernel of the full linearized generator, including both the transport and collision parts.
\end{proposition}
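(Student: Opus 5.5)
The argument has three parts. First we show that $G_M$ is positive definite. Then we solve the $5\times5$ linear system for uniqueness and the moment identity. Finally we check that each of the five modes is annihilated by both the transport and the collision parts.

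\textbf{Positive definiteness.} Symmetry of $G_M$ is immediate. For $a\in\mathbb R^5$,
\[
a^\tau G_M a=\int_{\mathbb R^6}(a\cdot\chi)^2M\,dx\,dv\ge0 .
\]
Finiteness of the entries follows from the integrability of $M$ against polynomial-type weights in $(x,v)$. For $p>2$ this holds because $\alpha\Phi$ dominates $|x|^2$ growth, so $-\alpha(\Phi+\tfrac12|v|^2)+Rx\cdot v$ still decays, as established in the proof of Theorem \ref{EISform}. For $1<p<2$ we have $R=0$. Since $M>0$ everywhere, $a^\tau G_Ma=0$ forces $a\cdot\chi\equiv0$ on $\mathbb R^6$. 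The functions $1$, $E$, $\ell_1$, $\ell_2$, $\ell_3$ are linearly independent:
\begin{itemize}
\item evaluating at $x=0$ kills every $\ell_j$ and leaves $a_1+a_2(\Phi(0)+\tfrac12|v|^2)$, which must vanish identically in $v$, so $a_1=a_2=0$;
\item the quadratic forms $x_iv_j-x_jv_i$ for $i<j$ are independent, so the remaining coefficients vanish.
\end{itemize}
Hence $a=0$ and $G_M$ is positive definite.

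\textbf{Moment identity and uniqueness.} The hypothesis $\int(1+E+|\ell|)|f|<\infty$ makes $\int\chi f$ well defined. Any element of $\operatorname{span}\{M,EM,\ell_jM\}$ can be written as $(c\cdot\chi)M$ for some $c\in\mathbb R^5$, and its moments are
\[
\int\chi\chi^\tau M\,c=G_Mc .
\]
The condition \eqref{coef2} is therefore equivalent to $G_Mc=\int\chi f$. Since $G_M$ is invertible, this system has the unique solution $c=c(f)$. This gives both the formula \eqref{StationaryProjection} and uniqueness.

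\textbf{Kernel property.} This is the only step with content. Write $T=v\cdot\nabla_x-\nabla_x\Phi\cdot\nabla_v$. Because $\Phi$ is radial, each $\chi_k$ is invariant under the Hamiltonian flow: $TE=0$ directly, and $T\ell=0$ since $v\wedge v=0$ and $x\wedge\nabla\Phi=0$. Theorem \ref{EISform} gives $TM=0$. The Leibniz rule then yields $T(\chi_kM)=0$ for each $k$.

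For the collision part, $Q(M,\chi_kM)+Q(\chi_kM,M)$ is the derivative at $\varepsilon=0$ of $Q(M_\varepsilon,M_\varepsilon)$, where $M_\varepsilon=Me^{\varepsilon\chi_k}$. Here one uses $Q(M,M)=0$ and the fact that $Q$ is bilinear. For fixed $x$, each $\chi_k$ is a collision invariant in $v$. Indeed, $1$ is invariant. $E$ equals $\Phi(x)$ plus the energy. $\ell$ is linear in $v$, so it is a combination of momentum components with coefficients depending only on $x$. Consequently $M_\varepsilon$ is again a local Maxwellian in $v$, so $Q(M_\varepsilon,M_\varepsilon)=0$ for every small $\varepsilon$.

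To avoid justifying differentiation under the integral, one can argue directly instead. With $h=\chi_k$ a collision invariant, the standard identity
\[
Q(M,hM)+Q(hM,M)=\int B\,M'M'_*\big(h'+h'_*-h-h_*\big)=0
\]
uses $M'M'_*=MM_*$ and conservation \eqref{conserv}. In the non-cutoff case this identity needs care, since the gain and loss terms are not separately finite. The cleanest route is the weak formulation tested against smooth compactly supported $\varphi$, combined with the symmetrization that makes the integrand pointwise zero.

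Linearity of $T$ and of the linearized collision operator then gives that $\Pi_Mf$ lies in the kernel of the full linearized generator.
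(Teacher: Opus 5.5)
Your proof is correct and follows essentially the paper's route: positivity of $G_M$ from $\int(a\cdot\chi)^2M$ together with linear independence of $1,E,\ell_1,\ell_2,\ell_3$, then solving $G_Mc=\int\chi f$, then checking that $\mathsf T$ annihilates $M$, $E$ and $\ell$ by radial symmetry and that each $\chi_k$ is a collision invariant in $v$ for fixed $x$. The one difference is in the collision step: you verify it directly through $M'M'_*=MM_*$ and add a caveat for the non-cutoff case, where the paper simply cites the standard null-space characterization of the linearized collision operator. You also check that the entries of $G_M$ are finite, which the paper leaves implicit.
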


For $1/2\le\lambda<1$, set
\[
\mathcal X_\lambda(M):=
\left\{f:M^{-\lambda}f\in L^2(\mathbb R^6)\right\},
\qquad
\|f\|_{\mathcal X_\lambda(M)}:=\|M^{-\lambda}f\|_{L^2}.
\]
Let $\mathcal A_M$ denote the full linearized operator in \eqref{Lineareqf}. We use its standard dissipative semigroup realization on $\mathcal X_{1/2}(M)$: the closure of $\mathcal A_M$, initially defined on a dense set $\mathscr C_M$ of smooth rapidly decaying functions, generates a strongly continuous semigroup $S_M(t)$. For every $1/2<\lambda<1$ and every $h_0\in\mathcal X_\lambda(M)$, one may choose $h_{0,n}\in\mathscr C_M$ such that
\[
\|h_{0,n}-h_0\|_{\mathcal X_\lambda(M)}\longrightarrow0,
\]
and the corresponding strong solutions $S_M(t)h_{0,n}$ satisfy the weighted energy identities used below. Hence the semigroup solution is the limit of strong solutions in $C([0,T];\mathcal X_{1/2}(M))$ for every $T>0$. This is the standard linear well-posedness and core-approximation framework for the transport--collision operator. The factor $e^{-\widetilde\Phi(x)}$ weakens coercivity at spatial infinity but does not affect dissipativity or the basic semigroup construction. Since the purpose of the present paper is the quantitative large-time estimate, we take this standard realization as the underlying solution concept and do not reproduce its construction.

\smallskip

We can now state the linear stability result for arbitrary initial moments.
\begin{theorem}[Linear stability modulo stationary modes]\label{linearstabilityp<2}
Let $M$ be a positive equilibrium of the form \eqref{071} or \eqref{072} with $m>0$. Suppose that
\[
\int_{\mathbb R^6}(1+E+|\ell|)|f_0|\,dx\,dv<\infty
\]
and that
\[
M^{-\lambda_1}(f_0-\Pi_Mf_0)\in L^2(\mathbb R^6)
\]
for some $1/2<\lambda_2<\lambda_1<1$. Then the semigroup solution $f(t)=S_M(t)f_0$ satisfies, for every $\delta>0$,
\begin{equation}\label{MainLinearDecay}
\left\|M^{-\lambda_2}\big(f(t)-\Pi_Mf_0\big)\right\|_{L^2}
\le C_{M,\lambda_1,\lambda_2,\delta}
\langle t\rangle^{-\frac{(\lambda_1-\lambda_2)(p+2)}{2[1+\delta(p+2)]}}
\left\|M^{-\lambda_1}\big(f_0-\Pi_Mf_0\big)\right\|_{L^2}.
\end{equation}
For every $1/2<\lambda<1$, the kernel of the full linearized generator $\mathcal A_M$ in $\mathcal X_\lambda(M)$ is exactly
\[
\ker\mathcal A_M
=\operatorname{span}\{M,EM,\ell_1M,\ell_2M,\ell_3M\}.
\]
\end{theorem}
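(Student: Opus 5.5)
The first step is to reduce to a normalized problem. Given an equilibrium $M=m\exp\{-\alpha E+Rx\cdot v\}$, I would use the rotating change of variables: pass to the frame rotating with angular velocity $R/\alpha$, writing $w=v-\alpha^{-1}Rx$, and rescale by $\alpha$. In the new variables $M$ becomes $C_M e^{-\widetilde\Phi(x)}\mu(w)$, where $\widetilde\Phi=\alpha\Phi-\frac{1}{2\alpha}|Rx|^2$ is the effective potential (confining because $p>2$, or trivially since $R=0$ when $p<2$). The equation \eqref{Lineareqf} becomes transport along a Hamiltonian flow, with the Coriolis term skew in $L^2$, plus $C_Me^{-\widetilde\Phi(x)}\mathsf L$. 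I would check that the five moments $1,E,\ell$ transform into the corresponding invariants, and that the weights $M^{-\lambda}$ transform into $e^{\lambda(\widetilde\Phi+|w|^2/2)}$. Setting $g=M^{-1/2}f$, with $f$ replaced by $f-\Pi_Mf_0$ (legitimate because Proposition \ref{onetoone2} puts $\Pi_Mf_0$ in the kernel and the moments are conserved), the problem reduces to $g$ with all five moments zero.

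Next comes a hypocoercive estimate with degenerate dissipation. The $L^2$ energy identity gives
\[
\frac{d}{dt}\|g\|^2\le -c\int e^{-\widetilde\Phi}|(\mathbb I-P)g|^2_{\sigma},
\]
where $P$ is the velocity projection onto $\ker\mathsf L$. Following the micro--macro architecture of \cite{carrapatoso_special_2024}, I would build a modified functional $\mathcal E\sim\|g\|^2$ by adding correctors from the local conservation laws for the 13 macroscopic coefficients. Weighted Poincaré and Poincaré--Korn inequalities with measure $e^{-\widetilde\Phi}dx$ control the density, momentum and temperature fields modulo finitely many modes. The ODEs for those finitely many coefficients (rotations, etc.) reduce them to the five conserved ones, which vanish. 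The result should be a dissipation $\mathcal D(g)\gtrsim\int e^{-\theta\widetilde\Phi}\langle w\rangle^{-k}|g|^2$, with a spatial degeneracy. The same computation at weight $\lambda$ in place of $1/2$ shows that the $\mathcal X_\lambda$ norms are nonincreasing up to constants, and it also yields the kernel statement: an element of the kernel has $\mathcal D=0$, hence is $(c\cdot\chi)M$.

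The main obstacle is converting the degenerate dissipation into a rate. I would use the far-field weight-transfer estimate: on the region $\{\widetilde\Phi(x)+|w|^2/2\le R\}$ one has $e^{-\widetilde\Phi}\gtrsim e^{-R}$. Along the Hamiltonian flow, velocity weight converts into position weight; by the virial identity, $\frac{d}{dt}(x\cdot w)=|w|^2-x\cdot\nabla\widetilde\Phi$ and $x\cdot\nabla\widetilde\Phi\sim p\widetilde\Phi$, so time averaging equates kinetic and potential parts. This is where the factor $p+2$ appears, through the energy level sets. Outside that region I would bound $\|g\|^2_{\mathcal X_{\lambda_2}}$ on the far set by $e^{-2(\lambda_1-\lambda_2)R}\|g\|^2_{\mathcal X_{\lambda_1}}$, which is uniformly bounded in time. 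Optimizing $R$ against $t$ gives a differential inequality
\[
\mathcal E'\le -c\,\mathcal E^{1+1/\kappa}\,\|g_0\|_{\lambda_1}^{-2/\kappa}
\]
with $\kappa=(\lambda_1-\lambda_2)(p+2)/[1+\delta(p+2)]$, where the $\delta$-loss absorbs the polynomial factors $\langle w\rangle^{k}$ and $R^{C}$. Integrating this inequality yields \eqref{MainLinearDecay}. Getting the exact exponent from the weight transfer is the delicate part. First I would try a Lyapunov weight $\langle \widetilde\Phi+|w|^2/2\rangle^{a}$ combined with a corrector in $x\cdot w$.
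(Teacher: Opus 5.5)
Your reduction (the shift $w=v-\alpha^{-1}Rx$, the rescaling, and subtracting $\Pi_Mf_0$) and your endgame are the paper's. The inequality $\mathcal E'\le -c\,\mathcal E^{1+1/\kappa}\|g_0\|_{\lambda_1}^{-2/\kappa}$, obtained by splitting along level sets of $\mathsf H$, is equivalent to the H\"older interpolation in Step 4 of the proof of Theorem \ref{082}, and your $\kappa$ is exactly $\theta/(1-\theta)$ there.

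That endgame, however, needs the following dissipation inequality at level $\lambda=\lambda_2$ (in the paper's normalization, with $\mathsf H=\widetilde\Phi+\frac12|v|^2$):
\[
\frac{d}{dt}\mathcal F_\lambda+\big\|e^{(\lambda-\frac1{p+2}-\delta)\mathsf H}g\big\|_{L^2_{x,v}}^2\le0,
\qquad
\mathcal F_\lambda\sim\|e^{\lambda\mathsf H}g\|_{L^2_{x,v}}^2 .
\]
This is a loss of only $\frac1{p+2}+\delta$ in the exponential weight, and your proposal never produces it. What you actually state, $e^{-\widetilde\Phi}\gtrsim e^{-R}$ on $\{\mathsf H\le R\}$, is a loss of $\frac12$. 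It yields only $\langle t\rangle^{-(\lambda_1-\lambda_2)}$, which is weaker than \eqref{MainLinearDecay} for small $\delta$ because $(p+2)/2>1$.

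The virial and time-averaging remark explains where $p+2$ should come from, but it is not an estimate. Your fallback of a polynomial weight $\langle\mathsf H\rangle^a$ plus an $x\cdot w$ corrector cannot deliver it. In the kinetic region the negative part $-|v|^2$ of $-\mathsf T(x\cdot v)$ must be dominated by the collision dissipation. At the same exponential level that dissipation is smaller by the factor $e^{-\widetilde\Phi}$, so the corrector's \emph{exponential} weight has to be lowered.

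The missing idea is Lemmas \ref{WeightTransferLemma1}--\ref{WeightTransferLemma2}. Work at $\lambda\in(1/2,1)$. There Lemma \ref{UpperBoundL2} gives coercivity on all of $g$, not only on $g_1$, with the full velocity weight. So the dissipation density is $e^{(2\lambda-1)\widetilde\Phi+\lambda|v|^2}$, modulo $\|e^{(\lambda-\frac12)\widetilde\Phi}g\|^2$. Add the corrector $\int(x\cdot v)e^{2(\lambda-\frac1{p+2}-\delta)\mathsf H}g^2$, which is bounded by $\|e^{\lambda\mathsf H}g\|^2$. The argument is then pointwise:
\begin{itemize}
\item Where $|v|^2\ge p\widetilde\Phi$ (up to $O(\delta)$ adjustments), one has $\widetilde\Phi\le\frac2{p+2}\mathsf H$. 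So the dissipation weight exceeds $e^{2(\lambda-\frac1{p+2}-\delta)\mathsf H}$ by a factor $e^{\delta\mathsf H}$, which absorbs $\langle v\rangle^{-6}$ and $|v|^2$.
\item Where $|v|^2<p\widetilde\Phi$ and $|x|$ is large, $-\mathsf T(x\cdot v)=x\cdot\nabla\widetilde\Phi-|v|^2-2\mathsf Rx\cdot v\gtrsim\langle x\rangle^{p}$.
\item Bounded $|x|$ is handled by taking the multiplier $N$ large.
\end{itemize}
The two regions meet exactly at the shift $\frac1{p+2}$.

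This estimate must also come \emph{before} the macroscopic closure, not after it. Your level-$\frac12$ hypocoercive step does not close by itself. The moment equations are driven by $\partial_tg_1+\mathsf Tg_1$, which carries no factor $e^{-\widetilde\Phi}$. So the correctors produce $\|e^{\delta_0\mathsf H}g_1\|^2$ (Proposition \ref{060}), with $\delta_0>0$ required by the weighted Poincar\'e inequalities. The degenerate micro-dissipation $\normm{e^{\frac14|v|^2}g_1}_{L^2_x}^2$ does not control this term. The paper absorbs it only by interpolating against the weight-transfer dissipation, using $2\delta_0<\lambda-\frac1{p+2}-\delta$.
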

\begin{remark}
The Gram matrix makes the finite-dimensional moment matching explicit and removes the artificial zero-angular-momentum restriction from the general linear theory. We use it as a transparent algebraic device, not as a new hypocoercivity mechanism. The analytical distinction from \cite{carrapatoso_special_2024} is instead that the collision term is obtained by linearizing about the full equilibrium $M$. After normalization it contains the spatial factor $e^{-\widetilde\Phi(x)}$, which degenerates at infinity and necessitates the weight-transfer argument leading to the algebraic decay in \eqref{MainLinearDecay}.
\end{remark}

\subsection{Non-integrable neutral modes and failure of asymptotic stability for $1<p<2$}
When $1<p<2$, there is a structural mismatch between the nonlinear equilibrium manifold and the stationary space of the linearized equation. By Theorem \ref{EISform}, the admissible nonlinear equilibria form the two-parameter family
\[
\mathscr{M}_{p<2}=\left\{m\exp\left[-\alpha\left(\Phi(x)+\frac12|v|^2\right)\right]:m>0,\ \alpha>0\right\}.
\]
On the other hand, Proposition \ref{onetoone2} and Theorem \ref{linearstabilityp<2} show that the stationary space of the linearized equation also contains the three angular-momentum modes
\[
(x_iv_j-x_jv_i)M,\qquad 1\le i<j\le3.
\]
Thus, the tangent space of the nonlinear equilibrium manifold is two-dimensional, whereas the stationary space of the linearized equation is five-dimensional.

The three additional linear modes cannot be integrated into a nearby family of admissible nonlinear equilibria. Indeed, fix $R\in\mathbb{R}^{3\wedge3}\setminus\{0\}$ and consider the formal one-parameter family of rotational Maxwellians
\[
M_{\varepsilon R}(x,v):=m\exp\left\{-\alpha\left(\Phi(x)+\frac12|v|^2\right)+\varepsilon Rx\cdot v\right\}.
\]
Integration in the velocity variable gives
\[
\int_{\mathbb{R}^3}M_{\varepsilon R}(x,v)\,dv
=m\left(\frac{2\pi}{\alpha}\right)^{3/2}
\exp\left\{-\alpha\Phi(x)+\frac{\varepsilon^2|Rx|^2}{2\alpha}\right\}.
\]
Since $\Phi(x)\sim |x|^p$ with $p<2$, this function is not integrable in $x$ for any $\varepsilon\ne0$. Nevertheless,
\[
\left.\frac{d}{d\varepsilon}M_{\varepsilon R}\right|_{\varepsilon=0}=(Rx\cdot v)M
\]
is integrable and is a stationary solution of the linearized equation. We therefore regard these angular modes as non-integrable neutral directions of the nonlinear equilibrium manifold. Section 8 gives the precise obstruction and discusses this open nonlinear mechanism.

\subsection{Ideas and strategies}
The proof is organized into four steps, each with a distinct role.

\underline{(1). Equilibrium geometry.}
The zero-entropy-production condition reduces the nonlinear equation to a finite-dimensional transport classification. We then impose finite mass and energy to distinguish the integrable superquadratic rotations from the non-integrable subquadratic ones, and identify the equilibrium parameters from the conserved moments. This is the content of Section 2.

\underline{(2). Stationary projection and normalization.}
The Gram matrix separates the stationary component $\Pi_Mf_0$ from the zero-moment component. An invertible coordinate transformation then reduces a general equilibrium, including a rotating equilibrium when $p>2$, to a Maxwell--Boltzmann distribution that is separable in $x$ and $v$. Throughout the transformation we track the generator, the five conserved moments, and the weighted norms. The normalized collision term displays the spatially vanishing factor $e^{-\widetilde\Phi(x)}$. See Section 3.

\underline{(3). Macroscopic closure under the transformed constraints.}
The coordinate transformation changes both the transport operator and the explicit form of the conserved quantities; see Subsection \ref{2000}. We decompose the macroscopic fields into weighted Poincar\'e--Korn-coercive components and a finite-dimensional remainder. The coercive components are estimated by the weighted inequalities of Section 5, including negative-order versions. Evolution equations control part of the remainder, while the last five directions are eliminated by the transformed conservation laws. This step adapts the architecture of \cite{carrapatoso_weighted_2022,carrapatoso_special_2024} to the present normalized equation; see Section 6.

\underline{(4). Far-field transfer and quantitative algebraic decay.}
The weight-transfer functions of Section 4 convert microscopic dissipation carrying velocity weight into confinement weight along the Hamiltonian flow. Combining this new far-field estimate with the macroscopic closure and interpolation between two exponential weights gives the rate in Theorem \ref{linearstabilityp<2}, with an arbitrarily small loss. See Lemmas \ref{WeightTransferLemma1} and \ref{WeightTransferLemma2}. We do not claim that the resulting exponent is sharp.

%\subsubsection{Reduction to a normalized static Maxwell-Boltzmann distribution} 

%\subsubsection{Linear stability}

%\paragraph{\it Macroscopic estimates: zero mode, non-zero mode and variant Poincar\'e inequality.}

\subsection{Organization of the paper}
The remainder of this paper is organized as follows. Section 2 classifies all positive finite-mass-and-energy equilibria and proves their parametrization by the appropriate conserved moments. Section 3 constructs the stationary projection and normalizes the equation around an arbitrary equilibrium, including the rotating states allowed when $p>2$. Section 4 records the velocity-space collision estimates and develops the far-field weight-transfer mechanism for the spatially degenerate collision strength. Section 5 states and proves the weighted Poincar\'e--Korn tools needed by the transformed macroscopic system. Section 6 closes that system under the transformed conservation constraints. Section 7 combines microscopic and macroscopic estimates to prove algebraic relaxation modulo stationary modes. Section 8 establishes the nonlinear obstruction associated with nonzero angular momentum when $1<p<2$. The appendix records an auxiliary inequality.

%%%%%%%%%%%%%%%%%%%%%%%%%%%%%%%%%%%%%%%%%%%%%%%%%%
\section{Classification of equilibria and conserved moments}
We first characterize entropy-invariant solutions and then prove their correspondence with the conserved moments.

%%%%%%%%%%%%%%%%%%%%%%%%%%%%%%%%%%%%%%%%%%%%%%%%%%
\subsection{Characterization of entropy-invariant solutions}
\begin{proposition}\label{propcha}
	A sufficiently regular function $M=M(t,x,v)>0$ is an entropy-invariant solution of \eqref{Boltzmann} if and only if
	\ben
	&&\notag\int_{\R^6}(1+\Phi(x)+|v|^2)M(t,x,v)dxdv<+\infty,\quad\mbox{and}\\
	&&\quad\qquad\qquad\quad\ln M\in \mbox{span}\{1,\Phi(x)+\f12|v|^2,x\wedge v\}.\label{characterM}
\een
\end{proposition}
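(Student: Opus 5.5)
The proof has two directions.

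For the converse direction, suppose $\ln M=a+b(\Phi+\frac12|v|^2)+Rx\cdot v$ with $R$ skew-symmetric. Since $\ln M$ is a collision invariant (it is affine in $v$ plus a multiple of $|v|^2$, with $x$-dependent coefficients), $Q(M,M)=0$ holds pointwise in $x$. For the transport part, write $\mathcal T=v\cdot\nabla_x-\nabla\Phi\cdot\nabla_v$. Then $\mathcal T(\Phi+\frac12|v|^2)=0$. Also
\[
\mathcal T(Rx\cdot v)=Rv\cdot v-Rx\cdot\nabla\Phi=0,
\]
because $R$ is skew-symmetric and $\nabla\Phi$ is parallel to $x$ for radial $\Phi$. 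Hence $\partial_tM+\mathcal T M=0$. The integrability condition is assumed in the statement, so $M$ is entropy-invariant.

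For the direct direction, assume $M>0$ satisfies both identities in \eqref{equationforeisol}. From $Q(M,M)=0$ and positivity (the standard characterization of collision equilibria, valid under (A1)--(A3) in both the cutoff and non-cutoff cases), $M$ is a local Maxwellian:
\[
\ln M=a(t,x)+b(t,x)\cdot v-c(t,x)\frac{|v|^2}{2}.
\]
Substitute this into $(\partial_t+\mathcal T)\ln M=0$. The left side is a cubic polynomial in $v$, so each homogeneous part must vanish:
\begin{itemize}
\item degree $3$: $\nabla_x c=0$, so $c=c(t)$;
\item degree $2$: $\mathrm{sym}(\nabla_x b)=\frac12\dot c\,\mathbb I$;
\item degree $1$: $\partial_t b+\nabla_x a+c\nabla\Phi=0$;
\item degree $0$: $\partial_t a-b\cdot\nabla\Phi=0$.
\end{itemize}
The degree-$2$ condition is a conformal Killing-type equation, which forces
\[
b=\tfrac12\dot c\,x+R(t)x+e(t),\qquad R\ \text{skew}.
\]
Plugging this into the degree-$1$ equation shows that $\nabla_x a$ is affine-quadratic in $x$ plus $-c\nabla\Phi$. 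Curl-freeness of $\nabla_x a$ together with the degree-$0$ equation then give
\[
a=-c\,\Phi-\tfrac14\ddot c\,|x|^2-\dot e\cdot x+d(t)\quad(\text{using }\dot R=0\text{ from the curl}).
\]

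The main work is the degree-$0$ equation
\[
\partial_t a=b\cdot\nabla\Phi,
\]
which reads
\[
-\dot c\,\Phi-\tfrac14\dddot c|x|^2-\ddot e\cdot x+\dot d=\big(\tfrac12\dot c\,x+Rx+e\big)\cdot\nabla\Phi .
\]
Here $Rx\cdot\nabla\Phi=0$. For $\Phi=|x|^p/p$ we have $x\cdot\nabla\Phi=p\Phi$, and $e\cdot\nabla\Phi=|x|^{p-2}e\cdot x$. For $\langle x\rangle^p/p$ there are analogous expressions. The plan is to compare the functions $|x|^p$ (respectively $\langle x\rangle^p$), $|x|^2$, $x_i$, $|x|^{p-2}x_i$ and $1$. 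These are linearly independent as functions of $x$ because $p\ne2$ (and $p>1$). Independence forces
\[
\dot c\,(1+\tfrac p2)=0,\qquad e=0,\qquad \dddot c=0,\qquad \dot d=0.
\]
Hence $c$ and $d$ are constant and $b=Rx$ with $R$ constant. The $\langle x\rangle$ case needs a short extra check: $x\cdot\nabla\Phi=\langle x\rangle^{p-2}|x|^2$ differs from $p\Phi$, but it is still independent of the other functions.

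This gives $\ln M=d-c(\Phi+\frac12|v|^2)+Rx\cdot v$, which is exactly \eqref{characterM} with time-independent coefficients. The integrability condition is part of the definition of an entropy-invariant solution.

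I expect the main obstacle to be this final linear-independence step, together with justifying the polynomial-in-$v$ matching under the "sufficiently regular" hypothesis. In particular one must handle the $\langle x\rangle^p$ case carefully, where $x\cdot\nabla\Phi$ is not a multiple of $\Phi$. Asymptotic expansion as $|x|\to\infty$ and near $x=0$ should settle it.
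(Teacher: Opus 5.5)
Your proposal is correct and follows essentially the same route as the paper. Both arguments reduce to a local Maxwellian, match the coefficients of the cubic polynomial in $v$, solve the resulting system in $x$, and conclude from linear independence of the spatial functions, which is where $p\neq2$ enters. The only differences are in how the $x$-system is solved: you use the classification of homothetic Killing fields and a curl argument to get $\dot R=0$, whereas the paper expands $\mathsf b$ as a multi-affine polynomial and compares coefficients.

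One point in your favour: your degree-$0$ equation correctly keeps the term $\tfrac12\dot c\,x\cdot\nabla\Phi$. Its counterpart in the paper, $\partial_t\mathsf c\,(x\cdot\nabla\Phi)$, arises from $\mathsf b_{ii}=-\partial_t\mathsf c$ and is silently dropped in the display preceding \eqref{lnMpta+}. As your computation shows, restoring it only changes the coefficient of $\Phi$, and for $\langle x\rangle^p$ it adds the independent function $\langle x\rangle^{p-2}$, so $\partial_t\mathsf c=0$ still follows.
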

\begin{proof}
	We observe that $(\partial_t + v \cdot \nabla_x - \na_x\Phi \cdot \nabla_v)(\ln M) = M^{-1}(\partial_t + v \cdot \nabla_x - \na_x\Phi \cdot \nabla_v)M$, which implies that if $M$ satisfies \eqref{characterM}, then we have $Q(M,M)=0$ and $(\partial_t + v \cdot \nabla_x - \na_x\Phi \cdot \nabla_v)M=0$. According to Definition \ref{Defoentropy-invariant}, $M$ is an entropy-invariant solution of \eqref{Boltzmann}.
	
		Conversely, let $M$ be an entropy-invariant solution. By Definition \ref{Defoentropy-invariant},
	\begin{equation}\label{EISintegrable}
		0 < \int_{\R^6}(1+\Phi(x)+|v|^2)M(t, x, v)dxdv < +\infty,
	\end{equation}
	and for any $t \ge 0$,
	\begin{equation}\label{EISequation}
		\partial_tM + v \cdot \nabla_xM - \nabla_x\Phi \cdot \nabla_vM = 0 = {Q}(M, M).
	\end{equation}
	The second equality in \eqref{EISequation} shows that
	\begin{equation*}
		\ln M(t, x, v) \in \mathrm{span}\{1, v, |v|^2\}, \:\:\: \forall (t, x) \in \mathbb{R}^+ \times \mathbb{R}^3.
	\end{equation*}
	Thus we may write
	\begin{equation}\label{lnMform}
		\ln M(t, x, v) = \mathsf{a}(t, x) + v \cdot \mathsf{b}(t, x) + |v|^2\mathsf{c}(t, x).
	\end{equation}
	And the first equality in \eqref{EISequation} immediately leads to
	\begin{equation}\label{lnMequation}
		(\partial_t + v \cdot \nabla_x - \nabla_x\Phi \cdot \nabla_v)\ln M = 0.
	\end{equation}
	We put \eqref{lnMform} into \eqref{lnMequation} and get
	\begin{equation*}
			\partial_t\mathsf{a} + v \cdot \nabla_x\mathsf{a} + v \cdot \partial_t\mathsf{b} + \sum_{i, j = 1}^3v_iv_j\partial_i\mathsf{b}_j - \nabla_x\Phi \cdot \mathsf{b} + |v|^2\partial_t\mathsf{c} + |v|^2v\cdot\nabla_x\mathsf{c} - 2(\nabla_x\Phi \cdot v)\mathsf{c} = 0,
	\end{equation*}
	which is a polynomial in $v$ that vanishes identically. Comparing coefficients gives
	\begin{align}
		\partial_t\mathsf{a} - \nabla_x\Phi \cdot \mathsf{b} &= 0;\label{lnMpta}\\
		\partial_t\mathsf{b}_i + \partial_i\mathsf{a} - 2(\partial_i\Phi)\mathsf{c} &= 0, \:\:\: 1 \le i \le 3;\label{lnMptb}\\
		\partial_t\mathsf{c} + \partial_i\mathsf{b}_i &= 0, \:\:\: 1 \le i \le 3;\label{lnMptc}\\
		\partial_i\mathsf{b}_j + \partial_j\mathsf{b}_i &= 0, \:\:\: 1 \le i \neq j \le 3;\label{lnMpxb}\\
		\partial_i\mathsf{c} &= 0, \:\:\: 1 \le i \le 3.\label{lnMpxc}
	\end{align}
	We observe that \eqref{lnMpxc} immediately shows
	\begin{equation}\label{lnMcform}
		\mathsf{c}(t, x) = \mathsf{c}(t).
	\end{equation}
	By \eqref{lnMptc} and \eqref{lnMcform}, for $1 \le i \le 3$,
	\begin{equation*}
		0 = \partial_i(\partial_t\mathsf{c} + \partial_i\mathsf{b}_i) = \partial_i^2\mathsf{b}_i,
	\end{equation*}
	and for $1 \le i \neq j \le 3$, \eqref{lnMptc}, \eqref{lnMpxb} and \eqref{lnMcform} shows that
	\begin{equation*}
		0 = \partial_i(\partial_i\mathsf{b}_j + \partial_j\mathsf{b}_i) = \partial_i^2\mathsf{b}_j + \partial_i\partial_j\mathsf{b}_i = \partial_i^2\mathsf{b}_j - \partial_j\partial_t\mathsf{c} = \partial_i^2\mathsf{b}_j.
	\end{equation*}
	Therefore $\partial_i^2\mathsf{b}_j=0$ for all $1\le i,j\le3$. Hence every $\mathsf b_j$ is a polynomial in $x$, of degree at most one in each variable. We may write
	\begin{equation}\label{lnMbform}
		\mathsf{b}_i(t, x) = \mathsf{b}_{i0}(t) + \sum_{j = 1}^3\mathsf{b}_{ij}(t)x_j + \sum_{1 \le j < k \le 3}\mathsf{b}_{ijk}(t)x_jx_k + \mathsf{b}_{i4}(t)x_1x_2x_3, \:\:\: 1 \le i \le 3.
	\end{equation}
	As an example, we compute $\partial_1\mathsf{b}_2 + \partial_2\mathsf{b}_1 = 0$ with the above form, and reach
	\begin{equation*}
		\mathsf{b}_{21}(t) + \mathsf{b}_{212}(t)x_2 + \mathsf{b}_{213}(t)x_3 + \mathsf{b}_{24}(t)x_2x_3 + \mathsf{b}_{12}(t) + \mathsf{b}_{112}(t)x_1 + \mathsf{b}_{123}(t)x_3 + \mathsf{b}_{14}(t)x_1x_3 = 0.
	\end{equation*}
	Symmetrically, there are two other similar equations. And leads to
	\begin{equation*}
	\begin{gathered}
		\mathsf{b}_{14}(t) = \mathsf{b}_{24}(t) = \mathsf{b}_{34}(t) = 0, \:\:\: \mathsf{b}_{112}(t) = \mathsf{b}_{212}(t) = \mathsf{b}_{113}(t) = \mathsf{b}_{313}(t) = \mathsf{b}_{223}(t) = \mathsf{b}_{323}(t) = 0;\\
		\mathsf{b}_{123}(t) + \mathsf{b}_{213}(t) = \mathsf{b}_{123}(t) + \mathsf{b}_{312}(t) = \mathsf{b}_{213}(t) + \mathsf{b}_{312}(t) = 0 \:\:\: \Rightarrow \:\:\: \mathsf{b}_{123}(t) = \mathsf{b}_{213}(t) = \mathsf{b}_{312}(t) = 0.
	\end{gathered}
	\end{equation*}
	Thus \eqref{lnMbform} becomes
	\begin{equation}\label{lnMbform+}
		\mathsf{b}_i(t, x) = \mathsf{b}_{i0}(t) + \sum_{j = 1}^3\mathsf{b}_{ij}(t)x_j, \:\:\: 1 \le i \le 3.
	\end{equation}
	With the form in \eqref{lnMcform} and \eqref{lnMbform+}, \eqref{lnMptc} and \eqref{lnMpxb} turns to
	\begin{equation}\label{lnMptc+}
	\begin{aligned}
		\partial_t\mathsf{c}(t) + \mathsf{b}_{ii}(t) &= 0, \:\:\: 1 \le i \le 3;\\
		\mathsf{b}_{ij}(t) + \mathsf{b}_{ji}(t) &= 0, \:\:\: 1 \le i \neq j \le 3.\\
	\end{aligned}
	\end{equation}
	Now we see \eqref{lnMptb} and write it as (where \eqref{lnMcform} is used)
	\begin{equation}\label{lnMptb+}
		\partial_t\mathsf{b}_i + \partial_i(\mathsf{a} - 2\Phi\mathsf{c}) = 0, \:\:\: 1 \le i \le 3.
	\end{equation}
	Take $\partial_i$ to \eqref{lnMptb+} and use \eqref{lnMbform+}, \eqref{lnMptc+}, we get
	\begin{equation*}
		0 = \partial_t\mathsf{b}_{ii}(t) + \partial_i^2(\mathsf{a} - 2\Phi\mathsf{c}) = -\partial_t^2\mathsf{c}(t) + \partial_i^2(\mathsf{a} - 2\Phi\mathsf{c}), \:\:\: 1 \le i \le 3,
	\end{equation*}
	which means that
	\begin{equation}\label{lnMeqa1}
		\partial_1^2(\mathsf{a} - 2\Phi\mathsf{c}) = \partial_2^2(\mathsf{a} - 2\Phi\mathsf{c}) = \partial_3^2(\mathsf{a} - 2\Phi\mathsf{c}) = \partial_t^2\mathsf{c}(t),
	\end{equation}
	For $j \neq i$, take $\partial_j$ to \eqref{lnMptb+} and use \eqref{lnMbform+}, we get
	\begin{equation*}
		\partial_t\mathsf{b}_{ij}(t) + \partial_i\partial_j(\mathsf{a} - 2\Phi\mathsf{c}) = 0, \:\:\: 1 \le i \neq j \le 3.
	\end{equation*}
	Symmetrically, we have
	\begin{equation*}
		\partial_t\mathsf{b}_{ji}(t) + \partial_i\partial_j(\mathsf{a} - 2\Phi\mathsf{c}) = 0, \:\:\: 1 \le i \neq j \le 3.
	\end{equation*}
	By \eqref{lnMptc+}, the preceding two equations imply
	\begin{equation}\label{lnMeqa2}
		\partial_i\partial_j(\mathsf{a} - 2\Phi\mathsf{c}) = 0, \:\:\: 1 \le i \neq j \le 3.
	\end{equation}
	Putting \eqref{lnMeqa1} and \eqref{lnMeqa2} together, we have the form
	\begin{equation}\label{lnMaform}
		\mathsf{a}(t, x) - 2\Phi(x)\mathsf{c}(t) = \mathsf{a}_0(t) + \sum_{j = 1}^3\mathsf{a}_j(t)x_j + \mathsf{a}_4(t)|x|^2.
	\end{equation}
	Substituting \eqref{lnMbform+} and \eqref{lnMaform} into \eqref{lnMptb+} gives
	\begin{equation*}
		\partial_t\mathsf{b}_{i0}(t) + \sum_{j = 1}^3\partial_t\mathsf{b}_{ij}(t)x_j + \mathsf{a}_i(t) + 2\mathsf{a}_4(t)x_i = 0, \:\:\: 1 \le i \le 3,
	\end{equation*}
	which are polynomials about $x$ that are equal to $0$. Then
	\begin{equation}\label{lnMptb++}
	\begin{aligned}
		\partial_t\mathsf{b}_{i0}(t) + \mathsf{a}_i(t) &= 0, \:\:\: 1 \le i \le 3;\\
		\partial_t\mathsf{b}_{ii}(t) + 2\mathsf{a}_4(t) &= 0, \:\:\: 1 \le i \le 3;\\
		\partial_t\mathsf{b}_{ij}(t) &= 0, \:\:\: 1 \le i \neq j \le 3.\\
	\end{aligned}
	\end{equation}
	Finally, substituting \eqref{lnMbform+} and \eqref{lnMaform} into \eqref{lnMpta} gives
	\begin{equation*}
	\begin{aligned}
		0 &= \partial_t\mathsf{a}_0(t) + \sum_{j = 1}^3\partial_t\mathsf{a}_j(t)x_j + \partial_t\mathsf{a}_4(t)|x|^2 + 2\Phi(x)\partial_t\mathsf{c}(t) - \nabla_x\Phi \cdot \mathsf{b}(t, x) \\
		&= \partial_t\mathsf{a}_0(t) + \sum_{j = 1}^3\partial_t\mathsf{a}_j(t)x_j + \partial_t\mathsf{a}_4(t)|x|^2 + 2\Phi(x)\partial_t\mathsf{c}(t) - \sum_{i = 1}^3\partial_i\Phi(x)(\mathsf{b}_{i0}(t) + \sum_{j = 1}^3\mathsf{b}_{ij}(t)x_j) \\
		&= \partial_t\mathsf{a}_0(t) + \sum_{j = 1}^3\partial_t\mathsf{a}_j(t)x_j + \partial_t\mathsf{a}_4(t)|x|^2 + 2\Phi(x)\partial_t\mathsf{c}(t) - \sum_{i = 1}^3\partial_i\Phi(x)\mathsf{b}_{i0}(t).
	\end{aligned}
	\end{equation*}
	In the last equality we used \eqref{lnMptc+} and the radial identity $\partial_i\Phi(x)x_j=\partial_j\Phi(x)x_i$. Since $p\ne2$, comparison of the independent spatial terms gives
	\begin{equation}\label{lnMpta+}
	\begin{gathered}
		\partial_t\mathsf{a}_0(t) = \partial_t\mathsf{a}_4(t) = \partial_t\mathsf{c}(t) = 0;\\
		\partial_t\mathsf{a}_i(t) = 0, \:\:\: \mathsf{b}_{i0}(t) = 0, \:\:\: 1 \le i \le 3.\\
	\end{gathered}
	\end{equation}
	Combining \eqref{lnMptc+}, \eqref{lnMptb++}, and \eqref{lnMpta+}, we obtain
	\begin{equation*}
		\mathsf{c}(t, x) \equiv \mathsf{c}, \:\:\: \mathsf{b}_i(t, x) = \sum_{j \neq i}\mathsf{b}_{ij}x_j, \:\:\: \mathsf{a}(t, x) = \mathsf{a}_0 + 2\Phi(x)\mathsf{c},
	\end{equation*}
	where $\mathsf{a}_0, \mathsf{b}_{ij}, \mathsf{c} \in \mathbb{R}$ are constants and $\mathsf{b}_{ij} + \mathsf{b}_{ji} = 0, i \neq j$. Recall \eqref{lnMform}, we have
	\begin{equation*}
		\ln M(t, x, v) = \mathsf{a}(t, x) + v \cdot \mathsf{b}(t, x) + |v|^2\mathsf{c}(t, x) = \mathsf{a}_0 + 2\mathsf{c}(\Phi(x) + \frac{1}{2}|v|^2) + \sum_{1 \le i \neq j \le 3}\mathsf{b}_{ij}v_ix_j.
	\end{equation*}
	Introduce the $3\times3$ skew-symmetric matrix
	\begin{equation*}
		R = \begin{pmatrix} 0 & \mathsf{b}_{12} & \mathsf{b}_{13} \\ \mathsf{b}_{21} & 0 & \mathsf{b}_{23} \\ \mathsf{b}_{31} & \mathsf{b}_{32} & 0 \end{pmatrix},
	\end{equation*}
	then $\ln M(t, x, v)$ has the form
	\begin{equation*}
		\ln M(t, x, v) = \mathsf{a}_0 + 2\mathsf{c}(\Phi(x) + \frac{1}{2}|v|^2) + Rx \cdot v.
	\end{equation*}
	This is exactly \eqref{characterM} and completes the proof.
	\end{proof}

\subsection{Classification for subquadratic and superquadratic potentials}
We now prove Theorem \ref{EISform}.
\begin{proof}[Proof of Theorem \ref{EISform}]
Proposition \ref{propcha} gives
\[
M(t,x,v)=m\exp\left\{-\alpha\left(\Phi(x)+\frac12|v|^2\right)+Rx\cdot v\right\},
\qquad m>0,\quad \alpha\in\mathbb R,\quad R\in\mathfrak{so}(3).
\]
Integrability in $v$ first forces $\alpha>0$. Completing the square then yields
\begin{equation}\label{inteR=0}
-\alpha\left(\Phi(x)+\frac12|v|^2\right)+Rx\cdot v
=-\frac{\alpha}{2}\left|v-\frac{Rx}{\alpha}\right|^2
-\alpha\Phi(x)+\frac{|Rx|^2}{2\alpha}.
\end{equation}
If $p>2$, the negative term $-\alpha|x|^p/p$ dominates the quadratic correction $|Rx|^2/(2\alpha)$, so every $R\in\mathfrak{so}(3)$ is admissible. If $1<p<2$ and $R\ne0$, the positive quadratic correction dominates the subquadratic confinement along a direction on which $Rx\ne0$; hence the spatial integral diverges. Thus $R=0$ in the subquadratic case. The converse assertions follow directly from \eqref{equationforeisol} and the same integrability calculation.
\end{proof}
%%%%%%%%%%%%%%%%%%%%%%%%%%%%%%%%%%%%%%%%%%%%%%%%%%

\subsection{Moment parametrization for $1<p<2$}
We first prove Theorem \ref{onetoone} in the subquadratic case.
\begin{proof}[Proof of Theorem \ref{onetoone} for $p\in(1,2)$]
	In this case $\Phi(x) = \frac{1}{p}\langle x \rangle^p$. For initial data $F_0(x, v) \ge 0$ satisfying
	\begin{equation*}
		0 < \int_{\R^6}(\Phi(x) + |v|^2 + 1)F_0( x, v)dxdv < +\infty,
	\end{equation*}
	we denote
	\begin{equation*}
		\int_{\R^6}(1, \Phi(x) + \frac{1}{2}|v|^2)F_0(x, v)dxdv =: (m_*, \alpha_*)\in(0,\infty)^2.
	\end{equation*}
	Since $p\Phi(x)>1$ almost everywhere, one has $p\alpha_*>m_*$. By Theorem \ref{EISform}, it remains to find $(m,\alpha)\in(0,\infty)^2$ such that
	\begin{equation*}
		\int_{\R^6}(1, \Phi(x) + \frac{1}{2}|v|^2)M_{[m, \alpha]}(x, v)dxdv =(m_*, \alpha_*),
	\end{equation*}
	where the equilibrium $M_{[m,\alpha]}(x,v)$ has the form
	\begin{equation*}
		M_{[m, \alpha]}(x, v) = me^{-\alpha(\Phi(x) + \frac{1}{2}|v|^2)}.
	\end{equation*}
	
Direct computation gives
	\begin{equation}\label{equation1}
	\begin{aligned}
		m_* &= m\int_{\R^6} e^{-\alpha(\Phi(x) + \frac{1}{2}|v|^2)}dxdv = (2\pi)^{3/2}m\alpha^{-\frac{3}{2}}\int_{\R^3} e^{-\frac{\alpha}{p}\langle x \rangle^p}dx;\\
		\alpha_* &= m\int_{\R^6}(\Phi(x) + \frac{1}{2}|v|^2)e^{-\alpha(\Phi(x) + \frac{1}{2}|v|^2)}dxdv \\
		&= (2\pi)^{3/2}m\alpha^{-\frac{3}{2}}\int_{\R^3}\frac{1}{p}\langle x \rangle^pe^{-\frac{\alpha}{p}\langle x \rangle^p}dx + \frac{3}{2}(2\pi)^{3/2}m\alpha^{-\frac{5}{2}}\int_{\R^3} e^{-\frac{\alpha}{p}\langle x \rangle^p}dx.
	\end{aligned}
	\end{equation}
Let
	\begin{equation*}
		I = I(\alpha) := \int_{\R^3} e^{-\frac{\alpha}{p}\langle x \rangle^p}dx; \:\:\: J = J(\alpha) := \int_{\R^3}\langle x \rangle^pe^{-\frac{\alpha}{p}\langle x \rangle^p}dx.
	\end{equation*}
Then \eqref{equation1} becomes 
	\begin{equation*}
		m_* = (2\pi)^{3/2}\alpha^{-\frac{3}{2}}mI; \:\:\: \alpha_* = (2\pi)^{3/2}\alpha^{-\frac{3}{2}}m(\frac{1}{p}J + \frac{3}{2\alpha}I).
	\end{equation*}
	which implies that
	\beno
	\f{\al_*}{m_*}=\frac{J}{pI} + \frac{3}{2\alpha}.
	\eeno
Observing  that $I, J$ are actually determined by $\alpha$ and $p\alpha_* > m_*$, we only need to show that
	\begin{equation*}
		\mathcal{K}(\alpha) := \frac{J}{pI} + \frac{3}{2\alpha}
	\end{equation*}
	is a bijection from $(0, \infty)$ to $(\frac{1}{p}, \infty)$. 
	
	Firstly, we prove that $\mathcal{K}(\alpha)$ is a monotonically decreasing function. To do this, we consider function $J(\alpha)/I(\alpha)$ and compute
	\begin{equation*}
		\Big(\frac{J(\alpha)}{I(\alpha)}\Big)' = \Big(\frac{\int_{\R^3}\langle x \rangle^pe^{-\frac{\alpha}{p}\langle x \rangle^p}dx}{\int_{\R^3} e^{-\frac{\alpha}{p}\langle x \rangle^p}dx}\Big)' = \frac{-\frac{1}{p}\int_{\R^3}\langle x \rangle^{2p}e^{-\frac{\alpha}{p}\langle x \rangle^p}dx\int_{\R^3} e^{-\frac{\alpha}{p}\langle x \rangle^p}dx + \frac{1}{p}(\int_{\R^3}\langle x \rangle^pe^{-\frac{\alpha}{p}\langle x \rangle^p}dx)^2}{(\int_{\R^3} e^{-\frac{\alpha}{p}\langle x \rangle^p}dx)^2}.
	\end{equation*}
	The Cauchy--Schwarz inequality gives $(J(\alpha)/I(\alpha))'<0$, and hence $\mathcal K'(\alpha)<0$.
	
	Together with the fact $\lim\limits_{\alpha \to 0+}\mathcal{K}(\alpha) = +\infty$, we only need to show $\lim\limits_{\alpha \to +\infty}\mathcal{K}(\alpha) = \frac{1}{p}$, which is equivalent to
	\begin{equation}\label{inf1}
		\lim_{\alpha \to \infty}\frac{J(\alpha)}{I(\alpha)} = \lim_{\alpha \to \infty}\frac{\int_{\R^3}\langle x \rangle^pe^{-\frac{\alpha}{p}\langle x \rangle^p}dx}{\int_{\R^3} e^{-\frac{\alpha}{p}\langle x \rangle^p}dx} = \lim_{\alpha \to \infty}\frac{\int_{\R^3}\langle x \rangle^pe^{-\alpha\langle x \rangle^p}dx}{\int_{\R^3} e^{-\alpha\langle x \rangle^p}dx} = 1.
	\end{equation}
	Indeed, for any $\varepsilon > 0$, we have
	\begin{equation*}
	\begin{aligned}
		\int_{|x| \le 4\varepsilon}\langle x \rangle^pe^{-\alpha\langle x \rangle^p}dx &\ge \int_{|x| \le \varepsilon}\langle x \rangle^pe^{-\alpha\langle x \rangle^p}dx \ge \int_{|x| \le \varepsilon}e^{-\alpha\langle \varepsilon \rangle^p}dx = \frac{4}{3}\pi\varepsilon^3e^{-\alpha\langle \varepsilon \rangle^p}\quad \mbox{and}\\
		\int_{|x| > 4\varepsilon}\langle x \rangle^pe^{-\alpha\langle x \rangle^p}dx &= \int_{|x| > 4\varepsilon}\langle x \rangle^pe^{-\alpha(\langle x \rangle^p - \langle \frac{x}{2} \rangle^p)}e^{-\alpha\langle \frac{x}{2} \rangle^p}dx \le e^{-\alpha\langle 2\varepsilon \rangle^p}\int_{\R^3}\langle x \rangle^pe^{-\alpha(\langle x \rangle^p - \langle \frac{x}{2} \rangle^p)}dx.
	\end{aligned}
	\end{equation*}
	Note that
	\begin{equation*}
		\int_{\R^3}\langle x \rangle^pe^{-\alpha(\langle x \rangle^p - \langle \frac{x}{2} \rangle^p)}dx < +\infty.
	\end{equation*}
As $\alpha \to \infty$, it holds that
	\begin{equation*}
		\int_{|x| > 4\varepsilon}\langle x \rangle^pe^{-\alpha\langle x \rangle^p}dx = o\Big(\int_{|x| \le 4\varepsilon}\langle x \rangle^pe^{-\alpha\langle x \rangle^p}dx\Big).
	\end{equation*}
	Thus, 
	\begin{equation*}
		\lim_{\alpha \to \infty}\frac{\int_{\R^3}\langle x \rangle^pe^{-\alpha\langle x \rangle^p}dx}{\int_{\R^3} e^{-\alpha\langle x \rangle^p}dx} = \lim_{\alpha \to \infty}\frac{(1+o(1))\int_{|x| \le 4\varepsilon}\langle x \rangle^pe^{-\alpha\langle x \rangle^p}dx}{\int_{\R^3} e^{-\alpha\langle x \rangle^p}dx} \le \langle 4\varepsilon \rangle^p.
	\end{equation*}
	Since $\varepsilon>0$ is arbitrary, \eqref{inf1} follows.
	Then we complete the proof of Theorem \ref{onetoone} for case  $p \in (1, 2)$.
	\end{proof}

%%%%%%%%%%%%%%%%%%%%%%%%%%%%%%%%%%%%%%%%%%%%%%%%%%

\subsection{Moment parametrization for $p>2$}
For the remainder of this section, we prove Theorem \ref{onetoone} in the case $p\in(2,\infty)$.
Recall that in this case $\Phi(x)=|x|^p/p$. For nonnegative initial data $F_0$ satisfying
	\begin{equation*}
		0 < \int_{\R^6}(\Phi(x) + |v|^2 + 1)F_0( x, v)dxdv < +\infty,
	\end{equation*}
	set
	\begin{equation}\label{malstar}
		\Psi(F_0) = \int_{\R^6}(1, \Phi(x) + \frac{1}{2}|v|^2, x \wedge v)F_0(x, v)dxdv =: (m_*, \alpha_*, R_*)\in(0,\infty)^2\times\R^{3\wedge 3}.
	\end{equation}
	The eigenvalues of $R_*\in\R^{3\wedge 3}$ are $\{0,\pm ir_*\}$ for some $r_*\ge0$.
	
	%%%%%%%%%%%%%%%%%%%%%%%%%%%%%%%%%%%%%%%%%%%%%%%%%%
	
	By Theorem \ref{EISform}, it suffices to identify $(m,\alpha,R)\in(0,\infty)^2\times\R^{3\wedge 3}$ satisfying
	\begin{equation}\label{002}
		\Psi(M_{[m, \alpha, R]}) = (m_*, \alpha_*, R_*),
	\end{equation}
	where the entropy-invariant solution $M_{[m, \alpha, R]}(x, v)$ has the form
	\begin{equation}\label{003}
		M_{[m, \alpha, R]}(x, v) = m\exp\{-\alpha(\Phi(x) + \frac{1}{2}|v|^2)+ Rx \cdot v\}.
	\end{equation}

	%%%%%%%%%%%%%%%%%%%%%%%%%%%%%%%%%%%%%%%%%%%%%%%%%%
	\begin{lemma}[Range of the conserved-moment map]
		Let $(m_*,\al_*,R_*)$ and $r_*$ be defined by \eqref{malstar}. Then
		\begin{equation}\label{r*Bound}
			0 \le r_* < \Big(\frac{2p}{p + 2}\Big)^{\frac{1}{2} + \frac{1}{p}}m_*^{\frac{1}{2} - \frac{1}{p}}\alpha_*^{\frac{1}{2} + \frac{1}{p}}.
		\end{equation}
	\end{lemma}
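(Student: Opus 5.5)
The plan is to bound $r_*$ by the moment $\int|x||v|F_0$ and then control that moment by mass and energy, using Cauchy--Schwarz, Hölder and a one-variable optimization. First I would relate $r_*$ to the angular-momentum vector. A real skew-symmetric $3\times3$ matrix $R_*$ corresponds to a vector $\omega_*\in\R^3$ with $R_*y=\pm\,\omega_*\times y$, and its eigenvalues are $\{0,\pm i|\omega_*|\}$. Hence $r_*=|\omega_*|$, where $\omega_*=\int_{\R^6}(x\times v)F_0\,dx\,dv$ up to sign. The triangle inequality gives
\[
r_*\le\int_{\R^6}|x\times v|F_0\,dx\,dv\le\int_{\R^6}|x||v|F_0\,dx\,dv .
\]

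Next I would set $X:=\int\Phi F_0=\frac1p\int|x|^pF_0$ and $V:=\int|v|^2F_0$, so that $X+\frac12V=\alpha_*$. Cauchy--Schwarz gives
\[
\int|x||v|F_0\le\Big(\int|x|^2F_0\Big)^{1/2}V^{1/2}.
\]
Since $p>2$, Hölder's inequality with respect to the finite measure $F_0\,dx\,dv$ gives
\[
\int|x|^2F_0\le m_*^{1-2/p}(pX)^{2/p}.
\]
Combining, $r_*\le m_*^{\frac12-\frac1p}\,(pX)^{1/p}\,V^{1/2}$.

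It remains to maximize $g(X)=(pX)^{1/p}(2(\alpha_*-X))^{1/2}$ over $X\in[0,\alpha_*]$. Differentiating $\log g$ gives the critical point $X=\frac{2\alpha_*}{p+2}$, at which $pX=2(\alpha_*-X)=\frac{2p\alpha_*}{p+2}$. So
\[
\max g=\Big(\frac{2p}{p+2}\Big)^{\frac12+\frac1p}\alpha_*^{\frac12+\frac1p},
\]
which is exactly the right-hand side of \eqref{r*Bound}. This proves the non-strict inequality.

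The main obstacle is strictness. Equality would force equality in each step. Hölder equality forces $|x|$ to be constant $F_0$-almost everywhere. Cauchy--Schwarz equality forces $|v|=c|x|$ on the support of $F_0$, and equality in $|x\times v|\le|x||v|$ forces $x\perp v$ there. Since $F_0$ is a nonnegative integrable function, not a singular measure, the condition that $|x|$ is constant on its support means $F_0$ is supported on a Lebesgue-null set such as $\{|x|=r_0\}\times\R^3$. Then $F_0=0$ a.e., contradicting $\int(\Phi+|v|^2+1)F_0>0$. Hence at least one inequality is strict and \eqref{r*Bound} holds with $<$. The case $r_*=0$ is trivially covered, since the right-hand side is positive.
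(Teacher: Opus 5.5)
Your proof is correct, and it reaches the same sharp constant by a different route than the paper.

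\textbf{How the two routes differ.} The paper works pointwise. It applies Young's inequality with exponents $\frac{p+2}{2}$ and $\frac{p+2}{p}$ to obtain \eqref{YoungsIneq}, namely $|x||v|\le(\frac{2p}{p+2})^{\frac12+\frac1p}(\Phi+\frac12|v|^2)^{\frac12+\frac1p}$. It then uses a single H\"older step in the measure $F_0\,dx\,dv$ with exponents $\frac{2p}{p+2}$ and $\frac{2p}{p-2}$, so the optimal energy split is built into the choice of Young exponents. You instead separate $x$ and $v$ by Cauchy--Schwarz and apply H\"older only to the spatial moment. You then optimize explicitly over the partition $X=\int\Phi F_0$, $\frac12 V=\alpha_*-X$. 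This makes visible that the extremal configuration has $\int\Phi F_0=\frac{2}{p+2}\alpha_*$.

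\textbf{What each route buys.} The paper's pointwise bound \eqref{YoungsIneq} is reused in the proof of Lemma \ref{010}, with $v=r(x_1,x_2,0)$, to show $\mathcal J(r)<1$; yours does not produce that reusable inequality. On the other hand, yours comes with an actual strictness argument, whereas the paper only asserts that one ``may easily check'' that equality fails. Your identification $r_*=|\omega_*|$ via the skew-matrix/vector correspondence is also cleaner than the paper's displayed identity $r_*^2=\frac12\bigl(\int|x\wedge v|F_0\bigr)^2$, which should be an inequality.

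\textbf{A small tightening of the strictness step.} Equality in the H\"older line of your chain forces equality in H\"older's inequality itself only when $V>0$. This holds automatically. The right-hand side of \eqref{r*Bound} is strictly positive, so equality throughout forces $X=\frac{2\alpha_*}{p+2}>0$ and hence $V>0$. The H\"older equality case alone then gives $|x|$ constant $F_0$-a.e., so $F_0=0$ a.e. The Cauchy--Schwarz and $x\perp v$ equality conditions are not needed.
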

	\begin{proof}
		Firstly, by Young's inequality, we have
		\begin{equation*}
			\begin{aligned}
				|x|^{\frac{2p}{p + 2}}|v|^{\frac{2p}{p + 2}} &\le \frac{2}{p + 2}(|x|^{\frac{2p}{p + 2}})^{\frac{p + 2}{2}} + \frac{p}{p + 2}(|v|^{\frac{2p}{p + 2}})^{\frac{p + 2}{p}} \\
				&= \frac{2}{p + 2}|x|^p + \frac{p}{p + 2}|v|^2 = \frac{2p}{p + 2}\Big(\frac{1}{p}|x|^p + \frac{1}{2}|v|^2\Big),
			\end{aligned}
		\end{equation*}
		it leads to
		\begin{equation}\label{YoungsIneq}
			|x||v| \le \Big(\frac{2p}{p + 2}\Big)^{\frac{1}{2} + \frac{1}{p}}\Big(\frac{1}{p}|x|^p + \frac{1}{2}|v|^2\Big)^{\frac{1}{2} + \frac{1}{p}}.
		\end{equation}
		Recall that the eigenvalues of $R_*$ are $\{0, \pm ir_*\}$ with $r_* \ge 0$, then
		\begin{equation*}
			r_*^2 = \frac{1}{2}|R_*|^2 = \frac{1}{2}\Big(\int_{\R^6}|x \wedge v|F_0(x, v)dxdv\Big)^2.
		\end{equation*}
		Here we denote $|A|^2 := \sum\limits_{i, j}a_{ij}^2$ for $A = (a_{ij})\in \R^{3\times 3}$. Note that
		\begin{equation*}
			\begin{aligned}
				|x \wedge v|^2 &= 2[(x_1v_2 - x_2v_1)^2 + (x_1v_3 - x_3v_1)^2 + (x_2v_3 - x_3v_2)^2] \\
				&= 2(|x|^2|v|^2 - (x \cdot v)^2) \le 2|x|^2|v|^2,
			\end{aligned}
		\end{equation*}
		therefore by \eqref{YoungsIneq} and H\"{o}lder inequality, we have
		\begin{equation*}
			\begin{aligned}
				0 \le r_* &\le \int_{\R^6}|x||v|F_0(x, v)dxdv \le \Big(\frac{2p}{p + 2}\Big)^{\frac{1}{2} + \frac{1}{p}}\int_{\R^6}(\Phi(x) + \frac{1}{2}|v|^2)^{\frac{1}{2} + \frac{1}{p}}F_0(x, v)dxdv \\
				&= \Big(\frac{2p}{p + 2}\Big)^{\frac{1}{2} + \frac{1}{p}}\int_{\R^6}[(\Phi(x) + \frac{1}{2}|v|^2)F_0(x, v)]^{\frac{1}{2} + \frac{1}{p}}F_0(x, v)^{\frac{1}{2} - \frac{1}{p}}dxdv \\
				&\le \Big(\frac{2p}{p + 2}\Big)^{\frac{1}{2} + \frac{1}{p}}\Big(\int_{\R^6}(\Phi(x) + \frac{1}{2}|v|^2)F_0(x, v)dxdv\Big)^{\frac{p + 2}{2p}}\Big(\int F_0(x, v)dxdv\Big)^{\frac{p - 2}{2p}} \\
				&= \Big(\frac{2p}{p + 2}\Big)^{\frac{1}{2} + \frac{1}{p}}m_*^{\frac{1}{2} - \frac{1}{p}}\alpha_*^{\frac{1}{2} + \frac{1}{p}}.
			\end{aligned}
		\end{equation*}	
		One may esaily check that the equal sign in 
		\eqref{r*Bound} does not hold. Thus we conclude the desired result.
	\end{proof}
	
	\begin{lemma}[Rotational normal form]\label{sim001}
	Let $(m,\al,R)$ and $(m_*,\al_*,R_*)$ be defined as above such that \eqref{002} and \eqref{003} hold. Moreover, define
		\begin{equation}\label{004}
		(\tilde{m}, \tilde{R}):=((2\pi)^{-3/2}\alpha^{\frac{3}{2} + \frac{3}{p}}m, \alpha^{\frac{1}{2} + \frac{1}{p}}R),
	\end{equation}
	and suppose the orthogonal matrix $U$ satisfies
		\begin{equation}\label{005}
		U^\tau \tilde{R}U = \begin{pmatrix} 0 & r & 0 \\ -r & 0 & 0 \\ 0 & 0 & 0 \end{pmatrix} =: \mathsf{R}, \: \tilde{R} = U\mathsf{R}U^\tau \: \text{with}~ \: r \ge 0.
	\end{equation}
Then \eqref{002} and \eqref{003} are equivalent to
	\begin{equation}\label{PsiMeq+}
	\begin{aligned}
		m_* &= \tilde{m}\int_{\R^3} e^{-\frac{1}{p}|x|^p + \frac{1}{2}|\mathsf{R}x|^2}dx = \tilde{m}\int_{\R^3} e^{-\frac{1}{p}|x|^p + \frac{r^2}{2}(x_1^2 + x_2^2)}dx;\\
		\alpha_* %&= \tilde{m}\alpha^{-1}\int(\frac{1}{p}|x|^p + \frac{1}{2}|\mathsf{R}x|^2 + \frac{3}{2})e^{-\frac{1}{p}|x|^p + \frac{1}{2}|\tilde{R}x|^2}dx \\
		&= \tilde{m}\alpha^{-1}\int_{\R^3}\Big(\frac{1}{p}|x|^p + \frac{r^2}{2}(x_1^2 + x_2^2) + \frac{3}{2}\Big)e^{-\frac{1}{p}|x|^p + \frac{r^2}{2}(x_1^2 + x_2^2)}dx;\\
			r_* &= \tilde{m}\alpha^{-\frac{1}{2} - \frac{1}{p}}\int_{\R^3} r(x_1^2 + x_2^2)e^{-\frac{1}{p}|x|^p + \frac{r^2}{2}(x_1^2 + x_2^2)}dx.
	\end{aligned}
\end{equation}
	\end{lemma}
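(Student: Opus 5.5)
The plan is to compute the three conserved moments of $M_{[m,\alpha,R]}$ directly, removing the parameters by a sequence of explicit changes of variables. The order is: integrate out $v$ by completing the square, rescale $x$ to absorb $\alpha$, rotate by $U$ to bring $\tilde R$ to the normal form $\mathsf R$, and finally use the orthogonal equivariance of the angular-momentum moment.

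\textbf{Step 1: velocity integration.} By \eqref{inteR=0}, we have
\[
M_{[m,\alpha,R]}=m\exp\Big\{-\frac{\alpha}{2}\Big|v-\frac{Rx}{\alpha}\Big|^2-\alpha\Phi(x)+\frac{|Rx|^2}{2\alpha}\Big\}.
\]
I shift $w=v-Rx/\alpha$. The Gaussian moments in $w$ are
\[
\int e^{-\alpha|w|^2/2}\,dw=(2\pi/\alpha)^{3/2},\qquad \int |w|^2e^{-\alpha|w|^2/2}\,dw=3\alpha^{-1}(2\pi/\alpha)^{3/2}.
\]
Since $|v|^2=|w|^2+2w\cdot Rx/\alpha+|Rx|^2/\alpha^2$ and the cross term integrates to zero, we get $\tfrac12\int|v|^2(\cdot)\,dw=(2\pi/\alpha)^{3/2}\big(\tfrac{3}{2\alpha}+\tfrac{|Rx|^2}{2\alpha^2}\big)$. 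For angular momentum, $x\wedge v=x\wedge w+x\wedge(Rx/\alpha)$, and the $x\wedge w$ part vanishes. This reduces each moment to a single $x$-integral with weight $e^{-\alpha|x|^p/p+|Rx|^2/(2\alpha)}$.

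\textbf{Step 2: scaling.} I substitute $x=\alpha^{-1/p}y$, so that $\alpha\Phi(x)=|y|^p/p$ and $dx=\alpha^{-3/p}dy$. Then
\[
\frac{|Rx|^2}{2\alpha}=\frac{\alpha^{-2/p-1}|Ry|^2}{2}=\frac12|\tilde Ry|^2,\qquad \tilde R=\alpha^{\frac12+\frac1p}R.
\]
Collecting the prefactors gives $m(2\pi)^{3/2}\alpha^{-3/2-3/p}$. This is where the sign of the exponent in \eqref{004} must be checked: as written, $\tilde m=(2\pi)^{-3/2}\alpha^{3/2+3/p}m$, which suggests the authors' normalization differs by the placement of $(2\pi)^{\pm 3/2}$ and the power of $\alpha$. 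I would redo this bookkeeping carefully and adopt whatever constant makes the $m_*$ identity hold.

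For the energy, $\Phi(x)=\alpha^{-1}|y|^p/p$ and $|Rx|^2/(2\alpha^2)=\alpha^{-1}|\tilde Ry|^2/2$, so one overall factor $\alpha^{-1}$ appears. For angular momentum, $x\wedge Rx/\alpha$ scales as $\alpha^{-2/p-1}=\alpha^{-1/2-1/p}\cdot\alpha^{-1/2-1/p}$. One of these factors combines with $R$ to form $\tilde R$, leaving the prefactor $\alpha^{-1/2-1/p}$.

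\textbf{Step 3: rotation.} I substitute $y=Uz$. Since $|y|$ is invariant and $|\tilde R Uz|=|U\mathsf Rz|=|\mathsf Rz|$, with $|\mathsf Rz|^2=r^2(z_1^2+z_2^2)$, the $m_*$ and $\alpha_*$ formulas in \eqref{PsiMeq+} follow.

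\textbf{Step 4: angular momentum (main obstacle).} We have $\int y\wedge\tilde Ry\,(\cdots)\,dy=U\big[\int z\wedge\mathsf Rz\,(\cdots)\,dz\big]U^\tau$. By the oddness of the weight in each coordinate, only the $(1,2)$ block survives, and it equals $r(z_1^2+z_2^2)$ up to sign. Thus $R_*=U\,\mathrm{diag\text{-}block}(r_*)\,U^\tau$, and taking the spectral invariant gives $r_*$ as stated. The delicate point is the sign and orientation convention: the $(1,2)$ entry of $z\wedge\mathsf Rz$ must come out positive, so that $R_*$ and $\tilde R$ share the same $U$. Here I would fix the convention $a\wedge b=ab^\tau-ba^\tau$ and verify it directly.

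Equivalence holds because every step is a bijective reparametrization. In particular $R_*$ is determined by $U$ and $r_*$, and conversely, given $(m_*,\alpha_*,R_*)$, one recovers $U$ from $R_*$.
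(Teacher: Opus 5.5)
Your route is the paper's own: complete the square and integrate out $v$, rescale $x=\alpha^{-1/p}y$, rotate by $U$, and identify $r_*$ through the spectrum $\{0,\pm i r_*\}$ of $R_*$. The argument is correct, up to two bookkeeping corrections.

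First, the normalization issue you spotted in $\tilde m$ affects $\tilde R$ as well. Since $|Rx|^2/(2\alpha)=\tfrac12|\alpha^{-\frac12-\frac1p}Ry|^2$, the constants that make \eqref{PsiMeq+} hold are $\tilde m=(2\pi)^{3/2}\alpha^{-\frac32-\frac3p}m$ and $\tilde R=\alpha^{-\frac12-\frac1p}R$. The formulas in \eqref{004} are the inverse relations; compare $R_1$ in Section 3. Your energy and angular-momentum prefactors already use these correct constants. Your displayed identity with $\tilde R=\alpha^{\frac12+\frac1p}R$ is therefore inconsistent with the rest of your computation and should be fixed.

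Second, the $(1,2)$ entry of $z\wedge\mathsf Rz$ is $-r(z_1^2+z_2^2)$, not positive. In fact $R_*=-\tilde m\,\alpha^{-\frac12-\frac1p}I_1\,\tilde R$, so $R_*$ is antiparallel to $\tilde R$. This is harmless. The same $U$ still brings $R_*$ to normal form, exactly as in \eqref{chooseU} with $-r_*$ in the $(1,2)$ slot. The spectral comparison only involves $r_*\ge0$, so you do not need that entry to be positive.
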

	\begin{proof}
	Observing that the exponent part in \eqref{003} can be written as
	\begin{equation*}
		-\alpha(\Phi(x) + \frac{1}{2}|v|^2)+ Rx \cdot v = -\frac{\alpha}{2}\Big|v - \frac{Rx}{\alpha}\Big|^2 - \frac{\alpha}{p}|x|^p + \frac{1}{2\alpha}|Rx|^2,
	\end{equation*}
	and we use the definition of $\Psi$ in \eqref{ConservedMap} to compute from \eqref{002} that
	\begin{equation*}
		\begin{aligned}
			m_* &= m\int_{\R^6} e^{-\alpha(\Phi(x) + \frac{1}{2}|v|^2)+ Rx \cdot v}dxdv
			= m\int_{\R^6} e^{-\frac{\alpha}{2}|v - \frac{Rx}{\alpha}|^2 - \frac{\alpha}{p}|x|^p + \frac{1}{2\alpha}|Rx|^2}dxdv \\
			&= m\int_{\R^6} e^{-\frac{\alpha}{2}|v|^2 - \frac{\alpha}{p}|x|^p + \frac{1}{2\alpha}|Rx|^2}dxdv
			= (2\pi)^{3/2}m\alpha^{-\frac{3}{2}}\int_{\R^3} e^{-\frac{\alpha}{p}|x|^p + \frac{1}{2\alpha}|Rx|^2}dx \\
			&= (2\pi)^{3/2}m\alpha^{-\frac{3}{2} - \frac{3}{p}}\int_{\R^3}e^{-\frac{1}{p}|x|^p + \frac{|Rx|^2}{2\alpha^{1 + 2/p}}}dx,
					\end{aligned}
		\end{equation*}
			\begin{equation*}
			\begin{aligned}
			\alpha_* &= m\int_{\R^6}\Big(\Phi(x) + \frac{1}{2}|v|^2\Big)e^{-\alpha(\Phi(x) + \frac{1}{2}|v|^2)+ Rx \cdot v}dxdv \\
			&= m\int_{\R^6}\Big(\frac{1}{p}|x|^p + \frac{1}{2}|v|^2\Big)e^{-\frac{\alpha}{2}|v - \frac{Rx}{\alpha}|^2 - \frac{\alpha}{p}|x|^p + \frac{1}{2\alpha}|Rx|^2}dxdv \\
			&= m\int_{\R^6}\Big(\frac{1}{p}|x|^p + \frac{1}{2}|v + \frac{Rx}{\alpha}|^2\Big)e^{-\frac{\alpha}{2}|v|^2 - \frac{\alpha}{p}|x|^p + \frac{1}{2\alpha}|Rx|^2}dxdv \\
			&= m\int_{\R^6}\Big(\frac{1}{p}|x|^p + \frac{1}{2\alpha^2}|Rx|^2 + \frac{1}{\alpha}Rx \cdot v + \frac{1}{2}|v|^2\Big)e^{-\frac{\alpha}{2}|v|^2 - \frac{\alpha}{p}|x|^p + \frac{1}{2\alpha}|Rx|^2}dxdv \\
			&= (2\pi)^{3/2}m\alpha^{-\frac{3}{2}}\int_{\R^3}\Big(\frac{1}{p}|x|^p + \frac{1}{2\alpha^2}|Rx|^2 + \frac{3}{2\alpha}\Big)e^{-\frac{\alpha}{p}|x|^p + \frac{1}{2\alpha}|Rx|^2}dx \\
			&= (2\pi)^{3/2}m\alpha^{-\frac{5}{2} - \frac{3}{p}}\int_{\R^3}\Big(\frac{1}{p}|x|^p + \frac{|Rx|^2}{2\alpha^{1 + 2/p}} + \frac{3}{2}\Big)e^{-\frac{1}{p}|x|^p + \frac{|Rx|^2}{2\alpha^{1 + 2/p}}}dx,
		\end{aligned}
	\end{equation*}
	and
	\begin{equation*}
		\begin{aligned}
			R_* &= m\int_{\R^6}(x \wedge v)e^{-\alpha(\Phi(x) + \frac{1}{2}|v|^2)+ Rx \cdot v}dxdv
			= m\int_{\R^6}(x \wedge v)e^{-\frac{\alpha}{2}|v - \frac{Rx}{\alpha}|^2 - \frac{\alpha}{p}|x|^p + \frac{1}{2\alpha}|Rx|^2}dxdv \\
			&= m\int_{\R^6}(x \wedge (v + \frac{Rx}{\alpha}))e^{-\frac{\alpha}{2}|v|^2 - \frac{\alpha}{p}|x|^p + \frac{1}{2\alpha}|Rx|^2}dxdv
			= (2\pi)^{3/2}m\alpha^{-\frac{5}{2}}\int_{\R^3}(x \wedge Rx)e^{-\frac{\alpha}{p}|x|^p + \frac{1}{2\alpha}|Rx|^2}dx \\
			&= (2\pi)^{3/2}m\alpha^{-\frac{5}{2} - \frac{5}{p}}\int_{\R^3}(x \wedge Rx)e^{-\frac{1}{p}|x|^p + \frac{|Rx|^2}{2\alpha^{1 + 2/p}}}dx.
		\end{aligned}
	\end{equation*}
Use notations \eqref{004}, the above three equations become 
	\begin{equation}\label{PsiMeq}
		\begin{aligned}
			m_* &= \tilde{m}\int_{\R^3} e^{-\frac{1}{p}|x|^p + \frac{1}{2}|\tilde{R}x|^2}dx;\\
			\alpha_* &= \tilde{m}\alpha^{-1}\int_{\R^3}\Big(\frac{1}{p}|x|^p + \frac{1}{2}|\tilde{R}x|^2 + \frac{3}{2}\Big)e^{-\frac{1}{p}|x|^p + \frac{1}{2}|\tilde{R}x|^2}dx;\\
			R_* &= \tilde{m}\alpha^{-\frac{1}{2} - \frac{1}{p}}\int_{\R^3}(x \wedge \tilde{R}x)e^{-\frac{1}{p}|x|^p + \frac{1}{2}|\tilde{R}x|^2}dx.
		\end{aligned}
	\end{equation}
	Furthermore, by \eqref{005} and change of variable $x \to Ux$, we can rewrite the third equation in \eqref{PsiMeq} as
	\begin{equation*}
		R_* = \tilde{m}\alpha^{-\frac{1}{2} - \frac{1}{p}}\int_{\R^3}(x \wedge U\mathsf{R}U^\tau x)e^{-\frac{1}{p}|x|^p + \frac{1}{2}|\tilde{R}x|^2}dx
		= \tilde{m}\alpha^{-\frac{1}{2} - \frac{1}{p}}\int_{\R^3}(Ux \wedge U\mathsf{R}x)e^{-\frac{1}{p}|x|^p + \frac{1}{2}|\tilde{R}Ux|^2}dx.
	\end{equation*}
	Note that
	\begin{equation*}
		Ux \wedge U\mathsf{R}x = Uxx^\tau \mathsf{R}^\tau U^\tau - U\mathsf{R}xx^\tau U^\tau = U(xx^\tau \mathsf{R}^\tau - \mathsf{R}xx^\tau)U^\tau = U(x \wedge \mathsf{R}x)U^\tau,
	\end{equation*}
	and
	\begin{equation}\label{xUxchange}
		|\tilde{R}Ux|^2 = x^\tau U^\tau \tilde{R}^\tau \tilde{R}Ux = x^\tau U^\tau U\mathsf{R}^\tau U^\tau U\mathsf{R}U^\tau Ux = x^\tau \mathsf{R}^\tau \mathsf{R}x = |\mathsf{R}x|^2.
	\end{equation}
	Therefore we have
	\begin{equation*}
		R_* = \tilde{m}\alpha^{-\frac{1}{2} - \frac{1}{p}}U\Big(\int_{\R^3}(x \wedge \mathsf{R}x)e^{-\frac{1}{p}|x|^p + \frac{1}{2}|\mathsf{R}x|^2}dx\Big)U^\tau,
	\end{equation*}
	which leads to
	\begin{equation}\label{001}
		\begin{aligned}
			U^\tau R_*U &= \tilde{m}\alpha^{-\frac{1}{2} - \frac{1}{p}}\int_{\R^3}(x \wedge \mathsf{R}x)e^{-\frac{1}{p}|x|^p + \frac{1}{2}|\mathsf{R}x|^2}dx \\
			&= \tilde{m}\alpha^{-\frac{1}{2} - \frac{1}{p}}\int_{\R^3}\begin{pmatrix} 0 & -r(x_1^2 + x_2^2) & 0 \\ r(x_1^2 + x_2^2) & 0 & 0 \\ 0 & 0 & 0 \end{pmatrix}e^{-\frac{1}{p}|x|^p + \frac{1}{2}|\mathsf{R}x|^2}dx.
		\end{aligned}
	\end{equation}
	Recall that all eigenvalues of $R_*$ are $\{0, \pm ir_*\}$ with $r_* \ge 0$. Since $U$ is an orthogonal matrix,  which implies that all eigenvalues of $U^\tau R_*U$ are also $\{0, \pm ir_*\}$, together with \eqref{001} imply that
	\begin{equation}\label{chooseU}
		U^\tau R_*U = \begin{pmatrix} 0 & -r_* & 0 \\ r_* & 0 & 0 \\ 0 & 0 & 0 \end{pmatrix}.
	\end{equation}
	Therefore, the third equation in \eqref{PsiMeq} can be simplified
	\begin{equation}\label{equaofr}
		r_* = \tilde{m}\alpha^{-\frac{1}{2} - \frac{1}{p}}\int_{\R^3} r(x_1^2 + x_2^2)e^{-\frac{1}{p}|x|^p + \frac{r^2}{2}(x_1^2 + x_2^2)}dx.
	\end{equation}
	
	We also use change of variable $x \to Ux$ for the first and second equation in \eqref{PsiMeq}, with the help of \eqref{xUxchange}, we obtain that
	\begin{equation}\label{PsiMe}
		\begin{aligned}
			m_* &= \tilde{m}\int e^{-\frac{1}{p}|x|^p + \frac{1}{2}|\mathsf{R}x|^2}dx = \tilde{m}\int e^{-\frac{1}{p}|x|^p + \frac{r^2}{2}(x_1^2 + x_2^2)}dx;\\
			\alpha_* &= \tilde{m}\alpha^{-1}\int(\frac{1}{p}|x|^p + \frac{1}{2}|\mathsf{R}x|^2 + \frac{3}{2})e^{-\frac{1}{p}|x|^p + \frac{1}{2}|\mathsf{R}x|^2}dx \\
			&= \tilde{m}\alpha^{-1}\int(\frac{1}{p}|x|^p + \frac{r^2}{2}(x_1^2 + x_2^2) + \frac{3}{2})e^{-\frac{1}{p}|x|^p + \frac{r^2}{2}(x_1^2 + x_2^2)}dx.
		\end{aligned}
	\end{equation}
	
Combining \eqref{equaofr} and \eqref{PsiMe} completes the proof.
		\end{proof}

	Lemma \ref{sim001} shows that $r_*=0$ if and only if $r=0$; hence we may assume $r_*,r>0$. In view of \eqref{r*Bound} and Lemma \ref{sim001}, it remains to prove that there is a unique $(\tilde m,\alpha,r)\in(0,\infty)^3$ satisfying \eqref{PsiMeq+} for every
	\begin{equation}\label{008}
		m_*, \alpha_*, r_* > 0, \:\:\: 0<r_* < (\frac{2p}{p + 2})^{\frac{1}{2} + \frac{1}{p}}m_*^{\frac{1}{2} - \frac{1}{p}}\alpha_*^{\frac{1}{2} + \frac{1}{p}}.
	\end{equation}
	
%	\begin{remark}
%		In fact, the matrix $U$ that satisfies \eqref{chooseU} may not be unique. However, we have
%		\begin{equation*}
%			S_* = U\begin{pmatrix} 0 & -r_* & 0 \\ r_* & 0 & 0 \\ 0 & 0 & 0 \end{pmatrix}U^\tau, \:\:\: S = U\begin{pmatrix} 0 & r & 0 \\ -r & 0 & 0 \\ 0 & 0 & 0 \end{pmatrix}U^\tau,
%		which implies that
%		\begin{equation*}
%			S = -\frac{r}{r_*}S_*.
%		\end{equation*}
%		This fact shows that if $r$ is unique, then $S$ is unique.
%	\end{remark}

To simplify the calculations, we introduce notation $\mathcal{I}(\cdot)$. For $f = f(x)$, we define
	\begin{equation}\label{006}
		\mathcal{I}(f) := \int_{\R^3} f(x)e^{-\frac{1}{p}|x|^p + \frac{r^2}{2}(x_1^2 + x_2^2)}dx.
	\end{equation}
	With the help of polar coordinate transformation
	\begin{equation*}
		x = (x_1, x_2, x_3) = (t\sin\theta\cos\varphi, t\sin\theta\sin\varphi, t\cos\theta),
	\end{equation*}
we can directly compute that
%	\begin{equation*}
%	\begin{aligned}
%		\mathcal{I}(1) &= \int e^{-\frac{1}{p}|x|^p + \frac{r^2}{2}(x_1^2 + x_2^2)}dx
%		= \int_0^\infty\!\!\!\int_0^{\pi}\!\!\!\int_0^{2\pi}e^{-\frac{1}{p}t^p + \frac{r^2}{2}t^2\sin^2\theta}t^2\sin\theta dtd\theta d\varphi \\
%		&= 4\pi\int_0^\infty\!\!\!\int_0^{\pi/2}t^2\sin\theta e^{-\frac{1}{p}t^p + \frac{r^2}{2}t^2\sin^2\theta}dtd\theta.
%	\end{aligned}
%	\end{equation*}
%By the same way, we can compute that
\ben\label{polartrans}
\notag&&\mathcal{I}(1) = 4\pi\int_0^\infty\!\!\!\int_0^{\pi/2}t^2\sin\theta e^{-\frac{1}{p}t^p + \frac{r^2}{2}t^2\sin^2\theta}dtd\theta;\\
&&\mathcal{I}(x_1^2 + x_2^2) = 4\pi\int_0^\infty\!\!\!\int_0^{\pi/2}t^4\sin^3\theta e^{-\frac{1}{p}t^p + \frac{r^2}{2}t^2\sin^2\theta}dtd\theta;\\
\notag&&\mathcal{I}((x_1^2 + x_2^2)^2) = 4\pi\int_0^\infty\!\!\!\int_0^{\pi/2}t^6\sin^5\theta e^{-\frac{1}{p}t^p + \frac{r^2}{2}t^2\sin^2\theta}dtd\theta;
\een
\ben\label{polartrans2}
\notag&&\mathcal{I}(|x|^p) = 4\pi\int_0^\infty\!\!\!\int_0^{\pi/2}t^{p + 2}\sin\theta e^{-\frac{1}{p}t^p + \frac{r^2}{2}t^2\sin^2\theta}dtd\theta;\\
&&\mathcal{I}((x_1^2 + x_2^2)|x|^p) = 4\pi\int_0^\infty\!\!\!\int_0^{\pi/2}t^{p + 4}\sin^3\theta e^{-\frac{1}{p}t^p + \frac{r^2}{2}t^2\sin^2\theta}dtd\theta;\\
\notag&&\mathcal{I}(|x|^{2p}) = 4\pi\int_0^\infty\!\!\!\int_0^{\pi/2}t^{2p + 2}\sin\theta e^{-\frac{1}{p}t^p + \frac{r^2}{2}t^2\sin^2\theta}dtd\theta.
\een
	For the convenience of writing, we further denote that
	\begin{equation}\label{007}
		I_0 = I_0(r) = \mathcal{I}(1), \:\:\: I_1 = I_1(r) = \mathcal{I}(x_1^2 + x_2^2), \:\:\: I_2 = I_2(r) = \mathcal{I}((x_1^2 + x_2^2)^2).
	\end{equation}
Then we have the following lemma:
\begin{lemma}[Integral identities for the rotational partition function]
Let $\mathcal I(f)$ and $I_0,I_1,I_2$ be defined by \eqref{006} and \eqref{007}. Then
	\begin{equation}\label{I0I1I2}
	\mathcal{I}(|x|^p) = 3I_0 + r^2I_1,\quad\mathcal{I}((x_1^2 + x_2^2)|x|^p)= 5I_1 + r^2I_2\quad\mbox{and}\quad\mathcal{I}(|x|^{2p})=3(p + 3)I_0 + (p + 8)r^2I_1 + r^4I_2.
	\end{equation}
\end{lemma}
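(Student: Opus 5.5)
The three identities are virial-type integration-by-parts relations: I would obtain each one by applying the Euler (dilation) operator $x\cdot\nabla_x$ to the weight in \eqref{006}. Set
\[
w(x):=e^{-\frac1p|x|^p+\frac{r^2}{2}(x_1^2+x_2^2)} .
\]
A direct computation gives
\[
x\cdot\nabla_x w=\Big(-|x|^p+r^2(x_1^2+x_2^2)\Big)w .
\]
Now let $f$ be positively homogeneous of degree $k>-3$ and smooth away from the origin, so that $x\cdot\nabla f=kf$ and $\operatorname{div}(fx)=(3+k)f$. The divergence theorem should then give
\[
\int_{\R^3} f\,x\cdot\nabla w\,dx=-(3+k)\int_{\R^3} f w\,dx ,
\]
which rearranges to the master identity
\[
\mathcal I(f|x|^p)=(3+k)\,\mathcal I(f)+r^2\,\mathcal I\big(f(x_1^2+x_2^2)\big).
\]

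Next I would apply the master identity three times. With $f=1$ ($k=0$) it gives $\mathcal I(|x|^p)=3I_0+r^2I_1$. With $f=x_1^2+x_2^2$ ($k=2$) it gives $\mathcal I((x_1^2+x_2^2)|x|^p)=5I_1+r^2I_2$. With $f=|x|^p$ ($k=p$) it gives
\[
\mathcal I(|x|^{2p})=(3+p)\,\mathcal I(|x|^p)+r^2\,\mathcal I\big((x_1^2+x_2^2)|x|^p\big).
\]
Substituting the first two identities into this and collecting terms yields $3(p+3)I_0+\big((p+3)+5\big)r^2I_1+r^4I_2$, which is the third claim.

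The only point needing care is the justification of the integration by parts. Since $p>2$, the factor $e^{-|x|^p/p}$ dominates $e^{r^2|x|^2/2}$, so $w$ and all polynomial multiples of $w$ decay super-exponentially. I would therefore work on the annulus $\{\varepsilon<|x|<\rho\}$ and let $\rho\to\infty$ and $\varepsilon\to0$. The outer boundary term is bounded by $\rho^{3+k}w$ on $|x|=\rho$ and tends to zero. The inner boundary term is of order $\varepsilon^{3+k}$, which tends to zero because $k\ge0$; the non-smoothness of $|x|^p$ at the origin causes no difficulty. Alternatively, the identities can be checked in the polar form \eqref{polartrans}--\eqref{polartrans2}, integrating by parts in $t$ via $\partial_t e^{-t^p/p}=-t^{p-1}e^{-t^p/p}$.
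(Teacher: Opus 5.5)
Your proof is correct and is essentially the paper's argument: the paper integrates by parts in the radial variable $t$ in polar coordinates using $\partial_t\bigl(-\frac1p t^p+\frac{r^2}{2}t^2\sin^2\theta\bigr)=-t^{p-1}+r^2t\sin^2\theta$, which is exactly your identity $x\cdot\nabla_x w=\bigl(-|x|^p+r^2(x_1^2+x_2^2)\bigr)w$ with $x\cdot\nabla_x=t\partial_t$. Like you, it obtains the third identity as $(p+3)\mathcal I(|x|^p)+r^2\mathcal I\bigl((x_1^2+x_2^2)|x|^p\bigr)$ and then substitutes the first two, so your master identity for homogeneous $f$ is simply a cleaner packaging of the same computation.
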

	\begin{proof}
	The proof comes from direct computation, in which we repeatedly use integration by parts.
Note that
	\begin{equation*}
		\Big(-\frac{1}{p}t^p + \frac{r^2}{2}t^2\sin^2\theta\Big)'_t = -t^{p - 1} + r^2t\sin^2\theta,
	\end{equation*}
	then we have
	\begin{equation*}
	\begin{aligned}
		\mathcal{I}(|x|^p) &= 4\pi\int_0^\infty\!\!\!\int_0^{\pi/2}t^{p + 2}\sin\theta e^{-\frac{1}{p}t^p + \frac{r^2}{2}t^2\sin^2\theta}dtd\theta \\
		&= 4\pi\int_0^\infty\!\!\!\int_0^{\pi/2}[(t^{p - 1} - r^2t\sin^2\theta)t^3\sin\theta + r^2t^4\sin^3\theta]e^{-\frac{1}{p}t^p + \frac{r^2}{2}t^2\sin^2\theta}dtd\theta \\
		&= -4\pi\int_0^\infty\!\!\!\int_0^{\pi/2}t^3\sin\theta(e^{-\frac{1}{p}t^p + \frac{r^2}{2}t^2\sin^2\theta})'_tdtd\theta + r^2I_1 \\
		&= 12\pi\int_0^\infty\!\!\!\int_0^{\pi/2}t^2\sin\theta e^{-\frac{1}{p}t^p + \frac{r^2}{2}t^2\sin^2\theta}dtd\theta + r^2I_1 = 3I_0 + r^2I_1.
	\end{aligned}
	\end{equation*}
By the similar argument, we can get that
	\begin{equation*}
	\begin{aligned}
		\mathcal{I}((x_1^2 + x_2^2)|x|^p) &= 4\pi\int_0^\infty\!\!\!\int_0^{\pi/2}t^{p + 4}\sin^3\theta e^{-\frac{1}{p}t^p + \frac{r^2}{2}t^2\sin^2\theta}dtd\theta \\
		&= 4\pi\int_0^\infty\!\!\!\int_0^{\pi/2}[(t^{p - 1} - r^2t\sin^2\theta)t^5\sin^3\theta + r^2t^6\sin^5\theta]e^{-\frac{1}{p}t^p + \frac{r^2}{2}t^2\sin^2\theta}dtd\theta \\
		&= -4\pi\int_0^\infty\!\!\!\int_0^{\pi/2}t^5\sin^3\theta(e^{-\frac{1}{p}t^p + \frac{r^2}{2}t^2\sin^2\theta})'_tdtd\theta + r^2I_2 \\
		&= 20\pi\int_0^\infty\!\!\!\int_0^{\pi/2}t^4\sin^3\theta e^{-\frac{1}{p}t^p + \frac{r^2}{2}t^2\sin^2\theta}dtd\theta + r^2I_2 = 5I_1 + r^2I_2,
	\end{aligned}
	\end{equation*}
	and
	\begin{equation*}
	\begin{aligned}
		\mathcal{I}(|x|^{2p}) &= 4\pi\int_0^\infty\!\!\!\int_0^{\pi/2}t^{2p + 2}\sin\theta e^{-\frac{1}{p}t^p + \frac{r^2}{2}t^2\sin^2\theta}dtd\theta \\
		&= 4\pi\int_0^\infty\!\!\!\int_0^{\pi/2}[(t^{p - 1} - r^2t\sin^2\theta)t^{p + 3}\sin\theta + r^2t^{p + 4}\sin^3\theta]e^{-\frac{1}{p}t^p + \frac{r^2}{2}t^2\sin^2\theta}dtd\theta \\
		&= -4\pi\int_0^\infty\!\!\!\int_0^{\pi/2}t^{p + 3}\sin\theta(e^{-\frac{1}{p}t^p + \frac{r^2}{2}t^2\sin^2\theta})'_tdtd\theta + r^2\mathcal{I}((x_1^2 + x_2^2)|x|^p) \\
		&= 4(p + 3)\pi\int_0^\infty\!\!\!\int_0^{\pi/2}t^{p + 2}\sin\theta e^{-\frac{1}{p}t^p + \frac{r^2}{2}t^2\sin^2\theta}dtd\theta + r^2\mathcal{I}((x_1^2 + x_2^2)|x|^p) \\
		&= (p + 3)\mathcal{I}(|x|^p) + r^2\mathcal{I}((x_1^2 + x_2^2)|x|^p)
		= 3(p + 3)I_0 + (p + 8)r^2I_1 + r^4I_2.
	\end{aligned}
	\end{equation*}
%	Now let us use notation $I_0, I_1, I_2$ to rewrite \eqref{polartrans}:
%	\begin{equation}\label{I0I1I2}
%	\begin{gathered}
%		\mathcal{I}(1) = I_0, \:\:\: \mathcal{I}(x_1^2 + x_2^2) = I_1, \:\:\: \mathcal{I}((x_1^2 + x_2^2)^2) = I_2, \:\:\: 	\mathcal{I}(|x|^p) = 3I_0 + r^2I_1,\\
%		\mathcal{I}((x_1^2 + x_2^2)|x|^p) = 5I_1 + r^2I_2, \:\:\: \mathcal{I}(|x|^{2p}) = 3(p + 3)I_0 + (p + 8)r^2I_1 + r^4I_2.
%	\end{gathered}
%	\end{equation}
	Thus we complete the proof of this lemma.
\end{proof}
		
	With the notations $I_0, I_1, I_2$ in hand, we now turn back to \eqref{PsiMeq+} and get that
	\begin{equation}\label{PsiMeq++}
		m_* = \tilde{m}I_0, \:\:\: \alpha_* = \tilde{m}\alpha^{-1}\Big(\frac{1}{2} + \frac{1}{p}\Big)(3I_0 + r^2I_1)\quad\mbox{and}\quad \:\:\: r_* = \tilde{m}\alpha^{-\frac{1}{2} - \frac{1}{p}}rI_1.
	\end{equation}
	Furthermore,  substitute the first two equations above into the third equation, we obtain that 
	%We declare that $I_0, I_1$ are actually determined by $r$. If $r > 0$ is chosen, then
	%\begin{equation*}
	%	m = \frac{m_*}{I_0}, \:\:\: \alpha = (\frac{1}{2} + \frac{1}{p})\frac{m_*}{\alpha_*}\frac{3I_0 + r^2I_1}{I_0}
	%\end{equation*}
	%are immediately determined. And the third equation in \eqref{PsiMeq++} becomes
	\begin{equation}\label{011}
		\Big(\frac{1}{2} + \frac{1}{p}\Big)^{\frac{1}{2} + \frac{1}{p}}\frac{r_*}{m_*^{\frac{1}{2} - \frac{1}{p}}\alpha_*^{\frac{1}{2} + \frac{1}{p}}} = \frac{rI_1}{I_0^{\frac{1}{2} - \frac{1}{p}}(3I_0 + r^2I_1)^{\frac{1}{2} + \frac{1}{p}}}.
	\end{equation}
	Now due to \eqref{008}, we will show the following lemma:

%%%%%%%%%%%%%%%%%%%%%%%%%%%%%%%%%%%%%%%%%%%%%%%%%%
\begin{lemma}[Monotonicity of the rotational moment map]\label{010}
Define the function $\mathcal{J}(r)$ as
	\begin{equation}\label{009}
	\mathcal{J}(r) := \frac{rI_1}{I_0^{\frac{1}{2} - \frac{1}{p}}(3I_0 + r^2I_1)^{\frac{1}{2} + \frac{1}{p}}},
\end{equation}
then $\mathcal{J}(r)$ is a bijection from $(0, \infty)$ to $(0, 1)$.
\end{lemma}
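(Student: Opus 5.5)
We rewrite $\mathcal J$ in terms of averages against the probability measure
\[
d\mu_r:=I_0^{-1}e^{-\frac1p|x|^p+\frac{r^2}{2}(x_1^2+x_2^2)}\,dx .
\]
Write $s:=x_1^2+x_2^2$, $\langle\cdot\rangle_r$ for the $\mu_r$-average, and $y(r):=r^2\langle s\rangle_r=r^2I_1/I_0$. Then \eqref{009} becomes
\[
\mathcal J(r)=\frac{r\langle s\rangle_r}{(3+y)^{\frac12+\frac1p}},\qquad
\mathcal J(r)^2=\frac{y\,\langle s\rangle_r}{(3+y)^{1+\frac2p}} .
\]
By \eqref{I0I1I2} we also have $\langle |x|^p\rangle_r=3+y$. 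All integrals are finite and smooth in $r$ because $p>2$, so $\mathcal J$ is continuous on $(0,\infty)$. The plan has three steps: the upper bound $\mathcal J<1$, the two endpoint limits, and strict monotonicity. Together these give the bijection onto $(0,1)$.

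\textbf{Upper bound and limits.} We use $s\le|x|^2$ and Jensen ($p>2$):
\[
\langle s\rangle_r\le\langle|x|^2\rangle_r\le\langle|x|^p\rangle_r^{2/p}=(3+y)^{2/p}.
\]
This gives $\mathcal J^2\le y/(3+y)<1$, which is consistent with the strict inequality in \eqref{r*Bound}.

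As $r\to0^+$, $I_0$ and $I_1$ converge to positive finite limits by dominated convergence, so $\mathcal J(r)\sim 3^{-\frac12-\frac1p}\,r\,I_1(0)/I_0(0)\to0$.

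As $r\to\infty$, we argue by Laplace's method in the polar form \eqref{polartrans}. The exponent $-\frac1pt^p+\frac{r^2}{2}t^2\sin^2\theta$ is maximized at $\theta=\pi/2$ and $t_r=r^{2/(p-2)}\to\infty$. The measure $\mu_r$ concentrates there with relative fluctuations $o(1)$, hence
\[
\langle s\rangle_r=(1+o(1))\,t_r^2,\qquad \langle|x|^p\rangle_r=(1+o(1))\,t_r^p .
\]
Consequently $y\to\infty$ and $\langle s\rangle_r/(3+y)^{2/p}\to1$, and both inequalities in the chain above become asymptotically equalities. This yields $\mathcal J\to1$.

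\textbf{Monotonicity, the main obstacle.} We differentiate under the integral: $\frac{d}{dr}\langle g\rangle_r=r\,\mathrm{Cov}_r(g,s)$. Set $w:=r^2\mathrm{Var}_r(s)/\langle s\rangle_r\ge0$. Then
\[
\frac r2\,\frac{d}{dr}\log\mathcal J^2=1+w-\Big(1+\frac2p\Big)\frac{y}{3+y}\Big(1+\frac w2\Big).
\]
For small $y$ this is clearly positive. For large $y$ the factor $1+\frac2p$ is a genuine problem, so positivity must come from a lower bound on $w$, i.e.\ on the fluctuation of $s$. To obtain it I would use the second identity of \eqref{I0I1I2}, namely $\langle s|x|^p\rangle_r=5\langle s\rangle_r+r^2\langle s^2\rangle_r$, which rewrites $\mathrm{Var}_r(s)$ through the covariance of $s$ and $|x|^p$. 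I would then combine this with a Hölder inequality of the type $\langle s\,|x|^p\rangle_r\le\langle s^{p/2+1}\rangle_r^{2/(p+2)}\langle|x|^{p+2}\rangle_r^{p/(p+2)}$, or with a further integration by parts in $t$ that produces $\langle|x|^{2p}\rangle_r$ from the third identity of \eqref{I0I1I2}. The goal is the inequality
\[
w\,\Big(1-\frac{p+2}{2p}\,\frac{y}{3+y}\Big)>\frac{2}{p}\,\frac{y}{3+y}-\frac{3}{3+y}
\]
whenever its right-hand side is positive.

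If this direct route stalls, the fallback is the change of variables $t\mapsto t_r\tau$ in \eqref{polartrans}. This reduces $\mathcal J$ to a one-parameter family of one-dimensional integrals, on which a log-concavity (Prékopa--Leindler) argument may give monotonicity more transparently. Once $\mathcal J'>0$ is established, continuity together with the limits $0$ and $1$ gives that $\mathcal J$ is a bijection $(0,\infty)\to(0,1)$.
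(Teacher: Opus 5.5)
The gap is in the monotonicity step. Your reduction there is correct. With $y=r^2\langle s\rangle_r$ and $w=r^2\mathrm{Var}_r(s)/\langle s\rangle_r$, the condition $\mathcal J'>0$ is equivalent to
\[
3+3w+\Big(\frac12-\frac1p\Big)wy>\frac2p\,y,
\]
which is exactly the paper's inequality \eqref{Iinjection} divided by $I_0^2I_1$.

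The other parts are sound:
\begin{itemize}
\item Your Jensen bound $\langle s\rangle_r\le\langle|x|^p\rangle_r^{2/p}$ gives $\mathcal J<1$ more cleanly than the paper's Young--H\"older computation.
\item The endpoint limits are fine, although the Laplace concentration as $r\to\infty$ still has to be made precise. The paper does this by rescaling $t\mapsto r^{2/(p-2)}t$ and comparing $\{t<t_1\}$ with $\{t\ge t_1\}$.
\end{itemize}

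Monotonicity is the only source of injectivity, and you leave it unproved. The route you sketch does not work. Your H\"older inequality bounds $\langle s|x|^p\rangle_r$ from \emph{above}. Through $\langle s|x|^p\rangle_r=5\langle s\rangle_r+r^2\langle s^2\rangle_r$, that gives an upper bound on $\mathrm{Var}_r(s)$, whereas you need a lower bound on $w$. The Pr\'ekopa--Leindler fallback is also unclear: the radial weight $t^2e^{-t^p/p+r^2t^2\sin^2\theta/2}$ is not log-concave, because the term $+\frac{r^2}{2}t^2\sin^2\theta$ is convex.

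The missing idea is the Cauchy--Schwarz inequality for covariances under $\mu_r$. This is the paper's Lemma \ref{LagrangeIneq} applied to $1$, $s$, $|x|^p$:
\[
\mathrm{Cov}_r(s,|x|^p)^2\le\mathrm{Var}_r(s)\,\mathrm{Var}_r(|x|^p).
\]
It works because \eqref{I0I1I2} expresses both sides through $\mathrm{Var}_r(s)$ itself:
\[
\mathrm{Cov}_r(s,|x|^p)=2\langle s\rangle_r+r^2\mathrm{Var}_r(s),\qquad
\mathrm{Var}_r(|x|^p)=3p+(p+2)y+r^4\mathrm{Var}_r(s).
\]
The $r^4\mathrm{Var}_r(s)^2$ terms cancel, which leaves $\mathrm{Var}_r(s)\,\big(3p+(p-2)y\big)\ge4\langle s\rangle_r^2$. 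Equivalently, $w\ge 4y/(3p+(p-2)y)$. Hence
\[
w\Big(3+\Big(\frac12-\frac1p\Big)y\Big)\ge\frac{2y}{p}\cdot\frac{6p+(p-2)y}{3p+(p-2)y}>\frac{2y}{p},
\]
which closes your target inequality. So the lower bound on the fluctuation comes from an upper bound on the \emph{covariance}, where the unknown variance appears on both sides, not from an upper bound on $\langle s|x|^p\rangle_r$ alone. Without this step the proof of the lemma is incomplete.
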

\begin{proof}
We prove this lemma in two steps: the first step proves monotonicity of $\mathcal{J}(r)$ and the second step calculates its upper bound.\\
$\bullet$ Monotonicity of $\mathcal{J}$. We will show that $\mathcal{J}'(r) > 0$ for any $r > 0$. Recall that
	\begin{equation*}
	\begin{aligned}
		I_0(r) &= 4\pi\int_0^\infty\!\!\!\int_0^{\pi/2}t^2\sin\theta e^{-\frac{1}{p}t^p + \frac{r^2}{2}t^2\sin^2\theta}dtd\theta;\\
		I_1(r) &= 4\pi\int_0^\infty\!\!\!\int_0^{\pi/2}t^4\sin^3\theta e^{-\frac{1}{p}t^p + \frac{r^2}{2}t^2\sin^2\theta}dtd\theta;\\
		I_2(r) &= 4\pi\int_0^\infty\!\!\!\int_0^{\pi/2}t^6\sin^5\theta e^{-\frac{1}{p}t^p + \frac{r^2}{2}t^2\sin^2\theta}dtd\theta.\\
	\end{aligned}
	\end{equation*}
	Therefore it holds that
	\begin{equation}\label{073}
		I'_0(r) = rI_1(r), \:\:\: I'_1(r) = rI_2(r).
	\end{equation}
	It is easy to check that $\mathcal{J}'(r) > 0$ is equivalent to
	\begin{equation*}
	\begin{aligned}
		(I_1 + r^2I_2)I_0^{\frac{1}{2} - \frac{1}{p}}(3I_0 + r^2I_1)^{\frac{1}{2} + \frac{1}{p}} - \Big(\frac{1}{2} - \frac{1}{p}\Big)r^2I_1^2I_0^{-\frac{1}{2} - \frac{1}{p}}(3I_0 + r^2I_1)^{\frac{1}{2} + \frac{1}{p}} \\- \Big(\frac{1}{2} + \frac{1}{p}\Big)rI_1I_0^{\frac{1}{2} - \frac{1}{p}}(3I_0 + r^2I_1)^{-\frac{1}{2} + \frac{1}{p}}(5rI_1 + r^3I_2) > 0.
	\end{aligned}
\end{equation*}
	Moreover, it is equivalent to
	\begin{equation*}
		I_0(I_1 + r^2I_2)(3I_0 + r^2I_1) - \Big(\frac{1}{2} - \frac{1}{p}\Big)r^2I_1^2(3I_0 + r^2I_1) - \Big(\frac{1}{2} + \frac{1}{p}\Big)r^2I_0I_1(5I_1 + r^2I_2) > 0.
	\end{equation*}
	We consider it as a polynomial about $r$ and rewrite it as
	\begin{equation}\label{Iinjection}
		\Big(\frac{1}{2} - \frac{1}{p}\Big)r^4I_1(I_0I_2 - I_1^2) + 3r^2I_0(I_0I_2 - I_1^2) + 3I_0^2I_1 > \frac{2r^2}{p}I_0I_1^2.
	\end{equation}
%	Note that by Cauchy-Schwartz inequality, we have
%	\begin{equation*}
%		I_1^2 = (\mathcal{I}(x_1^2 + x_2^2))^2 \le \mathcal{I}(1)\mathcal{I}((x_1^2 + x_2^2)^2) = I_0I_2.
%	\end{equation*}
%	However, inequality $I_0I_2 \ge I_1^2$ is not strong enough to prove \eqref{Iinjection}. We need a stronger lower bound of $I_0I_2 - I_1^2$. 
By Lemma \ref{LagrangeIneq},
	\begin{equation}\label{100}
	\begin{aligned}
		&\Big[\mathcal{I}(1)\mathcal{I}((x_1^2+x_2^2)^2)
		-(\mathcal{I}(x_1^2+x_2^2))^2\Big]
		\Big[\mathcal{I}(1)\mathcal{I}(|x|^{2p})-(\mathcal{I}(|x|^p))^2\Big]\\
		&\qquad\ge
		\Big[\mathcal{I}(1)\mathcal{I}((x_1^2+x_2^2)|x|^p)
		-\mathcal{I}(x_1^2+x_2^2)\mathcal{I}(|x|^p)\Big]^2.
	\end{aligned}
\end{equation}
	With the help of \eqref{I0I1I2}, the above inequality becomes
	\begin{equation*}
		[I_0I_2 - I_1^2][I_0(3(p + 3)I_0 + (p + 8)r^2I_1 + r^4I_2) - (3I_0 + r^2I_1)^2] \ge [I_0(5I_1 + r^2I_2) - I_1(3I_0 + r^2I_1)]^2,
	\end{equation*}
which leads to
	\begin{equation*}
		[I_0I_2 - I_1^2][3pI_0^2 + (p + 2)r^2I_0I_1 + r^4(I_0I_2 - I_1^2)] \ge [2I_0I_1 + r^2(I_0I_2 - I_1^2)]^2.
	\end{equation*}
It follows that the lower bound of $I_0I_2 - I_1^2$ is 
	\begin{equation*}
		I_0I_2 - I_1^2 \ge \frac{4I_0I_1^2}{3pI_0 + (p - 2)r^2I_1}.
	\end{equation*}
	Substituting this estimate into the left-hand side of \eqref{Iinjection} gives
	\begin{equation*}
	\begin{aligned}
	\mbox{The left hand side of \eqref{Iinjection}} &\ge \frac{4I_0I_1^2}{3pI_0 + (p - 2)r^2I_1}[(\frac{1}{2} - \frac{1}{p})r^4I_1 + 3r^2I_0] \\
	&= \frac{4I_0I_1^2}{3pI_0 + (p - 2)r^2I_1}\frac{r^2}{2p}(6pI_0 + (p - 2)r^2I_1) > \frac{2r^2}{p}I_0I_1^2.
	\end{aligned}
\end{equation*}
	Thus \eqref{Iinjection} holds and $\mathcal J'(r)>0$.\\
$\bullet$ Upper bound and the limit of $\mathcal{J}$ .  Next we show that $\mathcal{J}(r) < 1$ for any $r > 0$. We take
	\begin{equation*}
		x = (x_1, x_2, 0), \:\:\: v = r(x_1, x_2, 0)
	\end{equation*}
	in \eqref{YoungsIneq} and get
	\begin{equation*}
		r(x_1^2 + x_2^2) \le (\frac{2p}{p + 2})^{\frac{1}{2} + \frac{1}{p}}(\frac{1}{p}(x_1^2 + x_2^2)^{\frac{p}{2}} + \frac{r^2}{2}(x_1^2 + x_2^2))^{\frac{1}{2} + \frac{1}{p}}.
	\end{equation*}
	Therefore by H\"{o}lder inequality, we have
	\begin{equation*}
	\begin{aligned}
		rI_1 &= \int r(x_1^2 + x_2^2)e^{-\frac{1}{p}|x|^p + \frac{r^2}{2}(x_1^2 + x_2^2)}dx \\
		&\le (\frac{2p}{p + 2})^{\frac{1}{2} + \frac{1}{p}}\int(\frac{1}{p}(x_1^2 + x_2^2)^{\frac{p}{2}} + \frac{r^2}{2}(x_1^2 + x_2^2))^{\frac{1}{2} + \frac{1}{p}}e^{-\frac{1}{p}|x|^p + \frac{r^2}{2}(x_1^2 + x_2^2)}dx \\
		&\le (\frac{2p}{p + 2})^{\frac{1}{2} + \frac{1}{p}}\int[(\frac{1}{p}|x|^p + \frac{r^2}{2}(x_1^2 + x_2^2))e^{-\frac{1}{p}|x|^p + \frac{r^2}{2}(x_1^2 + x_2^2)}]^{\frac{1}{2} + \frac{1}{p}}[e^{-\frac{1}{p}|x|^p + \frac{r^2}{2}(x_1^2 + x_2^2)}]^{\frac{1}{2} - \frac{1}{p}}dx \\
		&\le (\frac{2p}{p + 2})^{\frac{1}{2} + \frac{1}{p}}\big(\int(\frac{1}{p}|x|^p + \frac{r^2}{2}(x_1^2 + x_2^2))e^{-\frac{1}{p}|x|^p + \frac{r^2}{2}(x_1^2 + x_2^2)}dx\big)^{\frac{1}{2} + \frac{1}{p}}\big(\int e^{-\frac{1}{p}|x|^p + \frac{r^2}{2}(x_1^2 + x_2^2)}dx\big)^{\frac{1}{2} - \frac{1}{p}} \\
		&= (\frac{2p}{p + 2})^{\frac{1}{2} + \frac{1}{p}}(\frac{3I_0 + r^2I_1}{p} + \frac{r^2}{2}I_1)^{\frac{1}{2} + \frac{1}{p}}I_0^{\frac{1}{2} - \frac{1}{p}} = I_0^{\frac{1}{2} - \frac{1}{p}}(\frac{6}{p + 2}I_0 + r^2I_1) < I_0^{\frac{1}{2} - \frac{1}{p}}(3I_0 + r^2I_1).
	\end{aligned}
	\end{equation*}
	This implies that $\mathcal{J}(r) < 1$ for any $r > 0$. Together with the fact that $\mathcal{J}'(r) > 0$, we know  $\lim\limits_{r \to \infty}\mathcal{J}(r)$ exists and is no more than $1$.
	
Finally, we show that $\lim\limits_{r \to \infty}\mathcal{J}(r) = 1$ and begin with the function $I_0(r)/I_1(r)$. Recall from \eqref{073} that
	\begin{equation*}
		I'_0(r) = rI_1(r), \:\:\: I'_1(r) = rI_2(r).
	\end{equation*}
	Then we have
	\begin{equation*}
		\Big(\frac{I_0}{I_1}\Big)' = \frac{rI_1^2 - rI_0I_2}{I_1^2} \le 0,
	\end{equation*}
	by the Cauchy--Schwarz inequality. Consequently,
	\begin{equation*}
		\lim_{r \to \infty}\frac{I_0}{r^2I_1} = 0.
	\end{equation*}
	Then
	\begin{equation*}
		1 \ge \lim_{r \to \infty}\mathcal{J}(r) = \lim_{r \to \infty}\frac{rI_1}{I_0^{\frac{1}{2} - \frac{1}{p}}(3I_0 + r^2I_1)^{\frac{1}{2} + \frac{1}{p}}} = \lim_{r \to \infty}\frac{rI_1}{I_0^{\frac{1}{2} - \frac{1}{p}}(r^2I_1)^{\frac{1}{2} + \frac{1}{p}}} = \Big(\lim_{r \to \infty}\frac{I_1}{r^{\frac{4}{p - 2}}I_0}\Big)^{\frac{1}{2} - \frac{1}{p}}.
	\end{equation*}
It leads to 
	\begin{equation*}
		\lim_{r \to \infty}\frac{I_1}{r^{\frac{4}{p - 2}}I_0} \le 1.
	\end{equation*}
	Our goal is to demonstrate that the limit is indeed equal to $1$. We compute
	\begin{equation*}
	\begin{aligned}
		I_0 &= 4\pi\int_0^\infty\!\!\!\int_0^{\pi/2}t^2\sin\theta e^{-\frac{1}{p}t^p + \frac{r^2}{2}t^2\sin^2\theta}dtd\theta
		= 4\pi\int_0^\infty\!\!\!\int_0^1t^2e^{-\frac{1}{p}t^p + \frac{r^2}{2}t^2(1 - s^2)}dtds \\
		&= 4\pi\int_0^\infty t^2e^{-\frac{1}{p}t^p + \frac{r^2}{2}t^2}dt\int_0^1e^{-\frac{r^2}{2}t^2s^2}ds
		= 4\pi\int_0^\infty t^2e^{-\frac{1}{p}t^p + \frac{r^2}{2}t^2}dt\int_0^{rt}\frac{1}{rt}e^{-\frac{1}{2}s^2}ds \\
		&\le 4\pi r^{-1}\int_0^\infty te^{-\frac{1}{p}t^p + \frac{r^2}{2}t^2}dt\int_0^\infty e^{-\frac{1}{2}s^2}ds,
	\end{aligned}
	\end{equation*}
	and
	\begin{equation*}
	\begin{aligned}
		I_1 &= 4\pi\int_0^\infty\!\!\!\int_0^{\pi/2}t^4\sin^3\theta e^{-\frac{1}{p}t^p + \frac{r^2}{2}t^2\sin^2\theta}dtd\theta
		= 4\pi\int_0^\infty\!\!\!\int_0^1t^4(1 - s^2)e^{-\frac{1}{p}t^p + \frac{r^2}{2}t^2(1 - s^2)}dtds \\
		&= 4\pi\int_0^\infty t^4e^{-\frac{1}{p}t^p + \frac{r^2}{2}t^2}dt\int_0^1e^{-\frac{r^2}{2}t^2s^2}ds
		- 4\pi\int_0^\infty t^4e^{-\frac{1}{p}t^p + \frac{r^2}{2}t^2}dt\int_0^1s^2e^{-\frac{r^2}{2}t^2s^2}ds \\
		&= 4\pi\int_0^\infty t^4e^{-\frac{1}{p}t^p + \frac{r^2}{2}t^2}dt\int_0^{rt}\frac{1}{rt}e^{-\frac{1}{2}s^2}ds
		- 4\pi\int_0^\infty t^4e^{-\frac{1}{p}t^p + \frac{r^2}{2}t^2}dt\int_0^{rt}\frac{s^2}{r^3t^3}e^{-\frac{1}{2}s^2}ds \\
		&\ge 4\pi r^{-1}\int_0^\infty t^3e^{-\frac{1}{p}t^p + \frac{r^2}{2}t^2}dt\int_0^{rt}e^{-\frac{1}{2}s^2}ds
		- 4\pi r^{-3}\int_0^\infty te^{-\frac{1}{p}t^p + \frac{r^2}{2}t^2}dt\int_0^\infty s^2e^{-\frac{1}{2}s^2}ds.
	\end{aligned}
	\end{equation*}
	Compare these two estimates, we have
	\begin{equation*}
		\lim_{r \to \infty}\frac{I_1}{r^{\frac{4}{p - 2}}I_0} \ge \lim_{r \to \infty}\frac{\int_0^\infty t^3e^{-\frac{1}{p}t^p + \frac{r^2}{2}t^2}dt\int_0^{rt}e^{-\frac{1}{2}s^2}ds}{r^{\frac{4}{p - 2}}\int_0^\infty te^{-\frac{1}{p}t^p + \frac{r^2}{2}t^2}dt\int_0^\infty e^{-\frac{1}{2}s^2}ds}.
	\end{equation*}
	For any
	\begin{equation*}
		0 < \varepsilon < \frac{1}{2}(\frac{1}{2} - \frac{1}{p}),
	\end{equation*}
	we take $r$ sufficiently large such that
	\begin{equation*}
		\int_0^re^{-\frac{1}{2}s^2}ds \ge (1 - \varepsilon)\int_0^\infty e^{-\frac{1}{2}s^2}ds.
	\end{equation*}
	Then
	\begin{equation*}
		\int_0^\infty t^3e^{-\frac{1}{p}t^p + \frac{r^2}{2}t^2}dt\int_0^{rt}e^{-\frac{1}{2}s^2}ds
		\ge (1 - \varepsilon)\int_1^\infty t^3e^{-\frac{1}{p}t^p + \frac{r^2}{2}t^2}dt\int_0^\infty e^{-\frac{1}{2}s^2}ds.
	\end{equation*}
	With the change of variables $t\mapsto r^{\frac{2}{p-2}}t$, we obtain
	\begin{equation*}
	\begin{aligned}
		\lim_{r \to \infty}\frac{I_1}{r^{\frac{4}{p - 2}}I_0} &\ge (1 - \varepsilon)\lim_{r \to \infty}\frac{\int_1^\infty t^3e^{-\frac{1}{p}t^p + \frac{r^2}{2}t^2}dt}{r^{\frac{4}{p - 2}}\int_0^\infty te^{-\frac{1}{p}t^p + \frac{r^2}{2}t^2}dt}
		= (1 - \varepsilon)\lim_{r \to \infty}\frac{\int_{r^{-\frac{2}{p - 2}}}^\infty r^{\frac{8}{p - 2}}t^3e^{(-\frac{1}{p}t^p + \frac{1}{2}t^2)r^\frac{2p}{p - 2}}dt}{r^{\frac{4}{p - 2}}\int_0^\infty r^{\frac{4}{p - 2}}te^{(-\frac{1}{p}t^p + \frac{1}{2}t^2)r^\frac{2p}{p - 2}}dt} \\
		&= (1 - \varepsilon)\lim_{r \to \infty}\frac{\int_{r^{-\frac{2}{p - 2}}}^\infty t^3e^{(-\frac{1}{p}t^p + \frac{1}{2}t^2)r^\frac{2p}{p - 2}}dt}{\int_0^\infty te^{(-\frac{1}{p}t^p + \frac{1}{2}t^2)r^\frac{2p}{p - 2}}dt}
		= (1 - \varepsilon)\lim_{r \to \infty}\frac{\int_{r^{-\frac{1}{p}}}^\infty t^3e^{(-\frac{1}{p}t^p + \frac{1}{2}t^2)r}dt}{\int_0^\infty te^{(-\frac{1}{p}t^p + \frac{1}{2}t^2)r}dt}.
	\end{aligned}
	\end{equation*}
	Define function of $t$,
	\begin{equation*}
		q(t) := -\frac{1}{p}t^p + \frac{1}{2}t^2 \:\:\: \Rightarrow \:\:\: q'(t) = -t^{p - 1} + t,
	\end{equation*}
	which means $q(t)$ reaches its maximum at $t = 1$, and $q(1) = \frac{1}{2} - \frac{1}{p}$. We choose $0 < t_1 < t_2 < 1$ such that
	\begin{equation*}
		q(t_1) = \frac{1}{2} - \frac{1}{p} - 2\varepsilon, \:\:\: q(t_2) = \frac{1}{2} - \frac{1}{p} - \varepsilon.
	\end{equation*}
	We emphasize that $t_1, t_2$ depend only on $p$ and $\varepsilon$, not on $r$. Take $r$ sufficiently large such that $r^{-1/p} < t_1$, then
	\begin{equation*}
		\lim_{r \to \infty}\frac{I_1}{r^{\frac{4}{p - 2}}I_0} \ge (1 - \varepsilon)\lim_{r \to \infty}\frac{\int_{t_1}^\infty t^3e^{q(t)r}dt}{\int_0^{t_1}te^{q(t)r}dt + \int_{t_1}^\infty te^{q(t)r}dt}.
	\end{equation*}
	Note that for any $0 \le t \le t_1, q(t) \le q(t_1)$, and for any $t_2 \le t \le 1, q(t) \ge q(t_2)$. We have
	\begin{equation*}
	\begin{gathered}
		\int_0^{t_1}te^{q(t)r}dt \le \int_0^{t_1}te^{q(t_1)r}dt = \frac{1}{2}t_1^2e^{(\frac{1}{2} - \frac{1}{p} - 2\varepsilon)r};\\
		\int_{t_1}^\infty te^{q(t)r}dt \ge \int_{t_2}^1 te^{q(t)r}dt \ge \int_{t_2}^1 te^{q(t_2)r}dt = \frac{1 - t_2^2}{2}e^{(\frac{1}{2} - \frac{1}{p} - \varepsilon)r},
	\end{gathered}
	\end{equation*}
	which means
	\begin{equation*}
		\int_0^{t_1}te^{q(t)r}dt = o(\int_{t_1}^\infty te^{q(t)r}dt), \: ~~\text{as} ~~\: r \to \infty.
	\end{equation*}
	Then
	\begin{equation*}
		\lim_{r \to \infty}\frac{I_1}{r^{\frac{4}{p - 2}}I_0} \ge (1 - \varepsilon)\lim_{r \to \infty}\frac{\int_{t_1}^\infty t^3e^{q(t)r}dt}{\int_{t_1}^\infty te^{q(t)r}dt} \ge (1 - \varepsilon)t_1^2.
	\end{equation*}
	Recall that $0 < t_1 < 1$ satisfies
	\begin{equation*}
		q(t_1) = \frac{1}{2} - \frac{1}{p} - 2\varepsilon,
	\end{equation*}
	which means $t_1 \to 1-$ as $\varepsilon \to 0+$. Thus  we finally complete the proof.
	
	\end{proof}

%%%%%%%%%%%%%%%%%%%%%%%%%%%%%%%%%%%%%%%%%%%%%%%%%%
We now prove Theorem \ref{onetoone} in the case $p\in(2,\infty)$.
\begin{proof}[Proof of Theorem \ref{onetoone} for case $p \in (2,\infty)$]
	
Let $(m_*,\alpha_*,r_*)\in(0,\infty)^2\times[0,\infty)$ satisfy \eqref{008}. Lemma \ref{010} gives a unique $r\ge0$ satisfying \eqref{011}. Regarding $I_0,I_1,I_2$ as functions of $r$, set
\beno
\tilde{m}=\f{m_*}{I_0},\quad \mbox{and}\quad\alpha = \tilde{m}\alpha_*^{-1}\Big(\frac{1}{2} + \frac{1}{p}\Big)(3I_0 + r^2I_1),
\eeno 
which leads to \eqref{PsiMeq++}, and it is equivalent to \eqref{PsiMeq+} and \eqref{003} according to Lemma \ref{sim001}. 
	
This completes the proof of Theorem \ref{onetoone} for $p\in(2,\infty)$.
\end{proof}
%%%%%%%%%%%%%%%%%%%%%%%%%%%%%%%%%%%%%%%%%%%%%%%%%%

\section{Stationary projection and normalization around rotating equilibria}

Section 2 identifies the nonlinear equilibria. We now pass to the linearized equation around an arbitrary fixed equilibrium. The first task is algebraic: the Gram matrix extracts the stationary component carrying the five conserved moments. The second task is analytic: an explicit change of variables normalizes a possibly rotating equilibrium while preserving the Hamiltonian form of the transport and exposing the factor $e^{-\widetilde\Phi(x)}$ in front of the velocity collision operator. This factor is the point at which the present problem leaves the uniformly coercive setting of \cite{carrapatoso_special_2024}.

\subsection{Stationary modes and the Gram projection}
We first justify the projection introduced in Proposition \ref{onetoone2}.
\begin{proof}[Proof of Proposition \ref{onetoone2}]
For $z\in\mathbb{R}^5$,
\[
z^\tau G_Mz
=\int_{\mathbb{R}^6}(z\cdot\chi(x,v))^2M(x,v)\,dx\,dv\ge0.
\]
If the left-hand side vanishes, then $z\cdot\chi=0$ almost everywhere because $M>0$. Its quadratic part in $v$ gives the coefficient of $E$ equal to zero. The remaining linear part has the form $a\cdot\ell(x,v)$ for a fixed $a\in\mathbb R^3$. Since this identity holds for almost every $(x,v)$, varying $x$ and $v$ gives $a=0$. The constant part then gives the mass coefficient equal to zero. Hence $G_M$ is positive definite and therefore invertible.

For any $f$ with finite conserved moments, set
\[
c(f)=G_M^{-1}\int_{\mathbb{R}^6}\chi f\,dx\,dv,
\qquad h:=\Pi_Mf=(c(f)\cdot\chi)M.
\]
Then
\[
\int_{\mathbb{R}^6}\chi h\,dx\,dv
=G_Mc(f)
=\int_{\mathbb{R}^6}\chi f\,dx\,dv,
\]
which proves \eqref{coef2}; uniqueness follows from the invertibility of $G_M$.

It remains to verify that $h$ is stationary for the full linearized equation. Let
\[
\mathsf{T}_M=v\cdot\nabla_x-\nabla_x\Phi\cdot\nabla_v.
\]
Since $M$ is an equilibrium, $\mathsf{T}_MM=0$. Moreover,
\[
\mathsf{T}_M1=0,
\qquad \mathsf{T}_ME=0,
\qquad
\mathsf{T}_M(x_iv_j-x_jv_i)
=x_j\partial_i\Phi-x_i\partial_j\Phi=0,
\]
because $\Phi$ is radial. Thus $\mathsf{T}_Mh=0$. For each fixed $x$, the quotient $h/M$ belongs to $\operatorname{span}\{1,v_1,v_2,v_3,|v|^2\}$, the space of collision invariants. The standard null-space characterization of the linearized collision operator therefore yields
\[
Q(M,h)+Q(h,M)=0.
\]
Consequently, $h=\Pi_Mf$ belongs to the kernel of the full linearized generator.
\end{proof}

\subsection{Reduction to the zero-moment problem}
Let $M$ be a fixed positive equilibrium. For arbitrary initial data $f_0$, Proposition \ref{onetoone2} gives the decomposition
\[
f_0=\Pi_Mf_0+(I-\Pi_M)f_0.
\]
The first term is stationary, while the second has zero mass, energy, and angular momentum. Therefore it suffices to establish decay for the solution issued from $(I-\Pi_M)f_0$. From this point through Section 7, $f$ denotes this zero-moment component. By Theorem \ref{EISform}, the reference equilibrium has the form
	\begin{equation*}
	M(x, v) = m\exp\{-\alpha(\Phi(x) + \frac{1}{2}|v|^2)+ Rx \cdot v\}, \qquad m,\alpha>0,\quad R\in\R^{3\wedge 3}.
\end{equation*}
In the case $p \in (1, 2)$, one has $R=0$. The zero-moment component satisfies
	\begin{equation}\label{Lineareqf+}
		\partial_tf + v \cdot \nabla_xf - \nabla_x\Phi \cdot \nabla_vf = {Q}(M, f) + {Q}(f, M).
	\end{equation}
The five moments are conserved by this equation and vanish for all time:
	\begin{equation}\label{012}
		\int_{\R^6}(1, \Phi(x) + \frac{1}{2}|v|^2, x \wedge v)f(t, x, v)dxdv = 0.
	\end{equation}
We now transform $M$ into a normalized, separable equilibrium and track the corresponding changes in \eqref{Lineareqf+}, \eqref{012}, and the weighted norms.

\subsection{Collision invariance under elementary transformations}
We first record how translations, scalings, and rotations act on the collision operator ${Q}$.
	\begin{lemma}\label{TransQ}
		For smooth functions $g = g(v)$ and  $h = h(v)$.

		\indent$\bullet$ Let $u \in \mathbb{R}^3$ and $g_1(v) = g(v + u), h_1(v) = h(v + u)$, then
		\begin{equation*}
			{Q}(g, h)(v + u) = {Q}(g_1, h_1)(v).
		\end{equation*}

		\indent$\bullet$ Let $r>0$ and $g_2(v) = g(rv), h_2(v) = h(rv)$, then
		\begin{equation*}
			{Q}(g, h)(rv) = r^{3 + \gamma}{Q}(g_2, h_2)(v).
		\end{equation*}

		\indent$\bullet$ Let U be a $3 \times 3$ orthogonal matrix and $g_3(v) = g(Uv), h_3(v) = h(Uv)$, then
		\begin{equation*}
			{Q}(g, h)(Uv) = {Q}(g_3, h_3)(v).
		\end{equation*}
	\end{lemma}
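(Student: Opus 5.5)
We verify each identity by direct substitution in the integral defining $Q$, using the $\sigma$-representation \eqref{v'v'*}. Fix $v$ and write
\[
Q(g,h)(w)=\int_{\R^3\times\S^2}|w-v_*|^\gamma b\Big(\tfrac{w-v_*}{|w-v_*|}\cdot\sigma\Big)\big[g(v'_*)h(v')-g(v_*)h(w)\big]\,d\sigma\,dv_*,
\]
where $v',v'_*$ are computed from $(w,v_*,\sigma)$. In each case we choose $w$ as the transformed point, change the integration variable $v_*$ (and, for rotations, $\sigma$), and check how the post-collisional velocities transform. If the cutoff is not assumed, the integral is understood in the usual principal-value/weak sense. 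All changes of variables are then applied to the regularized (truncated-angle) operator, and the identities pass to the limit.

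\textbf{Translation.} Take $w=v+u$ and substitute $v_*=w_*+u$; the Jacobian is $1$. Then $w-v_*=v-w_*$, so the kernel $|v-w_*|^\gamma b(\cdot)$ is unchanged. By \eqref{v'v'*}, the post-collisional velocities are $v'(w,v_*,\sigma)=v'(v,w_*,\sigma)+u$, and the same holds for $v'_*$. Hence $g(v'_*)=g_1(\cdot)$ and $h(v')=h_1(\cdot)$ evaluated at the post-collisional velocities of $(v,w_*,\sigma)$, which gives $Q(g,h)(v+u)=Q(g_1,h_1)(v)$.

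\textbf{Scaling.} Take $w=rv$ and $v_*=rw_*$, so $dv_*=r^3dw_*$. Then $|w-v_*|^\gamma=r^\gamma|v-w_*|^\gamma$, while the angle $\frac{w-v_*}{|w-v_*|}\cdot\sigma$ is unchanged because $r>0$. The formulas \eqref{v'v'*} are positively homogeneous of degree one in $(w,v_*)$, so the post-collisional velocities are $r$ times those of $(v,w_*,\sigma)$. Collecting the factor $r^{3+\gamma}$ gives the second identity.

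\textbf{Rotation.} Take $w=Uv$, $v_*=Uw_*$ and $\sigma=U\tau$. Both the Lebesgue measure and the surface measure on $\S^2$ are invariant under orthogonal maps. We have $|U(v-w_*)|=|v-w_*|$, and
\[
\frac{U(v-w_*)}{|v-w_*|}\cdot U\tau=\frac{v-w_*}{|v-w_*|}\cdot\tau,
\]
so the kernel is preserved. From \eqref{v'v'*}, $v'=U\big(\frac{v+w_*}2+\frac{|v-w_*|}2\tau\big)$, and similarly for $v'_*$, so the post-collisional velocities are $U$ applied to those of $(v,w_*,\tau)$. This yields $Q(g,h)(Uv)=Q(g_3,h_3)(v)$.

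The only real obstacle is the non-cutoff case, where $Q$ is not an absolutely convergent integral. We handle it by proving the identities for the truncated kernels $b\mathbf 1_{\theta\ge\epsilon}$; each substitution maps the truncation region to itself, since $\theta$ is preserved in all three cases. We then let $\epsilon\to0$ for smooth $g,h$, using the standard cancellation lemma. Alternatively, we can check the identities in weak form against test functions, where the same changes of variables apply verbatim.
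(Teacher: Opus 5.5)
Your proposal is correct and follows the paper's proof: the same changes of variables $v_*\mapsto v_*+u$, $v_*\mapsto rv_*$ (Jacobian $r^3$, kernel factor $r^\gamma$), and $(\sigma,v_*)\mapsto(U\sigma,Uv_*)$, combined with the observation that the post-collisional velocities transform covariantly. Your additional truncation argument for the non-cutoff case, which works because the deviation angle is preserved by all three maps, supplies a justification that the paper leaves implicit.
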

	\begin{proof}
		We prove it step by step.
		
		$\bullet$ For $g_1(v) = g(v + u), h_1(v) = h(v + u)$, by definition of the collision operator ${Q}$, we have
		\begin{multline*}
			{Q}(g, h)(v + u) = \int_{\R^3\times\S^2}\Big[g\Big(\frac{v + u + v_*}{2} - \frac{|v + u - v_*|}{2}\sigma\Big)h\Big(\frac{v + u + v_*}{2} - \frac{|v + u - v_*|}{2}\sigma\Big) \\- g(v_*)h(v + u)]|v + u - v_*|^\gamma b\Big(\frac{v + u - v_*}{|v + u - v_*|} \cdot \sigma\Big)d\sigma dv_*
		\end{multline*}
		Use change of variable $v_* \to v_* + u$ and by \eqref{v'v'*}, we have 
		\begin{equation*}
		\begin{aligned}
			{Q}(g, h)(v + u) &= \int_{\R^3\times\S^2}[g(v'_* + u)h(v' + u) - g(v_* + u)h(v + u)]|v - v_*|^\gamma b\Big(\frac{v - v_*}{|v - v_*|} \cdot \sigma\Big)d\sigma dv_* \\
			&= \int_{\R^3\times\S^2}[g_1(v'_*)h_1(v') - g_1(v_*)h_1(v)]|v - v_*|^\gamma b\Big(\frac{v - v_*}{|v - v_*|} \cdot \sigma\Big)d\sigma dv_* = {Q}(g_1, h_1)(v).
		\end{aligned}
		\end{equation*}

		$\bullet$ For $g_2(v) = g(rv), h_2(v) = h(rv)$, also by definition of ${Q}$, we have
		\begin{multline*}
			{Q}(g, h)(rv) = \int_{\R^3\times\S^2}[g\Big(\frac{rv + v_*}{2} - \frac{|rv - v_*|}{2}\sigma\Big)h\Big(\frac{rv + v_*}{2} - \frac{|rv - v_*|}{2}\sigma\Big) \\- g(v_*)h(rv)]|rv - v_*|^\gamma b\Big(\frac{rv - v_*}{|rv - v_*|} \cdot \sigma\Big)d\sigma dv_*
		\end{multline*}
		Use change of variable $v_* \to rv_*$ and \eqref{v'v'*}, we have 
		\begin{equation*}
		\begin{aligned}
			{Q}(g, h)(rv) &= \int_{\R^3\times\S^2}[g(rv'_*)h(rv') - g(rv_*)h(rv)]r^\gamma|v - v_*|^\gamma b\Big(\frac{v - v_*}{|v - v_*|} \cdot \sigma\Big)r^3d\sigma dv_* \\
			&= r^{3 + \gamma}\int_{\R^3\times\S^2}[g_2(v'_*)h_2(v') - g_2(v_*)h_2(v)]|v - v_*|^\gamma b\Big(\frac{v - v_*}{|v - v_*|} \cdot \sigma\Big)d\sigma dv_* = r^{3 + \gamma}{Q}(g_2, h_2)(v).
		\end{aligned}
		\end{equation*}

		$\bullet$ For $g_3(v) = g(Uv), h_3(v) = h(Uv)$, by definition of ${Q}$ we have
		\begin{multline*}
			{Q}(g, h)(Uv) = \int_{\R^3\times\S^2}\Big[g\Big(\frac{Uv + v_*}{2} - \frac{|Uv - v_*|}{2}\sigma\Big)h\Big(\frac{Uv + v_*}{2} - \frac{|Uv - v_*|}{2}\sigma\Big) \\- g(v_*)h(Uv)]|Uv - v_*|^\gamma b\Big(\frac{Uv - v_*}{|Uv - v_*|} \cdot \sigma\Big)d\sigma dv_*
		\end{multline*}
		Use change of variable $(\sigma, v_*) \to (U\sigma, Uv_*)$ and \eqref{v'v'*}, we have 
		\begin{equation*}
		\begin{aligned}
			{Q}(g, h)(Uv) &= \int_{\R^3\times\S^2}[g(Uv'_*)h(Uv') - g(Uv_*)h(Uv)]|v - v_*|^\gamma b\Big(\frac{v - v_*}{|v - v_*|} \cdot \sigma\Big)d\sigma dv_* \\
			&= \int_{\R^3\times\S^2}[g_3(v'_*)h_3(v') - g_3(v_*)h_3(v)]|v - v_*|^\gamma b\Big(\frac{v - v_*}{|v - v_*|} \cdot \sigma\Big)d\sigma dv_* = {Q}(g_3, h_3)(v).
		\end{aligned}
		\end{equation*}
		Then we complete the proof of this lemma.
	\end{proof}

%%%%%%%%%%%%%%%%%%%%%%%%%%%%%%%%%%%%%%%%%%%%%%%%%%
We first reduce \eqref{Lineareqf+} to a normalized form in the subquadratic case.
	\subsection{Reduction for $p\in(1,2)$}
	In this case $\Phi(x) = \frac{1}{p}\langle x \rangle^p$ and
	\begin{equation*}
		M(x, v) = m\exp\{-\alpha(\Phi(x) + \frac{1}{2}|v|^2)\},\qquad m,\alpha>0.
	\end{equation*}
	We first set
	\begin{equation}\label{f11}
			M_1(x, v) = M(x, \alpha^{-\frac{1}{2}}v) = me^{-\alpha\Phi(x) - \frac{1}{2}|v|^2}\quad\mbox{and}\quad		f_1(t, x, v) = f(t, x, \alpha^{-\frac{1}{2}}v).
	\end{equation}
	By derivative rules, we have
	\begin{equation*}
		\nabla_vf_1(t, x, v) = \alpha^{-\frac{1}{2}}\nabla_vf(t, x, \alpha^{-\frac{1}{2}}v) \:\:\: \Rightarrow \:\:\: \nabla_vf(t, x, \alpha^{-\frac{1}{2}}v) = \alpha^{\frac{1}{2}}\nabla_vf_1(t, x, v).
	\end{equation*}
Replacing $(t,x,v)$ by $(t,x,\alpha^{-1/2}v)$ in \eqref{Lineareqf+} and using Lemma \ref{TransQ}, we obtain
	\begin{equation*}
		\partial_tf_1 + \alpha^{-\frac{1}{2}}(v \cdot \nabla_xf_1 - \alpha\nabla_x\Phi \cdot \nabla_vf_1) = \alpha^{-\frac{3 + \gamma}{2}}({Q}(M_1, f_1) + {Q}(f_1, M_1)).
	\end{equation*}
	Next we set
	\begin{equation}\label{f222}
		M_2(x,v)=M_1(x,v),\quad f_2(t, x, v) = f_1(\alpha^{\frac{1}{2}}t, x, v),
	\end{equation}
	and derive the equation for $f_2$ is
	\begin{equation}\label{Lineareq<2}
		\partial_tf_2 + v \cdot \nabla_xf_2 - \alpha\nabla_x\Phi \cdot \nabla_vf_2 = \alpha^{-1 - \frac{\gamma}{2}}({Q}(M_2, f_2) + {Q}(f_2, M_2)).
	\end{equation}
	Recall from \eqref{012} that our conservation condition for $f$ is 
	\begin{equation*}
		\int_{\R^6}(1, \Phi(x) + \frac{1}{2}|v|^2, x \wedge v)f(t, x, v)dxdv = 0, \:\:\: \forall t \ge 0,
	\end{equation*}
	and $f_1(t, x, v) = f(t, x, \alpha^{-\frac{1}{2}}v)$. By using change of variables, we have
	\begin{equation*}
	\begin{aligned}
		0 &= \int_{\R^6} f(t, x, v)dxdv = \alpha^{-\frac{3}{2}}\int_{\R^6} f_1(t, x, v)dxdv;\\
		0 &= \int_{\R^6}(\Phi(x) + \frac{1}{2}|v|^2)f(t, x, v)dxdv = \alpha^{-\frac{5}{2}}\int_{\R^6}(\alpha\Phi(x) + \frac{1}{2}|v|^2)f_1(t, x, v)dxdv;\\
		0 &= \int_{\R^6}(x \wedge v)f(t, x, v)dxdv = \alpha^{-2}\int_{\R^6}(x \wedge v)f_1(t, x, v)dxdv.
	\end{aligned}
	\end{equation*}
	In general, we have
	\begin{equation*}
		\int_{\R^6}(1, \alpha\Phi(x) + \frac{1}{2}|v|^2, x \wedge v)f_1(t, x, v)dxdv = 0, \:\:\: \forall t \ge 0.
	\end{equation*}
	Furthermore, we get from \eqref{f222} that
	\begin{equation}\label{f2}
		\int_{\R^6}(1, \alpha\Phi(x) + \frac{1}{2}|v|^2, x \wedge v)f_2(t, x, v)dxdv = 0, \:\:\: \forall t \ge 0.
	\end{equation}
	In \eqref{Lineareq<2}, we denote
	\begin{equation}\label{tildephi<2}
		\widetilde{\Phi} = \widetilde{\Phi}(x) = \alpha\Phi(x) = \frac{\alpha}{p}\langle x \rangle^p, \:\:\: \mu = \mu(v) = (2\pi)^{-3/2}e^{-\frac{1}{2}|v|^2}.
	\end{equation}
	Then $M_2(x, v)$ is in the form of variable separation:
	\begin{equation*}
		M_2(x, v) = (2\pi)^{3/2}me^{-\widetilde{\Phi}(x)}\mu(v).
	\end{equation*}
	We redefine \( f_2 \) and \(M_2\) as \( g \) and $\mathcal{M}$, then from \eqref{Lineareq<2}, the linear equation for $g$ reads like
	\begin{equation}\label{Lineareqg<2}
		\partial_tg + v \cdot \nabla_xg - \nabla_x\widetilde{\Phi} \cdot \nabla_vg = \al^{-1-\f \ga 2}(Q(\mathcal{M},g)+Q(g,\mathcal{M}))=(2\pi)^{3/2}m \al^{-1-\f \gamma 2}e^{-\widetilde{\Phi}}\mathsf{L}g,
	\end{equation}
	where the linear operator
	\begin{equation}\label{sfL}
		\mathsf{L} g := {Q}(\mu, g) + {Q}(g, \mu).
	\end{equation}
	%Then the linear equation for $g$ reads like
	%\begin{equation}
	%	\partial_tg + v \cdot \nabla_xg - \nabla_x\widetilde{\Phi} \cdot \nabla_vg = C_Me^{-\widetilde{\Phi}} L g.
	%\end{equation}
	And from \eqref{f2}, conservation conditions becomes
	\begin{equation}\label{Conserveg<2}
		\int_{\R^6}(1, \widetilde{\Phi}(x) + \frac{1}{2}|v|^2, x \wedge v)g(t, x, v)dxdv = 0, \:\:\: \forall t \ge 0.
	\end{equation}
	Thus, we conclude from \eqref{f11} and \eqref{f222} that if we let $\mathcal{M}(x,v)=M(x,\al^{-\f12}v)$ and \( g(t, x, v) = f(\alpha^{\frac{1}{2}} t, x, \alpha^{-\frac{1}{2}} v) \), then \( g \) satisfies linear equation \eqref{Lineareqg<2} and the conservation laws \eqref{Conserveg<2} with $\widetilde{\Phi}$ defined in \eqref{tildephi<2}.

%	To prove Theorem \ref{Conv}, we find that for any $\lambda \in (\frac{1}{2}, 1)$, there holds
%	\begin{equation*}
%		\|M^{-\lambda}f(t)\|_{L_{x, v}^2} \sim \|M_1^{-\lambda}f_1(t)\|_{L_{x, v}^2} \sim \|e^{\lambda(\widetilde{\Phi}(x) + \frac{1}{2}|v|^2)}g(\alpha^{-\frac{1}{2}}t)\|_{L_{x, v}^2}.
%	\end{equation*}

%%%%%%%%%%%%%%%%%%%%%%%%%%%%%%%%%%%%%%%%%%%%%%%%%%

	\subsection{Reduction for $p\in(2,\infty)$}
	In this case $\Phi(x) = \frac{1}{p}|x|^p$ and
	\begin{equation*}
	M(x, v) = m\exp\{-\alpha(\Phi(x) + \frac{1}{2}|v|^2)+ Rx \cdot v\},\qquad m,\alpha>0,\quad R\in\R^{3\wedge 3}.
	\end{equation*}
We will complete the reduction in several steps.
%%%%%%%%%%%%%%%%%%%%%%%%%%%%%%%%%%%%%%%%%%%%%%%%%%

	\bigskip\noindent$\bullet$ \textit{Translation.}
	Note that
	\begin{equation*}
		-\alpha(\Phi(x) + \frac{1}{2}|v|^2)+ Rx \cdot v = -\frac{\alpha}{2}\Big|v - \frac{Rx}{\alpha}\Big|^2 - \frac{\alpha}{p}|x|^p + \frac{1}{2\alpha}|Rx|^2.
	\end{equation*}
	We set
	\begin{equation}\label{m1}
		M_1(x, v) = M(x, v + \frac{Rx}{\alpha}) = me^{-\frac{\alpha}{2}|v|^2 - \frac{\alpha}{p}|x|^p + \frac{1}{2\alpha}|Rx|^2},
	\end{equation}
	and
	\begin{equation}\label{f1}
		f_1(t, x, v) = f(t, x, v + \frac{Rx}{\alpha}).
	\end{equation}
	By derivative rules we have
	\begin{equation*}
	\begin{gathered}
		\partial_tf_1(t, x, v) = \partial_tf(t, x, v + \frac{Rx}{\alpha}); \:\:\: \nabla_vf_1(t, x, v) = \nabla_vf(t, x, v + \frac{Rx}{\alpha});\\
		\nabla_xf_1(t, x, v) = \nabla_xf(t, x, v + \frac{Rx}{\alpha}) - \frac{R\nabla_v}{\alpha}f(t, x, v + \frac{Rx}{\alpha}),
	\end{gathered}
	\end{equation*}
	which leads to
	\begin{equation*}
	\begin{gathered}
		\partial_tf(t, x, v + \frac{Rx}{\alpha}) = \partial_tf_1(t, x, v); \:\:\: \nabla_vf(t, x, v + \frac{Rx}{\alpha}) = \nabla_vf_1(t, x, v);\\
		\nabla_xf(t, x, v + \frac{Rx}{\alpha}) = \nabla_xf_1(t, x, v) + \frac{R\nabla_v}{\alpha}f_1(t, x, v).
	\end{gathered}
	\end{equation*}
	Replacing $(t,x,v)$ by $(t,x,v+Rx/\alpha)$ in \eqref{Lineareqf+} and applying Lemma \ref{TransQ}, we obtain
	\begin{equation*}
		\partial_tf_1 + (v + \frac{Rx}{\alpha}) \cdot (\nabla_xf_1 + \frac{R\nabla_v}{\alpha}f_1) - |x|^{p - 2}x \cdot \nabla_vf_1 = {Q}(f_1, M_1) + {Q}(M_1, f_1),
	\end{equation*}
	and further derive the equation for $f_1$:
	\begin{multline}\label{Lineareqf1}
		\partial_tf_1 + v \cdot \nabla_xf_1 + \alpha^{-1}Rx \cdot \nabla_xf_1 - \alpha^{-1}Rv \cdot \nabla_vf_1 - (|x|^{p - 2}x + \alpha^{-2}R^2x) \cdot \nabla_vf_1 \\= {Q}(M_1, f_1) + {Q}(f_1, M_1).
	\end{multline}
	Here we have used the facts
	\begin{equation*}
		v \cdot R\nabla_v = -Rv \cdot \nabla_v, \:\:\: Rx \cdot R\nabla_v = -R^2x \cdot \nabla_v
	\end{equation*}
	since $R$ is skew-symmetric.

%%%%%%%%%%%%%%%%%%%%%%%%%%%%%%%%%%%%%%%%%%%%%%%%%%

	\bigskip\noindent$\bullet$ \textit{Scaling transformation}.
	Recall from \eqref{m1} that
	\begin{equation*}
		M_1(x, v) = me^{-\frac{\alpha}{2}|v|^2 - \frac{\alpha}{p}|x|^p + \frac{1}{2\alpha}|Rx|^2},
	\end{equation*}
	we set
	\begin{equation}\label{m22}
		M_2(x, v) = M_1(\alpha^{-\frac{1}{p}}x, \alpha^{-\frac{1}{2}}v) = me^{-\frac{1}{2}|v|^2 - \frac{1}{p}|x|^p + \frac{|Rx|^2}{2\alpha^{1 + 2/p}}},
	\end{equation}
	and
	\begin{equation}\label{f22}
		f_2(t, x, v) = f_1(t, \alpha^{-\frac{1}{p}}x, \alpha^{-\frac{1}{2}}v).
	\end{equation}
	By derivative rules we have
	\begin{equation*}
		\nabla_xf_2(t, x, v) = \alpha^{-\frac{1}{p}}\nabla_xf_1(t, \alpha^{-\frac{1}{p}}x, \alpha^{-\frac{1}{2}}v); \:\:\: \nabla_vf_2(t, x, v) = \alpha^{-\frac{1}{2}}\nabla_vf_1(t, \alpha^{-\frac{1}{p}}x, \alpha^{-\frac{1}{2}}v),
	\end{equation*}
	which leads to
	\begin{equation*}
		\nabla_xf_1(t, \alpha^{-\frac{1}{p}}x, \alpha^{-\frac{1}{2}}v) = \alpha^{\frac{1}{p}}\nabla_xf_2(t, x, v); \:\:\: \nabla_vf_1(t, \alpha^{-\frac{1}{p}}x, \alpha^{-\frac{1}{2}}v) = \alpha^{\frac{1}{2}}\nabla_vf_2(t, x, v).
	\end{equation*}
	Replacing $(t,x,v)$ by $(t,\alpha^{-1/p}x,\alpha^{-1/2}v)$ in \eqref{Lineareqf1} and applying Lemma \ref{TransQ}, we obtain
	\begin{multline*}
		\partial_tf_2 + \alpha^{-\frac{1}{2} + \frac{1}{p}}v \cdot \nabla_xf_2 + \alpha^{-1}Rx \cdot \nabla_xf_2 - \alpha^{-1}Rv \cdot \nabla_vf_2 - (\alpha^{-\frac{1}{2} + \frac{1}{p}}|x|^{p - 2}x + \alpha^{-\frac{3}{2} - \frac{1}{p}}R^2x) \cdot \nabla_vf_2 \\= \alpha^{-\frac{3 + \gamma}{2}}({Q}(M_2, f_2) + {Q}(f_2, M_2)).
	\end{multline*}
	Now we set
	\begin{equation}\label{betas1}
		\beta = \alpha^{-\frac{1}{2} + \frac{1}{p}}, \:\:\: R_1 = \alpha^{-\frac{1}{2} - \frac{1}{p}}R,
	\end{equation}
	then the above equation for $f_2$ becomes
	\begin{multline}\label{Lineareqf2}
		\partial_tf_2 + \beta(v \cdot \nabla_xf_2 + R_1x \cdot \nabla_xf_2 - R_1v \cdot \nabla_vf_2 - (|x|^{p - 2}x + R_1^2x) \cdot \nabla_vf_2) \\= \alpha^{-\frac{3 + \gamma}{2}}({Q}(M_2, f_2) + {Q}(f_2, M_2)).
	\end{multline}

%%%%%%%%%%%%%%%%%%%%%%%%%%%%%%%%%%%%%%%%%%%%%%%%%%

	\bigskip\noindent$\bullet$ \textit{Rotation transformation}.
	Recall from \eqref{m22} that
	\begin{equation*}
		M_2(x, v) = me^{-\frac{1}{2}|v|^2 - \frac{1}{p}|x|^p + \frac{|Rx|^2}{2\alpha^{1 + 2/p}}} = me^{-\frac{1}{2}|v|^2 - \frac{1}{p}|x|^p + \frac{1}{2}|R_1x|^2}.
	\end{equation*}
	There exists an orthogonal matrix $U$ such that
	\begin{equation}\label{defiU}
		U^\tau R_1U = \begin{pmatrix} 0 & r & 0 \\ -r & 0 & 0 \\ 0 & 0 & 0 \end{pmatrix} := \mathsf{R}, \: R_1 = U\mathsf{R}U^\tau \: \text{with} \: r \ge 0.
	\end{equation}
	Note that
	\begin{equation*}
		|R_1Ux|^2 = x^\tau U^\tau R_1^\tau R_1Ux = x^\tau \mathsf{R}^\tau \mathsf{R}x = |\mathsf{R}x|^2.
	\end{equation*}
	we set
	\begin{equation}\label{m33}
		M_3(x, v) = M_2(Ux, Uv) = me^{-\frac{1}{2}|v|^2 - \frac{1}{p}|x|^p + \frac{1}{2}|\mathsf{R}x|^2},
	\end{equation}
	and
	\begin{equation}\label{f33}
		f_3(t, x, v) = f_2(t, Ux, Uv).
	\end{equation}
	By derivative rules we have
	\begin{equation*}
		\nabla_xf_3(t, x, v) = U^\tau\nabla_xf_2(t, Ux, Uv); \:\:\: \nabla_vf_3(t, x, v) = U^\tau\nabla_vf_2(t, Ux, Uv);
	\end{equation*}
	and leads to
	\begin{equation*}
		\nabla_xf_2(t, Ux, Uv) = U\nabla_xf_3(t, x, v); \:\:\: \nabla_vf_2(t, Ux, Uv) = U\nabla_vf_3(t, x, v).
	\end{equation*}
	Replacing $(t,x,v)$ by $(t,Ux,Uv)$ in \eqref{Lineareqf2} and applying Lemma \ref{TransQ}, we obtain
	\begin{multline*}
		\partial_tf_3 + \beta(Uv \cdot U\nabla_xf_3 + R_1Ux \cdot U\nabla_xf_3 - R_1Uv \cdot U\nabla_vf_3 - (|x|^{p - 2}Ux + R_1^2Ux) \cdot U\nabla_vf_3) \\= \alpha^{-\frac{3 + \gamma}{2}}({Q}(M_3, f_3) + {Q}(f_3, M_3)).
	\end{multline*}
	Note the fact that
	\begin{equation*}
		Uv \cdot U\nabla_x = v^\tau U^\tau U\nabla_x = v^\tau\nabla_x = v \cdot \nabla_x,\quad Ux \cdot U\nabla_v = x \cdot \nabla_v
	\end{equation*}
	and
	\begin{equation*}
		\begin{aligned}
		&R_1Ux \cdot U\nabla_x = x^\tau U^\tau R_1^\tau U\nabla_x = x^\tau \mathsf{R}^\tau\nabla_x = \mathsf{R}x \cdot \nabla_x,\\
		&R_1Uv \cdot U\nabla_v = \mathsf{R}v \cdot \nabla_v, R_1^2Ux \cdot U\nabla_v = \mathsf{R}^2x \cdot \nabla_v.
		\end{aligned}
	\end{equation*}
Now we derive the equation for $f_3$:
	\begin{multline}\label{Lineareqf3}
		\partial_tf_3 + \beta(v \cdot \nabla_xf_3 + \mathsf{R}x \cdot \nabla_xf_3 - \mathsf{R}v \cdot \nabla_vf_3 - (|x|^{p - 2}x + \mathsf{R}^2x) \cdot \nabla_vf_3) \\= \alpha^{-\frac{3 + \gamma}{2}}({Q}(M_3, f_3) + {Q}(f_3, M_3)).
	\end{multline}

%%%%%%%%%%%%%%%%%%%%%%%%%%%%%%%%%%%%%%%%%%%%%%%%%%

	\bigskip\noindent$\bullet$ \textit{Normalization}.
	Recall from \eqref{betas1} that $\beta = \alpha^{-\frac{1}{2} + \frac{1}{p}}$, we set
	\begin{equation}\label{f44}
		M_4(x,v)=M_3(x,v),\quad f_4(t, x, v) = f_3(\beta^{-1}t, x, v) = f_3(\alpha^{\frac{1}{2} - \frac{1}{p}}t, x, v),
	\end{equation}
	and derive the equation for $f_4$:
	\begin{multline}\label{Lineareqf4}
		\partial_tf_4 + v \cdot \nabla_xf_4 + \mathsf{R}x \cdot \nabla_xf_4 - \mathsf{R}v \cdot \nabla_vf_4 - (|x|^{p - 2}x + \mathsf{R}^2x) \cdot \nabla_vf_4 \\= \alpha^{-1 - \frac{1}{p} - \frac{\gamma}{2}}({Q}(M_4, f_4) + {Q}(f_4, M_4)).
	\end{multline}

%%%%%%%%%%%%%%%%%%%%%%%%%%%%%%%%%%%%%%%%%%%%%%%%%%
We also track the transformed conserved quantities. Recall from \eqref{012} that the zero-moment condition for $f$ is
	\begin{equation*}
		\int_{\R^6}(1, \Phi(x) + \frac{1}{2}|v|^2, x \wedge v)f(t, x, v)dxdv = 0, \:\:\: \forall t \ge 0,
	\end{equation*}
	and relations \eqref{f1}\eqref{f22}\eqref{f33} and \eqref{f44} between $f, f_1, f_2$ and $ f_3$
	\begin{equation}\label{081}
		\begin{aligned}
		&f_1(t, x, v) = f(t, x, v + \frac{Rx}{\alpha}), \:\:\: f_2(t, x, v) = f_1(t, \alpha^{-\frac{1}{p}}x, \alpha^{-\frac{1}{2}}v), \\
		&\:\:\: f_3(t, x, v) = f_2(t, Ux, Uv),\:\:\: f_4(t, x, v) = f_3(\alpha^{\frac{1}{2} - \frac{1}{p}}t, x, v).
		\end{aligned}
	\end{equation}
	By using change of variables three times, we have
		\begin{equation*}
		\begin{aligned}
			0 &= \int_{\R^6} f(t, x, v)dxdv
			 = \int_{\R^6} f_1(t, x, v)dxdv \\
			&= \alpha^{-\frac{3}{2} - \frac{3}{p}}\int_{\R^6} f_2(t, x, v)dxdv
			 = \alpha^{-\frac{3}{2} - \frac{3}{p}}\int_{\R^6} f_3(t, x, v)dxdv;\\
		0 &= \int_{\R^6}(\Phi(x) + \frac{1}{2}|v|^2)f(t, x, v)dxdv
		= \int_{\R^6}(\frac{1}{p}|x|^p + \frac{1}{2}|v + \frac{Rx}{\alpha}|^2)f_1(t, x, v)dxdv \\
		&= \alpha^{-\frac{5}{2} - \frac{3}{p}}
		\int_{\R^6}(\frac{1}{p}|x|^p + \frac{1}{2}|v + R_1x|^2)f_2(t, x, v)dxdv\\
		&= \alpha^{-\frac{5}{2} - \frac{3}{p}}
		\int_{\R^6}(\frac{1}{p}|x|^p + \frac{1}{2}|Uv + R_1Ux|^2)f_3(t, x, v)dxdv.
	\end{aligned}
	\end{equation*}
	One may check that
	\begin{equation*}
		|Uv + R_1Ux|^2 = |Uv|^2 + 2Uv \cdot R_1Ux + |R_1Ux|^2 = |v|^2 + 2v \cdot \mathsf{R}x + |\mathsf{R}x|^2 = |v + \mathsf{R}x|^2.
	\end{equation*}
	Thus we have
	\begin{equation*}
		0 = \int_{\R^6}(\frac{1}{p}|x|^p + \frac{1}{2}|v + \mathsf{R}x|^2)f_3(t, x, v)dxdv.
	\end{equation*}
	Finally,
	\begin{multline*}
		0 = \int_{\R^6}(x \wedge v)f(t, x, v)dxdv = \int_{\R^6}(x \wedge (v + \frac{Rx}{\alpha}))f_1(t, x, v)dxdv \\
		= \alpha^{-2 - \frac{4}{p}}\int_{\R^6}(x \wedge (v + R_1x))f_2(t, x, v)dxdv
		= \alpha^{-2 - \frac{4}{p}}\int_{\R^6}(Ux \wedge (Uv + R_1Ux))f_3(t, x, v)dxdv,
	\end{multline*}
	and we compute
	\begin{equation*}
	\begin{gathered}
		Ux \wedge Uv = Uxv^\tau U^\tau - Uvx^\tau U^\tau = U(xv^\tau - vx^\tau)U^\tau = U(x \wedge v)U^\tau;\\
		Ux \wedge R_1Ux = Uxx^\tau U^\tau R_1^\tau - R_1Uxx^\tau U^\tau = Uxx^\tau U^\tau U\mathsf{R}^\tau U^\tau - U\mathsf{R}U^\tau Uxx^\tau U^\tau \\= Uxx^\tau \mathsf{R}^\tau U^\tau - U\mathsf{R}xx^\tau U^\tau = U(xx^\tau \mathsf{R}^\tau - \mathsf{R}xx^\tau)U^\tau = U(x \wedge \mathsf{R}x)U^\tau.
	\end{gathered}
	\end{equation*}
	And the third condition becomes
	\begin{equation*}
		0 = \int_{\R^6}(x \wedge (v + \mathsf{R}x))f_3(t, x, v)dxdv.
	\end{equation*}
	In general, we have
	\begin{equation*}
		\int_{\R^6}(1, \frac{1}{p}|x|^p + \frac{1}{2}|v + \mathsf{R}x|^2, x \wedge (v + \mathsf{R}x))f_3(t, x, v)dxdv = 0, \:\:\: \forall t \ge 0.
	\end{equation*}
Since $f_4(t, x, v) = f_3(\alpha^{\frac{1}{2} - \frac{1}{p}}t, x, v)$, at last we get
	\begin{equation}\label{conf4}
		\int_{\R^6}(1, \frac{1}{p}|x|^p + \frac{1}{2}|v + \mathsf{R}x|^2, x \wedge (v + \mathsf{R}x))f_4(t, x, v)dxdv = 0, \:\:\: \forall t \ge 0.
	\end{equation}

%%%%%%%%%%%%%%%%%%%%%%%%%%%%%%%%%%%%%%%%%%%%%%%%%%

	\bigskip\noindent$\bullet$ \textit{Summary of the reduction for the case $p \in (2, \infty)$}.
	We denote
	\begin{equation}\label{tildephi>2}
	\begin{gathered}
		\widetilde{\Phi} = \widetilde{\Phi}(x) = \frac{1}{p}|x|^p - \frac{1}{2}|\mathsf{R}x|^2 = \frac{1}{p}|x|^p - \frac{r^2}{2}(x_1^2 + x_2^2);\\
		\mu = \mu(v) = (2\pi)^{-3/2}e^{-\frac{1}{2}|v|^2}.
	\end{gathered}
	\end{equation}
	Then from \eqref{m33} and \eqref{f44}, we immediately have
	\begin{equation*}
		M_4(x, v) = (2\pi)^{3/2}me^{-\widetilde{\Phi}(x)}\mu(v),
	\end{equation*}
	we emphasize that $M_4(x, v)$ is in the form of variable separation. Besides, note that $\nabla_x\widetilde{\Phi} = |x|^{p - 2}x + \mathsf{R}^2x$, we redefine \( f_4 \) and $M_4$ as \( g \) and $\mathcal{M}$
	in \eqref{Lineareqf4}, then the linear equation for $g$ reads like
	\begin{equation}\label{Lineareqg>2}
		\begin{aligned}
		\partial_tg + v \cdot \nabla_xg + \mathsf{R}x \cdot \nabla_xg - \mathsf{R}v \cdot \nabla_vg - \nabla_x\widetilde{\Phi} \cdot \nabla_vg &= \alpha^{-1 - \frac{1}{p} - \frac{\gamma}{2}}(Q(\mathcal{M},g)+Q(g,\mathcal{M})) \\
		&=(2\pi)^{3/2}m\alpha^{-1 - \frac{1}{p} - \frac{\gamma}{2}}e^{-\widetilde{\Phi}}\mathsf{L}g.
		\end{aligned}
	\end{equation}
	%And denote linear operator
%	\begin{equation*}
	%	\mathsf{L} g := {Q}(\mu, g) + {Q}(g, \mu).
%	\end{equation*}
%	\begin{equation}
	%	\partial_tg + v \cdot \nabla_xg + Rx \cdot \nabla_xg - Rv \cdot \nabla_vg - \nabla_x\widetilde{\Phi} \cdot \nabla_vg = C_Me^{-\widetilde{\Phi}}\mathsf{L}g,
%	\end{equation}
And we write conservation conditions from \eqref{conf4} as
	\begin{equation}\label{Conserveg>2}
		\int_{\R^6}(1, \widetilde{\Phi}(x) + |	\mathsf{R}x|^2 + 	\mathsf{R}x \cdot v + \frac{1}{2}|v|^2, x \wedge (v + \mathsf{R}x))g(t, x, v)dxdv = 0, \:\:\: \forall t \ge 0.
	\end{equation}
Thus, we conclude from \eqref{081} that if we let $\mathcal{M}(x,v)=M(\al^{-\f1p}Ux,\al^{-\f12}Uv+\al^{-\f1p-1}RUx)$ and \( g(t, x, v) = f(\al^{\f12-\f1p}t,\al^{-\f1p}Ux,\al^{-\f12}Uv+\al^{-\f1p-1}RUx)\) with $U$ defined in \eqref{defiU}, then \( g \) satisfies linear equation \eqref{Lineareqg>2} and the conservation laws \eqref{Conserveg>2} with $\widetilde{\Phi}$ defined in \eqref{tildephi>2}.

%	To prove theorem \ref{Conv}, we find that for any $\lambda \in (\frac{1}{2}, 1)$, there holds
%	\begin{equation*}
%		\|M^{-\lambda}f(t)\|_{L_{x, v}^2} \sim \|M_1^{-\lambda}f_1(t)\|_{L_{x, v}^2} \sim \|M_2^{-\lambda}f_2(t)\|_{L_{x, v}^2} \sim \|M_3^{-\lambda}f_3(t)\|_{L_{x, v}^2} \sim \|e^{\lambda(\widetilde{\Phi}(x) + \frac{1}{2}|v|^2)}g(\alpha^{-\frac{1}{2} + \frac{1}{p}}t)\|_{L_{x, v}^2}.
%	\end{equation*}

%%%%%%%%%%%%%%%%%%%%%%%%%%%%%%%%%%%%%%%%%%%%%%%%%%

	\subsection{Summary of the reduction}\label{2000}
	Combining \eqref{Lineareqg<2} and \eqref{Lineareqg>2}, define
	\begin{equation}\label{gform}
		\begin{aligned}
			&g(t, x, v) = f(\alpha^{\frac{1}{2}} t, x, \alpha^{-\frac{1}{2}} v),~~p\in(1,2);\\
			&g(t, x, v) = f(\al^{\f12-\f1p}t,\al^{-\f1p}Ux,\al^{-\f12}Uv+\al^{-\f1p-1}RUx),~~p\in(2,\infty).
		\end{aligned}
	\end{equation}
Then $g$ satisfies
	\begin{equation}\label{Lineareqg}
		\partial_tg + v \cdot \nabla_xg + \mathsf{R}x \cdot \nabla_xg - \mathsf{R}v \cdot \nabla_vg - \nabla_x\widetilde{\Phi} \cdot \nabla_vg = C_Me^{-\widetilde{\Phi}}\mathsf{L}g,
	\end{equation}
	where $\mathsf{L}g=Q(\mu,g)+Q(g,\mu)$. The transformed potential $\widetilde{\Phi}$ is defined by
		\begin{equation}\label{013}
			\widetilde{\Phi}(x) = \frac{\alpha}{p}\langle x \rangle^p, \quad p \in (1, 2); \qquad \widetilde{\Phi}(x) = \frac{1}{p}|x|^p - \frac{1}{2}|\mathsf{R}x|^2, \quad p \in (2, \infty).
	\end{equation}
	with
			\begin{equation}\label{Rform}
			\mathsf{R} = r\begin{pmatrix} 0 & 1 & 0 \\ -1 & 0 & 0 \\ 0 & 0 & 0 \end{pmatrix},~~r=0~~\mbox{ for the case }p \in (1, 2).
		\end{equation}
The constant $C_M$ is
\begin{equation}\label{083}
C_M=(2\pi)^{3/2}m \al^{-1-\f \gamma 2},~~p\in(1,2);\quad C_M=(2\pi)^{3/2}m\alpha^{-1 - \frac{1}{p} - \frac{\gamma}{2}},~~p\in(2,\infty).
\end{equation} 
	%Where $C_M=$ is a constant determined by entropy-invariant solution $M(x, v)$, and
%Function $\widetilde{\Phi}(x)$ is defined differently in different cases: 
	The transformed conservation conditions follow from \eqref{Conserveg<2} and \eqref{Conserveg>2}. In both cases,
	\begin{equation}\label{Conserveg}
		\int_{\R^6}(1, \widetilde{\Phi}(x) + |	\mathsf{R}x|^2 + 	\mathsf{R}x \cdot v + \frac{1}{2}|v|^2, x \wedge (v + 	\mathsf{R}x))g(t, x, v)dxdv = 0, \:\:\: \forall t \ge 0.
	\end{equation}
	
\bigskip The same invertible coordinate transformation is applied to $M$, $\mathcal M$, $f$, and $g$; its Jacobian and the equivalence constants of the weighted norms depend only on the equilibrium parameters. Consequently, the original zero-moment problem is equivalent to the normalized problem stated in Theorem \ref{082} and proved in Section 7.

%%%%%%%%%%%%%%%%%%%%%%%%%%%%%%%%%%%%%%%%%%%%%%%%%%

\section{Spatially degenerate microscopic coercivity and far-field weight transfer}
The velocity-space coercivity and upper bounds for the linearized collision operator are standard inputs. The new difficulty is that the normalized equation multiplies this dissipation by $e^{-\widetilde\Phi(x)}$, which becomes arbitrarily small in the far field. We first record the collision estimates in the precise weighted form used below and then prove the weight-transfer inequalities that compensate for this spatial degeneracy.

\subsection{The linearized collision operator}
Recall that the linear Boltzmann operator is defined in \eqref{sfL}. We use the standard coercivity and upper-bound estimates for cutoff and non-cutoff kernels. For a smooth function $g=g(x,v)$, define its macroscopic part by
\begin{equation}\label{Defig0}
	\begin{aligned}
		g_0 &:= \Big(\int_{\R^3} gdv\Big)\mu + \sum_{i = 1}^3\Big(\int_{\R^3} v_igdv\Big)v_i\mu + \Big(\int_{\R^3}\frac{|v|^2 - 3}{6}gdv\Big)(|v|^2 - 3)\mu \\
		&:= a(t, x)\mu + v \cdot b(t, x)\mu + (|v|^2 - 3)c(t, x)\mu.
	\end{aligned}
\end{equation}
The remainder $g_1:=g-g_0$ is the microscopic part. Throughout this section, $\normm{\cdot}$ denotes a collision-dissipation norm. In the cutoff case it is equivalent to $\|\cdot\|_{L^2_{\gamma/2}}$. In the non-cutoff case we use the standard anisotropic norm, which is equivalent, up to the usual choice of polynomial weights, to
\[
\|h\|_{L^2_{\gamma/2+s}}
+\|h\|_{H^s_{\gamma/2}}
+\|(-\Delta_{\mathbb S^2})^{s/2}h\|_{L^2_{\gamma/2}}.
\]
Only the coercivity and upper-bound properties stated below are used in the proof. The following estimates are standard for the collision kernels in $\mathbf{(A1)}$--$\mathbf{(A3)}$; see \cite{alexandre_global_2011,gressman_global_2011,duan_global_2021,MR2013332,MR2209761,MR2366140}.

\begin{lemma}\label{UpperBoundL1}
	For every smooth function $g=g(v)$, there exists a constant $c>0$ such that
	\begin{equation*}
		(\mathsf{L}g, e^{\frac{1}{2}|v|^2}g)_{L_v^2} \le -c\normm{e^{\frac{1}{4}|v|^2}g_1}^2.
	\end{equation*}
\end{lemma}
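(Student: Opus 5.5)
The plan is to reduce to the standard weighted coercivity of the linearized Boltzmann operator around $\mu$, written in the symmetric frame. Set $h:=\mu^{-1/2}g$, so that $e^{\frac12|v|^2}g=(2\pi)^{-3/2}\mu^{-1}g$ and
\[
(\mathsf Lg,e^{\frac12|v|^2}g)_{L^2_v}=(2\pi)^{-3/2}\big(\mu^{-1/2}\mathsf L(\mu^{1/2}h),h\big)_{L^2_v}=:-(2\pi)^{-3/2}(\mathcal Lh,h)_{L^2_v}.
\]
Here $\mathcal L$ is the classical self-adjoint, nonnegative linearized operator. Its null space is $\operatorname{span}\{\mu^{1/2},v_i\mu^{1/2},|v|^2\mu^{1/2}\}$, which is exactly the range of the projection $g\mapsto g_0$ in \eqref{Defig0} after conjugation by $\mu^{1/2}$.

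I will first check that \eqref{Defig0} is the $L^2(\mu^{-1}dv)$-orthogonal projection onto $\operatorname{span}\{\mu,v\mu,(|v|^2-3)\mu\}$. The functions $1,v_i,(|v|^2-3)$ are orthogonal in $L^2(\mu\,dv)$ with norms $1,1,6$, which matches the coefficient $1/6$ in $c$. Consequently $\mu^{-1/2}g_0=\mathbf P h$, where $\mathbf P$ is the orthogonal projection onto $\ker\mathcal L$ in $L^2_v$, and $\mu^{-1/2}g_1=(\mathbb I-\mathbf P)h$.

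Next I will invoke the standard coercivity estimate, cited from the references listed before the lemma. In the cutoff case it is Grad's splitting $\mathcal L=\nu-K$ with a spectral gap, giving $(\mathcal Lh,h)\ge c_0\|(\mathbb I-\mathbf P)h\|^2_{L^2_{\gamma/2}}$. In the non-cutoff case it is the estimate of Alexandre--Morimoto--Ukai--Xu--Yang and Gressman--Strain, giving $(\mathcal Lh,h)\ge c_0\normm{(\mathbb I-\mathbf P)h}^2$ with the anisotropic norm. Since $\mathcal L\mathbf P=0$ and $\mathcal L$ is self-adjoint, the cross terms vanish, so $(\mathcal Lh,h)=(\mathcal L(\mathbb I-\mathbf P)h,(\mathbb I-\mathbf P)h)$. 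Finally $(\mathbb I-\mathbf P)h=\mu^{-1/2}g_1=(2\pi)^{3/4}e^{\frac14|v|^2}g_1$, which gives the claim with $c=(2\pi)^{-3/2}(2\pi)^{3/2}c_0=c_0$ after tracking the constant. The norm $\normm{\cdot}$ is simply taken to be the dissipation norm appearing in those theorems.

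The only step needing care is matching conventions. I must confirm that $\normm{\cdot}$ as defined in the paper is equivalent to the dissipation norm in the cited coercivity theorems, which is true for the stated anisotropic norm. I must also check that $\mathsf L$ is defined with $\mu$ normalized as in \eqref{MBd}, so that $\mathsf L(\mu\cdot\text{collision invariant})=0$. I expect no genuine obstacle here: the lemma is a restatement of known coercivity in the weighted, non-symmetric frame, so the proof consists of this conjugation argument plus the citation.
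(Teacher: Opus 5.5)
Your proposal is correct and is essentially the paper's approach. The paper gives no separate proof and only cites the standard cutoff and non-cutoff coercivity estimates. Your conjugation $h=\mu^{-1/2}g$ is exactly what makes that citation apply: it identifies $g_0$ with $\mu^{1/2}\mathbf P h$, and your normalization check, including the factor $1/6$ and the constant bookkeeping $c=c_0$, is accurate.
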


Additionally, we have the following upper bound estimates:
\begin{lemma}\label{UpperBoundL2}
	For smooth functions $g=g(v)$ and $f=f(v)$ and for $\lambda\in(1/2,1)$, there exist constants $c=c(\lambda)>0$ and $C=C(\lambda)>0$ such that
	\begin{equation*}
		(\mathsf{L}g, e^{\lambda|v|^2}g)_{L_v^2} \le -c\normm{e^{\frac{\lambda}{2}|v|^2}g}^2 + C\|g\|_{L_v^2}^2,\quad |(\mathsf{L}g, e^{\lambda|v|^2}f)_{L_v^2}|\le C\normm{e^{\frac{\lambda}{2}|v|^2}g}\normm{e^{\frac{\lambda}{2}|v|^2} f}.
	\end{equation*}
\end{lemma}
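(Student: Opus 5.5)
The plan is to reduce both estimates of Lemma \ref{UpperBoundL2} to the standard unweighted ($\mu^{-1/2}$-weighted) theory by conjugating with the Gaussian weight. I would write $g=\mu^{\lambda}h$, so that $e^{\frac{\lambda}{2}|v|^2}g$ is a constant multiple of $h$ and $(\mathsf Lg,e^{\lambda|v|^2}g)_{L^2_v}$ becomes a constant multiple of $(\mu^{-\lambda}\mathsf L(\mu^\lambda h),h)_{L^2_v}$. The operator is then split as $\mathsf L=\mathsf L_{\rm gain}+\mathsf L_{\rm loss}$:
\[
Q(g,\mu)=-\nu(v)g\quad\text{(cutoff)},
\qquad
Q(\mu,g)=Q^+(\mu,g)-\Big(\int B\mu_*\Big)g .
\]
For non-cutoff kernels I would use the standard decomposition into the $Q(\mu,\cdot)$ part, which produces the anisotropic dissipation, plus lower-order terms. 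Everything below is done first in the cutoff case and then adapted.

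\textbf{Step 1: coercive part.} The term $(\mu^{-\lambda}Q(\mu,\mu^\lambda h),h)$ is rewritten via the pre/post-collisional change of variables as
\[
-\tfrac12\iint B\mu_*\big(\mu'^{\lambda}h'-\mu^{\lambda}h\big)^2\mu^{-2\lambda}\,(\dots)
\]
plus a commutator term. When $\lambda<1$, energy conservation gives the pointwise bound $\mu_*\mu^{-2\lambda}\mu'^{2\lambda}\lesssim\mu_*^{1-\lambda}\mu'^{\lambda}_*\cdots$, which yields decay in $v_*$. This produces $-c\normm{h}^2$ together with a remainder of the form $\int K_\lambda(v,v_*)h h_*$, whose kernel decays like a Gaussian in $v_*$ and grows at most polynomially in $v$. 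This is the classical computation behind the weighted estimates in \cite{gressman_global_2011,duan_global_2021}. For $\lambda<1$ the weight $e^{\lambda|v|^2}$ is strictly sub-$\mu^{-1}$, which is what keeps the constants finite.

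\textbf{Step 2: the compact part.} The terms coming from $Q(\mu^\lambda h,\mu)$ and from the gain part carry a $\mu_*^{1-\lambda}$-type factor. They are therefore bounded by $\eta\normm{h}^2+C_\eta\|h\mathbf 1_{|v|\le R_\eta}\|_{L^2_v}^2$: split into large and small velocities, using that the kernel is small for $|v|\ge R$ relative to the dissipation weight $\langle v\rangle^{\gamma}$. Since $h=e^{\frac{\lambda}{2}|v|^2}g$ up to a constant, the truncated $L^2$ norm of $h$ is bounded by $C_R\|g\|_{L^2_v}$. Choosing $\eta$ small then gives the first inequality.

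\textbf{Step 3: the upper bound.} The second inequality is the trilinear-type estimate $|(\mu^{-\lambda}\mathsf L(\mu^\lambda h),k)|\lesssim\normm{h}\normm{k}$, with $k=\mu^{-\lambda}f$ up to a constant. For cutoff kernels it follows from Cauchy--Schwarz applied to the collision integral with the same Gaussian bookkeeping. For non-cutoff kernels I would invoke the standard upper bound $|(Q(\mu^{a},h),k)|\lesssim\normm{h}\normm{k}$ and $|(Q(h,\mu^{a}),k)|\lesssim\|h\|_{L^2_{\rm loc}}\normm{k}$ from \cite{alexandre_global_2011,gressman_global_2011}.

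\textbf{Main obstacle.} The hard part is the non-cutoff case of Step 1. There the conjugation $\mu^{-\lambda}Q(\mu,\mu^\lambda\cdot)$ produces a commutator with the singular angular kernel. I would control it by a Taylor expansion of $\mu^{\lambda}$ along $v'-v$, which gains $\theta^2$ against $\theta^{-2-2s}$, so that the commutator is of lower order $\normm{\cdot}_{L^2_{\gamma/2+s-\epsilon}}$. Interpolation then absorbs it into $\normm{h}^2$ plus $C\|g\|^2_{L^2}$.
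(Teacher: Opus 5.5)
The paper gives no proof of this lemma; it quotes it as standard from the literature. Your architecture is fine in the cutoff case: conjugate by $\mu^{\lambda}$, split into coercive and compact parts, and use $\lambda<1$ for Gaussian decay in $v_*$. In the non-cutoff case, however, the step you single out as the main obstacle rests on a false claim. The commutator created by the conjugation is \emph{not} of lower order, so interpolation cannot absorb it.

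Put $\phi=\mu^{1-\lambda}$ and $\psi=\mu^{\lambda}$, and write $\int B(\cdot)$ for $\int_{\R^6\times\S^2}B\,(\cdot)\,d\sigma\,dv_*\,dv$. Since $\mu\mu_*=\mu'\mu'_*$, one has $\mu_*\mu^{\lambda}(\mu')^{-\lambda}=\phi_*\psi'_*$. The pre/post-collisional change of variables then gives exactly
\[
\big(\mu^{-\lambda}Q(\mu,\mu^{\lambda}h),h\big)_{L^2_v}-\big(Q(\mu,h),h\big)_{L^2_v}
=\int B\,\phi_*(\psi'_*-\psi_*)\,hh'
=-\frac12\int B\,P\,hh',\qquad P:=(\phi'_*-\phi_*)(\psi'_*-\psi_*)\ge0 .
\]
This formula already contains the symmetry cancellation of the first-order Taylor term. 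Even so, the best bound is $P\lesssim\min\{1,|v-v_*|^2\theta^2\}$, because $|v'_*-v_*|=|v-v_*|\sin\frac\theta2$. Then $\int_{\S^2}b\,\min\{1,|v-v_*|^2\theta^2\}\,d\sigma\approx|v-v_*|^{2s}$ for $|v-v_*|\ge1$. The $\theta^2$ you gain is paid back by $|v-v_*|^2$, so the commutator carries exactly the weight $\langle v\rangle^{\gamma+2s}$ of $\normm{\cdot}$.

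This is sharp. For a nonnegative smooth bump $h$ of unit width centred at $|v|=R$, the right-hand side is negative and of size $R^{\gamma+2s}\|h\|_{L^2}^2$ as $R\to\infty$. That is comparable to $\normm{h}^2$, so no bound by $\|h\|^2_{L^2_{\gamma/2+s-\epsilon}}$ can hold. Expanding in $v$ instead is worse, since $(\mu/\mu')^{\lambda}=e^{\frac\lambda2(|v'|^2-|v|^2)}$ has an exponent of size $|v|\,|v-v_*|\,\theta$.

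Relatedly, the factor in your Step 1 is $\mu_*\mu^{-2\lambda}(\mu')^{2\lambda}=\mu_*^{1+2\lambda}(\mu'_*)^{-2\lambda}$, which grows like $e^{\lambda|v'_*|^2}$ and is not dominated by $\mu_*^{1-\lambda}(\mu'_*)^{\lambda}$. Expanding $(\mu'^{\lambda}h'-\mu^{\lambda}h)^2\mu^{-2\lambda}$ is the wrong splitting; you must split symmetrically in $h$.

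What saves the lemma is sign, not smallness. Writing $hh'=\frac12(h^2+h'^2)-\frac12(h'-h)^2$ and symmetrizing gives the exact identity
\[
\big(\mu^{-\lambda}Q(\mu,\mu^{\lambda}h),h\big)_{L^2_v}
=-\frac14\int B\,(\phi_*\psi'_*+\phi'_*\psi_*)(h'-h)^2-\frac12\int B\,P\,h^2+\frac12\int B\,(\mu'_*-\mu_*)h^2 .
\]
Each term is then handled as follows.
\begin{itemize}
\item By AM--GM, $\phi_*\psi'_*+\phi'_*\psi_*\ge2\sqrt{\mu_*\mu'_*}$.
\item Writing $\phi,\psi$ as integrals of their derivatives in $|v|^2$ gives $P\ge4\lambda(1-\lambda)(\sqrt{\mu'_*}-\sqrt{\mu_*})^2$.
\item The last term is $O(\|h\|^2_{L^2_{\gamma/2}})$ by the cancellation lemma.
\end{itemize}
Combined with $\mu_*\le2\sqrt{\mu_*\mu'_*}+(\sqrt{\mu'_*}-\sqrt{\mu_*})^2$, this bounds the form by
\[
-c_\lambda\Big(\int B\mu_*(h'-h)^2+\int B(\sqrt{\mu'_*}-\sqrt{\mu_*})^2h^2\Big)+C\|h\|^2_{L^2_{\gamma/2}},
\]
with $c_\lambda\sim\lambda(1-\lambda)$. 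The bracket is the standard anisotropic quantity equivalent to $\normm{h}^2$. This is precisely where both exponents $\lambda$ and $1-\lambda$ being positive is used.

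Two smaller corrections. First, $Q(g,\mu)\ne-\nu g$; $-\nu g$ is the loss part of $Q(\mu,g)$. Second, in Step 3 the conjugated operator is $\mu^{-\lambda}\mathsf L(\mu^{\lambda}h)=\Gamma_\lambda(\mu^{1-\lambda},h)+\Gamma_\lambda(h,\mu^{1-\lambda})$, where $\Gamma_\lambda(f,g):=\int B\,\mu_*^{\lambda}(f'_*g'-f_*g)\,d\sigma\,dv_*$. It is not $Q(\mu^a,h)$ or $Q(h,\mu^a)$. What you need is the standard trilinear $\Gamma$-estimate with $\mu^{1/2}$ replaced by $\mu^{\lambda}$, with a Gaussian-weighted (not local) norm of $h$ in the second term.
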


\begin{lemma}\label{UpperBoundL0}
	For every smooth function $g=g(v)$ and every $\delta,l>0$, there exists $C(\delta,l)>0$ such that
	\begin{equation*}
		|(\mathsf{L}g, \<v\>^l)_{L_v^2}| \le C(\delta, l)\|e^{\delta|v|^2}g\|_{L_v^2}.
	\end{equation*}
\end{lemma}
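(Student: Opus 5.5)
Since $\mathsf L$ is bilinear in $(\mu,g)$ and acts only in $v$, I will prove the estimate of Lemma \ref{UpperBoundL0} by moving all moment weight onto the gain/loss structure through the weak formulation, and then absorbing polynomial growth into an arbitrarily small Gaussian. Concretely, I write
\[
(\mathsf Lg,\langle v\rangle^l)_{L^2_v}=\int_{\R^3}\big(Q(\mu,g)+Q(g,\mu)\big)\langle v\rangle^l\,dv .
\]
For each of the two terms I use the pre--post collisional change of variables:
\[
\int Q(F,G)\varphi\,dv=\int_{\R^3\times\R^3\times\S^2}B\,F_*G\,(\varphi'-\varphi)\,d\sigma\,dv_*\,dv ,
\]
with $\varphi=\langle v\rangle^l$.

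The key step is a bound on the angular integral $K(v,v_*):=\int_{\S^2}b(\cos\theta)\,(\varphi'-\varphi)\,d\sigma$. In the cutoff case I will simply bound $|\varphi'-\varphi|\lesssim\langle v\rangle^l+\langle v_*\rangle^l$, using $|v'|^2\le|v|^2+|v_*|^2$. In the non-cutoff case I will use the standard cancellation (Taylor expansion to second order at $v$, with the first-order term symmetrized in $\sigma$ around $\widehat z$). With $|v'-v|=|v-v_*|\sin(\theta/2)$, this gives
\[
|K|\lesssim|v-v_*|^2\sup\nolimits_{w}|\nabla^2\varphi(w)|\lesssim|v-v_*|^2(\langle v\rangle^{l}+\langle v_*\rangle^{l}),
\]
because $\int\theta^2 b\,d\sigma<\infty$ for $s<1$ and $w$ lies on the segment between points of modulus at most $(|v|^2+|v_*|^2)^{1/2}$. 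In either case,
\[
|K|\lesssim\langle v\rangle^{l+2}\langle v_*\rangle^{l+2}.
\]

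Multiplying by $|v-v_*|^\gamma$, I get
\[
|(\mathsf Lg,\langle v\rangle^l)|\lesssim\int\!\!\int |v-v_*|^\gamma\langle v\rangle^{l+2}\langle v_*\rangle^{l+2}\big(\mu_*|g|+|g_*|\mu\big)\,dv\,dv_* .
\]
When $\gamma\ge0$, I bound $|v-v_*|^\gamma\le\langle v\rangle^\gamma\langle v_*\rangle^\gamma$. The integral then factors into a finite Gaussian moment of $\mu$ times $\int\langle v\rangle^{l+4}|g|\,dv$. By Cauchy--Schwarz,
\[
\int\langle v\rangle^{l+4}|g|\,dv\le\|\langle v\rangle^{l+4}e^{-\delta|v|^2}\|_{L^2}\,\|e^{\delta|v|^2}g\|_{L^2}=C(\delta,l)\,\|e^{\delta|v|^2}g\|_{L^2_v}.
\]

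The expected obstacle is the soft-potential range $\gamma\in(-3,0)$, where $|v-v_*|^\gamma$ is singular. There I will use the singularity only in the variable paired with the Gaussian. For the term carrying $\mu_*|g|$, I use
\[
\int|v-v_*|^\gamma\langle v_*\rangle^{l+2}\mu_*\,dv_*\lesssim\langle v\rangle^{\gamma}\le1,
\]
which holds because $\gamma>-3$ makes the singularity locally integrable. For the term carrying $|g_*|\mu$, the integral $\int|v-v_*|^\gamma\langle v\rangle^{l+2}\mu\,dv$ is likewise bounded uniformly in $v_*$. Both terms therefore reduce to $\int\langle v\rangle^{l+2}|g|\,dv$, which is controlled by the same Cauchy--Schwarz step.

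The only delicate point is justifying the second-order cancellation when $\gamma<0$ is combined with the non-cutoff singularity. This works because the factor $|v-v_*|^2$ coming from the Taylor expansion offsets any loss, leaving the effective kernel $|v-v_*|^{\gamma+2}$, which is even better behaved.
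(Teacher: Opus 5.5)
Your argument is correct, but it takes a different route from the paper. The paper only sketches the non-cutoff case. It invokes the phase-space and frequency decompositions behind the sharp trilinear upper bounds of \cite{he_sharp_2018}: weight is shifted onto the first two slots of $(Q(\cdot,\cdot),\cdot)_{L^2_v}$ and all $2s$ derivatives onto the test function $\langle v\rangle^l$, so the lemma becomes an instance of a general functional estimate.

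You instead use that the test function is explicit and smooth. The weak formulation plus a Taylor expansion in $v'-v$ reduces everything to a pointwise bound on $K(v,v_*)$, and the kinetic singularity is then absorbed using only $\gamma>-3$. This is self-contained and treats cutoff/non-cutoff and hard/soft potentials uniformly. It in fact yields the stronger bound $|(\mathsf Lg,\langle v\rangle^l)_{L^2_v}|\lesssim_l\|\langle v\rangle^{l+4}g\|_{L^1_v}$, from which the Gaussian-weighted estimate follows by Cauchy--Schwarz for every $\delta>0$. The paper's heavier machinery buys nothing extra for this lemma beyond consistency with the anisotropic framework used elsewhere.

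There are two small points to fix.

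\textbf{The first-order term.} The symmetrized first-order term does not vanish. Since $v'-v=\frac{|v-v_*|}{2}(\sigma-\widehat z)$, its $\sigma$-average is $\frac{|v-v_*|}{2}\,\widehat z\int_{\mathbb S^2}b(\cos\theta)(\cos\theta-1)\,d\sigma$. So $K$ also contains a term of size $|v-v_*|\,|\nabla\varphi(v)|\int b(1-\cos\theta)\,d\sigma\lesssim|v-v_*|\langle v\rangle^{l-1}$. The correct bound is therefore $|K|\lesssim|v-v_*|\langle v\rangle^{l-1}+|v-v_*|^2(\langle v\rangle^{l}+\langle v_*\rangle^{l})$, and the ``effective kernel'' is $|v-v_*|^{\gamma+1}$, not $|v-v_*|^{\gamma+2}$. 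This is still dominated by $\langle v\rangle^{l+2}\langle v_*\rangle^{l+2}$, so nothing downstream changes.

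\textbf{Meaning of the weak formulation.} For $s\ge 1/2$ the triple integral in the weak formulation is not absolutely convergent, so $K$ must be read as the $\sigma$-integral taken first. The identity is justified in the standard way: truncate to $b\mathbf 1_{\{\theta\ge\epsilon\}}$, apply the cutoff identity, and pass to the limit using the $\epsilon$-uniform bound on $K_\epsilon$ that your Taylor argument provides.
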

\begin{proof}
We provide only a concise sketch of the proof for the non‑cutoff case. By definition, we have  
\[
|(\mathsf{L}g, \langle v\rangle^l)_{L_v^2}| \le |(Q(\mu,g),\langle v\rangle^l)_{L^2_v}| + |(Q(g,\mu),\langle v\rangle^l)_{L^2_v}|.
\]
Applying the phase and frequency decompositions from \cite{he_sharp_2018}, we may increase the weights at the first and second positions so as to reduce the weight at the third position, and assign all $2s$‑order derivatives to the third position. Consequently, for the first and second terms we obtain  
\[
\begin{aligned}
|(Q(\mu,g),\langle v\rangle^l)_{L^2_v}| &\lesssim C_l \,\|\mu^{1/2}\|_{L^2} \, \|\langle v\rangle^{l+10} g\|_{L^2} \, \|\langle v\rangle^{-10}\|_{H^{2s}_{\gamma+2s}} \\
&\lesssim C(\delta,l) \, \| e^{\delta |v|^2} g\|_{L^2}, \\[4pt]
|(Q(g,\mu),\langle v\rangle^l)_{L^2_v}| &\lesssim C_{\delta,l} \, \|e^{\f\delta 2|v|^2} g\| \, \|\mu^{1/2}\|_{L^2} \, \|\langle v\rangle^{-10}\|_{H^{2s}_{\gamma+2s}} \\
&\lesssim C(\delta,l) \, \| e^{\delta |v|^2} g\|_{L^2},
\end{aligned}
\]  
which yields the desired estimate.
\end{proof}

%%%%%%%%%%%%%%%%%%%%%%%%%%%%%%%%%%%%%%%%%%%%%%%%%%
\medskip
\subsection{Transfer of velocity weights to confinement weights}
We first introduce the notation
\begin{equation*}
	\mathsf{T} := v \cdot \nabla_x + \mathsf{R}x \cdot \nabla_x - \mathsf{R}v \cdot \nabla_v - \nabla_x\widetilde{\Phi} \cdot \nabla_v.
\end{equation*}
The following weight-transfer estimates are the main result of this subsection.
\begin{lemma}[The case $p \in (1, 2)$]\label{WeightTransferLemma1}
	For every $\lambda\in(1/2,1)$ and $0<\delta<1/3-1/(p+2)$, there exist constants $N\ge1$ and $C_1>0$ such that
	\begin{equation}\label{WeightTransferEq1}
		N\langle v \rangle^{-6}e^{(2\lambda - 1)\widetilde{\Phi} + \lambda|v|^2} - \mathsf{T}(x \cdot v)e^{2(\lambda - \frac{1}{p + 2} - \delta)(\widetilde{\Phi} + \frac{1}{2}|v|^2)} \ge C_1e^{2(\lambda - \frac{1}{p + 2} - \delta)(\widetilde{\Phi} + \frac{1}{2}|v|^2)}.
	\end{equation}
\end{lemma}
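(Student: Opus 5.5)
The plan is to compute $\mathsf T(x\cdot v)$ explicitly, divide \eqref{WeightTransferEq1} by the common weight on the right-hand side, and reduce it to a pointwise inequality in $(x,v)$. That inequality will then be checked separately on the region where the confinement term helps and on the region where velocity dominates.

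\textbf{Step 1: explicit form of $\mathsf T(x\cdot v)$.} For $p\in(1,2)$ we have $\mathsf R=0$, so $\mathsf T=v\cdot\nabla_x-\nabla_x\widetilde\Phi\cdot\nabla_v$. Hence
\[
\mathsf T(x\cdot v)=|v|^2-x\cdot\nabla_x\widetilde\Phi .
\]
With $\widetilde\Phi=\frac{\alpha}{p}\langle x\rangle^p$ from \eqref{013},
\[
x\cdot\nabla_x\widetilde\Phi=\alpha\langle x\rangle^{p-2}|x|^2=\alpha\big(\langle x\rangle^p-\langle x\rangle^{p-2}\big)\ge p\widetilde\Phi-\alpha .
\]

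\textbf{Step 2: reduction to a pointwise inequality.} Set $\kappa:=2(\lambda-\frac1{p+2}-\delta)$. Dividing \eqref{WeightTransferEq1} by $e^{\kappa(\widetilde\Phi+\frac12|v|^2)}$, it becomes
\[
N\langle v\rangle^{-6}e^{-a\widetilde\Phi+b|v|^2}+x\cdot\nabla_x\widetilde\Phi-|v|^2\ge C_1,
\]
where
\[
a:=1+\kappa-2\lambda=\frac{p}{p+2}-2\delta,\qquad b:=\lambda-\frac{\kappa}{2}=\frac1{p+2}+\delta .
\]
The restriction on $\delta$ gives $a>0$ and $b>0$.

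\textbf{Step 3: choice of $C_1$ and the case split.} Fix, for instance, $C_1=1$ and split $\mathbb R^6$ into two regions.
\begin{itemize}
\item On $\{x\cdot\nabla_x\widetilde\Phi\ge |v|^2+C_1\}$ the inequality holds trivially, since the first term is nonnegative.
\item On the complement, Step 1 gives $p\widetilde\Phi\le |v|^2+C_1+\alpha$. Because $a>0$, this yields
\[
-a\widetilde\Phi+b|v|^2\ge \Big(b-\frac ap\Big)|v|^2-C=\delta\Big(1+\frac2p\Big)|v|^2-C .
\]
Here the key cancellation is that the $\frac1{p+2}$ parts of $b$ and $a/p$ cancel exactly, leaving a strictly positive multiple of $\delta$.
\end{itemize}

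\textbf{Step 4: closing the bad region.} On the complement it therefore suffices to show
\[
Ne^{-C}\langle v\rangle^{-6}e^{\delta(1+2/p)|v|^2}\ge |v|^2+C_1 .
\]
Since a Gaussian growth beats any polynomial, $\inf_v \langle v\rangle^{-6}e^{\delta(1+2/p)|v|^2}/(|v|^2+C_1)$ is positive. Choosing $N$ large, depending on $\lambda,\delta,p,\alpha$, closes the estimate.

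The only delicate point is Step 3: verifying the exponent bookkeeping, namely the sign of $a$ and the exact value $b-a/p=\delta(1+2/p)$. This is what makes the loss $\delta>0$ necessary, and it shows that the rate $\frac{1}{p+2}$ is precisely the exchange rate between velocity weight and confinement weight.
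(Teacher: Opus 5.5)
Your proof is correct, but it uses a different case split from the paper's. The paper also divides out the common weight, and then proceeds in three cases:
\begin{itemize}
\item On the far region $|x|\ge M_\varepsilon=\varepsilon^{-1/2}$ it first bounds $\alpha|x|^2\langle x\rangle^{p-2}$ below by $(1-\varepsilon)\alpha\langle x\rangle^p$. There it compares $(\frac1{p+2}+\frac\delta2)|v|^2$ with $(1-\frac2{p+2}-\delta)\widetilde\Phi$.
\item When velocity dominates, it uses the gain $e^{\delta(\widetilde\Phi+\frac12|v|^2)}$ of the first term.
\item When confinement dominates, it uses $-\mathsf T(x\cdot v)\gtrsim\langle x\rangle^p$. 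This requires choosing $\varepsilon$ small in terms of $p$ and $\delta$.
\item The bounded region $|x|<M_\varepsilon$ is treated separately by taking $N$ large.
\end{itemize}

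You instead split according to the sign of $-\mathsf T(x\cdot v)-C_1$ itself. You then use the exact identity $x\cdot\nabla_x\widetilde\Phi=p\widetilde\Phi-\alpha\langle x\rangle^{p-2}\ge p\widetilde\Phi-\alpha$, which holds for every $x$. This removes $\varepsilon$, $M_\varepsilon$ and the near-field case entirely. The computation $b-a/p=\delta(1+2/p)$ shows directly why $1/(p+2)$ is the critical exchange rate and why a loss $\delta>0$ is needed. The hypothesis on $\delta$ enters only through $a>0$.

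The paper's case structure buys a template that carries over to $p>2$ (Lemma~\ref{WeightTransferLemma2}). There the rotation terms $2\mathsf Rx\cdot v+|\mathsf Rx|^2$ allow only an approximate comparison with $p\widetilde\Phi$ and $|v|^2$, so some $\varepsilon$-bookkeeping reappears anyway.

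One step you should state explicitly: in Step 4 you discard $x\cdot\nabla_x\widetilde\Phi$. This is legitimate because it equals $\alpha\langle x\rangle^{p-2}|x|^2\ge0$.
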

\begin{proof}
	For the case $p \in (1, 2)$, recall from \eqref{013} that $\widetilde{\Phi}(x) = \frac{\alpha}{p}\langle x \rangle^p$ and  $\mathsf{R} = 0$. Thus
	\begin{equation*}
		-\mathsf{T}(x \cdot v) = \alpha|x|^2\langle x \rangle^{p - 2} - |v|^2.
	\end{equation*}
	For every $0<\varepsilon<1$, there exists $M_\varepsilon>0$ such that
	\begin{equation*}
		\alpha|x|^2\langle x \rangle^{p - 2} - |v|^2 \ge (1 - \varepsilon)\alpha\langle x \rangle^p - |v|^2, \quad\forall |x| \ge M_\varepsilon.
	\end{equation*}
	Indeed, the inequality is equivalent to $\varepsilon|x|^2\ge1-\varepsilon$, so one may take $M_\varepsilon=\varepsilon^{-1/2}$. The difference between the two exponents is
	\begin{equation*}
		(2\lambda - 1)\widetilde{\Phi} + \lambda|v|^2 - 2(\lambda - \frac{1}{p + 2} - \delta)(\widetilde{\Phi} + \frac{1}{2}|v|^2) = (\frac{1}{p + 2} + \frac{\delta}{2})|v|^2 - (1 - \frac{2}{p + 2} - \delta)\widetilde{\Phi} + \delta(\widetilde{\Phi} + \frac{1}{2}|v|^2).
	\end{equation*}
	
	$\bullet$ Case 1: $|x| \ge M_\varepsilon$ and $(\frac{1}{p + 2} + \frac{\delta}{2})|v|^2 \ge (1 - \frac{2}{p + 2} - \delta)\widetilde{\Phi}$. We have 
	\begin{equation*}
		\begin{aligned}
		\mbox{The left hand side of \eqref{WeightTransferEq1}}	&\ge (N\langle v \rangle^{-6}e^{\delta(\widetilde{\Phi} + \frac{1}{2}|v|^2)} + (1 - \varepsilon)\alpha\langle x \rangle^p - |v|^2)e^{2(\lambda - \frac{1}{p + 2} - \delta)(\widetilde{\Phi} + \frac{1}{2}|v|^2)} \\
			&\ge \langle v \rangle^{-6}e^{\delta(\widetilde{\Phi} + \frac{1}{2}|v|^2)}e^{2(\lambda - \frac{1}{p + 2} - \delta)(\widetilde{\Phi} + \frac{1}{2}|v|^2)} \ge C_1e^{2(\lambda - \frac{1}{p + 2} - \delta)(\widetilde{\Phi} + \frac{1}{2}|v|^2)}
		\end{aligned}
	\end{equation*}
	for sufficiently large $N$ and some constant $C_1>0$.
	
	$\bullet$ Case 2: $|x| \ge M_\varepsilon$ and $(\frac{1}{p + 2} + \frac{\delta}{2})|v|^2 < (1 - \frac{2}{p + 2} - \delta)\widetilde{\Phi}$. We have
	\begin{equation*}
		0 \le |v|^2 < \frac{2p - 2\delta(p + 2)}{2 + \delta(p + 2)}\widetilde{\Phi}.
	\end{equation*}
	Then
	\begin{equation*}
		(1 - \varepsilon)\alpha\langle x \rangle^p - |v|^2 > p(1 - \varepsilon)\widetilde{\Phi} - \frac{2p - 2\delta(p + 2)}{2 + \delta(p + 2)}\widetilde{\Phi}.
	\end{equation*}
	Observing that
	\begin{equation*}
		p(1 - \varepsilon)(2 + \delta(p + 2)) > 2p - 2\delta(p + 2)
	\end{equation*}
	 is equivalent to
	\begin{equation*}
		\varepsilon < \frac{\delta(p^2 + 4p + 4)}{2p + \delta(p^2 + 2p)}.
	\end{equation*}
	Thus we can choose such $\varepsilon$ and then there exists a constant $C_1>0$ such that
	\begin{equation*}
		(1 - \varepsilon)\alpha\langle x \rangle^p - |v|^2 > C_1\langle x \rangle^p \ge C_1.
	\end{equation*}
	It yields that
		\begin{equation*}
		\begin{aligned}
			\mbox{The left hand side of \eqref{WeightTransferEq1}}	&\ge  - \mathsf{T}(x \cdot v)e^{2(\lambda - \frac{1}{p + 2} - \delta)(\widetilde{\Phi} + \frac{1}{2}|v|^2)} \\
			&\ge C_1e^{2(\lambda - \frac{1}{p + 2} - \delta)(\widetilde{\Phi} + \frac{1}{2}|v|^2)}.
		\end{aligned}
	\end{equation*}

	$\bullet$ Case 3: $|x| < M_\varepsilon$. Note that $\varepsilon$ is chosen, and related to $p$ and $\delta$. Then $M_\varepsilon = \varepsilon^{-1/2}$ is a constant depending on $p$ and $\delta$. And $|x| < M_\varepsilon$ shows that $\widetilde{\Phi}$ is actually bounded, then there exists some constant $c$ such that
	\begin{equation*}
		(\alpha|x|^2\langle x \rangle^{p - 2} - |v|^2)e^{2(\lambda - \frac{1}{p + 2} - \delta)(\widetilde{\Phi} + \frac{1}{2}|v|^2)} \ge - c\langle v \rangle^2e^{(\lambda - \frac{1}{p + 2} - \delta)|v|^2}.
	\end{equation*}
	Thus
	\begin{equation*}
		\mbox{The left-hand side of \eqref{WeightTransferEq1}}\ge N\langle v \rangle^{-6}e^{\lambda|v|^2} - c\langle v \rangle^2e^{(\lambda - \frac{1}{p + 2} - \delta)|v|^2} \ge C_1e^{2(\lambda - \frac{1}{p + 2} - \delta)(\widetilde{\Phi} + \frac{1}{2}|v|^2)}
	\end{equation*}
	for sufficiently large $N$ and some constant $C_1>0$.
	
	Combining the three cases completes the proof.
	
\end{proof}

\begin{lemma}[The case $p \in (2, \infty)$]\label{WeightTransferLemma2}
	For every $\lambda\in(1/2,1)$ and $0<\delta<1/3-1/(p+2)$, there exist constants $N\ge1$ and $C_1,C_2>0$ such that
	\begin{multline}\label{WeightTransferEq2}
		N\langle v \rangle^{-6}e^{(2\lambda - 1)\widetilde{\Phi} + \lambda|v|^2} - \mathsf{T}(x \cdot v)e^{2(\lambda - \frac{1}{p + 2} - \delta)(\widetilde{\Phi} + \frac{1}{2}|v|^2)} \\\ge C_1(|x|^p + |v|^2)e^{2(\lambda - \frac{1}{p + 2} - \delta)(\widetilde{\Phi} + \frac{1}{2}|v|^2)} \ge C_1e^{2(\lambda - \frac{1}{p + 2} - \delta)(\widetilde{\Phi} + \frac{1}{2}|v|^2)} - C_2.
	\end{multline}
\end{lemma}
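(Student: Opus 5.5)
We first compute $-\mathsf T(x\cdot v)$ explicitly, using only the skew-symmetry of $\mathsf R$ and the formula for $\nabla_x\widetilde\Phi$ from \eqref{013}. Since $\mathsf Rv\cdot x=-\mathsf Rx\cdot v$ and $\nabla_x\widetilde\Phi\cdot x=|x|^p+\mathsf R^2x\cdot x=|x|^p-|\mathsf Rx|^2$, we get
\[
-\mathsf T(x\cdot v)=|x|^p-|\mathsf Rx|^2-2\mathsf Rx\cdot v-|v|^2=|x|^p-|v+\mathsf Rx|^2 .
\]
Thus the transport produces confinement weight $|x|^p$ minus the "rotated kinetic energy". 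The key structural fact is $p>2$, so $|\mathsf Rx|^2\le r^2|x|^2=o(|x|^p)$. Also $\widetilde\Phi\ge \frac{1}{2p}|x|^p-C_r$, so $\widetilde\Phi$ is bounded below and comparable to $|x|^p/p$ at infinity. We then mimic the three-case splitting of Lemma \ref{WeightTransferLemma1}. The exponent gap between the two weights is the same algebraic expression as there:
\[
\Big(\tfrac1{p+2}+\tfrac\delta2\Big)|v|^2-\Big(1-\tfrac2{p+2}-\delta\Big)\widetilde\Phi+\delta\Big(\widetilde\Phi+\tfrac12|v|^2\Big).
\]

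In Case 1 we take $|x|\ge M$ large and $(\frac1{p+2}+\frac\delta2)|v|^2\ge(1-\frac2{p+2}-\delta)\widetilde\Phi$. The first term on the left of \eqref{WeightTransferEq2} is then at least $N\langle v\rangle^{-6}e^{\delta(\widetilde\Phi+\frac12|v|^2)}$ times the common weight. Because $\widetilde\Phi+\frac12|v|^2\gtrsim |x|^p+|v|^2-C$, this exponential dominates any polynomial. In particular it dominates $|v+\mathsf Rx|^2+C_1(|x|^p+|v|^2)$, which absorbs the possibly negative transport term and leaves the claimed lower bound. In Case 2 we take $|x|\ge M$ and the reverse inequality. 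This forces $|v|^2\le\theta\,p\widetilde\Phi\le\theta|x|^p$, with
\[
\theta=\frac{2p-2\delta(p+2)}{p(2+\delta(p+2))}<1 .
\]
By Young, $|v+\mathsf Rx|^2\le(1+\varepsilon)|v|^2+C_\varepsilon r^2|x|^2$. Choosing $\varepsilon$ with $(1+\varepsilon)\theta<1$ gives
\[
|x|^p-|v+\mathsf Rx|^2\ge c|x|^p-C|x|^2\ge \tfrac c2|x|^p\ge c'(|x|^p+|v|^2)
\]
once $M$ is large, depending on $p,\delta,r$. Here we drop the nonnegative $N$-term. In Case 3 we take $|x|<M$. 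Then $\widetilde\Phi$ is bounded, and the right side together with the transport term is at most $C\langle v\rangle^{2}e^{(\lambda-\frac1{p+2}-\delta)|v|^2}$. The $N$-term is at least $N c\langle v\rangle^{-6}e^{\lambda|v|^2}$, which dominates by the velocity gap $(\frac1{p+2}+\delta)|v|^2$ once $N$ is large.

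The second inequality of \eqref{WeightTransferEq2} is elementary. Set $w=e^{2(\lambda-\frac1{p+2}-\delta)(\widetilde\Phi+\frac12|v|^2)}$. Where $|x|^p+|v|^2\ge1$ we have $(|x|^p+|v|^2)w\ge w$. On the bounded set $\{|x|^p+|v|^2<1\}$, $w$ is bounded by some $C_2/C_1$. Together these give $C_1(|x|^p+|v|^2)w\ge C_1w-C_2$.

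The main obstacle is the bookkeeping in Case 2: we must check that the admissible range $\delta<1/3-1/(p+2)$ (so that $\lambda-\frac1{p+2}-\delta$ stays above $\lambda-\frac13>0$) still leaves $\theta<1$ with room for the $\varepsilon$-loss from the cross term $2\mathsf Rx\cdot v$. The additive constant $-C_r$ in $\widetilde\Phi$ must also not spoil the comparison $p\widetilde\Phi\le|x|^p$. I would handle this by enlarging $M$ so that all lower-order terms ($|x|^2$ and constants) are absorbed into a small fraction of $|x|^p$.
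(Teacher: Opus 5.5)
Your proposal is correct and follows the paper's proof essentially step for step. It uses the same identity $-\mathsf T(x\cdot v)=|x|^p-|v+\mathsf Rx|^2$ (the paper writes it expanded), the same exponent gap, the same three cases, and the same elementary splitting according to whether $|x|^p+|v|^2\ge 1$ for the second inequality.

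The three cases match as follows:
\begin{itemize}
\item far field with velocity-dominant exponent, absorbed by the $N$-term;
\item far field with confinement-dominant exponent, handled by Young's inequality on $2\mathsf Rx\cdot v$ together with $p>2$;
\item near field, handled by the velocity gap $(\frac1{p+2}+\delta)|v|^2$ and a large $N$.
\end{itemize}

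The obstacle you flag at the end is not a real one. The bound $p\widetilde\Phi\le|x|^p$ holds exactly, since $\widetilde\Phi=\frac1p|x|^p-\frac12|\mathsf Rx|^2$. Likewise $\theta<1$ holds for every $\delta>0$.
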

\begin{proof}
	In the case $p \in (2, \infty)$,  recall from \eqref{013} that $\widetilde{\Phi}(x) = \frac{1}{p}|x|^p - \frac{1}{2}|\mathsf{R}x|^2$. Thus
	\begin{equation*}
		-\mathsf{T}(x \cdot v) = |x|^p - |v|^2 - 2\mathsf{R}x \cdot v - |\mathsf{R}x|^2.
	\end{equation*}
	For every $0<\varepsilon<1$, there exists $M_\varepsilon>0$ such that
	\begin{equation*}
		|x|^p - |v|^2 - 2\mathsf{R}x \cdot v - |\mathsf{R}x|^2 \ge (1 - \varepsilon)|x|^p - (1 + \varepsilon)|v|^2, \forall |x| \ge M_\varepsilon.
	\end{equation*}
	It is equivalent to $\varepsilon(|x|^p + |v|^2) - 2\mathsf{R}x \cdot v - |\mathsf{R}x|^2 \ge 0$. Note that
	\begin{equation*}
		\varepsilon(|x|^p + |v|^2) - 2\mathsf{R}x \cdot v - |\mathsf{R}x|^2 \ge \varepsilon(|x|^p + |v|^2) - \varepsilon|v|^2 - \frac{r^2}{\varepsilon}|x|^2 - r^2|x|^2 = \varepsilon|x|^2(|x|^{p - 2} - \frac{r^2}{\varepsilon^2} - \frac{r^2}{\varepsilon})
	\end{equation*}
	Thus we only need to choose $M_\varepsilon=(4r^2/\varepsilon^2)^{\f1{p-2}}$. We also noticed the difference between exponent part is
	\begin{equation*}
		(2\lambda - 1)\widetilde{\Phi} + \lambda|v|^2 - 2(\lambda - \frac{1}{p + 2} - \delta)(\widetilde{\Phi} + \frac{1}{2}|v|^2) = (\frac{1}{p + 2} + \frac{\delta}{2})|v|^2 - (1 - \frac{2}{p + 2} - \delta)\widetilde{\Phi} + \delta(\widetilde{\Phi} + \frac{1}{2}|v|^2).
	\end{equation*}
	
	$\bullet$ Case 1:  $|x| \ge M_\varepsilon$ and $(\frac{1}{p + 2} + \frac{\delta}{2})|v|^2 \ge (1 - \frac{2}{p + 2} - \delta)\widetilde{\Phi}$. We have that
	\begin{multline*}
		\mbox{The left hand side of \eqref{WeightTransferEq2}}	\ge (N(|v|^2 + 1)^{-3}e^{\delta(\widetilde{\Phi} + \frac{1}{2}|v|^2)} + (1 - \varepsilon)|x|^p - (1 + \varepsilon)|v|^2)e^{2(\lambda - \frac{1}{p + 2} - \delta)(\widetilde{\Phi} + \frac{1}{2}|v|^2)} \\
			\ge (|v|^2 + 1)^{-3}e^{\delta(\widetilde{\Phi} + \frac{1}{2}|v|^2)}e^{2(\lambda - \frac{1}{p + 2} - \delta)(\widetilde{\Phi} + \frac{1}{2}|v|^2)} \ge C_1(|x|^p + |v|^2)e^{2(\lambda - \frac{1}{p + 2} - \delta)(\widetilde{\Phi} + \frac{1}{2}|v|^2)}
	\end{multline*}
	for sufficiently large $N$ and some constant $C_1>0$.
	
	$\bullet$ Case 2: $|x| \ge M_\varepsilon$ and $(\frac{1}{p + 2} + \frac{\delta}{2})|v|^2 < (1 - \frac{2}{p + 2} - \delta)\widetilde{\Phi}$. We have
	\begin{equation*}
		0 \le |v|^2 < \frac{2p - 2\delta(p + 2)}{2 + \delta(p + 2)}\widetilde{\Phi} \le \frac{2p - 2\delta(p + 2)}{2 + \delta(p + 2)}\frac{1}{p}|x|^p.
	\end{equation*}
	Then
	\begin{equation*}
		(1 - \varepsilon)|x|^p - (1 + \varepsilon)|v|^2 > (1 - \varepsilon)|x|^p - \frac{1 + \varepsilon}{p} \times \frac{2p - 2\delta(p + 2)}{2 + \delta(p + 2)}|x|^p.
	\end{equation*}
	Observing that
	\begin{equation*}
		p(1 - \varepsilon)(2 + \delta(p + 2)) > (1 + \varepsilon)(2p - 2\delta(p + 2))
	\end{equation*}
	is equivalent to
	\begin{equation*}
		\varepsilon < \frac{\delta(p^2 + 4p + 4)}{4p + \delta(p^2 - 4)}.
	\end{equation*}
	Thus we choose such $\varepsilon$ and then there exist constants $C, C_1>0$ such that
	\begin{equation*}
		(1 - \varepsilon)|x|^p - (1 + \varepsilon)|v|^2 > 2C|x|^p > C(|x|^p + \frac{p(2 + \delta(p + 2))}{2p - \delta(p + 2)}|v|^2) \ge C_1(|x|^p + |v|^2).
	\end{equation*}
		It yields that
	\begin{equation*}
		\begin{aligned}
			\mbox{The left hand side of \eqref{WeightTransferEq2}}	&\ge  - \mathsf{T}(x \cdot v)e^{2(\lambda - \frac{1}{p + 2} - \delta)(\widetilde{\Phi} + \frac{1}{2}|v|^2)} \\
			&\ge C_1(|x|^p + |v|^2)e^{2(\lambda - \frac{1}{p + 2} - \delta)(\widetilde{\Phi} + \frac{1}{2}|v|^2)}.
		\end{aligned}
	\end{equation*}

	$\bullet$ Case 3: $|x| < M_\varepsilon$. Note that $\varepsilon$ is chosen, and related to $p$ and $\delta$. Then $M_\varepsilon$ is a constant depending on  $p$ and $\delta$. And $|x| < M_\varepsilon$ shows that $\widetilde{\Phi}$ is actually bounded, then there exists a constant $c>0$ such that
	\begin{equation*}
		(|x|^p - |v|^2 - 2\mathsf{R}x \cdot v - |\mathsf{R}x|^2)e^{2(\lambda - \frac{1}{p + 2} - \delta)(\widetilde{\Phi} + \frac{1}{2}|v|^2)} \ge -c(|v|^2 + 1)e^{(\lambda - \frac{1}{p + 2} - \delta)|v|^2}.
	\end{equation*}
	Thus we have
	\begin{multline*}
		\mbox{The left hand side of \eqref{WeightTransferEq2}}\ge N(|v|^2 + 1)^{-3}e^{\lambda|v|^2} - c(|v|^2 + 1)e^{(\lambda - \frac{1}{p + 2} - \delta)|v|^2} \\\ge c_1(M^p + |v|^2)e^{(\lambda - \frac{1}{p + 2} - \delta)|v|^2} \ge C_1(|x|^p + |v|^2)e^{2(\lambda - \frac{1}{p + 2} - \delta)(\widetilde{\Phi} + \frac{1}{2}|v|^2)}
	\end{multline*}

	Finally, we divide into two cases, i.e.
	$|x|^p + |v|^2 \ge 1$ and $|x|^p + |v|^2 < 1$, it is easy to obtain that
	\beno
	C_1(|x|^p + |v|^2)e^{2(\lambda - \frac{1}{p + 2} - \delta)(\widetilde{\Phi} + \frac{1}{2}|v|^2)} \ge C_1e^{2(\lambda - \frac{1}{p + 2} - \delta)(\widetilde{\Phi} + \frac{1}{2}|v|^2)} - C_2.
	\eeno
	Then we complete the proof of this lemma.
%	\begin{equation*}
%		\widetilde{\Phi} + \frac{1}{2}|v|^2 = \frac{1}{p}|x|^p - \frac{1}{2}|\mathsf{R}x|^2 + \frac{1}{2}|v|^2 < \frac{1}{2} + \frac{1}{p}.
%	\end{equation*}
%	Thus $e^{2(\lambda - \frac{1}{p + 2} - \delta)(\widetilde{\Phi} + \frac{1}{2}|v|^2)}$ has a maximum value, and complete the proof.
\end{proof}

As a direct corollary of Lemma \ref{WeightTransferLemma1} and \ref{WeightTransferLemma2}, we have
\begin{corollary}\label{WeightTransferLemma}
	In both cases, whether \( p \in (1,2) \) or \( p \in (2,\infty) \),
	 for any $\lambda \in (\frac{1}{2}, 1), 0 < \delta < \frac{1}{3} - \frac{1}{p + 2}$. There exists a large constant $N$, and constants $C_1, C_2>0$ such that
	\begin{equation}\label{WeightTransferEq}
		N\langle v \rangle^{-6}e^{(2\lambda - 1)\widetilde{\Phi} + \lambda|v|^2} - \mathsf{T}(x \cdot v)e^{2(\lambda - \frac{1}{p + 2} - \delta)(\widetilde{\Phi} + \frac{1}{2}|v|^2)} \ge C_1e^{2(\lambda - \frac{1}{p + 2} - \delta)(\widetilde{\Phi} + \frac{1}{2}|v|^2)} - C_2.
	\end{equation}
\end{corollary}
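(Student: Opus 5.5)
The corollary follows by merging the two preceding lemmas; there is essentially nothing new to prove. The plan is to split into the two regimes of \eqref{013} and show in each that \eqref{WeightTransferEq} is implied by the estimate already established.

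For $p\in(2,\infty)$, Lemma \ref{WeightTransferLemma2} gives exactly \eqref{WeightTransferEq}. The second inequality in \eqref{WeightTransferEq2} is the right-hand side of \eqref{WeightTransferEq}, with the same range of $\lambda$ and $\delta$. We therefore take the constants $N,C_1,C_2$ produced there.

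For $p\in(1,2)$, Lemma \ref{WeightTransferLemma1} gives \eqref{WeightTransferEq1}. This is stronger than \eqref{WeightTransferEq}, since its right-hand side has no subtracted constant. For any $C_2>0$ we have
\[
C_1e^{2(\lambda-\frac1{p+2}-\delta)(\widetilde\Phi+\frac12|v|^2)}\ \ge\ C_1e^{2(\lambda-\frac1{p+2}-\delta)(\widetilde\Phi+\frac12|v|^2)}-C_2,
\]
so \eqref{WeightTransferEq} follows with any positive $C_2$, for example $C_2=1$.

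To state one set of constants valid in both regimes, we take $N$ to be the maximum of the two values of $N$ and $C_1$ the minimum of the two values, for fixed $\lambda,\delta,p$ and equilibrium parameters. This step needs one check: increasing $N$ preserves the inequality. That holds because the term $N\langle v\rangle^{-6}e^{(2\lambda-1)\widetilde\Phi+\lambda|v|^2}$ is nonnegative, and decreasing $C_1$ only weakens the right-hand side.

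There is no real obstacle here. The only points to verify are that the admissible range $0<\delta<\frac13-\frac1{p+2}$ is identical in both lemmas and in the corollary, and that the constants may depend on $p,\lambda,\delta$ and the equilibrium, through $\alpha$ and $r$.
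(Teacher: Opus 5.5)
Your proposal is correct and matches the paper. The paper obtains the corollary directly from the second inequality of Lemma~\ref{WeightTransferLemma2} when $p>2$, and from Lemma~\ref{WeightTransferLemma1} when $1<p<2$, whose right-hand side is only weakened by subtracting any $C_2>0$; your extra step of merging constants across the two regimes is unnecessary, since for fixed $p$ only one lemma applies, but it is harmless.
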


%%%%%%%%%%%%%%%%%%%%%%%%%%%%%%%%%%%%%%%%%%%%%%%%%%

	\section{Weighted Poincar\'e--Korn tools for the macroscopic equations}

	The macroscopic closure uses the weighted Poincar\'e and Poincar\'e--Korn framework developed in \cite{carrapatoso_weighted_2022} and employed in \cite{carrapatoso_special_2024}. We do not claim the general Poincar\'e--Korn principle as new. We formulate and prove the exact zero-order and negative-order estimates, with the nested exponential weights generated by $\widetilde\Phi$, that are needed for the transformed moment equations in Section 6.

\subsection{Weighted functional setting}

	For $0 < \delta < 1/2$, we define the weight function
	\begin{equation}\label{084}
		V_\delta(x) := 2(1 - \delta)\widetilde{\Phi}(x)
	\end{equation}
	with $\widetilde{\Phi}$ defined in  \eqref{013}, and Hilbert space $L^2(V_\delta)$ equip with the inner product
	
	\begin{equation*}
		(f, g)_{L^2(V_\delta)} := \int_{\R^3} f(x)\overline{g(x)}e^{-V_\delta(x)}dx.
	\end{equation*}
	The Laplace operator associated with $V_\delta$ is defined by
	\begin{equation}\label{085}
		\Delta_{V_\delta}f := \Delta f - \nabla V_\delta \cdot \nabla f.
	\end{equation}
	By using integral by parts, one may check that
	\begin{equation*}
		(-\Delta_{V_\delta}f, g)_{L^2(V_\delta)} = \int_{\R^3}(-\Delta f + \nabla V_\delta \cdot \nabla f)ge^{-V_\delta}dx = \int_{\R^3}(\nabla f \cdot \nabla g)e^{-V_\delta}dx = (\nabla f, \nabla g)_{L^2(V_\delta)},
	\end{equation*}
	which means that $-\Delta_{V_\delta}$ is nonnegative and self-adjoint in Hilbert space $L^2(V_\delta)$. Thus the operator
	\begin{equation*}
		\Lambda_\delta := I - \Delta_{V_\delta}
	\end{equation*}
	is positive, self-adjoint and invertible. When not causing confusion, we abbreviate $V_\delta, \Lambda_\delta$ as $V, \Lambda$. 

	\bigskip In \cite{carrapatoso_weighted_2022}, authors have provided very detailed studies on Hilbert space $L^2(V)$ and operator $\Lambda$. We list some of the conclusions, which would be used later.

	\begin{lemma}[Weighted elliptic operator bounds]\label{BoundedLambda}
		The following operators are all bounded in space $L^2(V)$:
		\begin{equation*}
			\begin{aligned}
			&\Lambda^{-\frac{1}{2}}\nabla, \nabla\Lambda^{-\frac{1}{2}},	\<\na V\>\Lambda^{-\frac{1}{2}},\Lambda^{-\frac{1}{2}}\<\na V\>,\Lambda^{\frac{1}{2}}\nabla\Lambda^{-1}, \Lambda\nabla\Lambda^{-\frac{3}{2}}, \Lambda^{-1}\nabla\Lambda^{\frac{1}{2}};\\
			&\na^2\Lam^{-1},\Lam^{-1}\na^2,\<\na V\>\na\Lam^{-1},\Lam^{-1}\<\na V\>\na, \<\na V\>^2\Lam^{-1},\Lam^{-1}\<\na V\>^2.
      \end{aligned}
		\end{equation*}
		More precisely, for example, there exists a constant $C_V$ such that for any $f \in L^2(V)$,
		\beno
		\|\Lambda^{-\frac{1}{2}}\nabla f\|_{L^2(V)} \le \|f\|_{L^2(V)}.
		\eeno
		
	%	\begin{equation*}
	%	\begin{aligned}
	%		&\|\Lambda^{-\frac{1}{2}}\nabla f\|_{L^2(V)} \le \|f\|_{L^2(V)},~~\|\Lambda^{-\frac{1}{2}}\<\na V\> f\|_{L^2(V)} \le \|f\|_{L^2(V)},\quad
	%		\|\Lambda^{\frac{1}{2}}\nabla\Lambda^{-1}f\|_{L^2(V)} \le C_V\|f\|_{L^2(V)};\\
	%		&\|\Lambda\nabla\Lambda^{-\frac{3}{2}}f\|_{L^2(V)} \le C_V\|f\|_{L^2(V)};\quad
	%		\|\Lambda^{-1}\nabla f\|_{L^2(V)} \le C_V\|\Lambda^{-\frac{1}{2}}f\|_{L^2(V)}.
	%	\end{aligned}
	%	\end{equation*}
	\end{lemma}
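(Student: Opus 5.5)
The plan is to reduce Lemma \ref{BoundedLambda} to the abstract results of \cite{carrapatoso_weighted_2022} by checking their structural hypotheses on $V=V_\delta=2(1-\delta)\widetilde\Phi$. I will also sketch the underlying integration-by-parts arguments, so that the constants visibly depend only on $V$.

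\textbf{Step 1: structural properties of $V$ from \eqref{013}.}
For $1<p<2$ we have $\widetilde\Phi=\frac{\alpha}{p}\langle x\rangle^p$. Then $|\nabla V|\sim|x|^{p-1}\to\infty$, and $\nabla^2V$ is bounded.
For $p>2$ we have $\widetilde\Phi=\frac1p|x|^p-\frac{r^2}{2}(x_1^2+x_2^2)$. Then $|\nabla V|\sim|x|^{p-1}$ at infinity, $|\nabla^2V|\lesssim 1+|x|^{p-2}$, and $\nabla^2 V\ge -2(1-\delta)r^2\,\mathbb I$.
In both cases this yields the following properties:
\begin{itemize}
\item[(i)] $|\Delta V|\le \varepsilon|\nabla V|^2+C_\varepsilon$ for every $\varepsilon>0$;
\item[(ii)] $|\nabla^2V|\lesssim\langle\nabla V\rangle$;
\item[(iii)] $\nabla^2V\ge -C\,\mathbb I$.
\end{itemize}
These are exactly the hypotheses used in \cite{carrapatoso_weighted_2022}.

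\textbf{Step 2: first-order bounds.}
All norms below are in $L^2(V)$. For $g$ in the form domain, $\|\Lambda^{1/2}g\|^2=\|g\|^2+\|\nabla g\|^2$, so $\nabla\Lambda^{-1/2}$ is bounded.
Integrating $\operatorname{div}(g^2\nabla V e^{-V})=0$ gives
\[
\int|\nabla V|^2g^2e^{-V}=\int\big(2g\nabla g\cdot\nabla V+g^2\Delta V\big)e^{-V}.
\]
Using Young's inequality on the first term and (i) with small $\varepsilon$ on the second, I obtain $\|\langle\nabla V\rangle g\|\lesssim\|\Lambda^{1/2}g\|$. Hence $\langle\nabla V\rangle\Lambda^{-1/2}$ is bounded, and by duality so is $\Lambda^{-1/2}\langle\nabla V\rangle$.
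In $L^2(V)$ the adjoint of $\nabla$ is $\nabla^*=-\nabla+\nabla V$. Therefore $\Lambda^{-1/2}\nabla=(\nabla^*\Lambda^{-1/2})^*$ is bounded by the two previous bounds.

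\textbf{Step 3: second-order bounds.}
Let $g=\Lambda^{-1}h$. A weighted Bochner identity gives
\[
\|\Delta_Vg\|^2=\|\nabla^2g\|^2+\int\nabla^2V(\nabla g,\nabla g)e^{-V}.
\]
Together with (iii) and $\|\nabla g\|\le\|h\|$, this yields $\|\nabla^2 g\|\lesssim\|h\|$.
Next I apply the first-order weighted estimate of Step 2 to $\partial_ig$, which gives $\|\langle\nabla V\rangle\nabla g\|\lesssim\|h\|$.
I then apply it to $\langle\nabla V\rangle g$. The resulting commutator $\nabla\langle\nabla V\rangle$ is controlled by (ii), giving $\|\langle\nabla V\rangle^2g\|\lesssim\|h\|$.
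The reversed-order operators $\Lambda^{-1}\nabla^2$, $\Lambda^{-1}\langle\nabla V\rangle\nabla$ and $\Lambda^{-1}\langle\nabla V\rangle^2$ then follow by duality. Writing $\nabla^*$ in place of $\nabla$ produces only lower-order terms already controlled.

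\textbf{Step 4: mixed compositions.}
Since $\|\Lambda^{1/2}u\|^2=\|u\|^2+\|\nabla u\|^2$, the bound for $\Lambda^{1/2}\nabla\Lambda^{-1}$ follows from those for $\nabla\Lambda^{-1}$ and $\nabla^2\Lambda^{-1}$.
For $\Lambda\nabla\Lambda^{-3/2}$ I use the commutator $[\Lambda,\partial_i]=\partial_i\nabla V\cdot\nabla$. This gives
\[
\Lambda\nabla\Lambda^{-3/2}=\nabla\Lambda^{-1/2}+(\nabla^2V\,\nabla\Lambda^{-1})\Lambda^{-1/2},
\]
which is bounded by (ii) and the estimate for $\langle\nabla V\rangle\nabla\Lambda^{-1}$.
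Finally, $\Lambda^{-1}\nabla\Lambda^{1/2}$ is the adjoint of $\Lambda^{1/2}\nabla^*\Lambda^{-1}$, which is handled in the same way as $\Lambda^{1/2}\nabla\Lambda^{-1}$.

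\textbf{Main obstacle.}
I expect the hardest step to be Step 3, specifically the bound on $\langle\nabla V\rangle^2\Lambda^{-1}$ in the rotating case $p>2$. There the Hessian of $V$ is not bounded below by zero, so the argument relies on the lower bound (iii) and the comparison (ii). I would first verify that the non-convex quadratic correction $-\frac{r^2}{2}(x_1^2+x_2^2)$ enters only through bounded terms. If that fails, I would absorb the troublesome region, which is a compact set, by a cutoff argument.
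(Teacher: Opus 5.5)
Your proposal is correct and follows essentially the paper's route. The paper's proof is only a citation of (28), (29), (31), (32), (40) and Lemma 10 in \cite{carrapatoso_weighted_2022}; you make the same reduction, but you also verify the structural conditions on $V_\delta$ and sketch a self-contained integration-by-parts/Bochner/duality proof, which checks out. In particular, for $p>2$ the non-convex correction $-\frac{r^2}{2}(x_1^2+x_2^2)$ is harmless, because $\nabla^2V\ge -2(1-\delta)r^2\,\mathbb I$ and $|\nabla^2V|\lesssim\langle\nabla V\rangle$ hold globally, so the ``main obstacle'' you flag needs no cutoff. The only slip is a harmless sign, $[\Lambda,\partial_i]=-(\partial_i\nabla V)\cdot\nabla$, so that $\Lambda\partial_i\Lambda^{-3/2}=\partial_i\Lambda^{-1/2}-\big((\partial_i\nabla V)\cdot\nabla\Lambda^{-1}\big)\Lambda^{-1/2}$.
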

	\begin{proof}
		These results come from $(28),(29),(31),(32),(40)$ and Lemma 10 in \cite{carrapatoso_weighted_2022}.
	\end{proof}

%%%%%%%%%%%%%%%%%%%%%%%%%%%%%%%%%%%%%%%%%%%%%%%%%%
\smallskip
	\subsection{Weighted Poincar\'e inequality}
	We start with two auxiliary lemmas.
	\begin{lemma}[Confinement at spatial infinity]\label{PILem}
		For all sufficiently large $M>0$,
		\begin{equation*}
			\frac{1}{4}|\nabla V|^2 - \frac{1}{2}\Delta V \ge \frac{1}{2} - (1 - \delta)(\Delta\widetilde{\Phi} + 1)1_{|x| < M}.
		\end{equation*}
	\end{lemma}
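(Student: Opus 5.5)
The plan is to compute both sides explicitly from $V_\delta=2(1-\delta)\widetilde\Phi$ and split $\mathbb R^3$ into a bounded region $\{|x|<M\}$ and its complement. Since $\nabla V=2(1-\delta)\nabla\widetilde\Phi$ and $\Delta V=2(1-\delta)\Delta\widetilde\Phi$,
\[
\frac14|\nabla V|^2-\frac12\Delta V=(1-\delta)^2|\nabla\widetilde\Phi|^2-(1-\delta)\Delta\widetilde\Phi .
\]
So the claimed inequality reduces to two separate facts.

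\emph{Bounded region.} On $\{|x|<M\}$ it suffices to show
\[
(1-\delta)^2|\nabla\widetilde\Phi|^2\ge \tfrac12-(1-\delta)=\delta-\tfrac12 .
\]
The right-hand side is negative because $0<\delta<1/2$, so this holds trivially for every $M$. No regularity issue arises: in both cases $\widetilde\Phi\in C^2$. This is clear for $\langle x\rangle^p$. For $p>2$, $\Delta|x|^p/p=(p+1)|x|^{p-2}$ is continuous.

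\emph{Far field.} On $\{|x|\ge M\}$ I must show
\[
(1-\delta)^2|\nabla\widetilde\Phi|^2-(1-\delta)\Delta\widetilde\Phi\ge\tfrac12
\]
once $M$ is large. The computation depends on the regime.
\begin{itemize}
\item For $1<p<2$: $\nabla\widetilde\Phi=\alpha\langle x\rangle^{p-2}x$ and $\Delta\widetilde\Phi=\alpha\langle x\rangle^{p-2}\big(3+(p-2)|x|^2\langle x\rangle^{-2}\big)$. Hence $|\nabla\widetilde\Phi|^2\sim\alpha^2|x|^{2p-2}\to\infty$, since $p>1$, while $\Delta\widetilde\Phi=O(|x|^{p-2})$ stays bounded.
\item For $p>2$: $\nabla\widetilde\Phi=|x|^{p-2}x+\mathsf R^2x$, so $|\nabla\widetilde\Phi|\ge|x|^{p-1}-r^2|x|\ge\frac12|x|^{p-1}$ for $|x|$ large. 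Also $\Delta\widetilde\Phi=(p+1)|x|^{p-2}-2r^2\le(p+1)|x|^{p-2}$. Thus $\Delta\widetilde\Phi/|\nabla\widetilde\Phi|^2=O(|x|^{-p})\to0$, and the left-hand side is at least $c|x|^{2p-2}$ for large $|x|$.
\end{itemize}
In either regime the left-hand side tends to $+\infty$, so it exceeds $1/2$ for all $|x|\ge M$ once $M$ is chosen large depending on $p,\alpha,r,\delta$. Combining the two regions gives the lemma for all sufficiently large $M$.

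The only mildly delicate point is the $p>2$ case. There the rotational correction $-\frac12|\mathsf Rx|^2$ makes $\widetilde\Phi$ non-convex near the origin and adds the negative term $\mathsf R^2x$ to the gradient. The superquadratic growth $|x|^{p-1}\gg r^2|x|$ absorbs it in the far field, and in the near field the indicator term absorbs everything.
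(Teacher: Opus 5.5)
Your proposal is correct and follows essentially the same route as the paper: writing $V=2(1-\delta)\widetilde\Phi$, the inequality on $\{|x|<M\}$ reduces to $(1-\delta)^2|\nabla\widetilde\Phi|^2\ge\delta-\frac12$, which holds trivially since $\delta<\frac12$, and on $\{|x|\ge M\}$ it follows because, in each regime, $|\nabla\widetilde\Phi|^2\gtrsim|x|^{2p-2}$ dominates $\Delta\widetilde\Phi$. The only cosmetic difference is that you bound $|\nabla\widetilde\Phi|$ from below by the reverse triangle inequality, whereas the paper uses $|a+b|^2\ge\frac12|a|^2-|b|^2$.
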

	\begin{proof}
		Recall that $V = 2(1 - \delta)\widetilde{\Phi}$, we need to show that
		\begin{equation*}
			(1 - \delta)^2|\nabla\widetilde{\Phi}|^2 - (1 - \delta)\Delta\widetilde{\Phi} \ge \frac{1}{2} - (1 - \delta)(\Delta\widetilde{\Phi} + 1)1_{|x| < M}.
		\end{equation*}
		It definitely holds for $|x| < M$, as for $|x| \ge M$, it turns to
		\begin{equation}\label{PILemproof}
			(1 - \delta)^2|\nabla\widetilde{\Phi}|^2 - (1 - \delta)\Delta\widetilde{\Phi} - \frac{1}{2} \ge 0.
		\end{equation}
		If $p \in (1, 2)$ and $\widetilde{\Phi}(x) = \frac{\alpha}{p}\langle x \rangle^p$, then
		\begin{equation*}
			|\nabla \widetilde{\Phi}|^2 = |\alpha\langle x \rangle^{p - 2}x|^2 = \alpha^2\langle x \rangle^{2p - 4}|x|^2, \:\:\: \Delta\widetilde{\Phi} = 3\alpha\langle x \rangle^{p - 2} - \alpha(2 - p)\langle x \rangle^{p - 4}|x|^2.
		\end{equation*}
		If $p \in (2, \infty)$ and $\widetilde{\Phi}(x) = \frac{1}{p}|x|^p - \frac{1}{2}|\mathsf{R}x|^2$, then $\Delta\widetilde{\Phi} = (p + 1)|x|^{p - 2} - 2r^2$ and
		\begin{equation*}
			|\nabla \widetilde{\Phi}|^2 = ||x|^{p - 2}x + \mathsf{R}^2x|^2 \ge \frac{1}{2}|x|^{2p - 2} - |\mathsf{R}^2x|^2 \ge \frac{1}{2}|x|^{2p - 2} - r^4|x|^2.
		\end{equation*}
		Thus, in both cases, \eqref{PILemproof} holds for sufficiently large $M$, which proves the lemma.
	\end{proof}

	\begin{lemma}\label{LPI}
		(Local Poincar\'e Inequality). If $W = W(x)$ satisfies $W, W^{-1} \in L_{loc}^\infty$, then for any ball $\Omega$ and smooth function $f$, there exists some constant $C_\Omega$ such that
		\begin{equation*}
			\int_\Omega f^2Wdx \le C_\Omega\int_\Omega|\nabla f|^2Wdx + \frac{1}{W(\Omega)}\big(\int_\Omega fWdx\big)^2, \: \text{where} \:\: W(\Omega) := \int_\Omega Wdx.
		\end{equation*}
	\end{lemma}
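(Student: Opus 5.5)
The plan is the classical weighted local Poincar\'e argument: first remove the weight by an equivalence of weights on the ball, then apply the unweighted Poincar\'e--Wirtinger inequality, and finally return to the weighted mean. Since $\Omega$ is a ball, hence bounded, and $W,W^{-1}\in L^\infty_{loc}$, there are constants $0<w_1\le w_2<\infty$ with $w_1\le W\le w_2$ a.e.\ on $\Omega$. In particular $W(\Omega)\in(0,\infty)$ and the right-hand side makes sense.

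First I will use the variational characterization of the weighted mean. Put $\bar f_W:=W(\Omega)^{-1}\int_\Omega fW\,dx$. For every constant $\kappa$,
\[
\int_\Omega (f-\kappa)^2W\,dx=\int_\Omega (f-\bar f_W)^2W\,dx+W(\Omega)(\bar f_W-\kappa)^2 .
\]
Expanding the case $\kappa=0$ gives the identity
\[
\int_\Omega f^2W\,dx=\int_\Omega (f-\bar f_W)^2W\,dx+\frac{1}{W(\Omega)}\Big(\int_\Omega fW\,dx\Big)^2 .
\]
So it suffices to prove
\[
\int_\Omega (f-\bar f_W)^2W\,dx\le C_\Omega\int_\Omega|\nabla f|^2W\,dx .
\]

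Second, I will compare with the unweighted mean. Let $\bar f:=|\Omega|^{-1}\int_\Omega f\,dx$. By the minimality above, taking $\kappa=\bar f$, we get
\[
\int_\Omega (f-\bar f_W)^2W\le\int_\Omega (f-\bar f)^2W\le w_2\int_\Omega (f-\bar f)^2 .
\]
The standard Poincar\'e--Wirtinger inequality on the ball (a bounded convex Lipschitz domain) bounds the last term by $w_2C_P(\Omega)\int_\Omega|\nabla f|^2\,dx$. Using $W\ge w_1$, this is at most $\frac{w_2}{w_1}C_P(\Omega)\int_\Omega|\nabla f|^2W\,dx$. The lemma then follows with $C_\Omega=C_P(\Omega)\,w_2/w_1$.

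There is no genuine obstacle here. The only point needing care is to justify the two-sided bounds $w_1,w_2$ on the closed ball from the $L^\infty_{loc}$ hypothesis, which holds because $\overline\Omega$ is compact. One should also note that $C_\Omega$ depends on $W$ through $w_2/w_1$. For the application in Section 5, $W=e^{-V}$ is continuous and positive, so this is immediate.
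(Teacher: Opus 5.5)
Your argument is correct. The paper states this lemma without proof, as a standard fact, and your route is the standard justification it relies on: the weighted-variance identity, minimality of the weighted mean $\bar f_W$, and comparison with the unweighted Poincar\'e--Wirtinger inequality on the ball via $w_1\le W\le w_2$ on $\overline\Omega$. This gives a constant $C_\Omega=C_P(\Omega)\,w_2/w_1$ that is uniform in $f$, which is what the application needs.

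Two harmless remarks. First, positivity of $W$ is tacit in the statement: the hypothesis $W^{-1}\in L^\infty_{loc}$ alone does not fix a sign, and your bound $w_1\le W$ uses positivity. Second, in the proof of Lemma~\ref{PI} the lemma is applied with $W=e^{-\widetilde\Phi}$ on $\{|x|<M\}$, not with $e^{-V}$; this weight is equally continuous and positive, so nothing changes.
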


	Next we introduce the variant Poincar\'e inequality with exponential weight $V$:
	\begin{lemma}[Weighted Poincar\'e inequality]\label{PI}
		If smooth function $f$ satisfies one of the following conditions:
		\begin{equation*}
			\int_{\R^3} fe^{-\widetilde{\Phi}}dx = 0, \:\:\: \int_{\R^3}(\Lambda f)e^{-\widetilde{\Phi}}dx = 0.
		\end{equation*}
		Then there exists a constant $C_V$ such that
		\begin{equation*}
			\int_{\R^3} f^2e^{-V}dx \le C_V\int_{\R^3}|\nabla f|^2e^{-V}dx.
		\end{equation*}
	\end{lemma}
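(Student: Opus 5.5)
We plan the standard two-step argument: a global weighted Poincaré inequality proved by the ground-state substitution together with Lemmas \ref{PILem} and \ref{LPI}, followed by removal of the constant mode using the stated orthogonality condition.

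\emph{Step 1: an inequality with a local remainder.} For smooth $f$, decaying suitably, set $h=fe^{-V/2}$. Then $\nabla f\,e^{-V/2}=\nabla h+\tfrac12 h\nabla V$. Expanding the square and integrating the cross term $\int h\nabla h\cdot\nabla V=-\tfrac12\int h^2\Delta V$ by parts gives
\[
\int|\nabla f|^2e^{-V}dx=\int|\nabla h|^2dx+\int\Big(\tfrac14|\nabla V|^2-\tfrac12\Delta V\Big)h^2dx\ \ge\ \int\Big(\tfrac14|\nabla V|^2-\tfrac12\Delta V\Big)f^2e^{-V}dx .
\]
By Lemma \ref{PILem}, for large $M$ the right side is at least
\[
\tfrac12\int f^2e^{-V}dx-C_M\int_{B_M}f^2e^{-V}dx,
\]
because $\Delta\widetilde\Phi$ is bounded on $B_M$. (When $p>2$, $\Delta\widetilde\Phi$ is continuous for $p\ge 2$; if $p$ is slightly above $2$, local boundedness still holds.) Hence
\[
\int f^2e^{-V}\le 2\int|\nabla f|^2e^{-V}+2C_M\int_{B_M}f^2e^{-V}.
\]

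\emph{Step 2: the local term.} Apply Lemma \ref{LPI} on $\Omega=B_M$ with $W=e^{-V}$:
\[
\int_{B_M}f^2e^{-V}\le C\int|\nabla f|^2e^{-V}+W(B_M)^{-1}\Big(\int_{B_M}fe^{-V}\Big)^2 .
\]
This leaves a constant-mode term, which the orthogonality hypothesis must control. The cleanest route is a compactness/contradiction argument. Steps 1–2 show that $\{\|f\|_{L^2(V)}\le1,\ \|\nabla f\|_{L^2(V)}\le\epsilon_n\}$ has tails uniformly small outside $B_M$. Indeed, Step 1 applied to $f\chi_R$, or simply the far-field gain $\tfrac14|\nabla V|^2-\tfrac12\Delta V\to\infty$, gives tightness, while Rellich gives local compactness. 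So if the inequality failed, there would be $f_n$ with $\|f_n\|_{L^2(V)}=1$, $\|\nabla f_n\|\to0$, and $f_n\to c$ strongly in $L^2(V)$, with $c$ constant and $\|c\|_{L^2(V)}=1$. The constraint then passes to the limit. Under the first condition, $\int f_ne^{-\widetilde\Phi}\to c\int e^{-\widetilde\Phi}$ is justified by Cauchy–Schwarz, since $e^{-\widetilde\Phi}=e^{-V/2}e^{-\delta\widetilde\Phi}\cdot e^{-V/2}\cdots$ in the sense that $\int e^{-2\widetilde\Phi+V}=\int e^{-2\delta\widetilde\Phi}<\infty$. This forces $c=0$, a contradiction.

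\emph{Second condition; main obstacle.} The second condition requires rewriting $\int(\Lambda f)e^{-\widetilde\Phi}$. I would integrate by parts against $e^{-\widetilde\Phi}=e^{-V}e^{(1-2\delta)\widetilde\Phi}$, using self-adjointness of $\Lambda$ in $L^2(V)$:
\[
\int(\Lambda f)e^{-\widetilde\Phi}=(f,\Lambda\psi)_{L^2(V)},\qquad \psi=e^{(1-2\delta)\widetilde\Phi}.
\]
One must check that $\Lambda\psi\,e^{-V/2}\in L^2$. With $\Delta_V\psi=\psi\big[(1-2\delta)\Delta\widetilde\Phi+(1-2\delta)^2|\nabla\widetilde\Phi|^2-2(1-\delta)(1-2\delta)|\nabla\widetilde\Phi|^2\big]$, this amounts to $\int\langle\nabla\widetilde\Phi\rangle^4e^{-2\delta\widetilde\Phi}<\infty$, which holds. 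The limiting constraint is then $c\int\Lambda\psi\, e^{-V}=c\int e^{-\widetilde\Phi}=0$, since $\Lambda c=c$. Again $c=0$. The main technical care lies in these integrability checks and in the tightness estimate used in the compactness step; both reduce to the growth $|\nabla\widetilde\Phi|\to\infty$ supplied by Lemma \ref{PILem}.
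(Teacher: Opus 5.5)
Your proof is correct. After the common first step, it takes a different route from the paper.

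\textbf{Common start.} Both arguments begin with the ground-state substitution $h=fe^{-V/2}$ and Lemma \ref{PILem}.

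\textbf{The paper's route.} The paper stays quantitative. It applies Lemma \ref{LPI} on $B_M$ with weight $e^{-\widetilde\Phi}$ and estimates the constant-mode term $\big(\int_{B_M}fe^{-\widetilde\Phi}dx\big)^2$ directly from the constraint.
\begin{itemize}
\item Under the first condition, this term equals $\big(\int_{|x|\ge M}fe^{-\widetilde\Phi}dx\big)^2\le\mathcal E_M\|f\|_{L^2(V)}^2$, where $\mathcal E_M=\int_{|x|\ge M}e^{-2\delta\widetilde\Phi}dx\to0$.
\item Under the second condition, one integration by parts rewrites the constraint as $\int(f+(1-2\delta)\nabla\widetilde\Phi\cdot\nabla f)e^{-\widetilde\Phi}dx=0$. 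Its gradient part is bounded by $\|\nabla f\|_{L^2(V)}$.
\end{itemize}
The resulting small multiple of $\|f\|_{L^2(V)}^2$ is then absorbed by taking $M$ large.

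\textbf{Your route.} You argue by contradiction. A normalized sequence with $\|\nabla f_n\|_{L^2(V)}\to0$ is tight and locally $H^1$-bounded, so it converges in $L^2(V)$ to a constant. Both constraints are $L^2(V)$-pairings with the fixed functions $\psi=e^{(1-2\delta)\widetilde\Phi}$ and $\Lambda\psi$. Both functions lie in $L^2(V)$ since $\int e^{-2\delta\widetilde\Phi}\langle\nabla\widetilde\Phi\rangle^4dx<\infty$. The pairings are nonzero on constants, because $(1,\psi)_{L^2(V)}=(1,\Lambda\psi)_{L^2(V)}=\int e^{-\widetilde\Phi}dx>0$.

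\textbf{Comparison.} The paper's argument gives a traceable constant and uses only the lower bound $\tfrac12$ from Lemma \ref{PILem}. Yours gives no explicit $C_V$. In exchange, it isolates the structural reason the inequality holds: any $L^2(V)$-continuous linear constraint that does not vanish on constants would do.

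\textbf{Two small points.}
\begin{itemize}
\item The local Poincar\'e display in your Step 2 is never actually used in the compactness route.
\item Tightness does not follow from Lemma \ref{PILem} as stated, which only gives the bound $\tfrac12$ at infinity. Either use the explicit growth $\tfrac14|\nabla V|^2-\tfrac12\Delta V\to\infty$, which holds for both potentials. Or apply your Step 1 bound to $f\chi_R$, where $\chi_R$ vanishes on $B_R$ (with $R\ge M$) and $|\nabla\chi_R|\lesssim R^{-1}$. The local term then drops out, and one gets $\int_{|x|\ge2R}f^2e^{-V}dx\lesssim\|\nabla f\|_{L^2(V)}^2+R^{-2}\|f\|_{L^2(V)}^2$.
\end{itemize}
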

	\begin{proof}
		%Before starting the proof, we emphasize that $V$ is determined by $\delta$ and $\widetilde{\Phi}$, and function $\widetilde{\Phi}(x)$ is determined by $p$ and $\alpha$ in $p \in (1, 2)$ case, or $p$ and $r$ in $p \in (2, \infty)$. Therefore constant $C_V$ actually means $C_{p, \alpha, r, \delta}$. 
		We set $g := fe^{-\frac{1}{2}V}$, then
		\begin{equation*}
			\nabla g = (\nabla f)e^{-\frac{1}{2}V} - \frac{1}{2}(\nabla V)fe^{-\frac{1}{2}V}.
		\end{equation*}
	It leads to
		\begin{equation*}
			0 \le \int_{\R^3}|\nabla g|^2dx = \int_{\R^3}|\nabla f|^2e^{-V}dx + \frac{1}{4}\int_{\R^3}|\nabla V|^2f^2e^{-V}dx - \int_{\R^3}f\nabla f \cdot \nabla Ve^{-V}dx.
		\end{equation*}
		By using integral by parts, we have
		\begin{equation*}
			\int_{\R^3} f\nabla f \cdot \nabla Ve^{-V}dx = \frac{1}{2}\int_{\R^3}\nabla(f^2) \cdot \nabla Ve^{-V}dx = -\frac{1}{2}\int_{\R^3}(\Delta V)f^2e^{-V} + \frac{1}{2}\int_{\R^3}|\nabla V|^2f^2e^{-V}dx.
		\end{equation*}
		Thus we have
		\begin{equation*}
			0 \le \int_{\R^3}|\nabla f|^2e^{-V}dx + \frac{1}{2}\int_{\R^3}(\Delta V)f^2e^{-V}dx - \frac{1}{4}\int_{\R^3}|\nabla V|^2f^2e^{-V}dx,
		\end{equation*}
		and reaches that
		\begin{equation*}
			\int_{\R^3}|\nabla f|^2e^{-V}dx \ge \int_{\R^3}(\frac{1}{4}|\nabla V|^2 - \frac{1}{2}\Delta V)f^2e^{-V}dx.
		\end{equation*}
		By Lemma \ref{PILem},
		\begin{equation*}
			\int_{\R^3}|\nabla f|^2e^{-V}dx \ge \frac{1}{2}\int_{\R^3} f^2e^{-V}dx - (1 - \delta)\int_{|x| < M}(\Delta\widetilde{\Phi} + 1)f^2e^{-V}dx.
		\end{equation*}
		Observing that
		\begin{equation*}
			(\Delta\widetilde{\Phi} + 1)e^{-V} = (\Delta\widetilde{\Phi} + 1)e^{-(1 - 2\delta)\widetilde{\Phi}}e^{-\widetilde{\Phi}},
		\end{equation*}
		where $|\Delta\widetilde{\Phi} + 1|e^{-(1 - 2\delta)\widetilde{\Phi}}$ is bounded in $\mathbb{R}^3$ since $\delta<\f12$. Thus we have
		\begin{equation}\label{PIproof1}
			\int_{\R^3} f^2e^{-V}dx \le 2\int_{\R^3}|\nabla f|^2e^{-V}dx + C_V\int_{|x| < M}f^2e^{-\widetilde{\Phi}}dx.
		\end{equation}
		By local Poincar\'e Inequality, i.e. Lemma \ref{LPI},
		\begin{equation}\label{PIproof2}
			\int_{|x| < M}f^2e^{-\widetilde{\Phi}}dx \le C_M\int_{|x| < M}|\nabla f|^2e^{-\widetilde{\Phi}}dx + \frac{1}{\mathcal{Z}_M}\big(\int_{|x| < M}fe^{-\widetilde{\Phi}}dx\big)^2,
		\end{equation}
		where
		\begin{equation*}
			\mathcal{Z}_M = \int_{|x| < M}e^{-\widetilde{\Phi}}dx, \:\:\: \text{and} \: \mathcal{Z}_M \to \mathcal{Z}_\infty < +\infty \:\: \text{as} \:\: M \to \infty.
		\end{equation*}
		
		In the condition that $\int_{\R^3} fe^{-\widetilde{\Phi}}dx = 0$, we have
		\begin{equation*}
			(\int_{|x| < M}fe^{-\widetilde{\Phi}}dx\big)^2 = (\int_{|x| \ge M}fe^{-\widetilde{\Phi}}dx\big)^2 \le \int_{|x| \ge M}f^2e^{-2(1 - \delta)\widetilde{\Phi}}dx \int_{|x| \ge M}e^{-2\delta\widetilde{\Phi}}dx \le \mathcal{E}_M\int_{\R^3} f^2e^{-V}dx,
		\end{equation*}
		where
		\begin{equation*}
			\mathcal{E}_M = \int_{|x| \ge M}e^{-2\delta\widetilde{\Phi}}dx, \:\:\: \text{and} \: \mathcal{E}_M \to 0 \:\: \text{as} \:\: M \to \infty.
		\end{equation*}

		In the condition that $\int_{\R^3}(\Lambda f)e^{-\widetilde{\Phi}}dx = 0$, by integration by parts, we first write
		\begin{equation*}
			0 = \int_{\R^3}(f - \Delta f + \nabla V \cdot \nabla f)e^{-\widetilde{\Phi}}dx = \int_{\R^3}(f + (1 - 2\delta)\nabla\widetilde{\Phi}\cdot \nabla f)e^{-\widetilde{\Phi}}dx.
		\end{equation*}
		The Cauchy--Schwarz inequality gives
		\begin{equation*}
		\begin{aligned}
			(\int_{|x| < M}fe^{-\widetilde{\Phi}}dx\big)^2 &= (\int_{|x| \ge M}fe^{-\widetilde{\Phi}}dx + (1 - 2\delta)\int_{\R^3}\nabla\widetilde{\Phi} \cdot \nabla fe^{-\widetilde{\Phi}}dx\big)^2 \\
			&\le 2(\int_{|x| \ge M}fe^{-\widetilde{\Phi}}dx\big)^2 + 2(1 - \delta)^2\big(\int_{\R^3}\nabla\widetilde{\Phi} \cdot \nabla fe^{-\widetilde{\Phi}}dx\big)^2 \\
			&\le 2\mathcal{E}_M\int_{|x| \ge M}f^2e^{-V}dx + 2(1 - \delta)^2\int_{\R^3}|\nabla f|^2e^{-2(1 - \delta)\widetilde{\Phi}}dx  \int_{\R^3}|\nabla\widetilde{\Phi}|^2e^{-2\delta\widetilde{\Phi}}dx \\
			&\le 2\mathcal{E}_M\int_{\R^3} f^2e^{-V}dx + C_V\int_{\R^3}|\nabla f|^2e^{-V}dx.
		\end{aligned}
		\end{equation*}
		Thus in either cases, we plug it into \eqref{PIproof2} and get
		\begin{equation*}
			\int_{|x| < M}f^2e^{-\widetilde{\Phi}}dx \le C_M\int_{|x| < M}|\nabla f|^2e^{-\widetilde{\Phi}}dx + \frac{2\mathcal{E}_M}{\mathcal{Z}_M}\int_{\R^3} f^2e^{-V}dx + \frac{C_V}{\mathcal{Z}_M}\int_{\R^3}|\nabla f|^2e^{-V}dx.
		\end{equation*}
	Observe that
		\begin{equation*}
			\mathbf{1}_{|x| < M}e^{-\widetilde{\Phi}} = 	\mathbf{1}_{x < M}e^{(1 - 2\delta)\widetilde{\Phi}}e^{-V} \le C_{V, M}e^{-V}
		\end{equation*}
		Therefore
		\begin{equation*}
			\int_{|x| < M}f^2e^{-\widetilde{\Phi}}dx \le (C_{V, M} + \frac{C_V}{\mathcal{Z}_M})\int_{\R^3}|\nabla f|^2e^{-V}dx + \frac{2\mathcal{E}_M}{\mathcal{Z}_M}\int_{\R^3} f^2e^{-V}dx.
		\end{equation*}
		We use the above inequality in \eqref{PIproof1} to get that
		\begin{equation*}
			\int_{\R^3} f^2e^{-V}dx \le (2+C_{V, M} + \frac{C_V}{\mathcal{Z}_M})\int_{\R^3}|\nabla f|^2e^{-V}dx + \frac{2\mathcal{E}_M}{\mathcal{Z}_M}\int_{|x| \ge M}f^2e^{-V}dx.
		\end{equation*}
		Recall that, as $M\to\infty$,
		\begin{equation*}
			\mathcal{Z}_M \to \mathcal{Z}_\infty < +\infty, \:\:\: \mathcal{E}_M \to 0.
		\end{equation*}
		Choose $M$ sufficiently large that $4\mathcal E_M<\mathcal Z_M$. Then
		\begin{equation*}
			\int_{\R^3} f^2e^{-V}dx \le C_V\int_{\R^3}|\nabla f|^2e^{-V}dx,
		\end{equation*}
		which completes the proof.
	\end{proof}

%%%%%%%%%%%%%%%%%%%%%%%%%%%%%%%%%%%%%%%%%%%%%%%%%%

	\subsection{Negative-order weighted Poincar\'e inequalities}
	\begin{lemma}[Negative-order weighted Poincar\'e inequalities]\label{VPI}
		If smooth function $f$ satisfies $\int_{\R^3} fe^{-\widetilde{\Phi}}dx = 0$. Then there exists some constant $C_V$ such that
		\begin{equation*}
			\|f\|_{L^2(V)} \le C_V\|\Lambda^{-\frac{1}{2}}\nabla f\|_{L^2(V)}; \:\:\:\:\:\:
			\|\Lambda^{-\frac{1}{2}}f\|_{L^2(V)} \le C_V\|\Lambda^{-1}\nabla f\|_{L^2(V)}.
		\end{equation*}
	\end{lemma}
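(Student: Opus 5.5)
Both inequalities will be derived by duality from the weighted Poincar\'e inequality (Lemma \ref{PI}) together with the operator bounds of Lemma \ref{BoundedLambda}, working in $L^2(V)$ with $V=V_\delta$. The main difficulty is that the constraint in the hypothesis, $\int fe^{-\widetilde\Phi}dx=0$, is expressed in the measure $e^{-\widetilde\Phi}$, whereas $L^2(V)$ uses $e^{-V}$. It is therefore natural to write the constraint as $(f,\varphi)_{L^2(V)}=0$ with $\varphi:=e^{V-\widetilde\Phi}=e^{(1-2\delta)\widetilde\Phi}$. Since $\delta<1/2$, $\varphi\in L^2(V)$, because $\int e^{2(1-2\delta)\widetilde\Phi-2(1-\delta)\widetilde\Phi}=\int e^{-2\delta\widetilde\Phi}<\infty$.

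For the first inequality, write
\[
\|f\|_{L^2(V)}^2=(f,\Lambda^{-1}\Lambda f)=(f,\Lambda u),\qquad u:=\Lambda^{-1}f .
\]
The aim is to choose a test function $u$ that is orthogonal to constants in the $e^{-\widetilde\Phi}$ sense, so that Lemma \ref{PI} applies to it. Compute
\[
\int(\Lambda u)e^{-\widetilde\Phi}dx=(\Lambda u,\varphi)_{L^2(V)}=(f,\varphi)_{L^2(V)}=0,
\]
so $u$ satisfies the second admissible condition of Lemma \ref{PI}. This is exactly why that lemma was stated with the alternative condition $\int(\Lambda f)e^{-\widetilde\Phi}=0$. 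Next, $(f,\Lambda u)=(f,u)-(f,\Delta_V u)=(f,u)+(\nabla f,\nabla u)_{L^2(V)}$ after integrating by parts, and since $u=\Lambda^{-1}f$ the identity $(f,u)=\|f\|^2-(\nabla f,\nabla u)$ is trivially consistent. The cleaner route is therefore
\[
\|f\|^2=(\Lambda u,u)+(\nabla u,\nabla u)\cdot 0+\dots
\]
Instead, use the standard $H^{-1}$ argument:
\[
\|f\|^2=(f,\Lambda u)=\|u\|^2+2\|\nabla u\|^2+\|\Delta_V u\|^2 ,
\]
and bound $\|u\|\le C_V\|\nabla u\|$ by Lemma \ref{PI}. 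Then $\|\nabla u\|^2=(-\Delta_V u,u)\le(\Lambda u,u)=(f,u)$. Finally, $(f,u)=(f,\Lambda^{-1}f)=\|\Lambda^{-1/2}f\|^2$, which reduces the problem to showing $\|\Lambda^{-1/2}f\|\lesssim\|\Lambda^{-1/2}\nabla f\|$, i.e. a negative-order Poincar\'e estimate.

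The plan is therefore organized in reverse. First prove $\|\Lambda^{-1/2}f\|\le C\|\Lambda^{-1}\nabla f\|$ (the second claim), and then derive the first claim from it. For the second claim, set $w:=\Lambda^{-1}f$. Then
\[
\|\Lambda^{-1/2}f\|^2=(f,w)=\|w\|^2+\|\nabla w\|^2 ,
\]
and by the computation above $w$ satisfies $\int(\Lambda w)e^{-\widetilde\Phi}=0$, so Lemma \ref{PI} gives $\|w\|\le C_V\|\nabla w\|$. It remains to control $\|\nabla w\|=\|\nabla\Lambda^{-1}f\|$ by $\|\Lambda^{-1}\nabla f\|$, which requires a commutator identity. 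One has $\nabla\Delta_V=\Delta_V\nabla-\nabla^2V\,\nabla$, hence
\[
\Lambda\nabla w=\nabla f-\nabla^2V\,\nabla w ,
\qquad
\nabla w=\Lambda^{-1}\nabla f-\Lambda^{-1}(\nabla^2V\,\nabla w).
\]
The last term is the main obstacle. Since $|\nabla^2V|\lesssim\langle\nabla V\rangle^{2}$ for large $x$ in both regimes (for $p>2$ it is even $o(\langle\nabla V\rangle^2)$), its size is controlled by bounds of the type $\Lambda^{-1}\langle\nabla V\rangle^2$ from Lemma \ref{BoundedLambda}, giving an estimate by $\|\nabla w\|$. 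That is not small in general, so a low/high splitting is needed. For $|x|\ge M$, $|\nabla^2V|\le\varepsilon\langle\nabla V\rangle^2+C$ when $p>1$. For $1<p<2$, $\nabla^2V$ is bounded and $\langle\nabla V\rangle$ is bounded as well, so the estimate can be closed using $\|\Lambda^{-1}\nabla^2V\nabla w\|\le C\|\Lambda^{-1/2}\nabla w\|$ together with interpolation. The compact part is absorbed using Lemma \ref{PI} and the extra smoothing of $\Lambda^{-1}$ on bounded sets. If this absorption fails to close, the fallback is the argument of \cite{carrapatoso_weighted_2022} (their negative Poincar\'e lemma), which uses exactly the bounds listed in Lemma \ref{BoundedLambda} and is cited for this purpose.

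For the first claim, apply the second claim to get $\|\Lambda^{-1/2}f\|$ and combine it with the elementary splitting
\[
\|f\|^2=(\Lambda^{1/2}f,\Lambda^{-1/2}f)\le \|f\|_{H^1(V)}\,\|\Lambda^{-1/2}f\|,
\]
or more directly run the same argument one level up with $w=\Lambda^{-1/2}f$, using the boundedness of $\Lambda^{1/2}\nabla\Lambda^{-1}$ and $\Lambda\nabla\Lambda^{-3/2}$ to commute $\nabla$ past $\Lambda^{-1/2}$.
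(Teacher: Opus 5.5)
You correctly pick the test function $g=\Lambda^{-1}f$ and observe that it satisfies the second hypothesis of Lemma \ref{PI}, since $\Lambda g=f$. However, neither inequality is actually closed.

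\emph{First inequality.} You start from $\|f\|^2=\|g\|^2+2\|\nabla g\|^2+\|\Delta_Vg\|^2$ and bound $\|g\|^2$ and $\|\nabla g\|^2$ by $(f,g)=\|\Lambda^{-1/2}f\|^2$. In doing so you drop $\|\Delta_Vg\|^2$, which is the dominant term. Since $\|f\|$ is not controlled by $\|\Lambda^{-1/2}f\|$, the claimed reduction is invalid. Your later derivation of the first inequality from the second also fails. The bound $\|f\|^2\le\|\Lambda^{1/2}f\|\,\|\Lambda^{-1/2}f\|$ brings in the stronger norm $\|\Lambda^{1/2}f\|$. The ``one level up'' variant with $\Lambda^{-1/2}f$ has no orthogonality property, so Lemma \ref{PI} cannot be applied to it.

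\emph{Second inequality.} You correctly reach $\|\Lambda^{-1/2}f\|^2=\|g\|^2+\|\nabla g\|^2\le(1+C_V)\|\nabla g\|^2$. You then try to prove $\|\nabla\Lambda^{-1}f\|\lesssim\|\Lambda^{-1}\nabla f\|$ via the commutator $\nabla\Delta_V=\Delta_V\nabla-\nabla^2V\,\nabla$. The error term $\Lambda^{-1}(\nabla^2V\,\nabla g)$ is of the same order as $\nabla g$ on bounded sets. The only lower-order control available, Poincar\'e's $\|g\|\lesssim\|\nabla g\|$, points the wrong way for absorption. You concede this and defer to a citation, so the step is unproved. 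Note also that $|\nabla V|\sim|x|^{p-1}$ is unbounded for $1<p<2$ as well.

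The missing idea is that no commutator is needed. After using Poincar\'e, write what remains as a pairing with $\nabla f$ and move operators across by duality, using the bounded operators of Lemma \ref{BoundedLambda}. For the second inequality,
\[
\|\nabla g\|^2=(-\Delta_V\Lambda^{-2}f,f)=(\nabla\Lambda^{-2}f,\nabla f)=\big(\Lambda\nabla\Lambda^{-3/2}\,\Lambda^{-1/2}f,\ \Lambda^{-1}\nabla f\big)\le C\|\Lambda^{-1/2}f\|\,\|\Lambda^{-1}\nabla f\|,
\]
and you then divide by $\|\Lambda^{-1/2}f\|$. For the first inequality, group $\|\nabla g\|^2+\|\Delta_Vg\|^2=(-\Delta_Vg,\Lambda g)=(-\Delta_V\Lambda^{-1}f,f)$, so that Poincar\'e only has to absorb $\|g\|^2$:
\[
\|f\|^2\le(2+C_V)(-\Delta_V\Lambda^{-1}f,f)=(2+C_V)\big(\Lambda^{1/2}\nabla\Lambda^{-1}f,\ \Lambda^{-1/2}\nabla f\big)\le C\|f\|\,\|\Lambda^{-1/2}\nabla f\|.
\]
This is precisely the paper's argument. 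The two inequalities are proved independently, each directly from Lemma \ref{PI} applied to $\Lambda^{-1}f$, plus one bounded operator: $\Lambda^{1/2}\nabla\Lambda^{-1}$ for the first and $\Lambda\nabla\Lambda^{-3/2}$ for the second.
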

	\begin{proof}
		For the first inequality, set $g:=\Lambda^{-1}f$. Since $\int_{\R^3}(\Lambda g)e^{-\widetilde{\Phi}}dx=0$, Lemma \ref{PI} gives
		\begin{equation*}
			(-\Delta_Vg, g)_{L^2(V)} = \|\nabla g\|_{L^2(V)}^2 \ge C_V\|g\|_{L^2(V)}^2 = C_V(g, g)_{L^2(V)}
		\end{equation*}
		for some constant $C_V$. Then
		\begin{equation*}
			(-\Delta_Vg, g)_{L^2(V)} \ge \frac{1}{2}(-\Delta_Vg, g)_{L^2(V)} + \frac{1}{2}C_V(g, g)_{L^2(V)} \ge C'_V((I - \Delta_V)g, g)_{L^2(V)}.
		\end{equation*}
		Therefore we have
		\begin{equation*}
			\begin{aligned}
				(f, f)_{L^2(V)} &= ((I - \Delta_V)g, (I - \Delta_V)g)_{L^2(V)}
				= ((I - \Delta_V)g, g)_{L^2(V)} \\
				&\quad + (-\Delta_Vg, g)_{L^2(V)}
				 + (-\Delta_Vg, -\Delta_Vg)_{L^2(V)} \\
			&\ge (1 + C'_V)((I - \Delta_V)g, g)_{L^2(V)} = (1 + C'_V)(f, \Lambda^{-1}f)_{L^2(V)}
			= (1 + C'_V)(f, (\Lambda + \Delta_V)\Lambda^{-1}f)_{L^2(V)} \\
			&= (1 + C'_V)(f, f)_{L^2(V)} - (1 + C'_V)(-\Delta_V\Lambda^{-1}f, f)_{L^2(V)}.
		\end{aligned}
		\end{equation*}
	It leads to
		\begin{equation*}
		\begin{aligned}
			\frac{C'_V}{1 + C'_V}\|f\|_{L^2(V)}^2 &\le (-\Delta_V\Lambda^{-1}f, f)_{L^2(V)}
			= (\nabla\Lambda^{-1}f, \nabla f)_{L^2(V)} \\
			&= (\Lambda^{\frac{1}{2}}\nabla\Lambda^{-1}f, \Lambda^{-\frac{1}{2}}\nabla f)_{L^2(V)}
			\le \|\Lambda^{\frac{1}{2}}\nabla\Lambda^{-1}f\|_{L^2(V)}\|\Lambda^{-\frac{1}{2}}\nabla f\|_{L^2(V)}.
		\end{aligned}
		\end{equation*}
		Lemma \ref{BoundedLambda} gives $\|\Lambda^{1/2}\nabla\Lambda^{-1}f\|_{L^2(V)}\leq\|f\|_{L^2(V)}$, proving the first inequality.

		\bigskip For the second inequality, again we set $g := \Lambda^{-1}f$ and Lemma \ref{PI} shows that
		\begin{equation*}
			(-\Delta_Vg, g)_{L^2(V)} = \|\nabla g\|_{L^2(V)}^2 \ge C_V\|g\|_{L^2(V)}^2 = C_V(g, g)_{L^2(V)}
		\end{equation*}
		for some constant $C_V$. Then
		\begin{equation*}
			C_V(g, g)_{L^2(V)} \le (-\Delta_Vg, g)_{L^2(V)} = ((\Lambda - I)g, g)_{L^2(V)} = (\Lambda g, g)_{L^2(V)} - (g, g)_{L^2(V)},
		\end{equation*}
		which leads to
		\begin{equation*}
			\frac{1}{1 + C_V}(\Lambda g, g)_{L^2(V)} \ge (g, g)_{L^2(V)} = (\Lambda g, g)_{L^2(V)} - (-\Delta_Vg, g)_{L^2(V)}.
		\end{equation*}
		Therefore we have
		\begin{equation*}
			\frac{C_V}{1 + C_V}(\Lambda g, g)_{L^2(V)} \le (-\Delta_Vg, g)_{L^2(V)}.
		\end{equation*}
		Note that $g = \Lambda^{-1}f$, then we have 
		\begin{multline*}
			\frac{C_V}{1 + C_V}\|\Lambda^{-\frac{1}{2}}f\|_{L^2(V)} \le (-\Delta_V\Lambda^{-1}f, \Lambda^{-1}f)_{L^2(V)} = (-\Delta_V\Lambda^{-2}f, f)_{L^2(V)} = (\nabla\Lambda^{-2}f, \nabla f)_{L^2(V)} \\
			= (\Lambda\nabla\Lambda^{-2}f, \Lambda^{-1}\nabla f)_{L^2(V)} \le \|\Lambda\nabla\Lambda^{-\frac{3}{2}}\Lambda^{-\frac{1}{2}}f\|_{L^2(V)}\|\Lambda^{-1}\nabla f\|_{L^2(V)}.
		\end{multline*}
		Applying Lemma \ref{BoundedLambda}, we find $\|\Lambda\nabla\Lambda^{-3/2}\Lambda^{-1/2}f\|_{L^2(V)}\leq\|\Lambda^{-1/2}f\|_{L^2(V)}$, which proves the second inequality.
		%We complete the proof of this lemma.
	\end{proof}

	\subsection{Weighted Poincar\'e--Korn inequality}
	For the vector-valued function $f = (f_1, f_2, f_3)$, we denote $\nabla_x^{sym}f$ and $\nabla_x^{skew}f$ are $3 \times 3$ matrix, where
	\begin{equation}\label{Defisymskew}
		\nabla_x^{sym}f = (\nabla_x^{sym}f)_{ij} = \frac{\partial_jf_i + \partial_if_j}{2}; \:\:\:\:\:\:
		\nabla_x^{skew}f = (\nabla_x^{skew}f)_{ij} = \frac{\partial_jf_i - \partial_if_j}{2}.
	\end{equation}
	Besides, we denote
	\begin{equation*}
		|\nabla f|^2 = \sum_{i, j = 1}^3|\partial_if_j|^2; \:\:\:
		|\nabla_x^{sym}f|^2 = \sum_{i, j = 1}^3|(\nabla_x^{sym}f)_{ij}|^2; \:\:\:
		|\nabla_x^{skew}f|^2 = \sum_{i, j = 1}^3|(\nabla_x^{skew}f)_{ij}|^2.
	\end{equation*}

The following zeroth order variant Poincar\'e-Korn inequality holds:
	\begin{lemma}[Weighted Poincar\'e--Korn inequality]\label{VPKI}
		If the vector-valued function $f = (f_1, f_2, f_3)$ satisfies
		\begin{equation*}
			\int_{\R^3} fe^{-\widetilde{\Phi}}dx = 0, \quad\mbox{and}\quad \int_{\R^3}\nabla_x^{skew}fe^{-\widetilde{\Phi}}dx = 0.
		\end{equation*}
		Then there exists a constant $C_V$ such that
		\begin{equation*}
			\|f\|_{L^2(V)} \le C_V\|\Lambda^{-\frac{1}{2}}\nabla_x^{sym}f\|_{L^2(V)}.
		\end{equation*}
	\end{lemma}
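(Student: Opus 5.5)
We plan to follow the same two-step route as in Lemma \ref{VPI}: first prove a zero-order weighted Korn inequality with an $H^1$-type right-hand side, then dualize it through $\Lambda^{-1}$ using the operator bounds of Lemma \ref{BoundedLambda}.

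\textbf{Step 1: zero-order Korn inequality.} Under the two stated constraints we aim to show
\[
\|f\|_{L^2(V)}+\|\nabla f\|_{L^2(V)}\le C_V\|\nabla_x^{sym}f\|_{L^2(V)}.
\]
The starting point is the pointwise identity $\partial_j\partial_k f_i=\partial_j(\nabla^{sym}f)_{ik}+\partial_k(\nabla^{sym}f)_{ij}-\partial_i(\nabla^{sym}f)_{jk}$. It gives $\nabla(\nabla^{skew}f)$ as a combination of first derivatives of $\nabla^{sym}f$.

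Each entry $\omega$ of $\nabla^{skew}f$ satisfies $\int\omega e^{-\widetilde\Phi}=0$. The first inequality of Lemma \ref{VPI} then yields
\[
\|\omega\|_{L^2(V)}\le C\|\Lambda^{-1/2}\nabla\omega\|_{L^2(V)}\le C\sum\|\Lambda^{-1/2}\partial(\nabla^{sym}f)\|_{L^2(V)}\le C\|\nabla^{sym}f\|_{L^2(V)}.
\]
The last step uses boundedness of $\Lambda^{-1/2}\nabla$.

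Since $\nabla f=\nabla^{sym}f+\nabla^{skew}f$, we get $\|\nabla f\|_{L^2(V)}\lesssim\|\nabla^{sym}f\|_{L^2(V)}$. Applying Lemma \ref{PI} componentwise (each $f_i$ has $e^{-\widetilde\Phi}$-mean zero) then controls $\|f\|_{L^2(V)}$.

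\textbf{Step 2: negative-order version.} We would instead apply the first inequality of Lemma \ref{VPI} directly to $f_i$:
\[
\|f\|_{L^2(V)}\le C\|\Lambda^{-1/2}\nabla f\|_{L^2(V)}\le C\bigl(\|\Lambda^{-1/2}\nabla^{sym}f\|_{L^2(V)}+\|\Lambda^{-1/2}\omega\|_{L^2(V)}\bigr).
\]
It remains to bound $\|\Lambda^{-1/2}\omega\|_{L^2(V)}$. By the second inequality of Lemma \ref{VPI}, valid since $\omega$ has zero $e^{-\widetilde\Phi}$-mean,
\[
\|\Lambda^{-1/2}\omega\|_{L^2(V)}\le C\|\Lambda^{-1}\nabla\omega\|_{L^2(V)}.
\]
By the identity above, $\nabla\omega$ is a sum of terms $\partial_j S$ with $S$ entries of $\nabla^{sym}f$. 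We therefore need
\[
\|\Lambda^{-1}\partial_j S\|_{L^2(V)}\lesssim\|\Lambda^{-1/2}S\|_{L^2(V)},
\]
which is the boundedness of $\Lambda^{-1}\nabla\Lambda^{1/2}$ from Lemma \ref{BoundedLambda}. Combining the pieces gives the claim.

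\textbf{Main obstacle.} The delicate step is making the distributional identity rigorous inside the negative-order norms. One must check that $\Lambda^{-1}\nabla\Lambda^{1/2}$ acts on $\Lambda^{-1/2}S$ as expected, that is, that the weighted adjoint structure does not produce extra $\nabla V$ terms when derivatives are moved. Lemma \ref{BoundedLambda} already encodes the commutator bounds involving $\langle\nabla V\rangle$. We would work first with smooth compactly supported $f$ and then conclude by density.
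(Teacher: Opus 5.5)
Your Step 2 is essentially the paper's proof. It applies the first inequality of Lemma \ref{VPI} componentwise, splits $\nabla f$ into symmetric and skew parts, and applies the second inequality of Lemma \ref{VPI} to the mean-zero skew entries. It then uses $\partial_k(\nabla_x^{skew}f)_{ij}=\partial_j(\nabla_x^{sym}f)_{ik}-\partial_i(\nabla_x^{sym}f)_{jk}$ and the boundedness of $\Lambda^{-1}\nabla\Lambda^{\frac12}$.

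The factorization $\Lambda^{-1}\partial_jS=(\Lambda^{-1}\partial_j\Lambda^{\frac12})\Lambda^{-\frac12}S$ is a pure operator identity, so the adjoint/commutator obstacle you flag does not arise. Your Step 1 (zero-order Korn) is correct but unnecessary: Step 2 never uses it, and it follows from the negative-order estimate.
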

	\begin{proof}
		Firstly, it is easy to check that
		\begin{equation*}
			\|\Lambda^{-\frac{1}{2}}\nabla f\|_{L^2(V)}^2 = \|\Lambda^{-\frac{1}{2}}\nabla_x^{sym}f\|_{L^2(V)}^2 + \|\Lambda^{-\frac{1}{2}}\nabla_x^{skew}f\|_{L^2(V)}^2.
		\end{equation*}
		The first inequality in Lemma \ref{VPI} shows that
		\begin{equation}\label{VPKIproof}
			\|f\|_{L^2(V)}^2 \le C_V\|\Lambda^{-\frac{1}{2}}\nabla f\|_{L^2(V)}^2 = C_V(\|\Lambda^{-\frac{1}{2}}\nabla_x^{sym}f\|_{L^2(V)}^2 + \|\Lambda^{-\frac{1}{2}}\nabla_x^{skew}f\|_{L^2(V)}^2).
		\end{equation}
		Now by the second inequality in Lemma \ref{VPI}, we have
		\begin{equation*}
			\|\Lambda^{-\frac{1}{2}}\nabla_x^{skew}f\|_{L^2(V)}^2 = \sum_{i, j = 1}^3\|\Lambda^{-\frac{1}{2}}(\nabla_x^{skew}f)_{ij}\|_{L^2(V)}^2 \le C_V\sum_{i, j, k = 1}^3\|\Lambda^{-1}\partial_k(\nabla_x^{skew}f)_{ij}\|_{L^2(V)}^2.
		\end{equation*}
		We use the following differential compatibility identity:
		\begin{equation*}
			\partial_k(\nabla_x^{skew}f)_{ij} = \partial_j(\nabla_x^{sym}f)_{ik} - \partial_i(\nabla_x^{sym}f)_{jk}.
		\end{equation*}
		One may check its correctness by definition of $\nabla_x^{sym}$ and $\nabla_x^{skew}$ \eqref{Defisymskew}. Therefore we get
		\begin{equation*}
			\|\Lambda^{-\frac{1}{2}}\nabla_x^{skew}f\|_{L^2(V)}^2 \le C_V\sum_{i, j, k = 1}^3\|\Lambda^{-1}\partial_k(\nabla_x^{sym}f)_{ij}\|_{L^2(V)}^2 = C_V\sum_{i, j = 1}^3\|\Lambda^{-1}\nabla(\nabla_x^{sym}f)_{ij}\|_{L^2(V)}^2.
		\end{equation*}
		From Lemma \ref{BoundedLambda}, we know that $\Lam^{-1}\na\Lam^{\f12}$ is bounded in $L^2(V)$, thus
		\begin{equation*}
			\|\Lambda^{-\frac{1}{2}}\nabla_x^{skew}f\|_{L^2(V)}^2 \le C_V\sum_{i, j = 1}^3\|\Lambda^{-\frac{1}{2}}(\nabla_x^{sym}f)_{ij}\|_{L^2(V)}^2 = C_V\|\Lambda^{-\frac{1}{2}}\nabla_x^{sym}f\|_{L^2(V)}^2.
		\end{equation*}
		Together with \eqref{VPKIproof}, this is enough for the proof.
	\end{proof}

%%%%%%%%%%%%%%%%%%%%%%%%%%%%%%%%%%%%%%%%%%%%%%%%%%

	\subsection{Comparison and interpolation of weighted spaces}
At the end of this section, we compare the spaces associated with different values of $\delta$ and record the interpolation estimates used later.
	\begin{lemma}[Comparison and interpolation of weights]\label{lemma5.7}
		$(1)$ For any given $0<\de_1<\f12$, there exists a constant $\varepsilon>0$ depending on $\de_1$ such that if $|\de_2-\de_1|<\varepsilon$, then for any regular function $f$, it holds that
		\begin{equation*}
		\|\Lam_{\de_2}f\|_{L^2(V_{\de_1})}\sim_{\de_1,\de_2} \|\Lam_{\de_1}f\|_{L^2(V_{\de_1})}.
		\end{equation*}
		As a corollary, for any given $\de_0\in (0,\f1{12})$, we can choose $\de_0>\de_1>\de_2>\de_3>\de_4>0$ such that $\de_0+\de_4>2\de_1,\de_1+\de_4>2\de_2,\de_2+\de_4>2\de_3$ and 
		\begin{equation*}
			\|\Lam_{\de_1}f\|_{L^2(V_{\de_0})}\sim \|\Lam_{\de_0}f\|_{L^2(V_{\de_0})},\quad 	\|\Lam_{\de_2}f\|_{L^2(V_{\de_1})}\sim \|\Lam_{\de_1}f\|_{L^2(V_{\de_1})},\quad\|\Lam_{\de_3}f\|_{L^2(V_{\de_2})}\sim \|\Lam_{\de_2}f\|_{L^2(V_{\de_2})}.
		\end{equation*}

		\noindent $(2)$ For every $m\in\mathbb R$, $0<\delta_1<\delta_2<1/2$, and every smooth function $f$,
		\begin{equation*}
			\|\langle \cdot \rangle^mf\|_{L^2(V_{\delta_1})} \lesssim \|f\|_{L^2(V_{\delta_2})}.
		\end{equation*}

		\noindent $(3)$ Let $0<\delta_1<\delta_2<\delta_3<1/2$ and let $f,g$ be smooth. If $2\delta_2\le\delta_1+\delta_3$, then
		\begin{equation*}
			(f, g)_{L^2(V_{\delta_2})} \lesssim \|f\|_{L^2(V_{\delta_1})}\|g\|_{L^2(V_{\delta_3})}.
		\end{equation*}
	\end{lemma}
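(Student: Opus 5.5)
Part (2) and part (3) are elementary pointwise statements about the weights $e^{-V_\delta}=e^{-2(1-\delta)\widetilde\Phi}$, so I would do them first. For (2), write $\|\langle x\rangle^m f\|_{L^2(V_{\delta_1})}^2=\int \langle x\rangle^{2m}e^{-2(\delta_2-\delta_1)\widetilde\Phi}\,f^2e^{-V_{\delta_2}}dx$. Since $\widetilde\Phi(x)\gtrsim |x|^p-C$ in both regimes (for $p>2$ the term $|x|^p/p$ dominates $\tfrac{r^2}{2}(x_1^2+x_2^2)$), the factor $\langle x\rangle^{2m}e^{-2(\delta_2-\delta_1)\widetilde\Phi}$ is bounded. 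For (3), write $fg\,e^{-V_{\delta_2}}=(fe^{-V_{\delta_1}/2})(ge^{-V_{\delta_3}/2})e^{-(2\delta_1+2\delta_3-4\delta_2)\widetilde\Phi/2}\cdot e^{(\ldots)}$. Concretely, $V_{\delta_2}-\tfrac12V_{\delta_1}-\tfrac12V_{\delta_3}=(\delta_1+\delta_3-2\delta_2)\widetilde\Phi\ge -C$, since $\widetilde\Phi$ is bounded below. Cauchy--Schwarz then concludes.

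For (1), the plan is to compare $\Lambda_{\delta_2}f-\Lambda_{\delta_1}f=(\nabla V_{\delta_2}-\nabla V_{\delta_1})\cdot\nabla f=2(\delta_1-\delta_2)\nabla\widetilde\Phi\cdot\nabla f$. Since $\nabla V_{\delta_1}=2(1-\delta_1)\nabla\widetilde\Phi$, this is bounded by $C|\delta_1-\delta_2|\,\langle\nabla V_{\delta_1}\rangle|\nabla f|$. By Lemma \ref{BoundedLambda} in $L^2(V_{\delta_1})$, the operator $\langle\nabla V\rangle\nabla\Lambda_{\delta_1}^{-1}$ is bounded. Hence
\[
\|(\Lambda_{\delta_2}-\Lambda_{\delta_1})f\|_{L^2(V_{\delta_1})}\le C_{\delta_1}|\delta_1-\delta_2|\,\|\Lambda_{\delta_1}f\|_{L^2(V_{\delta_1})}.
\]
Choosing $\varepsilon$ with $C_{\delta_1}\varepsilon\le 1/2$ gives both directions of the equivalence by the triangle inequality. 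One point needs care: the constant $C_{\delta_1}$ must be uniform, which it is because it depends only on $\delta_1$ (the space being fixed). The main obstacle is making sure the bound for $\langle\nabla V\rangle\nabla\Lambda^{-1}$ from \cite{carrapatoso_weighted_2022} applies to our $\widetilde\Phi$, including the rotating $p>2$ case and the regularized $p<2$ case. I would check the structural hypotheses there ($|\nabla^2 V|\lesssim \langle\nabla V\rangle^{2-\kappa}$ plus growth at infinity), which hold for these potentials.

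For the corollary I would choose the $\delta_i$ inductively. Fix $\delta_0\in(0,1/12)$ and let $\varepsilon_0=\varepsilon(\delta_0)$. Choose $\delta_1<\delta_0$ with $\delta_0-\delta_1<\varepsilon_0$, then $\delta_2<\delta_1$ within $\varepsilon(\delta_1)$, then $\delta_3$ similarly. Finally choose $\delta_4>0$ small. The strict inequalities $\delta_{k-1}+\delta_4>2\delta_k$ need the steps to be nearly arithmetic. To arrange this, take all gaps $\delta_{k-1}-\delta_k$ small and decreasing geometrically, e.g. $\delta_k=\delta_0-\eta(1-2^{-k})\cdot$const with $\eta$ small, so that $\delta_4$ stays close to $\delta_0$. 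Then $\delta_{k-1}+\delta_4-2\delta_k=(\delta_{k-1}-\delta_k)-(\delta_k-\delta_4)$, and I need the gaps to shrink slowly enough. Choosing $\delta_k=\delta_0-k\eta$ for $k\le 3$ and $\delta_4=\delta_0-3.5\eta$ gives $\delta_{k-1}+\delta_4-2\delta_k\ge \eta/2>0$ for $k=1,2,3$. With $\eta$ below $\min\varepsilon(\cdot)$ over a compact range of the $\delta$'s, all conditions hold. This in turn requires knowing that $\varepsilon(\delta)$ is locally bounded below, which follows from the constants in Lemma \ref{BoundedLambda} depending continuously on $\delta$.
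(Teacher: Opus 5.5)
Your main estimate in (1) and your proofs of (2) and (3) are correct and follow the paper. The difference $\Lambda_{\delta_2}-\Lambda_{\delta_1}=2(\delta_1-\delta_2)\nabla\widetilde\Phi\cdot\nabla$ is controlled by the boundedness of $\langle\nabla V\rangle\nabla\Lambda^{-1}$ from Lemma \ref{BoundedLambda}; your triangle-inequality version is a cleaner form of the paper's expansion of $\|\Lambda_{\delta_2}f\|^2$. Parts (2)--(3) are pointwise comparisons of weights. In (3) you rightly use only that $\widetilde\Phi$ is bounded below. For $p>2$ and $r>0$, $\widetilde\Phi$ is negative near the origin, so the paper's ``the last factor is at most one'' should really read ``is bounded''.

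The construction of $\delta_1,\dots,\delta_4$ in the corollary, however, fails, and your diagnosis of what is needed is backwards. Your own identity $\delta_{k-1}+\delta_4-2\delta_k=(\delta_{k-1}-\delta_k)-(\delta_k-\delta_4)$ says each gap must exceed the \emph{entire} remaining distance down to $\delta_4$. For $k=1$ this reads $\delta_0-\delta_1>(\delta_1-\delta_2)+(\delta_2-\delta_3)+(\delta_3-\delta_4)$. So the gaps must shrink fast, and nearly arithmetic steps are excluded. Indeed, with $\delta_k=\delta_0-k\eta$ ($k\le3$) and $\delta_4=\delta_0-\frac72\eta$ you get $\delta_{k-1}+\delta_4-2\delta_k=(k-\frac52)\eta$, i.e.\ $-\frac32\eta,\,-\frac12\eta,\,\frac12\eta$. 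Hence $\delta_0+\delta_4>2\delta_1$ and $\delta_1+\delta_4>2\delta_2$ are both violated. The uniform $\eta$ also forces you to assume a local lower bound on $\varepsilon(\delta)$, i.e.\ uniformity in $\delta$ of the constants in Lemma \ref{BoundedLambda}, which that lemma does not provide.

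The paper's sequential choice avoids both problems. Take $\eta_1:=\delta_0-\delta_1<\min\{\varepsilon(\delta_0),\delta_0/2\}$, then $\eta_2:=\delta_1-\delta_2<\min\{\varepsilon(\delta_1),\eta_1/3\}$, then $\eta_3:=\delta_2-\delta_3<\min\{\varepsilon(\delta_2),\eta_2\}$. Finally pick $\delta_4\in(\delta_3-\rho,\delta_3)\cap(0,\infty)$ with $\rho:=\min\{\eta_3,\,\eta_2-\eta_3,\,\eta_1-\eta_2-\eta_3\}>0$. Then $\delta_{k-1}+\delta_4-2\delta_k$ equals $\eta_1-\eta_2-\eta_3-(\delta_3-\delta_4)$, $\eta_2-\eta_3-(\delta_3-\delta_4)$, and $\eta_3-(\delta_3-\delta_4)$ for $k=1,2,3$, all positive. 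Since each $\eta_k$ is fixed only after $\delta_{k-1}$ is known, only the pointwise $\varepsilon(\delta_{k-1})$ from part (1) is used. Your first idea of geometrically decreasing gaps (ratio at most $\frac12$, with $\delta_3-\delta_4$ below the last gap) was the right one; switching to equal gaps is what breaks the argument.
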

	\begin{proof}
First, by the definitions of $V_{\de_2}$ and $\Lam_{\de_2}$ in \eqref{084} and \eqref{085},
\beno
\Lam_{\de_2}=I-\D+\na V_{\de_2}\cdot\na=I-\D+2(1-\de_2)\na \widetilde{\Phi}\cdot \na,
\eeno
thus from Lemma \ref{BoundedLambda}, we have that $\|\Lam_{\de_2}f\|_{L^2(V_{\de_1})}\ls_{\de_1,\de_2} \|\Lam_{\de_1}f\|_{L^2(V_{\de_1})}$.
    
  For the reverse inequality, note that
	    \begin{align*}
	    \|\Lam_{\de_2}f\|^2_{L^2(V_{\de_1})}
	    &= (\Lam_{\de_1}f,\Lam_{\de_1}f)_{L^2(V_{\de_1})}
	      +((\Lam_{\de_2}-\Lam_{\de_1})f,\Lam_{\de_2}f)_{L^2(V_{\de_1})}\\
	    &\quad +(\Lam_{\de_1}f,(\Lam_{\de_2}-\Lam_{\de_1})f)_{L^2(V_{\de_1})}\\
	    &=\|\Lam_{\de_1}f\|^2_{L^2(V_{\de_1})}
	      +2(\de_1-\de_2)
	      (\na_x\widetilde{\Phi}\cdot\na f,\Lam_{\de_1}f+\Lam_{\de_2}f)_{L^2(V_{\de_1})}\\
	    &\geq \bigl(1-C_{\de_1}|\de_1-\de_2|\bigr)
	      \|\Lam_{\de_1}f\|^2_{L^2(V_{\de_1})}
	      -C_{\de_1,\de_2}\|\Lam_{\de_2}f\|^2_{L^2(V_{\de_1})}.
	    \end{align*}
For fixed $\de_1$, choose $|\de_1-\de_2|<(2C_{\de_1})^{-1}$. It follows that
\[
\|\Lam_{\de_2}f\|^2_{L^2(V_{\de_1})}
\geq \frac{1}{2(1+C_{\de_1,\de_2})}
\|\Lam_{\de_1}f\|^2_{L^2(V_{\de_1})}.
\]

	    We next prove the corollary. Fix $\de_0\in(0,\f1{12})$. There exists a sufficiently small constant $\varepsilon_{01}>0$ such that, for $\de_1=\de_0-\varepsilon_{01}$, one has $\|\Lam_{\de_1}f\|_{L^2(V_{\de_0})}\sim \|\Lam_{\de_0}f\|_{L^2(V_{\de_0})}$. For this fixed $\de_1$, there exists a sufficiently small constant $\varepsilon_{12}<\f{\varepsilon_{01}}3$ such that, for $\de_2=\de_1-\varepsilon_{12}$, one has
    $\|\Lam_{\de_2}f\|_{L^2(V_{\de_1})}\sim \|\Lam_{\de_1}f\|_{L^2(V_{\de_1})}$. Then for fixed $\de_2=\de_1-\varepsilon_{12}$, there exists $\varepsilon_{23}<\varepsilon_{12}$ such that for $\de_3=\de_2-\varepsilon_{23}$, it holds that $\|\Lam_{\de_3}f\|_{L^2(V_{\de_2})}\sim \|\Lam_{\de_2}f\|_{L^2(V_{\de_2})}$. It is easy to verify that
    \beno
	    \max\{\de_0-2\varepsilon_{01},\de_1-2\varepsilon_{12},\de_2-2\varepsilon_{23}\}<\de_3
    \eeno
	    by the choice of $\varepsilon_{01},\varepsilon_{12}$ and $\varepsilon_{23}$. Finally, choose $\de_4=\f12(\de_3+\max\{\de_0-2\varepsilon_{01},\de_1-2\varepsilon_{12},\de_2-2\varepsilon_{23}\})$. Then $0<\de_4<\de_3$ and $\de_0+\de_4>2\de_1,\de_1+\de_4>2\de_2,\de_2+\de_4>2\de_3$.

For part (2), the identity $V_{\delta_1}-V_{\delta_2}=2(\delta_2-\delta_1)\widetilde\Phi$ and the confinement of $\widetilde\Phi$ imply
\[
\langle x\rangle^{2m}e^{-V_{\delta_1}(x)}le C_{m,\delta_1,\delta_2}e^{-V_{\delta_2}(x)}.
\]
Integration proves the asserted norm comparison. For part (3), write
\[
e^{-V_{\delta_2}}
=e^{-V_{\delta_1}/2}e^{-V_{\delta_3}/2}
\exp\left(\frac{V_{\delta_1}+V_{\delta_3}}2-V_{\delta_2}\right).
\]
The last factor is at most one when $2\delta_2\le\delta_1+\delta_3$. The conclusion follows from the Cauchy--Schwarz inequality.
\end{proof}

%%%%%%%%%%%%%%%%%%%%%%%%%%%%%%%%%%%%%%%%%%%%%%%%%%

	\section{Macroscopic closure under the transformed conservation constraints}
This section is where the Poincar\'e--Korn architecture recalled in Section 5 is adapted to the normalized equation of Section 3. The transformed transport contains the rotation matrix $\mathsf R$, and the orthogonality relations are the transformed mass, energy, and angular-momentum constraints rather than the standard Maxwellian constraints. The argument has four stages: we first derive the moment system; next we split the thirteen local macroscopic coefficients into weighted Poincar\'e--Korn-coercive components and a finite-dimensional remainder; we then control these two classes by evolution equations and the five conservation laws, respectively; finally we combine the estimates into a single macroscopic coercivity inequality. Recall from \eqref{Defig0} that the macroscopic part
	\begin{equation}\label{g0form}
		g_0 = [a(t, x) + v \cdot b(t, x) + (|v|^2 - 3)c(t, x)]\mu.
	\end{equation}
	Thus the estimation for $g_0$ can be reduced to the estimation for $a, b, c$.

%%%%%%%%%%%%%%%%%%%%%%%%%%%%%%%%%%%%%%%%%%%%%%%%%%
\subsection{Macro--micro decomposition and moment equations}
We write $g = g_0 + g_1$ in \eqref{Lineareqg} and get that
	\begin{equation}\label{Lineareqg0}
		\partial_tg_0 + \mathsf{T}g_0 = -\partial_tg_1 - \mathsf{T}g_1 + C_Me^{-\widetilde{\Phi}}\mathsf{L}g.
	\end{equation}
	Here we have used notations
	\begin{equation*}
		\mathsf{T} := v \cdot \nabla_x + \mathsf{R}x \cdot \nabla_x - \mathsf{R}v \cdot \nabla_v - \nabla_x\widetilde{\Phi} \cdot \nabla_v.
	\end{equation*}
	By the definition of collision ${Q}$ and $\mathsf{L}$, we write
	\begin{equation*}
		\mathsf{L}g = \int_{\R^3\times\S^2}(\mu'_*g' + \mu'g'_* - \mu_*g - \mu g_*)Bd\sigma dv_*,
	\end{equation*}
	together with conservation of momentum and energy \eqref{conserv}, we have
	\begin{equation}\label{propertyL1}
		\mathsf{L}\mu = \mathsf{L}(v\mu) = \mathsf{L}(|v|^2\mu) = 0 \:\:\: \Rightarrow \:\:\: \mathsf{L}g_0 = 0, \: \mathsf{L}g = \mathsf{L}g_1.
	\end{equation}
	By using pre-post change of variables, we have
	\begin{equation*}
		(\mathsf{L}g, h)_{L_v^2} = \int_{\R^6\times\S^2}(\mu'_*g' + \mu'g'_* - \mu_*g - \mu g_*)hBd\sigma dvdv_*
		= \int_{\R^6\times\S^2}\mu_*g(h + h_* - h' - h'_*)Bd\sigma dvdv_*.
	\end{equation*}
	Again by the conservation of momentum and energy \eqref{conserv}, we have
	\begin{equation}\label{propertyL2}
		(\mathsf{L}g, 1)_{L_v^2} = (\mathsf{L}g, v)_{L_v^2} = (\mathsf{L}g, |v|^2)_{L_v^2} = 0.
	\end{equation}
	
	\subsubsection{Moment equations for $a$, $b$, and $c$}
We take the inner products of \eqref{Lineareqg0} with $1$, $v$, and $|v|^2-3$ in $L^2(\R^3_v)$. Using \eqref{g0form}, \eqref{propertyL1}, and \eqref{propertyL2}, we obtain
	\begin{equation*}
	\begin{aligned}
		\partial_ta + \nabla_x \cdot b + \mathsf{R}x \cdot \nabla_xa &= -(\mathsf{T}g_1, 1)_{L_v^2};\\
		\partial_tb + \nabla_x(a + 2c) + \mathsf{R}x \cdot \nabla_xb + \mathsf{R}b + (\nabla_x\widetilde{\Phi})a &= -(\mathsf{T}g_1, v)_{L_v^2};\\
		\partial_tc + \frac{1}{3}\nabla_x \cdot b + \mathsf{R}x \cdot \nabla_xc + \frac{1}{3}\nabla_x\widetilde{\Phi} \cdot b &= -\frac{1}{6}(\mathsf{T}g_1, |v|^2 - 3)_{L_v^2}.\\
	\end{aligned}
	\end{equation*}
	Next, take inner product of \eqref{Lineareqg0} with $v_iv_j - \delta_{ij}, 1 \le i, j \le 3$ in $L^2(\R^3_v)$ space, we can obtain that
	\begin{equation*}
		\delta_{ij}\partial_tc + \frac{\partial_jb_i + \partial_ib_j}{2} + \delta_{ij}\mathsf{R}x \cdot \nabla_xc + \frac{(\partial_j\widetilde{\Phi})b_i + (\partial_i\widetilde{\Phi})b_j}{2} = \frac{1}{2}(-\partial_tg_1 - \mathsf{T}g_1 + C_Me^{-\widetilde{\Phi}}\mathsf{L}g_1, v_iv_j - \delta_{ij})_{L_v^2}.\\
	\end{equation*}
	Finally, take inner product  of \eqref{Lineareqg0} with $v(|v|^2 - 5)$ in $L^2(\R^3_v)$ space, we have 
	\begin{equation*}
		\nabla_xc + (\nabla_x\widetilde{\Phi})c = \frac{1}{10}(-\partial_tg_1 - \mathsf{T}g_1 + C_Me^{-\widetilde{\Phi}}\mathsf{L}g_1, v(|v|^2 - 5))_{L_v^2}.\\
	\end{equation*}
	Multiply $e^{\widetilde{\Phi}}$ on both sides of  each formula and use notations
	\begin{equation}\label{Defilk}
	\begin{gathered}
		l_a = -e^{\widetilde{\Phi}}(\mathsf{T}g_1, 1)_{L_v^2} = 0; \:\:\: l_b = -e^{\widetilde{\Phi}}(\mathsf{T}g_1, v)_{L_v^2}; \:\:\: l_c = -\frac{1}{6}e^{\widetilde{\Phi}}(\mathsf{T}g_1, |v|^2 - 3)_{L_v^2} \\
		K = (k_{ij}) = -\frac{1}{2}e^{\widetilde{\Phi}}(g_1, v_iv_j - \delta_{ij})_{L_v^2}; \:\:\: L = (l_{ij}) = \frac{1}{2}(C_M\mathsf{L}g_1 - e^{\widetilde{\Phi}}\mathsf{T}g_1, v_iv_j - \delta_{ij})_{L_v^2};\\
		k_d = -\frac{1}{10}e^{\widetilde{\Phi}}(g_1, v(|v|^2 - 5))_{L_v^2}; \:\:\: l_d = \frac{1}{10}(C_M\mathsf{L}g_1 - e^{\widetilde{\Phi}}\mathsf{T}g_1, v(|v|^2 - 5))_{L_v^2},
	\end{gathered}
	\end{equation}
	the above formulas become
	\begin{align}
		\partial_t(ae^{\widetilde{\Phi}}) + &(\nabla_x \cdot b)e^{\widetilde{\Phi}} + \mathsf{R}x \cdot \nabla_x(ae^{\widetilde{\Phi}}) = 0;\label{pta}\\
		\partial_t(be^{\widetilde{\Phi}}) + &\nabla_x(ae^{\widetilde{\Phi}}) + 2(\nabla_xc)e^{\widetilde{\Phi}} + \mathsf{R}be^{\widetilde{\Phi}} + \mathsf{R}x \cdot \nabla_x(be^{\widetilde{\Phi}}) = l_b;\label{ptb}\\
		\partial_t(ce^{\widetilde{\Phi}}) + &\frac{1}{3}\nabla_x \cdot (be^{\widetilde{\Phi}}) + \mathsf{R}x \cdot \nabla_x(ce^{\widetilde{\Phi}}) = l_c;\label{ptc}\\
		\mathbb{I}_{3 \times 3}\partial_t(ce^{\widetilde{\Phi}}) + &\nabla_x^{sym}(be^{\widetilde{\Phi}}) + \mathbb{I}_{3 \times 3}\mathsf{R}x \cdot \nabla_x(ce^{\widetilde{\Phi}}) = \partial_tK + L;\label{ptcI}\\
		&\nabla_x(ce^{\widetilde{\Phi}}) = \partial_tk_d + l_d,\label{pxc}
	\end{align}
	where we also use the fact that $\mathsf{R}x \cdot \nabla_x\widetilde{\Phi} = 0$ since $\nabla_x\widetilde{\Phi} = \al\<x\>^{p-2}x,p\in(1,2),\nabla_x\widetilde{\Phi} = |x|^{p - 2}x + \mathsf{R}^2x,p\in(2,\infty)$ and $\mathsf{R}$ is skew-symmetric and the notation $\nabla_x^{sym}$ is defined in \eqref{Defisymskew}.

 Before going further, we introduce the notation that for $f = f(x)$,
	\begin{equation}\label{deofAandCphi}
		\mathscr{A}(f) := \frac{\int_{\R^3} f(x)e^{-\widetilde{\Phi}(x)}dx}{\int_{\R^3} e^{-\widetilde{\Phi}(x)}dx} = C_{\widetilde{\Phi}}\int_{\R^3} f(x)e^{-\widetilde{\Phi}(x)}dx,\quad C_{\widetilde{\Phi}}=\Big(\int_{\R^3} e^{-\widetilde{\Phi}(x)}dx\Big)^{-1}.
	\end{equation}
	If $f$ is odd in any coordinate, then $\mathscr A(f)=0$. Moreover, every sufficiently regular $f$ satisfies
	\begin{equation*}
	\begin{aligned}
		&\mathscr{A}((\nabla_xf)e^{\widetilde{\Phi}}) = C_{\widetilde{\Phi}}\int_{\R^3}\nabla_xfdx = 0;\\
		&\mathscr{A}(\nabla_xf) = C_{\widetilde{\Phi}}\int_{\R^3}(\nabla_xf)e^{-{\widetilde{\Phi}}}dx = C_{\widetilde{\Phi}}\int_{\R^3}(\nabla_x{\widetilde{\Phi}})fe^{-{\widetilde{\Phi}}}dx = \mathscr{A}((\nabla_x{\widetilde{\Phi}})f);\\
		&\mathscr{A}(\mathsf{R}x \cdot \nabla_xf) = \mathscr{A}(\nabla_x \cdot (\mathsf{R}xf)) = \mathscr{A}((\nabla_x{\widetilde{\Phi}}) \cdot (\mathsf{R}xf)) = 0.
	\end{aligned}
	\end{equation*}
	Besides, we note that
	\begin{equation*}
	\begin{aligned}
		\mathscr{A}(l_b) &= -C_{\widetilde{\Phi}}\int(v \cdot \nabla_xg_1 + \mathsf{R}x \cdot \nabla_xg_1 - \mathsf{R}v \cdot \nabla_vg_1 - \nabla_x{\widetilde{\Phi}} \cdot \nabla_vg_1)vdxdv = 0;\\
		\mathscr{A}(l_c) &= -\frac{1}{6}C_{\widetilde{\Phi}}\int(v \cdot \nabla_xg_1 + \mathsf{R}x \cdot \nabla_xg_1 - \mathsf{R}v \cdot \nabla_vg_1 - \nabla_x{\widetilde{\Phi}} \cdot \nabla_vg_1)(|v|^2 - 3)dxdv = 0.
	\end{aligned}
	\end{equation*}
%	We use notation $A \lesssim B$, if there exists some contant $C$ such that $A \le CB$.

%%%%%%%%%%%%%%%%%%%%%%%%%%%%%%%%%%%%%%%%%%%%%%%%%%

\subsection{Coercive and finite-dimensional macroscopic modes}
The following splitting removes precisely the finite-dimensional obstructions to the weighted Poincaré and Poincaré--Korn inequalities. The components carrying the subscript $\mathsf n$ are the coercive parts; the remaining scalar, vector, and matrix coefficients will be controlled later by their evolution equations and by the five conservation laws.
We introduce
	\begin{equation}\label{Defianbncn}
	\begin{gathered}
		c_\mathsf{n}e^{\widetilde{\Phi}} = ce^{\widetilde{\Phi}} - \mathscr{A}(ce^{\widetilde{\Phi}});\:\:\:\:\:\:\:\:\:\:\:\:
		b_\mathsf{n}e^{\widetilde{\Phi}} = be^{\widetilde{\Phi}} - \mathscr{A}(be^{\widetilde{\Phi}}) - B(t)x - \frac{1}{3}\mathscr{A}(\nabla_x \cdot (be^{\widetilde{\Phi}}))x;\\
		a_\mathsf{n}e^{\widetilde{\Phi}} = ae^{\widetilde{\Phi}} - 2({\widetilde{\Phi}} - \mathscr{A}({\widetilde{\Phi}}))\mathscr{A}(ce^{\widetilde{\Phi}}) - \mathscr{A}(\nabla_x(ae^{\widetilde{\Phi}})) \cdot x + (|x|^2 - \mathscr{A}(|x|^2))A(t) + T(t)x \cdot x - \mathscr{A}(T(t)x \cdot x),
	\end{gathered}
	\end{equation}
	where $B(t) := \mathscr{A}(\nabla_x^{skew}(be^{\widetilde{\Phi}}))$ is skew-symmetric, $T(t) := \frac{1}{2}(\mathsf{R}B(t) + B(t)\mathsf{R})$ is symmetric, and
	\begin{equation}\label{defiAt}
		A(t) := \frac{1}{6}(2\mathscr{A}((\Delta_x{\widetilde{\Phi}})ce^{\widetilde{\Phi}}) - \mathscr{A}(\Delta_x(ae^{\widetilde{\Phi}})) - \mathscr{A}(\nabla_x{\widetilde{\Phi}} \cdot \mathsf{R}be^{\widetilde{\Phi}}) - \mathscr{A}(\nabla_x{\widetilde{\Phi}} \cdot (\mathsf{R}x \cdot \nabla_xbe^{\widetilde{\Phi}}))).
	\end{equation}
		By construction, $\mathscr A(c_\mathsf n e^{\widetilde\Phi})=0$. The mass condition in \eqref{Conserveg}, together with \eqref{g0form}, gives $\mathscr A(ae^{\widetilde\Phi})=0$ and hence $\mathscr A(a_\mathsf n e^{\widetilde\Phi})=0$. Thus the normalized scalar fields satisfy the orthogonality hypothesis of Lemma \ref{VPI}. Moreover, $\mathscr A(x)=0$ and $\mathscr A(\nabla_x^{\rm skew}x)=0$, so
		\[
		\mathscr A(b_\mathsf n e^{\widetilde\Phi})=0,
		\qquad
		\mathscr A\!\left(\nabla_x^{\rm skew}(b_\mathsf n e^{\widetilde\Phi})\right)=0.
		\]
		Therefore $b_\mathsf n e^{\widetilde\Phi}$ satisfies the orthogonality hypotheses of the weighted Poincar\'e--Korn inequality in Lemma \ref{VPKI}.
	
\subsection{Estimates for $a_\mathsf{n}$, $b_\mathsf{n}$, and $c_\mathsf{n}$}
We now estimate the normalized macroscopic coefficients. The goal of this subsection is the following lemma.
	\begin{lemma}[Coercive macroscopic components]\label{Estimateanbncn}
		Fix $\de_0\in(0,1/12)$ and choose $\de_i$, $1\le i\le4$, as in Lemma \ref{lemma5.7}; namely,
		\begin{equation}\label{de01234}
			\begin{aligned}
	&\de_0>\de_1>\de_2>\de_3>\de_4>0, \de_0+\de_4>2\de_1,\de_1+\de_4>2\de_2,\de_2+\de_4>2\de_3;\\
	&\|\Lam_{\de_1}f\|_{L^2(V_{\de_0})}\sim \|\Lam_{\de_0}f\|_{L^2(V_{\de_0})},	\|\Lam_{\de_2}f\|_{L^2(V_{\de_1})}\sim \|\Lam_{\de_1}f\|_{L^2(V_{\de_1})},\|\Lam_{\de_3}f\|_{L^2(V_{\de_2})}\sim \|\Lam_{\de_2}f\|_{L^2(V_{\de_2})}.
			\end{aligned}
		\end{equation}
Set $\mathsf H(x,v):=\widetilde\Phi(x)+|v|^2/2$. Then
		\begin{equation*}
		\begin{aligned}
			&\frac{d}{dt}\mathcal{J}_c + \frac{1}{2}\|\Lambda_{\delta_1}^{-\frac{1}{2}}\nabla_x(c_\mathsf{n}e^{\widetilde{\Phi}})\|_{L^2(V_{\delta_1})}^2
			\lesssim \|e^{\delta_0{\mathsf{H}}}g_1\|_{L_{x, v}^2}^2 + \|e^{\delta_0{\mathsf{H}}}g_1\|_{L_{x, v}^2}\|\Lambda_{\delta_4}^{-1}\partial_t\nabla_x(c_\mathsf{n}e^{\widetilde{\Phi}})\|_{L^2(V_{\delta_4})};\\
			&\frac{d}{dt}\mathcal{J}_b + \frac{1}{2}\|\Lambda_{\delta_2}^{-\frac{1}{2}}\nabla_x^{sym}(b_\mathsf{n}e^{\widetilde{\Phi}})\|_{L^2(V_{\delta_2})}^2
			\lesssim \|e^{\delta_0{\mathsf{H}}}g_1\|_{L_{x, v}^2}^2 + \|c_\mathsf{n}e^{\widetilde{\Phi}}\|_{L^2(V_{\delta_1})}^2 \\
			&\qquad\qquad\qquad\qquad\qquad+ (\|e^{\delta_0{\mathsf{H}}}g_1\|_{L_{x, v}^2} + \|c_\mathsf{n}e^{\widetilde{\Phi}}\|_{L^2(V_{\delta_1})})\|\Lambda_{\delta_4}^{-1}\partial_t\nabla_x^{sym}(b_\mathsf{n}e^{\widetilde{\Phi}})\|_{L^2(V_{\delta_4})};\\
			&\frac{d}{dt}\mathcal{J}_a + \frac{1}{2}\|\Lambda_{\delta_3}^{-\frac{1}{2}}\nabla_x(a_\mathsf{n}e^{\widetilde{\Phi}})\|_{L^2(V_{\delta_3})}^2
			\lesssim \|e^{\delta_0{\mathsf{H}}}g_1\|_{L_{x, v}^2}^2 + \|c_\mathsf{n}e^{\widetilde{\Phi}}\|_{L^2(V_{\delta_1})}^2 + \|b_\mathsf{n}e^{\widetilde{\Phi}}\|_{L^2(V_{\delta_2})}^2\\
			&\qquad\qquad\qquad\qquad\qquad+ (\|e^{\delta_0{\mathsf{H}}}g_1\|_{L_{x, v}^2} + \|b_\mathsf{n}e^{\widetilde{\Phi}}\|_{L^2(V_{\delta_2})})\|\Lambda_{\delta_4}^{-1}\partial_t\nabla_x(a_\mathsf{n}e^{\widetilde{\Phi}})\|_{L^2(V_{\delta_4})},
		\end{aligned}
		\end{equation*}
		where $\mathcal{J}_c, \mathcal{J}_b, \mathcal{J}_a$ are defined as
		\begin{equation*}
		\begin{aligned}
			\mathcal{J}_c &:= -(k_d, \Lambda_{\delta_1}^{-1}\nabla_x(c_\mathsf{n}e^{\widetilde{\Phi}}))_{L^2(V_{\delta_1})},\\
			\mathcal{J}_b &:= -(K - \mathbb{I}_{3 \times 3}c_\mathsf{n}e^{\widetilde{\Phi}}, \Lambda_{\delta_2}^{-1}\nabla_x^{sym}(b_\mathsf{n}e^{\widetilde{\Phi}}))_{L^2(V_{\delta_2})},\\
			\mathcal{J}_a &:= -(\mathscr{A}(K^*)x - 2k_d - b_\mathsf{n}e^{\widetilde{\Phi}}, \Lambda_{\delta_3}^{-1}\nabla_x(a_\mathsf{n}e^{\widetilde{\Phi}}))_{L^2(V_{\delta_3})}
		\end{aligned}
		\end{equation*}
		and $K^*$ is defined in \eqref{DefiK*}.
	\end{lemma}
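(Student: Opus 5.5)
The three estimates will be proved in the order $c$, $b$, $a$, each by the same duality scheme. Take the local equation that expresses the gradient (or symmetric gradient) of the normalized coefficient through $g_1$. Pair it with $\Lambda_{\delta_i}^{-1}$ of the same gradient in $L^2(V_{\delta_i})$. Move the time derivative onto the test function, which produces the functional $\mathcal J$. Every other term is then estimated by the operator bounds of Lemma \ref{BoundedLambda}.

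\emph{The $c$-estimate.} By definition $\nabla_x(c_\mathsf n e^{\widetilde\Phi})=\nabla_x(ce^{\widetilde\Phi})$, so \eqref{pxc} gives $\nabla_x(c_\mathsf n e^{\widetilde\Phi})=\partial_tk_d+l_d$. We pair this with $\Lambda_{\delta_1}^{-1}\nabla_x(c_\mathsf n e^{\widetilde\Phi})$ in $L^2(V_{\delta_1})$. The left side becomes $\|\Lambda_{\delta_1}^{-1/2}\nabla_x(c_\mathsf n e^{\widetilde\Phi})\|^2_{L^2(V_{\delta_1})}$. On the right, the identity $(\partial_tk_d,\cdot)=\frac{d}{dt}(k_d,\cdot)-(k_d,\Lambda_{\delta_1}^{-1}\partial_t\nabla_x(c_\mathsf n e^{\widetilde\Phi}))$ produces $-\frac{d}{dt}\mathcal J_c$ plus a cross term. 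For that cross term, the nested-weight interpolation of Lemma \ref{lemma5.7}(3) with $\delta_0+\delta_4>2\delta_1$, together with the equivalence $\Lambda_{\delta_1}\sim\Lambda_{\delta_0}$ in \eqref{de01234}, bounds it by $\|k_d\|_{L^2(V_{\delta_0})}\|\Lambda_{\delta_4}^{-1}\partial_t\nabla_x(c_\mathsf n e^{\widetilde\Phi})\|_{L^2(V_{\delta_4})}$. The next step is to check that $\|k_d\|_{L^2(V_{\delta_0})}\lesssim\|e^{\delta_0\mathsf H}g_1\|$. Since $k_d=e^{\widetilde\Phi}\int g_1v(|v|^2-5)dv$, Cauchy--Schwarz in $v$ gives $|k_d|^2e^{-V_{\delta_0}}\lesssim e^{2\delta_0\widetilde\Phi}\int e^{2\delta_0|v|^2/2}|g_1|^2dv$, provided the polynomial factor $v(|v|^2-5)$ is absorbed by a Gaussian of small exponent. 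For $l_d$ we split it into its collision and transport parts. The collision part is $C_M(\mathsf Lg_1,\cdot)$ with no $e^{\widetilde\Phi}$, and Lemma \ref{UpperBoundL0} bounds it by a small Gaussian weight of $g_1$. In the transport part $e^{\widetilde\Phi}(\mathsf Tg_1,\cdot)$ we integrate by parts in $v$, which moves the $v$-derivatives to the test polynomial. The $x$-derivatives are moved onto the test function through $\Lambda_{\delta_1}^{-1}\nabla_x$ using boundedness of $\nabla^2\Lambda^{-1}$, $\langle\nabla V\rangle\nabla\Lambda^{-1}$ and $\langle\nabla V\rangle^2\Lambda^{-1}$. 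The polynomial weights $\langle x\rangle$ coming from $\nabla\widetilde\Phi$ and $\mathsf Rx$ are absorbed by Lemma \ref{lemma5.7}(2) at the cost $\delta_1\to\delta_0$. Young's inequality then yields the first estimate with the factor $\frac12$.

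\emph{The $b$-estimate.} Subtract $\mathbb I\,\partial_t(ce^{\widetilde\Phi})$ in \eqref{ptcI}, and replace $\nabla_x^{sym}(be^{\widetilde\Phi})$ by $\nabla_x^{sym}(b_\mathsf ne^{\widetilde\Phi})$. The difference consists of the affine correction terms in \eqref{Defianbncn}: the skew part $B(t)x$ has zero symmetric gradient, and the divergence correction contributes a multiple of $\mathbb I$. After this substitution the equation reads $\nabla_x^{sym}(b_\mathsf ne^{\widetilde\Phi})=\partial_t(K-\mathbb I c_\mathsf ne^{\widetilde\Phi})+L+(\text{terms in }c_\mathsf n,\ \mathsf Rx\cdot\nabla c)$, where any residual spatially constant matrix is killed by the $\mathscr A$-normalizations. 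The rotational term $\mathsf Rx\cdot\nabla_x(ce^{\widetilde\Phi})$ equals $\nabla\cdot(\mathsf Rx\, c_\mathsf ne^{\widetilde\Phi})$ and is estimated by $\|c_\mathsf ne^{\widetilde\Phi}\|_{L^2(V_{\delta_1})}$, using one derivative through $\Lambda^{-1}$ and the weight $\langle x\rangle$ via $\delta_2<\delta_1$. The same pairing with $\Lambda_{\delta_2}^{-1}\nabla^{sym}_x(b_\mathsf n e^{\widetilde\Phi})$ then gives the second estimate. For the $\partial_t(c_\mathsf n e^{\widetilde\Phi})$ part of the cross term we use $\delta_1+\delta_4>2\delta_2$.

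\emph{The $a$-estimate.} This is the step I expect to be the main obstacle. We must compute $\nabla_x(a_\mathsf ne^{\widetilde\Phi})$ from \eqref{ptb}, with $2\nabla_x(ce^{\widetilde\Phi})$ replaced by $2(\partial_tk_d+l_d)$. Then $\partial_t(be^{\widetilde\Phi})$ must be rewritten through $b_\mathsf n$ plus the time derivatives of the finite-dimensional coefficients $\mathscr A(be^{\widetilde\Phi})$, $B(t)$ and $\mathscr A(\nabla\cdot(be^{\widetilde\Phi}))$. These derivatives are computed by applying $\mathscr A$ to \eqref{ptb} and \eqref{ptcI}; this is where $K^*$ and the corrections $A(t)$, $T(t)$, $-2(\widetilde\Phi-\mathscr A(\widetilde\Phi))\mathscr A(ce^{\widetilde\Phi})$ enter. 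One has to verify that every term which is not in $\nabla_x$-form cancels exactly against $\nabla_x$ of these corrections, including the rotational terms $\mathsf R be^{\widetilde\Phi}$ and $\mathsf Rx\cdot\nabla(be^{\widetilde\Phi})$, whose affine parts produce the $T(t)x\cdot x$ and $A(t)|x|^2$ terms. Once this algebraic identity $\nabla_x(a_\mathsf ne^{\widetilde\Phi})=-\partial_t(\mathscr A(K^*)x-2k_d-b_\mathsf ne^{\widetilde\Phi})+(\text{remainder})$ is established, the remainder is bounded by $g_1$, $c_\mathsf n$ and $b_\mathsf n$ exactly as before. The pairing in $L^2(V_{\delta_3})$, using $\delta_2+\delta_4>2\delta_3$, then closes the third estimate.
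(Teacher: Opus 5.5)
Your $c$- and $b$-steps coincide with the paper's proofs of Lemmas \ref{015} and \ref{016}: pair with $\Lambda_{\delta_i}^{-1}$ of the gradient, move $\partial_t$ onto the test function, transfer $\Lambda_{\delta_4}$ by duality using the nested weights of Lemma \ref{lemma5.7}, and bound $k_d,K,l_d,L$ as in Lemma \ref{MicroBound}. The constant matrix you say is ``killed by the $\mathscr A$-normalizations'' vanishes for a specific reason: averaging \eqref{ptc} gives $\partial_t\mathscr A(ce^{\widetilde\Phi})=-\frac13\mathscr A(\nabla_x\cdot(be^{\widetilde\Phi}))$, which cancels the divergence correction in \eqref{bnbeq}.

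The gap is in the $a$-step. You rightly call it the crux, but you only describe it, and the mechanism you describe cannot work. The symmetric affine pieces are indeed gradients and are removed by the corrections: $\mathsf RB(t)x+B(t)\mathsf Rx=\nabla_x(T(t)x\cdot x)$, and $\frac13\partial_t\mathscr A(\nabla_x\cdot(be^{\widetilde\Phi}))x=2A(t)x+\frac13\mathscr A(\nabla_x\cdot l_b)x$. However, after expanding $\partial_t(be^{\widetilde\Phi})$ through $b_\mathsf n$, the equation also contains
\[
-B'(t)x-\tfrac23\mathscr A(\nabla_x\cdot(be^{\widetilde\Phi}))\mathsf Rx=\mathcal Rx-\mathscr A(\nabla_x^{skew}l_b)x,\qquad \mathcal R \text{ skew-symmetric}.
\]
A field $\mathcal Rx$ with $\mathcal R$ skew is never the gradient of a scalar, so no correction in $a_\mathsf n$ can cancel it. Nor can it simply be put into the remainder. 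A priori $\mathcal R$ consists of weighted averages of the full field $b$, for instance $\mathscr A(\partial_j(\mathsf Rx\cdot\nabla_x b_ie^{\widetilde\Phi}))=-C_{\widetilde\Phi}\int(\mathsf Rx\cdot\nabla_x\partial_j\widetilde\Phi)\,b_i\,dx$. Such averages are controlled only through $\mathsf E$, hence also through the modes $b_\mathfrak r$, $B$, $b_\mathfrak R$, which do not appear on the right-hand side.

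The missing idea is the explicit computation of $\mathcal R$ in Lemma \ref{017}.
\begin{itemize}
\item Since $\mathsf Rx\cdot\nabla_x\widetilde\Phi=0$, i.e.\ $x_1\partial_2\widetilde\Phi=x_2\partial_1\widetilde\Phi$, integration by parts gives $\int\widetilde\Phi\,(x_1\partial_2-x_2\partial_1)f\,dx=0$.
\item This removes all second-derivative terms. Each entry of $\mathcal R$ then reduces to a multiple of $r$ times either an off-diagonal entry or the traceless combination $(11)+(22)-2(33)$ of $\mathscr A(\nabla_x^{sym}(be^{\widetilde\Phi}))$.
\item For exactly these combinations the $\mathbb I$-terms in \eqref{ptcI} drop out, so $\mathcal R=\partial_t\mathscr A(K^*)+\mathscr A(L^*)$.
\end{itemize}
This is what places $\mathscr A(K^*)x$ inside the time derivative and $\mathscr A(L^*)x$ among the sources controlled by Lemma \ref{MicroBound}.

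Two smaller points in the same step:
\begin{itemize}
\item The resulting identity has the opposite sign to yours: $\nabla_x(a_\mathsf ne^{\widetilde\Phi})=+\partial_t\big(\mathscr A(K^*)x-2k_d-b_\mathsf ne^{\widetilde\Phi}\big)+(\text{sources})$. The part $-\partial_t(2k_d+b_\mathsf ne^{\widetilde\Phi})$ is forced by \eqref{ptb} and \eqref{pxc}. With your sign the pairing would produce $\frac{d}{dt}(-\mathcal J_a)$ instead of the stated $\frac{d}{dt}\mathcal J_a$.
\item \eqref{ptb} contains $2(\nabla_xc)e^{\widetilde\Phi}$, not $2\nabla_x(ce^{\widetilde\Phi})$. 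The difference $2\nabla_x\widetilde\Phi\,ce^{\widetilde\Phi}$ splits into a $c_\mathsf n$-source and the part removed by the correction $2(\widetilde\Phi-\mathscr A(\widetilde\Phi))\mathscr A(ce^{\widetilde\Phi})$.
\end{itemize}
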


%%%%%%%%%%%%%%%%%%%%%%%%%%%%%%%%%%%%%%%%%%%%%%%%%%
We will provide the proof of this lemma in several steps. 

	\subsubsection{Estimates for $c_\mathsf{n}$} We start with the estimate of $c_\mathsf{n}$.
	\begin{lemma}\label{015}
		Let $\de_i$, $i=0,\ldots,4$, be defined by \eqref{de01234}. Then
		\begin{equation*}
		\frac{d}{dt}\mathcal{J}_c + \frac{1}{2}\|\Lambda_{\delta_1}^{-\frac{1}{2}}\nabla_x(c_\mathsf{n}e^{\widetilde{\Phi}})\|_{L^2(V_{\delta_1})}^2
		\lesssim \|\Lambda_{\delta_1}^{-\frac{1}{2}}l_d\|_{L^2(V_{\delta_1})}^2 + \|k_d\|_{L^2(V_{\delta_0})}\|\Lambda_{\delta_4}^{-1}\partial_t\nabla_x(c_\mathsf{n}e^{\widetilde{\Phi}})\|_{L^2(V_{\delta_4})},
	\end{equation*}
	where
$
		\mathcal{J}_c := -(k_d, \Lambda_{\delta_1}^{-1}\nabla_x(c_\mathsf{n}e^{\widetilde{\Phi}}))_{L^2(V_{\delta_1})}.
$

	\end{lemma}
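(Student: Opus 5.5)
We test the moment equation \eqref{pxc}, $\nabla_x(ce^{\widetilde\Phi})=\partial_tk_d+l_d$, against $\Lambda_{\delta_1}^{-1}\nabla_x(c_\mathsf ne^{\widetilde\Phi})$ in $L^2(V_{\delta_1})$. Since $c_\mathsf ne^{\widetilde\Phi}$ differs from $ce^{\widetilde\Phi}$ by the spatial constant $\mathscr A(ce^{\widetilde\Phi})$, the gradients coincide, and the left-hand side becomes
\[
(\nabla_x(c_\mathsf ne^{\widetilde\Phi}),\Lambda_{\delta_1}^{-1}\nabla_x(c_\mathsf ne^{\widetilde\Phi}))_{L^2(V_{\delta_1})}=\|\Lambda_{\delta_1}^{-\frac12}\nabla_x(c_\mathsf ne^{\widetilde\Phi})\|_{L^2(V_{\delta_1})}^2,
\]
because $\Lambda_{\delta_1}$ is positive and self-adjoint in $L^2(V_{\delta_1})$. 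The $l_d$ term is bounded by Cauchy--Schwarz after splitting $\Lambda^{-1}=\Lambda^{-1/2}\Lambda^{-1/2}$, giving $\|\Lambda_{\delta_1}^{-1/2}l_d\|\,\|\Lambda_{\delta_1}^{-1/2}\nabla_x(c_\mathsf ne^{\widetilde\Phi})\|$. Young's inequality then absorbs half of the dissipation and leaves $\|\Lambda_{\delta_1}^{-1/2}l_d\|^2_{L^2(V_{\delta_1})}$.

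For the time-derivative term we write
\[
(\partial_tk_d,\Lambda_{\delta_1}^{-1}\nabla_x(c_\mathsf ne^{\widetilde\Phi}))=\frac{d}{dt}(k_d,\Lambda_{\delta_1}^{-1}\nabla_x(c_\mathsf ne^{\widetilde\Phi}))-(k_d,\Lambda_{\delta_1}^{-1}\partial_t\nabla_x(c_\mathsf ne^{\widetilde\Phi})),
\]
so that with $\mathcal J_c=-(k_d,\Lambda_{\delta_1}^{-1}\nabla_x(c_\mathsf ne^{\widetilde\Phi}))_{L^2(V_{\delta_1})}$ we obtain the structure $\frac{d}{dt}\mathcal J_c+\frac12\|\cdot\|^2\le C\|\Lambda^{-1/2}_{\delta_1}l_d\|^2+|(k_d,\Lambda_{\delta_1}^{-1}\partial_t\nabla_x(c_\mathsf ne^{\widetilde\Phi}))_{L^2(V_{\delta_1})}|$. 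Here $\Lambda_{\delta_1}$ is time independent, so $\partial_t$ commutes with it.

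The main obstacle is the remaining cross term. It must be estimated with $k_d$ in the stronger weight $V_{\delta_0}$ and the time derivative in the weaker negative norm $\|\Lambda_{\delta_4}^{-1}\cdot\|_{L^2(V_{\delta_4})}$. We set $h:=\partial_t\nabla_x(c_\mathsf ne^{\widetilde\Phi})$ and $w:=\Lambda_{\delta_4}^{-1}h$, so $h=\Lambda_{\delta_4}w$. The plan is to rewrite $\Lambda_{\delta_1}^{-1}\Lambda_{\delta_4}w=w+\Lambda_{\delta_1}^{-1}(\Lambda_{\delta_4}-\Lambda_{\delta_1})w=w+2(\delta_1-\delta_4)\Lambda_{\delta_1}^{-1}(\nabla\widetilde\Phi\cdot\nabla w)$. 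The operator $\Lambda_{\delta_1}^{-1}\langle\nabla V\rangle\nabla$ is bounded on $L^2(V_{\delta_1})$ by Lemma \ref{BoundedLambda}, where $\nabla V$ is comparable to $\nabla\widetilde\Phi$. Hence
\[
|(k_d,\Lambda_{\delta_1}^{-1}h)_{L^2(V_{\delta_1})}|\lesssim\|k_d\|_{L^2(V_{\delta_1})}\|w\|_{L^2(V_{\delta_1})},
\]
and it remains to pass from $L^2(V_{\delta_1})$ to the correct pair of weights. I would first try to insert the weight change at the level of the pairing rather than after bounding, writing $(k_d,\Lambda_{\delta_1}^{-1}h)_{L^2(V_{\delta_1})}=(e^{-V_{\delta_1}+V_{\delta_2}}k_d,\cdot)_{L^2(V_{\delta_2})}$-type identities. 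I would then apply Lemma \ref{lemma5.7}(3) with $\delta_0+\delta_4>2\delta_1$ to split the pairing as $\|k_d\|_{L^2(V_{\delta_0})}\|w\|_{L^2(V_{\delta_4})}$. The commutation between $\Lambda_{\delta_1}^{-1}$ and the weight change would be handled by the equivalences \eqref{de01234} together with the bounded operators of Lemma \ref{BoundedLambda}; any polynomial losses in $\langle x\rangle$ would be absorbed by Lemma \ref{lemma5.7}(2) using the strict gap between $\delta_0$ and $\delta_1$. Collecting the three estimates gives the claimed inequality.
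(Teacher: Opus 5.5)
\textbf{Verdict: there is a genuine gap.} Your handling of the dissipation, the $l_d$ term and the time integration by parts matches the paper. Your identity $\Lambda_{\delta_1}^{-1}\Lambda_{\delta_4}w=w+2(\delta_1-\delta_4)\Lambda_{\delta_1}^{-1}(\nabla\widetilde\Phi\cdot\nabla w)$ is also fine: it is the transpose of the paper's formula \eqref{088} for the adjoint $\Lambda^{*\delta_1}_{\delta_4}$ of $\Lambda_{\delta_4}$ in $L^2(V_{\delta_1})$. However, the cross term, which you correctly single out as the whole difficulty, is never actually estimated.

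The bound you do display, $\lesssim\|k_d\|_{L^2(V_{\delta_1})}\|w\|_{L^2(V_{\delta_1})}$, is a dead end. Since $\delta_4<\delta_1$, the weight $e^{-V_{\delta_4}}$ decays faster than $e^{-V_{\delta_1}}$. So $\|w\|_{L^2(V_{\delta_1})}$ is the stronger norm and cannot be recovered from $\|w\|_{L^2(V_{\delta_4})}$; once you estimate in $L^2(V_{\delta_1})\times L^2(V_{\delta_1})$, the information is lost.

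Your remaining sentences (identities involving $V_{\delta_2}$, polynomial losses via Lemma \ref{lemma5.7}(2)) do not pin down a mechanism. Polynomial weights are also not the real obstruction. The real issue is that $\Lambda_{\delta_1}^{-1}$ is only known to be regularizing in its own space $L^2(V_{\delta_1})$. Once $w$ sits in $L^2(V_{\delta_4})$, Lemma \ref{lemma5.7}(3) forces the $k_d$-side weight to be $V_{\delta_a}$ with $\delta_a\ge 2\delta_1-\delta_4>\delta_1$, i.e.\ a strictly stronger space.

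The missing step is to move everything onto $k_d$ before splitting the weights. Set $u:=\Lambda_{\delta_1}^{-1}k_d$. Self-adjointness of $\Lambda_{\delta_1}^{-1}$ in $L^2(V_{\delta_1})$ and one integration by parts against $e^{-V_{\delta_1}}$ give
\[
(k_d,\Lambda_{\delta_1}^{-1}\Lambda_{\delta_4}w)_{L^2(V_{\delta_1})}=(\Lambda^{*\delta_1}_{\delta_4}u,w)_{L^2(V_{\delta_1})},\qquad
\Lambda^{*\delta_1}_{\delta_4}u=k_d+2(\delta_1-\delta_4)\big[(\nabla\widetilde\Phi\cdot\nabla V_{\delta_1}-\Delta\widetilde\Phi)u-\nabla\widetilde\Phi\cdot\nabla u\big],
\]
now with $w$ undifferentiated. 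The argument then closes in three steps.
\begin{enumerate}
\item Lemma \ref{lemma5.7}(3), using $2\delta_1<\delta_0+\delta_4$, bounds the pairing by $\|\Lambda^{*\delta_1}_{\delta_4}u\|_{L^2(V_{\delta_0})}\|w\|_{L^2(V_{\delta_4})}$.
\item The first-order and multiplication terms in $\Lambda^{*\delta_1}_{\delta_4}u$ are controlled in $L^2(V_{\delta_0})$ by $\|\Lambda_{\delta_0}u\|_{L^2(V_{\delta_0})}$. This is Lemma \ref{BoundedLambda} applied with $V=V_{\delta_0}$, using the bounded operators $\langle\nabla V\rangle\nabla\Lambda^{-1}$ and $\langle\nabla V\rangle^2\Lambda^{-1}$ together with $|\Delta\widetilde\Phi|\lesssim\langle\nabla V\rangle^2$.
\item The final link $\|\Lambda_{\delta_0}u\|_{L^2(V_{\delta_0})}\lesssim\|\Lambda_{\delta_1}u\|_{L^2(V_{\delta_0})}=\|k_d\|_{L^2(V_{\delta_0})}$ is exactly the equivalence built into \eqref{de01234} via Lemma \ref{lemma5.7}(1). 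This is why $\delta_1$ must be chosen close to $\delta_0$.
\end{enumerate}
This is the paper's argument. Your proposal names the right tools but does not assemble them, so as written the cross-term bound is unproved.
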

	\begin{proof}
	By \eqref{pxc},
	\begin{equation*}
		\nabla_x(c_\mathsf{n}e^{\widetilde{\Phi}}) = \nabla_x(ce^{\widetilde{\Phi}}) = \partial_tk_d + l_d.
	\end{equation*}
	Thus we have
		\begin{equation}\label{020}
		\begin{aligned}
			\|\Lambda_{\delta_1}^{-\frac{1}{2}}\nabla_x(c_\mathsf{n}e^{\widetilde{\Phi}})\|_{L^2(V_{\delta_1})}^2
			&= (\partial_tk_d + l_d,
			\Lambda_{\delta_1}^{-1}\nabla_x(c_\mathsf{n}e^{\widetilde{\Phi}}))_{L^2(V_{\delta_1})} \\
			&= \frac{d}{dt}(k_d,
			\Lambda_{\delta_1}^{-1}\nabla_x(c_\mathsf{n}e^{\widetilde{\Phi}}))_{L^2(V_{\delta_1})}\\
			&\quad -(k_d,
			\Lambda_{\delta_1}^{-1}\partial_t\nabla_x(c_\mathsf{n}e^{\widetilde{\Phi}}))_{L^2(V_{\delta_1})}\\
			&\quad +(l_d,
			\Lambda_{\delta_1}^{-1}\nabla_x(c_\mathsf{n}e^{\widetilde{\Phi}}))_{L^2(V_{\delta_1})}.
		\end{aligned}
		\end{equation}
	By the Cauchy--Schwarz inequality,
		\begin{equation}\label{021}
		\begin{aligned}
		&(l_d, \Lambda_{\delta_1}^{-1}\nabla_x(c_\mathsf{n}e^{\widetilde{\Phi}}))_{L^2(V_{\delta_1})}\\
		&\qquad\le \frac{1}{2}\|\Lambda_{\delta_1}^{-\frac{1}{2}}l_d\|_{L^2(V_{\delta_1})}^2
		+\frac{1}{2}\|\Lambda_{\delta_1}^{-\frac{1}{2}}\nabla_x(c_\mathsf{n}e^{\widetilde{\Phi}})\|_{L^2(V_{\delta_1})}^2.
		\end{aligned}
		\end{equation}
	On the other hand, we denote $\Lam^{*\de_1}_{\de_4}$ as the dual operator of $\Lam_{\de_4}$ in space $L^2(V_{\de_1})$, i.e. $(\Lam^{*\de_1}_{\de_4} f,g)_{L^2(V_{\de_1})}=( f,\Lam_{\de_4}g)_{L^2(V_{\de_1})}$ for any regular functions $f,g$. Indeed, we can compute that \begin{equation}\label{088}
		\Lam^{*\de_1}_{\de_4}=\Lam_{\de_1}+\na(V_{\de_4}-V_{\de_1})\cdot\na V_{\de_1}-\D(V_{\de_4}-V_{\de_1})-\na(V_{\de_4}-V_{\de_1})\cdot\na.
		\end{equation} 
		Then we have 
	\begin{equation*}
		\begin{aligned}
		-(k_d, \Lambda_{\delta_1}^{-1}\partial_t\nabla_x(c_\mathsf{n}e^{\widetilde{\Phi}}))_{L^2(V_{\delta_1})}&=-(\Lambda_{\delta_4}^{*\de_1}\Lambda_{\delta_1}^{-1}k_d, \Lambda_{\delta_4}^{-1}\partial_t\nabla_x(c_\mathsf{n}e^{\widetilde{\Phi}}))_{L^2(V_{\delta_1})}\\
		&\leq\|\Lambda_{\delta_4}^{*\de_1}\Lambda_{\delta_1}^{-1}k_d\|_{L^2(V_{\de_0})}\|\Lambda_{\delta_4}^{-1}\partial_t\nabla_x(c_\mathsf{n}e^{\widetilde{\Phi}})\|_{L^2(V_{\de_4})}
		\end{aligned}
	\end{equation*}
	where we use the fact that $2\de_1<\de_0+\de_4$ and Lemma \ref{lemma5.7}(3).  We claim that
	\begin{equation}\label{027}
	\|\Lambda_{\delta_4}^{*\de_1}\Lambda_{\delta_1}^{-1}k_d\|_{L^2(V_{\de_0})}\ls \|k_d\|_{L^2(V_{\de_0})},
	\end{equation}
	which is equivalent to
	\beno
		\|\Lambda_{\delta_4}^{*\de_1}\tilde{k}\|_{L^2(V_{\de_0})}\ls \|\Lambda_{\delta_1}\tilde{k}\|_{L^2(V_{\de_0})},\quad \tilde{k}=\Lambda_{\delta_1}^{-1}k_d.
	\eeno
By the choice of $\de_i$ in \eqref{de01234}, we know that
	\beno
\|\Lambda_{\delta_0}\tilde{k}\|_{L^2(V_{\de_0})}\sim \|\Lambda_{\delta_1}\tilde{k}\|_{L^2(V_{\de_0})}.
\eeno
Thus we only need to prove 
\beno
\|\Lambda_{\delta_4}^{*\de_1}\tilde{k}\|_{L^2(V_{\de_0})}\ls\|\Lambda_{\delta_0}\tilde{k}\|_{L^2(V_{\de_0})}
\eeno
which is equivalent to
		\beno
	\|\Lambda_{\delta_4}^{*\de_1}\Lambda_{\delta_0}^{-1}\hat{k}\|_{L^2(V_{\de_0})}\ls \|\hat{k}\|_{L^2(V_{\de_0})},\quad \hat{k}=\Lambda_{\delta_0}\tilde{k}.
	\eeno
This follows from \eqref{088} and Lemma \ref{BoundedLambda}. Therefore,
\begin{equation}\label{022}
	\begin{aligned}
		-(k_d, \Lambda_{\delta_1}^{-1}\partial_t\nabla_x(c_\mathsf{n}e^{\widetilde{\Phi}}))_{L^2(V_{\delta_1})}&=-(\Lambda_{\delta_4}^*\Lambda_{\delta_1}^{-1}k_d, \Lambda_{\delta_4}^{-1}\partial_t\nabla_x(c_\mathsf{n}e^{\widetilde{\Phi}}))_{L^2(V_{\delta_1})}\\
		&\leq\|k_d\|_{L^2(V_{\de_0})}\|\Lambda_{\delta_4}^{-1}\partial_t\nabla_x(c_\mathsf{n}e^{\widetilde{\Phi}})\|_{L^2(V_{\de_4})}.
	\end{aligned}
\end{equation}
Thus we obtain the desired result by combining \eqref{020}, \eqref{021} and \eqref{022}.
\end{proof}

%%%%%%%%%%%%%%%%%%%%%%%%%%%%%%%%%%%%%%%%%%%%%%%%%%
\subsubsection{Estimates for $b_\mathsf{n}$} Next, we give the estimates of $b_\mathsf{n}$.
	\begin{lemma}\label{016}
		Let $\de_i$, $i=0,\ldots,4$, be defined by \eqref{de01234}. Then
			\begin{equation*}
			\begin{aligned}
				\frac{d}{dt}\mathcal{J}_b + \frac{1}{2}\|\Lambda_{\delta_2}^{-\frac{1}{2}}\nabla_x^{sym}(b_\mathsf{n}e^{\widetilde{\Phi}})\|_{L^2(V_{\delta_2})}^2 &\lesssim \|\Lambda_{\delta_2}^{-\frac{1}{2}}L\|_{L^2(V_{\delta_2})}^2 + \|c_\mathsf{n}e^{\widetilde{\Phi}}\|_{L^2(V_{\delta_1})}^2 \\+& (\|c_\mathsf{n}e^{\widetilde{\Phi}}\|_{L^2(V_{\delta_1})} + \|K\|_{L^2(V_{\delta_0})})\|\Lambda_{\delta_4}^{-1}\partial_t\nabla_x^{sym}(b_\mathsf{n}e^{\widetilde{\Phi}})\|_{L^2(V_{\delta_4})},
			\end{aligned}
		\end{equation*}
where
$
			\mathcal{J}_b := -(K - \mathbb{I}_{3 \times 3}c_\mathsf{n}e^{\widetilde{\Phi}}, \Lambda_{\delta_2}^{-1}\nabla_x^{sym}(b_\mathsf{n}e^{\widetilde{\Phi}}))_{L^2(V_{\delta_2})}.
$
%	\begin{equation*}
%		\begin{aligned}
%			\frac{d}{dt}\mathcal{J}_b + %\frac{1}{2}\|\Lambda_{\delta_2}^{-\frac{1}{2}}\nabla_x^{sym}(b_\mathsf{n}e^{\widetilde{\Phi}})\|_{L^2(V_{\delta_2})}^2
%			&\lesssim \|e^{\delta_0{\mathsf{H}}}g_1\|_{L_{x, v}^2}^2 + \|c_\mathsf{n}e^{\widetilde{\Phi}}\|_{L^2(V_{\delta_1})}^2 \\
%			&+ (\|e^{\delta_0{\mathsf{H}}}g_1\|_{L_{x, v}^2} + \|c_\mathsf{n}e^{\widetilde{\Phi}}\|_{L^2(V_{\delta_1})})\|\Lambda_{\delta_4}^{-1}\partial_t\nabla_x^{sym}(b_\mathsf{n}e^{\widetilde{\Phi}})\|_{L^2(V_{\delta_4})}
%		\end{aligned}
%	\end{equation*}
%	where 
%			\begin{equation*}
%		\begin{aligned}
%			\mathcal{J}_b &:= -(K - \mathbb{I}_{3 \times 3}c_\mathsf{n}e^{\widetilde{\Phi}}, \Lambda_{\delta_2}^{-1}\nabla_x^{sym}(b_\mathsf{n}e^{\widetilde{\Phi}}))_{L^2(V_{\delta_2})}.
%		\end{aligned}
%	\end{equation*}
	\end{lemma}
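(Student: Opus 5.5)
We will mimic the proof of Lemma \ref{015}, now starting from the symmetric-gradient equation \eqref{ptcI} instead of \eqref{pxc}. The first step is to rewrite \eqref{ptcI} in terms of $b_\mathsf n$ and $c_\mathsf n$. By \eqref{Defianbncn},
\[
\nabla_x^{sym}(be^{\widetilde\Phi})=\nabla_x^{sym}(b_\mathsf ne^{\widetilde\Phi})+\tfrac13\mathscr A(\nabla_x\cdot(be^{\widetilde\Phi}))\mathbb I_{3\times3},
\]
since $B(t)x$ has vanishing symmetric gradient. Averaging the trace of \eqref{ptcI} against $e^{-\widetilde\Phi}$, or equivalently averaging \eqref{ptc}, and using $\mathscr A(\mathsf Rx\cdot\nabla_x f)=0$, $\mathscr A(l_c)=0$, we expect $\partial_t\mathscr A(ce^{\widetilde\Phi})+\frac13\mathscr A(\nabla_x\cdot(be^{\widetilde\Phi}))=0$. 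The constant parts $\mathbb I\,\partial_t\mathscr A(ce^{\widetilde\Phi})$ and $\frac13\mathscr A(\nabla\cdot(be^{\widetilde\Phi}))\mathbb I$ then cancel. This should leave
\[
\nabla_x^{sym}(b_\mathsf ne^{\widetilde\Phi})=\partial_t\big(K-\mathbb I\,c_\mathsf ne^{\widetilde\Phi}\big)+L-\mathbb I\,\mathsf Rx\cdot\nabla_x(c_\mathsf ne^{\widetilde\Phi}).
\]
This identity is what produces the correction $-\mathbb I c_\mathsf n e^{\widetilde\Phi}$ in $\mathcal J_b$. Checking this cancellation carefully, especially the trace consistency between \eqref{ptc} and \eqref{ptcI}, is the first task.

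Next, we pair this identity in $L^2(V_{\delta_2})$ with $\Lambda_{\delta_2}^{-1}\nabla_x^{sym}(b_\mathsf ne^{\widetilde\Phi})$. The left side becomes $\|\Lambda_{\delta_2}^{-1/2}\nabla_x^{sym}(b_\mathsf ne^{\widetilde\Phi})\|^2$. The time-derivative term splits into $\frac{d}{dt}$ of the pairing, which is $-\frac{d}{dt}\mathcal J_b$, minus $(K-\mathbb Ic_\mathsf ne^{\widetilde\Phi},\Lambda_{\delta_2}^{-1}\partial_t\nabla_x^{sym}(b_\mathsf ne^{\widetilde\Phi}))$.

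For that remainder we insert the dual operator $\Lambda^{*\delta_2}_{\delta_4}$ as in \eqref{088}, using the explicit formula with $\delta_1$ replaced by $\delta_2$. We then apply Lemma \ref{lemma5.7}(3) with $2\delta_2<\delta_1+\delta_4$, and the analogue of claim \eqref{027}: $\|\Lambda^{*\delta_2}_{\delta_4}\Lambda_{\delta_2}^{-1}h\|_{L^2(V_{\delta_1})}\lesssim\|h\|_{L^2(V_{\delta_1})}$. That bound follows from the equivalence $\|\Lambda_{\delta_2}f\|_{L^2(V_{\delta_1})}\sim\|\Lambda_{\delta_1}f\|_{L^2(V_{\delta_1})}$ in \eqref{de01234} together with Lemma \ref{BoundedLambda}. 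This yields the bound
\[
\big(\|c_\mathsf ne^{\widetilde\Phi}\|_{L^2(V_{\delta_1})}+\|K\|_{L^2(V_{\delta_1})}\big)\|\Lambda_{\delta_4}^{-1}\partial_t\nabla_x^{sym}(b_\mathsf ne^{\widetilde\Phi})\|_{L^2(V_{\delta_4})},
\]
and $\|K\|_{L^2(V_{\delta_1})}\lesssim\|K\|_{L^2(V_{\delta_0})}$ by Lemma \ref{lemma5.7}(2).

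The $L$ term is handled by Cauchy--Schwarz exactly as in \eqref{021}, absorbing half of the dissipation.

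The main obstacle is the transport term $\mathbb I\,\mathsf Rx\cdot\nabla_x(c_\mathsf ne^{\widetilde\Phi})$, which is unbounded in $x$ and contains a derivative. We will write it as $\nabla_x\cdot(\mathsf Rx\,c_\mathsf ne^{\widetilde\Phi})$, which is valid since $\mathsf R$ is skew so $\nabla\cdot(\mathsf Rx)=0$. We then move the divergence onto the test function, using $\Lambda_{\delta_2}^{-1/2}\nabla$-type bounds from Lemma \ref{BoundedLambda} in $L^2(V_{\delta_2})$. The weight $|x|$ is absorbed by passing from $V_{\delta_2}$ to $V_{\delta_1}$ via Lemma \ref{lemma5.7}(2), after which Cauchy--Schwarz with a small parameter gives $\|c_\mathsf ne^{\widetilde\Phi}\|_{L^2(V_{\delta_1})}^2$ plus a fraction of the dissipation. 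Because the test function carries a weight, integration by parts produces commutator terms involving $\nabla V_{\delta_2}$. If these are troublesome, the fallback is to bound $(\mathsf Rx\,c_\mathsf n e^{\widetilde\Phi},\nabla\Lambda^{-1}_{\delta_2}\nabla^{sym}b)$-type terms directly, using that $\langle\nabla V\rangle\nabla\Lambda^{-1}$ and $\nabla\Lambda^{-1/2}$ are bounded. This is also where the positive gap $\delta_1>\delta_2$ is actually used.

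Collecting the estimates gives the stated inequality, with the $\frac12$ in front of the dissipation coming from two absorptions with constants $\frac14$.
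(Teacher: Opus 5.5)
Your proposal is correct and follows essentially the paper's proof. It uses the same reduction of \eqref{ptcI} to $\nabla_x^{sym}(b_\mathsf{n}e^{\widetilde{\Phi}})=\partial_t(K-\mathbb{I}_{3\times3}c_\mathsf{n}e^{\widetilde{\Phi}})+L-\mathbb{I}_{3\times3}\mathsf{R}x\cdot\nabla_x(c_\mathsf{n}e^{\widetilde{\Phi}})$ via $\partial_t\mathscr{A}(ce^{\widetilde{\Phi}})=-\frac13\mathscr{A}(\nabla_x\cdot(be^{\widetilde{\Phi}}))$, the same pairing with $\Lambda_{\delta_2}^{-1}\nabla_x^{sym}(b_\mathsf{n}e^{\widetilde{\Phi}})$, and the same dual-operator argument with $2\delta_2<\delta_1+\delta_4$ for the time-derivative remainder. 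The commutator concern in the transport term never materializes: the paper bounds $\|\Lambda_{\delta_2}^{-1/2}\mathsf{R}x\cdot\nabla_x(c_\mathsf{n}e^{\widetilde{\Phi}})\|_{L^2(V_{\delta_2})}$ directly through the boundedness of $\Lambda_{\delta_2}^{-1/2}\nabla_x$ in Lemma \ref{BoundedLambda} and then passes to $V_{\delta_1}$, which is your first option; your version, which keeps the factor $x$ and invokes Lemma \ref{lemma5.7}(2), is the precise form of that step.
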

	\begin{proof}
	Recall that  $B(t) = \mathscr{A}(\nabla_x^{skew}(be^{\widetilde{\Phi}}))$ is skew-symmetric, we compute that
	\begin{equation*}
		\partial_j(B(t)x)_i = \partial_j\sum_{k = 1}^3B_{ik}(t)x_k = B_{ij}(t) = \frac{1}{2}\mathscr{A}(\partial_j(b_ie^{\widetilde{\Phi}}) - \partial_i(b_je^{\widetilde{\Phi}})).
	\end{equation*}
By the definition of $b_\mathsf n$ in \eqref{Defianbncn},
	\begin{equation*}
	\begin{aligned}
		\partial_j(b_{\mathsf{n}, i}e^{\widetilde{\Phi}}) &= \partial_j(b_ie^{\widetilde{\Phi}}) - \frac{1}{2}\mathscr{A}(\partial_j(b_ie^{\widetilde{\Phi}}) - \partial_i(b_je^{\widetilde{\Phi}})) - \frac{1}{3}\delta_{ij}\mathscr{A}(\nabla_x \cdot (be^{\widetilde{\Phi}}));\\
		\partial_i(b_{\mathsf{n}, j}e^{\widetilde{\Phi}}) &= \partial_i(b_je^{\widetilde{\Phi}}) - \frac{1}{2}\mathscr{A}(\partial_i(b_je^{\widetilde{\Phi}}) - \partial_j(b_ie^{\widetilde{\Phi}})) - \frac{1}{3}\delta_{ij}\mathscr{A}(\nabla_x \cdot (be^{\widetilde{\Phi}})).\\
	\end{aligned}
	\end{equation*}
Hence $\mathscr{A}(\nabla_x^{skew}(b_\mathsf{n}e^{\widetilde{\Phi}}))=0$, and
	\begin{equation}\label{bnbeq}
		\nabla_x^{sym}(b_\mathsf{n}e^{\widetilde{\Phi}}) = \nabla_x^{sym}(be^{\widetilde{\Phi}}) - \frac{1}{3}\mathbb{I}_{3 \times 3}\mathscr{A}(\nabla_x \cdot (be^{\widetilde{\Phi}})).
	\end{equation}
	To deduce the energy for $b_\mathsf{n}$, we start with \eqref{ptcI}, i.e.,
	\begin{equation}\label{ptcI+}
		\nabla_x^{sym}(be^{\widetilde{\Phi}}) = -\mathbb{I}_{3 \times 3}\partial_t(ce^{\widetilde{\Phi}}) - \mathbb{I}_{3 \times 3}\mathsf{R}x \cdot \nabla_x(ce^{\widetilde{\Phi}}) + \partial_tK + L.
	\end{equation}
	By the definition of $c_\mathsf{n}$ \eqref{Defianbncn}, we have
	\begin{equation*}
		ce^{\widetilde{\Phi}} = c_\mathsf{n}e^{\widetilde{\Phi}} + \mathscr{A}(ce^{\widetilde{\Phi}}).
	\end{equation*}
	Then $\nabla_x(ce^{\widetilde{\Phi}}) = \nabla_x(c_\mathsf{n}e^{\widetilde{\Phi}})$, and by \eqref{ptc} we have
	\begin{equation*}
		\partial_t(ce^{\widetilde{\Phi}}) = \partial_t(c_\mathsf{n}e^{\widetilde{\Phi}}) + \mathscr{A}(\partial_t(ce^{\widetilde{\Phi}})) = \partial_t(c_\mathsf{n}e^{\widetilde{\Phi}}) - \frac{1}{3}\mathscr{A}(\nabla_x \cdot (be^{\widetilde{\Phi}})) - \mathscr{A}(\mathsf{R}x \cdot \nabla_x(ce^{\widetilde{\Phi}})) + \mathscr{A}(l_c).
	\end{equation*}
	Note the fact that $\mathscr{A}(l_c) = 0$, and $\mathscr{A}(\mathsf{R}x \cdot \nabla_xf) = 0$ for any function $f$, thus \eqref{ptcI+} becomes
	\begin{equation*}
		\nabla_x^{sym}(be^{\widetilde{\Phi}}) = -\mathbb{I}_{3 \times 3}\partial_t(c_\mathsf{n}e^{\widetilde{\Phi}}) + \frac{1}{3}\mathbb{I}_{3 \times 3}\mathscr{A}(\nabla_x \cdot (be^{\widetilde{\Phi}})) - \mathbb{I}_{3 \times 3}\mathsf{R}x \cdot \nabla_x(c_\mathsf{n}e^{\widetilde{\Phi}}) + \partial_tK + L.
	\end{equation*}
	We rewrite it as
	\begin{equation*}
		\nabla_x^{sym}(be^{\widetilde{\Phi}}) - \frac{1}{3}\mathbb{I}_{3 \times 3}\mathscr{A}(\nabla_x \cdot (be^{\widetilde{\Phi}})) = \partial_t(K - \mathbb{I}_{3 \times 3}(c_\mathsf{n}e^{\widetilde{\Phi}})) + L - \mathbb{I}_{3 \times 3}\mathsf{R}x \cdot \nabla_x(c_\mathsf{n}e^{\widetilde{\Phi}}).
	\end{equation*}
	Together with \eqref{bnbeq}, we get
\begin{equation}\label{023}
	\begin{aligned}
		\|\Lambda_{\delta_2}^{-\frac{1}{2}}\nabla_x^{sym}(b_\mathsf{n}e^{\widetilde{\Phi}})\|_{L^2(V_{\delta_2})}^2
&= (\nabla_x^{sym}(b_\mathsf{n}e^{\widetilde{\Phi}}), \Lambda_{\delta_2}^{-1}\nabla_x^{sym}(b_\mathsf{n}e^{\widetilde{\Phi}}))_{L^2(V_{\delta_2})} \\
&= (\partial_t(K - \mathbb{I}_{3 \times 3}(c_\mathsf{n}e^{\widetilde{\Phi}})) + L - \mathbb{I}_{3 \times 3}\mathsf{R}x \cdot \nabla_x(c_\mathsf{n}e^{\widetilde{\Phi}}), \Lambda_{\delta_2}^{-1}\nabla_x^{sym}(b_\mathsf{n}e^{\widetilde{\Phi}}))_{L^2(V_{\delta_2})}.
	\end{aligned}
\end{equation}
On one hand, we have that
	\begin{multline}\label{024}
		(L - \mathbb{I}_{3 \times 3}\mathsf{R}x \cdot \nabla_x(c_\mathsf{n}e^{\widetilde{\Phi}}), \Lambda_{\delta_2}^{-1}\nabla_x^{sym}(b_\mathsf{n}e^{\widetilde{\Phi}}))_{L^2(V_{\delta_2})} \\
		\le 3(\|\Lambda_{\delta_2}^{-\frac{1}{2}}L\|_{L^2(V_{\delta_2})}^2 + \|\Lambda_{\delta_2}^{-\frac{1}{2}}\mathsf{R}x \cdot \nabla_x(c_\mathsf{n}e^{\widetilde{\Phi}})\|_{L^2(V_{\delta_2})}^2) + \frac{1}{2}\|\Lambda_{\delta_2}^{-\frac{1}{2}}\nabla_x^{sym}(b_\mathsf{n}e^{\widetilde{\Phi}})\|_{L^2(V_{\delta_2})}^2.
	\end{multline}
	For the second term on the right-hand side, Lemma \ref{BoundedLambda} gives
	\begin{equation}\label{025}
		\|\Lambda_{\delta_2}^{-\frac{1}{2}}\mathsf{R}x \cdot \nabla_x(c_\mathsf{n}e^{\widetilde{\Phi}})\|_{L^2(V_{\delta_2})}^2 \lesssim \|c_\mathsf{n}e^{\widetilde{\Phi}}\|_{L^2(V_{\delta_2})}^2
	 \lesssim \|c_\mathsf{n}e^{\widetilde{\Phi}}\|_{L^2(V_{\delta_1})}^2.
	\end{equation}
On the other hand, we write
	\begin{equation}\label{026}
		\begin{aligned}
	(\partial_t(K - \mathbb{I}_{3 \times 3}(c_\mathsf{n}e^{\widetilde{\Phi}})), \Lambda_{\delta_2}^{-1}\nabla_x^{sym}(b_\mathsf{n}e^{\widetilde{\Phi}}))_{L^2(V_{\delta_2})}	&= \frac{d}{dt}(K - \mathbb{I}_{3 \times 3}(c_\mathsf{n}e^{\widetilde{\Phi}}), \Lambda_{\delta_2}^{-1}\nabla_x^{sym}(b_\mathsf{n}e^{\widetilde{\Phi}}))_{L^2(V_{\delta_2})}\\
	 +& (\mathbb{I}_{3 \times 3}(c_\mathsf{n}e^{\widetilde{\Phi}})-K, \Lambda_{\delta_2}^{-1}\partial_t\nabla_x^{sym}(b_\mathsf{n}e^{\widetilde{\Phi}}))_{L^2(V_{\delta_2})},
		\end{aligned}
	\end{equation}
	and moreover,
	\begin{equation}\label{028}
		\begin{aligned}
		&(\mathbb{I}_{3 \times 3}(c_\mathsf{n}e^{\widetilde{\Phi}}) - K, \Lambda_{\delta_2}^{-1}\partial_t\nabla_x^{sym}(b_\mathsf{n}e^{\widetilde{\Phi}}))_{L^2(V_{\delta_2})}=(\Lambda_{\delta_4}^{*\de_2}\Lambda_{\delta_2}^{-1}\mathbb{I}_{3 \times 3}(c_\mathsf{n}e^{\widetilde{\Phi}}) - \Lambda_{\delta_4}^{*\de_2}\Lambda_{\delta_2}^{-1}K, \Lambda_{\delta_4}^{-1}\partial_t\nabla_x^{sym}(b_\mathsf{n}e^{\widetilde{\Phi}}))_{L^2(V_{\delta_2})}\\
		&\leq(\|\Lambda_{\delta_4}^{*\de_2}\Lambda_{\delta_2}^{-1}(c_\mathsf{n}e^{\widetilde{\Phi}})\|_{L^2(V_{\delta_1})} + \|\Lambda_{\delta_4}^{*\de_2}\Lambda_{\delta_2}^{-1}K\|_{L^2(V_{\delta_1})})\|\Lambda_{\delta_4}^{-1}\partial_t\nabla_x^{sym}(b_\mathsf{n}e^{\widetilde{\Phi}})\|_{L^2(V_{\delta_4})},
		\end{aligned}
	\end{equation}
	where we use the fact that $2\de_2< \de_1+\de_4$. Furthermore, by the same argument as \eqref{027}, we have 
	\begin{equation}\label{029}
	\|\Lambda_{\delta_4}^{*\de_2}\Lambda_{\delta_2}^{-1}(c_\mathsf{n}e^{\widetilde{\Phi}})\|_{L^2(V_{\delta_1})} + \|\Lambda_{\delta_4}^{*\de_2}\Lambda_{\delta_2}^{-1}K\|_{L^2(V_{\delta_1})}\ls \|c_\mathsf{n}e^{\widetilde{\Phi}}\|_{L^2(V_{\de_1})}+\|K\|_{L^2(V_{\de_1})}.
	\end{equation}
	Thus we get the desired result by combining \eqref{023}-\eqref{029}.
\end{proof}
%%%%%%%%%%%%%%%%%%%%%%%%%%%%%%%%%%%%%%%%%%%%%%%%%%

\subsubsection{Estimates for $a_\mathsf{n}$} Finally, we give the estimates of $a_\mathsf{n}$.
	\begin{lemma}\label{017}
Let $\de_i$, $i=0,\ldots,4$, be defined by \eqref{de01234}. Then
		\begin{align*}
		\frac{d}{dt}\mathcal{J}_a
		&+ \frac{1}{2}\|\Lambda_{\delta_3}^{-\frac{1}{2}}
		\nabla_x(a_\mathsf{n}e^{\widetilde{\Phi}})\|_{L^2(V_{\delta_3})}^2 \\
		&\lesssim \|\Lambda_{\delta_3}^{-\frac{1}{2}}l_b\|_{L^2(V_{\delta_3})}^2
		+ \|\Lambda_{\delta_3}^{-\frac{1}{2}}l_d\|_{L^2(V_{\delta_3})}^2 \\
		&\quad + |\mathscr{A}(\nabla_x \cdot l_b)|^2+|\mathscr{A}(L^*)|^2
		+|\mathscr{A}(\nabla_x^{skew}l_b)|^2 \\
		&\quad + \|c_\mathsf{n}e^{\widetilde{\Phi}}\|_{L^2(V_{\delta_1})}^2
		+ \|b_\mathsf{n}e^{\widetilde{\Phi}}\|_{L^2(V_{\delta_2})}^2 \\
		&\quad +\big(\|b_\mathsf{n}e^{\widetilde{\Phi}}\|_{L^2(V_{\delta_2})}
		+\|k_d\|_{L^2(V_{\delta_0})}+|\mathscr{A}(K^*)|\big) \\
		&\qquad\times
		\|\Lambda_{\delta_4}^{-1}\partial_t\nabla_x^{sym}
		(b_\mathsf{n}e^{\widetilde{\Phi}})\|_{L^2(V_{\delta_4})},
	\end{align*}
where $
\mathcal{J}_a := -(\mathscr{A}(K^*)x - 2k_d - b_\mathsf{n}e^{\widetilde{\Phi}}, \Lambda_{\delta_3}^{-1}\nabla_x(a_\mathsf{n}e^{\widetilde{\Phi}}))_{L^2(V_{\delta_3})}
$, where $K^*$ and $L^*$ are defined in \eqref{DefiK*}.
\end{lemma}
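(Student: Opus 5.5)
The plan is to repeat the scheme of Lemmas \ref{015} and \ref{016}: write $\nabla_x(a_\mathsf{n}e^{\widetilde\Phi})$ as a time derivative plus a remainder, pair it with $\Lambda_{\delta_3}^{-1}\nabla_x(a_\mathsf{n}e^{\widetilde\Phi})$ in $L^2(V_{\delta_3})$, and treat the leftover time-derivative pairing by $\delta_4$-duality.

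\textbf{Step 1: the identity for $\nabla_x(a_\mathsf{n}e^{\widetilde\Phi})$.} Start from \eqref{ptb} and substitute \eqref{pxc} for $2(\nabla_xc)e^{\widetilde\Phi}=2\nabla_x(ce^{\widetilde\Phi})-2(\nabla_x\widetilde\Phi)ce^{\widetilde\Phi}$. This turns the $c$-gradient into $2\partial_tk_d+2l_d$ plus a zero-order term in $c$. Next express $be^{\widetilde\Phi}$ through $b_\mathsf{n}e^{\widetilde\Phi}$ using \eqref{Defianbncn}, and $ae^{\widetilde\Phi}$ through $a_\mathsf{n}e^{\widetilde\Phi}$.

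The finite-dimensional coefficients then appear in two places.
\begin{itemize}
\item The coefficients $\mathscr A(be^{\widetilde\Phi})$, $B(t)$, $\mathscr A(\nabla_x\cdot(be^{\widetilde\Phi}))$, $A(t)$, $T(t)$, $\mathscr A(ce^{\widetilde\Phi})$ and $\mathscr A(\nabla_x(ae^{\widetilde\Phi}))$ enter through $\nabla_x$ of the correction terms in $a_\mathsf{n}$.
\item The time derivatives $\partial_t\mathscr A(\cdot)$ enter through $\partial_t(be^{\widetilde\Phi})$.
\end{itemize}
I would compute the latter by applying $\mathscr A$ to \eqref{ptb}, \eqref{ptc} and \eqref{ptcI}. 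For this I use $\mathscr A(l_b)=\mathscr A(l_c)=0$, $\mathscr A(\mathsf Rx\cdot\nabla_xf)=0$ and $\mathscr A(\nabla_xf\,e^{\widetilde\Phi})=0$. I also take the trace and the $x$-moment of \eqref{ptcI}; this is where the matrices $K^*,L^*$ of \eqref{DefiK*} arise.

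The expected outcome is
\[
\nabla_x(a_\mathsf{n}e^{\widetilde\Phi})=\partial_t\big(\mathscr A(K^*)x-2k_d-b_\mathsf{n}e^{\widetilde\Phi}\big)+\mathcal R,
\]
where $\mathcal R$ is a combination of $l_b$, $l_d$, $\mathscr A(\nabla_x\cdot l_b)x$, $\mathscr A(L^*)x$ and $\mathscr A(\nabla_x^{skew}l_b)x$. It also contains zero-order multiples of $c_\mathsf{n}e^{\widetilde\Phi}$ and $b_\mathsf{n}e^{\widetilde\Phi}$ with coefficients $\nabla\widetilde\Phi$, $\mathsf R$ and $\mathsf Rx\cdot\nabla$. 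The choice of $A(t)$ in \eqref{defiAt} should be exactly what cancels the $|x|^2$-type obstruction.

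\textbf{Step 2: the energy inequality.} Pair this identity with $\Lambda_{\delta_3}^{-1}\nabla_x(a_\mathsf{n}e^{\widetilde\Phi})$ in $L^2(V_{\delta_3})$.
\begin{itemize}
\item The $\mathcal R$-terms are handled by Cauchy--Schwarz and absorption, as in \eqref{021} and \eqref{024}. The terms in $c_\mathsf{n},b_\mathsf{n}$ use Lemma \ref{BoundedLambda} (boundedness of $\Lambda^{-1/2}\langle\nabla V\rangle$ and of $\Lambda^{-1/2}\nabla$) together with Lemma \ref{lemma5.7}(2), which passes from $V_{\delta_3}$ down to $V_{\delta_1}$ and $V_{\delta_2}$. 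The $x$-polynomial terms multiplying $\mathscr A(\cdot)$ are controlled by Lemma \ref{lemma5.7}(2).
\item The time-derivative pairing gives $-\frac{d}{dt}\mathcal J_a$ plus the term $(\mathscr A(K^*)x-2k_d-b_\mathsf{n}e^{\widetilde\Phi},\Lambda_{\delta_3}^{-1}\partial_t\nabla_x(a_\mathsf{n}e^{\widetilde\Phi}))$.
\end{itemize}

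\textbf{Step 3: converting the time derivative (main obstacle).} To reach the factor that actually appears in the statement, the time derivative must be moved onto $b_\mathsf{n}$. Inserting \eqref{pta}, $\partial_t(a_\mathsf{n}e^{\widetilde\Phi})$ equals $-(\nabla_x\cdot b)e^{\widetilde\Phi}-\mathsf Rx\cdot\nabla_x(ae^{\widetilde\Phi})$ plus time derivatives of the finite-dimensional coefficients. Then $\nabla_x$ of the divergence is rewritten with the compatibility identity
\[
\partial_j(\nabla_x\cdot u)=2\textstyle\sum_i\partial_i(\nabla^{sym}_xu)_{ij}-\partial_j\operatorname{tr}\nabla_x^{sym}u,
\]
so that everything is expressed through $\nabla_x^{sym}(b_\mathsf{n}e^{\widetilde\Phi})$.

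Rather than estimating spatially, I would first try keeping the pairing with $\partial_t$ inside, that is, integrating by parts in time differently. The idea is that $\partial_t\nabla_x(a_\mathsf{n}e^{\widetilde\Phi})$, tested against $\Lambda_{\delta_3}^{-1}$ of a function, is dominated via \eqref{pta}, the identity above and \eqref{ptcI} by $\partial_t\nabla_x^{sym}(b_\mathsf{n}e^{\widetilde\Phi})$ plus lower-order terms already listed on the right-hand side. The duality trick \eqref{088} with $\Lambda^{*\delta_3}_{\delta_4}$, the bound \eqref{027}, and $2\delta_3<\delta_2+\delta_4$ with Lemma \ref{lemma5.7}(3) then yield the product of $\|b_\mathsf{n}e^{\widetilde\Phi}\|_{L^2(V_{\delta_2})}+\|k_d\|_{L^2(V_{\delta_0})}+|\mathscr A(K^*)|$ with $\|\Lambda_{\delta_4}^{-1}\partial_t\nabla_x^{sym}(b_\mathsf{n}e^{\widetilde\Phi})\|_{L^2(V_{\delta_4})}$.

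The delicate point is that, after this conversion, no leftover $\partial_t$ lands on $a_\mathsf{n}$ or $c_\mathsf{n}$ in a form not bounded by the right-hand side. If such terms survive, I would absorb them through \eqref{pxc} and \eqref{ptc} into $l_c$- and $l_d$-type remainders.
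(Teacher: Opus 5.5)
Your Steps 1 and 2 match the paper. It derives $\nabla_x(a_\mathsf{n}e^{\widetilde\Phi})=\partial_t\big(\mathscr A(K^*)x-2k_d-b_\mathsf{n}e^{\widetilde\Phi}\big)+\mathcal F_a$ in \eqref{014}, pairs it with $\Lambda_{\delta_3}^{-1}\nabla_x(a_\mathsf{n}e^{\widetilde\Phi})$, and bounds the $\mathcal F_a$-terms as in \eqref{031}--\eqref{033}.

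One point you leave vague: $K^*$ and $L^*$ do not come from the trace of \eqref{ptcI}. The skew matrix built from $\mathscr A(\nabla_x^{skew}(\mathsf Rbe^{\widetilde\Phi}))$, $\mathscr A(\nabla_x^{skew}(\mathsf Rx\cdot\nabla_x b\,e^{\widetilde\Phi}))$ and $-\tfrac23\mathscr A(\nabla_x\cdot(be^{\widetilde\Phi}))\mathsf R$ is rewritten, using $x_1\partial_2\widetilde\Phi=x_2\partial_1\widetilde\Phi$, as off-diagonal and trace-free diagonal entries of $\mathscr A(\nabla_x^{sym}(be^{\widetilde\Phi}))$. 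The $\mathbb I_{3\times3}$ terms of \eqref{ptcI} then drop out.

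The genuine gap is Step 3. Equation \eqref{pta} trades $\partial_t(ae^{\widetilde\Phi})$ for the \emph{spatial} quantities $-(\nabla_x\cdot b)e^{\widetilde\Phi}-\mathsf Rx\cdot\nabla_x(ae^{\widetilde\Phi})$. It never produces a time derivative of $b$. After your compatibility identity you obtain terms like $\Lambda_{\delta_4}^{-1}\nabla_x\nabla_x^{sym}(be^{\widetilde\Phi})$, not $\Lambda_{\delta_4}^{-1}\partial_t\nabla_x^{sym}(b_\mathsf{n}e^{\widetilde\Phi})$. You also pick up:
\begin{itemize}
\item terms in $a_\mathsf{n}e^{\widetilde\Phi}$, $a_\mathfrak R$ and $B$, through $\mathsf Rx\cdot\nabla_x(ae^{\widetilde\Phi})$ when $r\neq0$;
\item terms in $b_\mathfrak r$, $b_\mathfrak R$ and $B$, through $\nabla_x\cdot b$;
\item the coefficient derivatives $c_\mathfrak r'=-b_\mathfrak R$, $a_\mathfrak R'$, $A'$ and $T'$.
\end{itemize}
None of these is on the right-hand side of the statement, and none can be absorbed into $l_c$- or $l_d$-type remainders. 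In the moment system $\partial_t a$ is tied to $\nabla_x\cdot b$ and $\partial_t b$ to $\nabla_x a$, so $\partial_t\nabla_x a$ and $\partial_t\nabla_x^{sym}b$ are not interchangeable.

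The paper does not attempt this conversion. With $Z_a=b_\mathsf{n}e^{\widetilde\Phi}+2k_d-\mathscr A(K^*)x$, it writes $(Z_a,\Lambda_{\delta_3}^{-1}\partial_t\nabla_x(a_\mathsf{n}e^{\widetilde\Phi}))_{L^2(V_{\delta_3})}=(\Lambda_{\delta_4}^{*\delta_3}\Lambda_{\delta_3}^{-1}Z_a,\Lambda_{\delta_4}^{-1}\partial_t\nabla_x(a_\mathsf{n}e^{\widetilde\Phi}))_{L^2(V_{\delta_3})}$. It then uses $2\delta_3<\delta_2+\delta_4$, Lemma \ref{lemma5.7}(3) and the analogue of \eqref{027}. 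This gives the bound $\big(\|b_\mathsf{n}e^{\widetilde\Phi}\|_{L^2(V_{\delta_2})}+\|k_d\|_{L^2(V_{\delta_0})}+|\mathscr A(K^*)|\big)\,\|\Lambda_{\delta_4}^{-1}\partial_t\nabla_x(a_\mathsf{n}e^{\widetilde\Phi})\|_{L^2(V_{\delta_4})}$; see the Cauchy--Schwarz step of \eqref{034}.

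The factor $\partial_t\nabla_x^{sym}(b_\mathsf{n}e^{\widetilde\Phi})$ in the displayed statement, and in the last line of \eqref{034}, is evidently a misprint. Two places in the paper confirm this:
\begin{itemize}
\item the combined Lemma \ref{Estimateanbncn} records the $a_\mathsf{n}$ factor $\partial_t\nabla_x(a_\mathsf{n}e^{\widetilde\Phi})$;
\item Proposition \ref{060} only uses that this factor is $\lesssim\mathsf E$, which Lemma \ref{Estimateptanbncn} supplies.
\end{itemize}
So drop the conversion in Step 3 and close with the duality bound in terms of $\partial_t\nabla_x(a_\mathsf{n}e^{\widetilde\Phi})$. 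Aiming at the literal $b_\mathsf{n}$ factor is what creates the gap.
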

\begin{proof}
We will divide the calculation into two parts.

\underline{Step 1: Derivation of the $a$-equation.}
Recall that $\mathsf R$ and $B(t)$ are skew-symmetric matrices; therefore $T(t):=(\mathsf RB(t)+B(t)\mathsf R)/2$ is symmetric. The definition of $a_\mathsf n$ in \eqref{Defianbncn} gives
	\begin{equation}\label{anaeq}
		\nabla_x(a_\mathsf{n}e^{\widetilde{\Phi}}) = \nabla_x(ae^{\widetilde{\Phi}}) - 2(\nabla_x{\widetilde{\Phi}})\mathscr{A}(ce^{\widetilde{\Phi}}) - \mathscr{A}(\nabla_x(ae^{\widetilde{\Phi}})) + 2A(t)x + 2T(t)x.
	\end{equation}
	To deduce the energy for $a_\mathsf{n}$, we start with \eqref{ptb} to get that
	\begin{equation}\label{Energyan0}
		\nabla_x(ae^{\widetilde{\Phi}}) = -\partial_t(be^{\widetilde{\Phi}}) - 2(\nabla_xc)e^{\widetilde{\Phi}} - \mathsf{R}be^{\widetilde{\Phi}} - \mathsf{R}x \cdot \nabla_x(be^{\widetilde{\Phi}}) + l_b.
	\end{equation}
	We first recall the definition of $c_\mathsf{n}$ in \eqref{Defianbncn}, we have $ce^{\widetilde{\Phi}} = c_\mathsf{n}e^{\widetilde{\Phi}} + \mathscr{A}(ce^{\widetilde{\Phi}})$, together with \eqref{pxc} we get
	\begin{equation}\label{Energyan1}
		(\nabla_xc)e^{\widetilde{\Phi}} = \nabla_x(ce^{\widetilde{\Phi}}) - (\nabla_x{\widetilde{\Phi}})ce^{\widetilde{\Phi}} = \partial_tk_d + l_d - (\nabla_x{\widetilde{\Phi}})c_\mathsf{n}e^{\widetilde{\Phi}} - (\nabla_x{\widetilde{\Phi}})\mathscr{A}(ce^{\widetilde{\Phi}}).
	\end{equation}
	Next we recall the definition of $b_\mathsf{n}$ in \eqref{Defianbncn} reads like
	\begin{equation*}
		be^{\widetilde{\Phi}} = b_\mathsf{n}e^{\widetilde{\Phi}} + \mathscr{A}(be^{\widetilde{\Phi}}) + B(t)x + \frac{1}{3}\mathscr{A}(\nabla_x \cdot (be^{\widetilde{\Phi}}))x.
	\end{equation*}
	Thus we compute that
	\begin{equation}\label{Energyan2}
	\begin{gathered}
		\mathsf{R}be^{\widetilde{\Phi}} = \mathsf{R}b_\mathsf{n}e^{\widetilde{\Phi}} + \mathscr{A}(\mathsf{R}be^{\widetilde{\Phi}}) + \mathsf{R}B(t)x + \frac{1}{3}\mathscr{A}(\nabla_x \cdot (be^{\widetilde{\Phi}}))\mathsf{R}x;\\
		\mathsf{R}x \cdot \nabla_x(be^{\widetilde{\Phi}}) = \mathsf{R}x \cdot \nabla_x(b_\mathsf{n}e^{\widetilde{\Phi}}) + B(t)\mathsf{R}x + \frac{1}{3}\mathscr{A}(\nabla_x \cdot (be^{\widetilde{\Phi}}))\mathsf{R}x.
	\end{gathered}
	\end{equation}
	We plug \eqref{Energyan1} and \eqref{Energyan2} into \eqref{Energyan0} to get that
	\begin{multline}\label{Energyan3}
		\nabla_x(ae^{\widetilde{\Phi}}) = -\partial_t(be^{\widetilde{\Phi}}) - 2\partial_tk_d - 2l_d + 2(\nabla_x{\widetilde{\Phi}})c_\mathsf{n}e^{\widetilde{\Phi}} + 2(\nabla_x{\widetilde{\Phi}})\mathscr{A}(ce^{\widetilde{\Phi}}) \\- \mathsf{R}b_\mathsf{n}e^{\widetilde{\Phi}} - \mathsf{R}x \cdot \nabla_x(b_\mathsf{n}e^{\widetilde{\Phi}}) - \mathscr{A}(\mathsf{R}be^{\widetilde{\Phi}}) - 2T(t)x - \frac{2}{3}\mathscr{A}(\nabla_x \cdot (be^{\widetilde{\Phi}}))\mathsf{R}x + l_b.
	\end{multline}
	Subsequently, we are going to deal with $\partial_t(be^{\widetilde{\Phi}})$, again recall the definition of $b_\mathsf{n}$ in \eqref{Defianbncn},
	\begin{equation}\label{Energyan4}
		\partial_t(be^{\widetilde{\Phi}}) = \partial_t(b_\mathsf{n}e^{\widetilde{\Phi}}) + \partial_t\mathscr{A}(be^{\widetilde{\Phi}}) + \partial_t\mathscr{A}(\nabla_x^{skew}(be^{\widetilde{\Phi}}))x + \frac{1}{3}\partial_t\mathscr{A}(\nabla_x \cdot (be^{\widetilde{\Phi}}))x.
	\end{equation}
	We next compute $\partial_t\mathscr{A}(be^{\widetilde{\Phi}})$, $\partial_t\mathscr{A}(\nabla_x^{skew}(be^{\widetilde{\Phi}}))$, and $\partial_t\mathscr{A}(\nabla_x\cdot(be^{\widetilde{\Phi}}))$. Rewrite \eqref{ptb} as
	\begin{equation}\label{Energyan5}
		\partial_t(be^{\widetilde{\Phi}}) = -\nabla_x(ae^{\widetilde{\Phi}}) - 2(\nabla_xc)e^{\widetilde{\Phi}} - \mathsf{R}be^{\widetilde{\Phi}} - \mathsf{R}x \cdot \nabla_x(be^{\widetilde{\Phi}}) + l_b.
	\end{equation}
	Take $\mathscr{A}(\cdot)$ on \eqref{Energyan5}, and note that $\mathscr{A}(l_b) = 0$ and for any function $f$,
	\begin{equation*}
		\mathscr{A}((\nabla_xf)e^{\widetilde{\Phi}}) = 0, \:\:\: \mathscr{A}(\mathsf{R}x \cdot \nabla_xf) = 0,
	\end{equation*}
	we get
	\begin{equation}\label{Energyan6}
		\partial_t\mathscr{A}(be^{\widetilde{\Phi}}) = -\mathscr{A}(\nabla_x(ae^{\widetilde{\Phi}})) - \mathscr{A}(\mathsf{R}be^{\widetilde{\Phi}}).
	\end{equation}
	Next we take $\partial_j$ to the $i-$th component of \eqref{Energyan5} to get
	\begin{equation*}
		\partial_t\partial_j(b_ie^{\widetilde{\Phi}}) = -\partial_j\partial_i(ae^{\widetilde{\Phi}}) - 2\partial_j(\partial_ice^{\widetilde{\Phi}}) - \partial_j((\mathsf{R}b)_ie^{\widetilde{\Phi}}) - \partial_j(\mathsf{R}x \cdot \nabla_xb_ie^{\widetilde{\Phi}}) + \partial_j(l_{b, i}).
	\end{equation*}
	Applying $\mathscr A$ yields
	\begin{equation*}
		\partial_t\mathscr{A}(\partial_j(b_ie^{\widetilde{\Phi}})) = -\mathscr{A}(\partial_j\partial_i(ae^{\widetilde{\Phi}})) + 2\mathscr{A}(\partial_j\partial_i{\widetilde{\Phi}} ce^{\widetilde{\Phi}}) - \mathscr{A}(\partial_j((\mathsf{R}b)_ie^{\widetilde{\Phi}})) - \mathscr{A}(\partial_j(\mathsf{R}x \cdot \nabla_xb_ie^{\widetilde{\Phi}})) + \mathscr{A}(\partial_j(l_{b, i})),
	\end{equation*}
	where we used $\mathscr{A}(\partial_j(\partial_ice^{\widetilde{\Phi}}))=\mathscr{A}(\partial_j\widetilde{\Phi}\,\partial_ice^{\widetilde{\Phi}})=-\mathscr{A}(\partial_j\partial_i\widetilde{\Phi}\,ce^{\widetilde{\Phi}})$. Since $\mathscr A(\nabla_xf)=\mathscr A((\nabla_x\widetilde\Phi)f)$,
		\begin{equation}\label{Energyan7}
		\begin{aligned}
			\partial_t\mathscr{A}(\nabla_x \cdot (be^{\widetilde{\Phi}}))
			&= -\mathscr{A}(\Delta_x(ae^{\widetilde{\Phi}}))
			 + 2\mathscr{A}(\Delta_x{\widetilde{\Phi}} ce^{\widetilde{\Phi}})\\
			&\quad - \mathscr{A}(\nabla_x{\widetilde{\Phi}} \cdot \mathsf{R}be^{\widetilde{\Phi}})
			 - \mathscr{A}(\nabla_x{\widetilde{\Phi}} \cdot (\mathsf{R}x \cdot \nabla_xb_i)e^{\widetilde{\Phi}})\\
			&\quad + \mathscr{A}(\nabla_x \cdot l_b) \\
			&= 6A(t) + \mathscr{A}(\nabla_x \cdot l_b);\\
			\partial_t\mathscr{A}(\nabla_x^{skew}(be^{\widetilde{\Phi}}))
			&= -\mathscr{A}(\nabla_x^{skew}(\mathsf{R}be^{\widetilde{\Phi}}))\\
			&\quad - \mathscr{A}(\nabla_x^{skew}(\mathsf{R}x \cdot \nabla_xbe^{\widetilde{\Phi}}))
			 + \mathscr{A}(\nabla_x^{skew}l_b),
		\end{aligned}
		\end{equation}
	where $A(t)$ is defined in \eqref{defiAt}. We plug \eqref{Energyan6} and \eqref{Energyan7} into \eqref{Energyan4} to get
	\begin{multline*}
		\partial_t(be^{\widetilde{\Phi}}) = \partial_t(b_\mathsf{n}e^{\widetilde{\Phi}}) - \mathscr{A}(\nabla_x(ae^{\widetilde{\Phi}})) - \mathscr{A}(\mathsf{R}be^{\widetilde{\Phi}}) - \mathscr{A}(\nabla_x^{skew}(\mathsf{R}be^{\widetilde{\Phi}}))x \\- \mathscr{A}(\nabla_x^{skew}(\mathsf{R}x \cdot \nabla_xbe^{\widetilde{\Phi}}))x + \mathscr{A}(\nabla_x^{skew}l_b)x + 2A(t)x + \frac{1}{3}\mathscr{A}(\nabla_x \cdot l_b)x.
	\end{multline*}
	Therefore, \eqref{Energyan3} becomes
	\begin{multline}\label{Energyan8}
		\nabla_x(ae^{\widetilde{\Phi}}) - 2(\nabla_x{\widetilde{\Phi}})\mathscr{A}(ce^{\widetilde{\Phi}}) - \mathscr{A}(\nabla_x(ae^{\widetilde{\Phi}})) + 2A(t)x + 2T(t)x = -\partial_t(2k_d + b_\mathsf{n}e^{\widetilde{\Phi}}) \\
		+ l_b - 2l_d - \mathscr{A}(\nabla_x^{skew}l_b)x - \frac{1}{3}\mathscr{A}(\nabla_x \cdot l_b)x - \mathsf{R}b_\mathsf{n}e^{\widetilde{\Phi}} - \mathsf{R}x \cdot \nabla_x(b_\mathsf{n}e^{\widetilde{\Phi}}) + 2(\nabla_x{\widetilde{\Phi}})c_\mathsf{n}e^{\widetilde{\Phi}} \\
		+ \mathscr{A}(\nabla_x^{skew}(\mathsf{R}be^{\widetilde{\Phi}}))x + \mathscr{A}(\nabla_x^{skew}(\mathsf{R}x \cdot \nabla_xbe^{\widetilde{\Phi}}))x - \frac{2}{3}\mathscr{A}(\nabla_x \cdot (be^{\widetilde{\Phi}}))\mathsf{R}x.
	\end{multline}

Now we focus on the skew-symmetric matrix in \eqref{Energyan8},
	\begin{equation*}
		\mathcal{R} := \mathscr{A}(\nabla_x^{skew}(\mathsf{R}be^{\widetilde{\Phi}})) + \mathscr{A}(\nabla_x^{skew}(\mathsf{R}x \cdot \nabla_xbe^{\widetilde{\Phi}})) - \frac{2}{3}\mathscr{A}(\nabla_x \cdot (be^{\widetilde{\Phi}}))\mathsf{R}.
	\end{equation*}
	Recall that $C_{\widetilde\Phi}=(\int_{\R^3}e^{-\widetilde\Phi}dx)^{-1}$. For every sufficiently regular $f$,
	\begin{equation*}
		\mathscr{A}(\nabla_x(fe^{\widetilde{\Phi}})) = C_{\widetilde{\Phi}}\int_{\R^3}\nabla_x(fe^{\widetilde{\Phi}})e^{-{\widetilde{\Phi}}}dx = C_{\widetilde{\Phi}}\int_{\R^3}\nabla_x{\widetilde{\Phi}} fdx,
	\end{equation*}
	and recall the form of matrix $\mathsf{R}$ in \eqref{Rform}, we compute that
	\begin{equation*}
	\begin{aligned}
		2\mathcal{R}_{23} &= \mathscr{A}(\partial_3(Rbe^{\widetilde{\Phi}})_2) - \mathscr{A}(\partial_2(Rbe^{\widetilde{\Phi}})_3) + \mathscr{A}(\partial_3(Rx \cdot \nabla_xb_2e^{\widetilde{\Phi}})) - \mathscr{A}(\partial_2(Rx \cdot \nabla_xb_3e^{\widetilde{\Phi}})) \\
		&= C_{\widetilde{\Phi}}\int_{\R^3}[(\partial_3{\widetilde{\Phi}})(Rb)_2 - (\partial_2{\widetilde{\Phi}})(Rb)_3 + (\partial_3{\widetilde{\Phi}})(Rx \cdot \nabla_xb_2) - (\partial_2{\widetilde{\Phi}})(Rx \cdot \nabla_xb_3)]dx \\
		&= rC_{\widetilde{\Phi}}\int_{\R^3}[-(\partial_3{\widetilde{\Phi}})b_1 + (\partial_3{\widetilde{\Phi}})(x_2\partial_1b_2 - x_1\partial_2b_2) - (\partial_2{\widetilde{\Phi}})(x_2\partial_1b_3 - x_1\partial_2b_3)]dx \\
		&= rC_{\widetilde{\Phi}}\int_{\R^3}{\widetilde{\Phi}}[\partial_3b_1 - x_2\partial_1\partial_3b_2 + x_1\partial_2\partial_3b_2 + \partial_1b_3 + x_2\partial_1\partial_2b_3 - x_1\partial_2^2b_3]dx.
	\end{aligned}
	\end{equation*}
	We note that fact that
	\begin{equation*}
		\nabla_x{\widetilde{\Phi}} = (|x|^{p - 2}x_1 - r^2x_1, |x|^{p - 2}x_2 - r^2x_2, |x|^{p - 2}x_3)^\tau,
	\end{equation*}
	which implies $x_1\partial_2\widetilde\Phi=x_2\partial_1\widetilde\Phi$. Consequently, every sufficiently regular $f$ satisfies
	\begin{equation*}
		\int_{\R^3}{\widetilde{\Phi}}(x_1\partial_2f)dx = -\int_{\R^3}(x_1\partial_2{\widetilde{\Phi}})fdx = -\int_{\R^3}(x_2\partial_1{\widetilde{\Phi}})fdx = \int_{\R^3}{\widetilde{\Phi}}(x_2\partial_1f)dx.
	\end{equation*}
	Therefore
	\begin{equation*}
	\begin{aligned}
		2\mathcal{R}_{23} &= rC_{\widetilde{\Phi}}\int_{\R^3}{\widetilde{\Phi}}[\partial_3b_1 + (x_1\partial_2 - x_2\partial_1)\partial_3b_2 + \partial_1b_3 + (x_2\partial_1 - x_1\partial_2)\partial_2b_3]dx \\
		&= rC_{\widetilde{\Phi}}\int_{\R^3}{\widetilde{\Phi}}(\partial_3b_1 + \partial_1b_3)dx = -rC_{\widetilde{\Phi}}\int_{\R^3}[(\partial_3{\widetilde{\Phi}})b_1 + (\partial_1{\widetilde{\Phi}})b_3]dx \\
		&= -r\mathscr{A}(\partial_3(b_1e^{\widetilde{\Phi}})) - r\mathscr{A}(\partial_1(b_3e^{\widetilde{\Phi}})) = -2r\mathscr{A}(\nabla_x^{sym}(be^{\widetilde{\Phi}}))_{13}.
	\end{aligned}
	\end{equation*}
	Thus by \eqref{ptcI}, we get
	\begin{equation*}
		\mathcal{R}_{23} = -r\mathscr{A}(\nabla_x^{sym}(be^{\widetilde{\Phi}}))_{13} = -r\partial_t\mathscr{A}(K_{13}) - r\mathscr{A}(L_{13}).
	\end{equation*}

By the similar argument, we compute that
	\begin{equation*}
	\begin{aligned}
		2\mathcal{R}_{13} &= \mathscr{A}(\partial_3(Rbe^{\widetilde{\Phi}})_1) - \mathscr{A}(\partial_1(Rbe^{\widetilde{\Phi}})_3) + \mathscr{A}(\partial_3(Rx \cdot \nabla_xb_1e^{\widetilde{\Phi}})) - \mathscr{A}(\partial_1(Rx \cdot \nabla_xb_3e^{\widetilde{\Phi}})) \\
		&= C_{\widetilde{\Phi}}\int_{\R^3}[(\partial_3{\widetilde{\Phi}})(Rb)_1 - (\partial_1{\widetilde{\Phi}})(Rb)_3 + (\partial_3{\widetilde{\Phi}})(Rx \cdot \nabla_xb_1) - (\partial_1{\widetilde{\Phi}})(Rx \cdot \nabla_xb_3)]dx \\
		&= rC_{\widetilde{\Phi}}\int_{\R^3}[(\partial_3{\widetilde{\Phi}})b_2 + (\partial_3{\widetilde{\Phi}})(x_2\partial_1b_1 - x_1\partial_2b_1) - (\partial_1{\widetilde{\Phi}})(x_2\partial_1b_3 - x_1\partial_2b_3)]dx \\
		&= rC_{\widetilde{\Phi}}\int_{\R^3}{\widetilde{\Phi}}[-\partial_3b_2 - x_2\partial_1\partial_3b_1 + x_1\partial_2\partial_3b_1 + x_2\partial_1^2b_3 - \partial_2b_3 - x_1\partial_1\partial_2b_3]dx \\
			\end{aligned}
	\end{equation*}
		\begin{equation*}
		\begin{aligned}
		&= rC_{\widetilde{\Phi}}\int_{\R^3}{\widetilde{\Phi}}[-\partial_3b_2 + (x_1\partial_2 - x_2\partial_1)\partial_3b_1 + (x_2\partial_1 - x_1\partial_2)\partial_1b_3 - \partial_2b_3]dx \\
		&= -rC_{\widetilde{\Phi}}\int_{\R^3}{\widetilde{\Phi}}(\partial_3b_2 + \partial_2b_3)dx = rC_{\widetilde{\Phi}}\int_{\R^3}[(\partial_3{\widetilde{\Phi}})b_2 + (\partial_2{\widetilde{\Phi}})b_3]dx \\
		&= r\mathscr{A}(\partial_3(b_2e^{\widetilde{\Phi}})) + r\mathscr{A}(\partial_2(b_3e^{\widetilde{\Phi}})) = 2r\mathscr{A}(\nabla_x^{sym}(be^{\widetilde{\Phi}}))_{23}.
	\end{aligned}
	\end{equation*}
	Again by \eqref{ptcI}, we get
	\begin{equation*}
		\mathcal{R}_{13} = r\mathscr{A}(\nabla_x^{sym}(be^{\widetilde{\Phi}}))_{23} = r\partial_t\mathscr{A}(K_{23}) + r\mathscr{A}(L_{23}).
	\end{equation*}

	Finally, we compute that
	\begingroup\small
	\begin{equation*}
	\begin{aligned}
		2\mathcal{R}_{12} &= \mathscr{A}(\partial_2(Rbe^{\widetilde{\Phi}})_1) - \mathscr{A}(\partial_1(Rbe^{\widetilde{\Phi}})_2) + \mathscr{A}(\partial_2(Rx \cdot \nabla_xb_1e^{\widetilde{\Phi}})) - \mathscr{A}(\partial_1(Rx \cdot \nabla_xb_2e^{\widetilde{\Phi}})) - \frac{4r}{3}\mathscr{A}(\nabla_x \cdot (be^{\widetilde{\Phi}})) \\
		&= C_{\widetilde{\Phi}}\int_{\R^3}[(\partial_2{\widetilde{\Phi}})(Rb)_1 - (\partial_1{\widetilde{\Phi}})(Rb)_2 + (\partial_2{\widetilde{\Phi}})(Rx \cdot \nabla_xb_1) - (\partial_1{\widetilde{\Phi}})(Rx \cdot \nabla_xb_2) - \frac{4r}{3}\nabla_x{\widetilde{\Phi}} \cdot b]dx \\
		&= rC_{\widetilde{\Phi}}\int_{\R^3}[(\partial_2{\widetilde{\Phi}})b_2 + (\partial_1{\widetilde{\Phi}})b_1 + (\partial_2{\widetilde{\Phi}})(x_2\partial_1b_1 - x_1\partial_2b_1) - (\partial_1{\widetilde{\Phi}})(x_2\partial_1b_2 - x_1\partial_2b_2) - \frac{4}{3}\nabla_x{\widetilde{\Phi}} \cdot b]dx \\
		&= rC_{\widetilde{\Phi}}\int_{\R^3}{\widetilde{\Phi}}[-\partial_2b_2 - \partial_1b_1 - \partial_1b_1 - x_2\partial_1\partial_2b_1 + x_1\partial_2^2b_1 + x_2\partial_1^2b_2 - \partial_2b_2 - x_1\partial_1\partial_2b_2 + \frac{4}{3}\nabla_x \cdot b]dx \\
		&= rC_{\widetilde{\Phi}}\int_{\R^3}{\widetilde{\Phi}}[-\frac{2}{3}(\partial_1b_1 + \partial_2b_2 - 2\partial_3b_3) + (x_1\partial_2 - x_2\partial_1)\partial_2b_1 + (x_2\partial_1 - x_1\partial_2)\partial_1b_2]dx \\
		&= -\frac{2}{3}rC_{\widetilde{\Phi}}\int_{\R^3}{\widetilde{\Phi}}(\partial_1b_1 + \partial_2b_2 - 2\partial_3b_3)dx = \frac{2}{3}rC_{\widetilde{\Phi}}\int_{\R^3}[(\partial_1{\widetilde{\Phi}})b_1 + (\partial_2{\widetilde{\Phi}})b_2 - 2(\partial_3{\widetilde{\Phi}})b_3]dx \\
		&= \frac{2r}{3}[\mathscr{A}(\partial_1(b_1e^{\widetilde{\Phi}})) + \mathscr{A}(\partial_2(b_2e^{\widetilde{\Phi}})) - 2\mathscr{A}(\partial_3(b_3e^{\widetilde{\Phi}}))] \\
		&= \frac{2r}{3}[\mathscr{A}(\nabla_x^{sym}(be^{\widetilde{\Phi}}))_{11} + \mathscr{A}(\nabla_x^{sym}(be^{\widetilde{\Phi}}))_{22} - 2\mathscr{A}(\nabla_x^{sym}(be^{\widetilde{\Phi}}))_{33}].
	\end{aligned}
	\end{equation*}
	\endgroup
 By \eqref{ptcI}, we get
	\begin{equation*}
	\begin{aligned}
		\mathcal{R}_{12} &= \frac{r}{3}[\mathscr{A}(\nabla_x^{sym}(be^{\widetilde{\Phi}}))_{11} + \mathscr{A}(\nabla_x^{sym}(be^{\widetilde{\Phi}}))_{22} - 2\mathscr{A}(\nabla_x^{sym}(be^{\widetilde{\Phi}}))_{33}] \\
		&= \frac{r}{3}\partial_t\mathscr{A}(K_{11} + K_{22} - 2K_{33}) + \frac{r}{3}\mathscr{A}(L_{11} + L_{22} - 2L_{33}).
	\end{aligned}
	\end{equation*}

	Based on the previous calculations, we set
	\begin{equation}\label{DefiK*}
		K^* := \frac{r}{3}\begin{pmatrix} 0 & K_{11} + K_{22} - 2K_{33} & 3K_{23} \\ -K_{11} - K_{22} + 2K_{33} & 0 & -3K_{13} \\ -3K_{23} & 3K_{13} & 0 \end{pmatrix},
	\end{equation}
	and
	\begin{equation}\label{DefiL*}
		L^* := \frac{r}{3}\begin{pmatrix} 0 & L_{11} + L_{22} - 2L_{33} & 3L_{23} \\ -L_{11} - L_{22} + 2L_{33} & 0 & -3L_{13} \\ -3L_{23} & 3L_{13} & 0 \end{pmatrix}.
	\end{equation}
	Our calculations imply that
	\begin{equation*}
		\mathcal{R} = \partial_t\mathscr{A}(K^*) + \mathscr{A}(L^*).
	\end{equation*}

	Now we recall \eqref{Energyan8}, it becomes
	\begin{multline}\label{014}
		\nabla_x(ae^{\widetilde{\Phi}}) - 2(\nabla_x{\widetilde{\Phi}})\mathscr{A}(ce^{\widetilde{\Phi}}) - \mathscr{A}(\nabla_x(ae^{\widetilde{\Phi}})) + 2A(t)x + 2T(t)x = \partial_t(\mathscr{A}(K^*)x - 2k_d - b_\mathsf{n}e^{\widetilde{\Phi}}) \\
		+ l_b - 2l_d + \mathscr{A}(L^*)x - \mathscr{A}(\nabla_x^{skew}l_b)x - \frac{1}{3}\mathscr{A}(\nabla_x \cdot l_b)x - \mathsf{R}b_\mathsf{n}e^{\widetilde{\Phi}} - \mathsf{R}x \cdot \nabla_x(b_\mathsf{n}e^{\widetilde{\Phi}}) + 2(\nabla_x{\widetilde{\Phi}})c_\mathsf{n}e^{\widetilde{\Phi}}.
	\end{multline}
	
\underline{Step 2: Energy estimate for $a_\mathsf{n}$.}
Set
\[
\begin{aligned}
\mathcal F_a={}&l_b-2l_d+\mathscr{A}(L^*)x
-\mathscr{A}(\nabla_x^{skew}l_b)x
-\frac13\mathscr{A}(\nabla_x\cdot l_b)x\\
&-\mathsf{R}b_\mathsf{n}e^{\widetilde{\Phi}}
-\mathsf{R}x\cdot\nabla_x(b_\mathsf{n}e^{\widetilde{\Phi}})
+2(\nabla_x\widetilde{\Phi})c_\mathsf{n}e^{\widetilde{\Phi}}.
\end{aligned}
\]
Combining \eqref{014} and \eqref{anaeq}, we obtain
		\begin{equation}\label{030}
		\begin{aligned}
			\|\Lambda_{\delta_3}^{-\frac{1}{2}}\nabla_x(a_\mathsf{n}e^{\widetilde{\Phi}})\|_{L^2(V_{\delta_3})}^2
			&= \big(\partial_t(\mathscr{A}(K^*)x-2k_d-b_\mathsf{n}e^{\widetilde{\Phi}}),
			\Lambda_{\delta_3}^{-1}\nabla_x(a_\mathsf{n}e^{\widetilde{\Phi}})\big)_{L^2(V_{\delta_3})}\\
			&\quad +\big(\mathcal F_a,
			\Lambda_{\delta_3}^{-1}\nabla_x(a_\mathsf{n}e^{\widetilde{\Phi}})\big)_{L^2(V_{\delta_3})}.
		\end{aligned}
		\end{equation}
By the Cauchy--Schwarz inequality and Lemma \ref{BoundedLambda},
	\begin{equation}\label{031}
		\begin{aligned}
	&\big(l_b - 2l_d, \Lambda_{\delta_3}^{-1}\nabla_x(a_\mathsf{n}e^{\widetilde{\Phi}})\big)_{L^2(V_{\delta_3})} \le 16(\|\Lambda_{\delta_3}^{-\frac{1}{2}}l_b\|_{L^2(V_{\delta_3})}^2 + \|\Lambda_{\delta_3}^{-\frac{1}{2}}l_d\|_{L^2(V_{\delta_3})}^2) + \frac{1}{8}\|\Lambda_{\delta_3}^{-\frac{1}{2}}\nabla_x(a_\mathsf{n}e^{\widetilde{\Phi}})\|_{L^2(V_{\delta_3})}^2;\\
	&\big(2(\nabla_x{\widetilde{\Phi}})c_\mathsf{n}e^{\widetilde{\Phi}}, \Lambda_{\delta_3}^{-1}\nabla_x(a_\mathsf{n}e^{\widetilde{\Phi}})\big)_{L^2(V_{\delta_3})} \le2\|\Lambda_{\delta_3}^{-\frac{1}{2}}(\nabla_x{\widetilde{\Phi}} c_\mathsf{n}e^{\widetilde{\Phi}})\|_{L^2(V_{\delta_3})}\|\Lambda_{\delta_3}^{-\frac{1}{2}}\nabla_x(a_\mathsf{n}e^{\widetilde{\Phi}})\|_{L^2(V_{\delta_3})}\\
		&~~~\qquad\quad\qquad\qquad\qquad\qquad\qquad\qquad\le C\|c_\mathsf{n}e^{\widetilde{\Phi}}\|_{L^2(V_{\delta_1})}^2 + \frac{1}{8}\|\Lambda_{\delta_3}^{-\frac{1}{2}}\nabla_x(a_\mathsf{n}e^{\widetilde{\Phi}})\|_{L^2(V_{\delta_3})}^2;\\
		&\big(\mathscr{A}(L^*)x - \mathscr{A}(\nabla_x^{skew}l_b)x - \frac{1}{3}\mathscr{A}(\nabla_x \cdot l_b)x, \Lambda_{\delta_3}^{-1}\nabla_x(a_\mathsf{n}e^{\widetilde{\Phi}})\big)_{L^2(V_{\delta_3})} \\
		&\qquad\qquad\qquad\qquad\qquad\le C(|\mathscr{A}(L^*)|^2 + |\mathscr{A}(\nabla_x^{skew}l_b)|^2 + |\mathscr{A}(\nabla_x \cdot l_b)|^2) + \frac{1}{8}\|\Lambda_{\delta_3}^{-\frac{1}{2}}\nabla_x(a_\mathsf{n}e^{\widetilde{\Phi}})\|_{L^2(V_{\delta_3})}^2;
	\end{aligned}
	\end{equation}
	and
	\begin{multline}\label{032}
		-\big(\mathsf{R}b_\mathsf{n}e^{\widetilde{\Phi}} + \mathsf{R}x \cdot \nabla_x(b_\mathsf{n}e^{\widetilde{\Phi}}), \Lambda_{\delta_3}^{-1}\nabla_x(a_\mathsf{n}e^{\widetilde{\Phi}})\big)_{L^2(V_{\delta_3})} \\
		\le C(\|b_\mathsf{n}e^{\widetilde{\Phi}}\|_{L^2(V_{\delta_3})}^2 + \|\Lambda_{\delta_3}^{-\frac{1}{2}}\mathsf{R}x \cdot \nabla_x(b_\mathsf{n}e^{\widetilde{\Phi}})\|_{L^2(V_{\delta_3})}^2) + \frac{1}{8}\|\Lambda_{\delta_3}^{-\frac{1}{2}}\nabla_x(a_\mathsf{n}e^{\widetilde{\Phi}})\|_{L^2(V_{\delta_3})}^2;
	\end{multline}
	Due to Lemma \ref{BoundedLambda}, we have
	\begin{equation*}
		\|\Lambda_{\delta_3}^{-\frac{1}{2}}\mathsf{R}x \cdot \nabla_x(b_\mathsf{n}e^{\widetilde{\Phi}})\|_{L^2(V_{\delta_3})}^2
		= \|\Lambda_{\delta_3}^{-\frac{1}{2}}\nabla_x \cdot (\mathsf{R}xb_\mathsf{n}e^{\widetilde{\Phi}})\|_{L^2(V_{\delta_3})}^2
		\lesssim \|xb_\mathsf{n}e^{\widetilde{\Phi}}\|_{L^2(V_{\delta_3})}^2 \lesssim \|b_\mathsf{n}e^{\widetilde{\Phi}}\|_{L^2(V_{\delta_2})}^2.
	\end{equation*}
	Therefore, we have that
		\begin{equation}\label{033}
		\begin{aligned}
		&-(\mathsf{R}b_\mathsf{n}e^{\widetilde{\Phi}}
		+ \mathsf{R}x \cdot \nabla_x(b_\mathsf{n}e^{\widetilde{\Phi}}),
		\Lambda_{\delta_3}^{-1}\nabla_x(a_\mathsf{n}e^{\widetilde{\Phi}}))_{L^2(V_{\delta_3})} \\
		&\qquad\le C\|b_\mathsf{n}e^{\widetilde{\Phi}}\|_{L^2(V_{\delta_2})}^2
		+ \frac{1}{8}\|\Lambda_{\delta_3}^{-\frac{1}{2}}
		\nabla_x(a_\mathsf{n}e^{\widetilde{\Phi}})\|_{L^2(V_{\delta_3})}^2.
		\end{aligned}
		\end{equation}
	
	Besides, we write
		\begin{equation*}
			\begin{aligned}
			&(\partial_t(\mathscr{A}(K^*)x - 2k_d - b_\mathsf{n}e^{\widetilde{\Phi}}),
			\Lambda_{\delta_3}^{-1}\nabla_x(a_\mathsf{n}e^{\widetilde{\Phi}}))_{L^2(V_{\delta_3})}\\
			&\quad= \frac{d}{dt}(\mathscr{A}(K^*)x - 2k_d - b_\mathsf{n}e^{\widetilde{\Phi}},
			\Lambda_{\delta_3}^{-1}\nabla_x(a_\mathsf{n}e^{\widetilde{\Phi}}))_{L^2(V_{\delta_3})} \\
			&\qquad- (\mathscr{A}(K^*)x - 2k_d - b_\mathsf{n}e^{\widetilde{\Phi}},
			\Lambda_{\delta_3}^{-1}\partial_t\nabla_x(a_\mathsf{n}e^{\widetilde{\Phi}}))_{L^2(V_{\delta_3})}.
	\end{aligned}
		\end{equation*}
Set $Z_a=b_\mathsf{n}e^{\widetilde{\Phi}}+2k_d-\mathscr{A}(K^*)x$. Then
\begin{equation}\label{034}
\begin{aligned}
	&\big(Z_a,\Lambda_{\delta_3}^{-1}\partial_t\nabla_x
	(a_\mathsf{n}e^{\widetilde{\Phi}})\big)_{L^2(V_{\delta_3})}\\
	&\quad=\big(\Lambda_{\delta_4}^{*\de_3}\Lambda_{\delta_3}^{-1}Z_a,
	\Lambda_{\delta_4}^{-1}\partial_t\nabla_x
	(a_\mathsf{n}e^{\widetilde{\Phi}})\big)_{L^2(V_{\delta_3})}\\
	&\quad\leq \|\Lambda_{\delta_4}^{*\de_3}\Lambda_{\delta_3}^{-1}Z_a\|_{L^2(V_{\de_2})}
	\|\Lambda_{\delta_4}^{-1}\partial_t\nabla_x
	(a_\mathsf{n}e^{\widetilde{\Phi}})\|_{L^2(V_{\de_4})}\\
	&\quad\lesssim \big(\|b_\mathsf{n}e^{\widetilde{\Phi}}\|_{L^2(V_{\delta_2})}
	+\|k_d\|_{L^2(V_{\delta_0})}+|\mathscr{A}(K^*)|\big)\\
	&\qquad\times\|\Lambda_{\delta_4}^{-1}\partial_t\nabla_x^{sym}
	(b_\mathsf{n}e^{\widetilde{\Phi}})\|_{L^2(V_{\delta_4})}.
\end{aligned} 
\end{equation}
where we use the same argument as \eqref{027} and the fact that $2\de_3<\de_2+\de_4$.

Combining \eqref{030}--\eqref{034} proves the lemma.
\end{proof}
%%%%%%%%%%%%%%%%%%%%%%%%%%%%%%%%%%%%%%%%%%%%%%%%%%

	\subsubsection{Further estimates for $a_\mathsf{n}, b_\mathsf{n}, c_\mathsf{n}$}
	To estimate the terms on the right-hand side of the energy equation of $a_\mathsf{n}, b_\mathsf{n}, c_\mathsf{n}$, we provide the following lemma:

	\begin{lemma}[Bounds for the microscopic source terms]\label{MicroBound}
		We recall that	${\mathsf{H}} = {\mathsf{H}}(x, v) = {\widetilde{\Phi}}(x) + \frac{1}{2}|v|^2$, then the following assertions hold:

		$(1)$ For $k = k_d, K_{ij}$ or $ K_{ij}^*$ defined in \eqref{Defilk} and \eqref{DefiK*}, then
		\begin{equation*}
			\|k\|_{L^2(V_{\delta_0})}^2 + |\mathscr{A}(k)|^2 \lesssim \|e^{\delta_0{\mathsf{H}}}g_1\|_{L_{x, v}^2}^2.
		\end{equation*}
		
		$(2)$ For $l = l_b, l_d, L_{ij}$ or $L_{ij}^*$ defined in \eqref{Defilk} and \eqref{DefiL*}, $\delta \in \{\delta_1, \delta_2, \delta_3, \delta_4\}$, then
		\begin{equation*}
			\|\Lambda_{\delta}^{-\frac{1}{2}}l\|_{L^2(V_{\delta})}^2 + |\mathscr{A}(l)|^2 + |\mathscr{A}(\nabla_xl)|^2 \lesssim \|e^{\delta_0{\mathsf{H}}}g_1\|_{L_{x, v}^2}^2.
		\end{equation*}
	\end{lemma}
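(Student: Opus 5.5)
The plan is a direct weighted Cauchy--Schwarz argument in $v$, combined with integration by parts in $x$ and $v$ to move the transport operator $\mathsf T$ off $g_1$. The guiding principle: every quantity in \eqref{Defilk} is a $v$-moment of $g_1$, or of $\mathsf Tg_1$, or of $\mathsf Lg_1$, against a polynomial $\varphi(v)$, multiplied by $e^{\widetilde\Phi}$. The weight $e^{-V_\delta}=e^{-2(1-\delta)\widetilde\Phi}$ leaves a net factor $e^{2\delta\widetilde\Phi}$, which is exactly the spatial part of $e^{2\delta_0\mathsf H}$ since $\delta\le\delta_0$.

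\textbf{Part (1).} For $k=(g_1,\varphi)_{L^2_v}e^{\widetilde\Phi}$ with $\varphi$ polynomial, write
\[
|(g_1,\varphi)_{L^2_v}|\le \|e^{\delta_0|v|^2/2}g_1\|_{L^2_v}\,\|\varphi e^{-\delta_0|v|^2/2}\|_{L^2_v}.
\]
This gives
\[
\|k\|_{L^2(V_{\delta_0})}^2\lesssim\int e^{2\widetilde\Phi-2(1-\delta_0)\widetilde\Phi}\|e^{\delta_0|v|^2/2}g_1\|_{L^2_v}^2\,dx\le\|e^{\delta_0\mathsf H}g_1\|_{L^2_{x,v}}^2 .
\]
For $\mathscr A(k)=C_{\widetilde\Phi}\int (g_1,\varphi)\,dx$, insert $e^{\delta_0\widetilde\Phi}e^{-\delta_0\widetilde\Phi}$ and use the integrability of $e^{-2\delta_0\widetilde\Phi}$. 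The same argument handles $K^*$, which is linear in the $K_{ij}$.

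\textbf{Part (2).} The duality $\|\Lambda_\delta^{-1/2}l\|_{L^2(V_\delta)}=\sup\{(l,\psi)_{L^2(V_\delta)}:\|\Lambda_\delta^{1/2}\psi\|_{L^2(V_\delta)}\le1\}$ reduces matters to bounding $(l,\psi)_{L^2(V_\delta)}$ by $\|e^{\delta_0\mathsf H}g_1\|\,\|\Lambda_\delta^{1/2}\psi\|$. Here
\[
\|\Lambda_\delta^{1/2}\psi\|^2_{L^2(V_\delta)}=\|\psi\|^2_{L^2(V_\delta)}+\|\nabla\psi\|^2_{L^2(V_\delta)}\gtrsim\|\langle\nabla V_\delta\rangle\psi\|^2_{L^2(V_\delta)}
\]
by Lemma \ref{BoundedLambda}. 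The estimate splits by source term.

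\emph{Transport part.} For the piece $-e^{\widetilde\Phi}(\mathsf Tg_1,\varphi)_{L^2_v}$, pair with $\psi e^{-V_\delta}$ and integrate by parts in $(x,v)$. Since $\mathsf T$ is divergence-free in $(x,v)$ (note $\nabla_x\cdot\mathsf Rx=0=\nabla_v\cdot\mathsf Rv$), this becomes
\[
\int g_1\,\mathsf T\!\big(\varphi\,\psi\, e^{\widetilde\Phi-V_\delta}\big)\,dx\,dv .
\]
The derivatives fall on $\psi$ (giving $v\cdot\nabla\psi$ and $\mathsf Rx\cdot\nabla\psi$), on the weight (giving factors $v\cdot\nabla\widetilde\Phi$, with the $\mathsf Rx\cdot\nabla\widetilde\Phi$ term vanishing), and on $\varphi$ (giving $\nabla_x\widetilde\Phi$ and $\mathsf Rv$ times a polynomial). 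Everything is therefore bounded by polynomials in $v$ and $x$ times $(|\psi|\langle\nabla\widetilde\Phi\rangle+|\nabla\psi|)$. The $\langle x\rangle$ growth from $\mathsf Rx$ is absorbed by lowering the weight exponent slightly, via Lemma \ref{lemma5.7}(2) and the fact $\delta\le\delta_1<\delta_0$. Cauchy--Schwarz then yields the bound.

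\emph{Collision part.} For the piece $C_M(\mathsf Lg_1,\varphi)$, note that it carries no $e^{\widetilde\Phi}$ factor, so the spatial weight is harmless. By Lemma \ref{UpperBoundL0}, $|(\mathsf Lg_1,\varphi)|\lesssim\|e^{\delta_0|v|^2/2}g_1\|_{L^2_v}$ after choosing the small exponent there $\le\delta_0/2$. The remaining pairing with $\psi e^{-V_\delta}$ is then controlled because $e^{-V_\delta}\le e^{-(1-\delta)\widetilde\Phi}e^{\delta_0\widetilde\Phi}\cdots$ is bounded.

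\emph{Averages.} For $\mathscr A(l)$ and $\mathscr A(\nabla_x l)=\mathscr A(\nabla\widetilde\Phi\, l)$, test against $\psi=1$ and $\psi=\nabla\widetilde\Phi$ with weight $e^{-\widetilde\Phi}$ instead. The same integration by parts applies, and the factors $e^{-2\delta_0\widetilde\Phi}$ times polynomial growth in $\langle x\rangle$ remain integrable.

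The main obstacle I expect is bookkeeping the polynomial growth in $x$ coming from $\nabla\widetilde\Phi\sim|x|^{p-1}$ and $\mathsf Rx$ in the transport term. The duality only controls $\langle\nabla V_\delta\rangle\psi$ and $\nabla\psi$, which covers one power of $\nabla\widetilde\Phi$; any extra growth must be paid for by the gap $\delta_0-\delta$ in the exponential weight. The strict ordering $\delta_i<\delta_0$ in \eqref{de01234} is exactly what makes this possible.
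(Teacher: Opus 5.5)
Your proposal is correct and follows essentially the same route as the paper. Part (1) uses weighted Cauchy--Schwarz in $v$. Part (2) removes the transport derivatives from $g_1$, uses Lemma \ref{UpperBoundL0} for the collision piece, and spends the gap $\delta_0-\delta>0$ to absorb the polynomial growth in $x$ coming from $\nabla_x\widetilde\Phi$ and $\mathsf Rx$.

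The only difference is presentational. The paper writes the $x$-transport piece as $e^{\widetilde\Phi}\nabla_x\cdot\int(v+\mathsf Rx)P(v)g_1\,dv$ and invokes the boundedness of $\Lambda_\delta^{-1/2}\nabla$ from Lemma \ref{BoundedLambda}, then integrates the force piece by parts in $v$. Your duality pairing with $\psi$ is the dual form of the same estimate and is slightly more self-contained; it uses $\|\Lambda_\delta^{1/2}\psi\|_{L^2(V_\delta)}^2=\|\psi\|_{L^2(V_\delta)}^2+\|\nabla\psi\|_{L^2(V_\delta)}^2$ together with the divergence-free structure of $\mathsf T$.
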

	\begin{proof}
		$(1)$ By definition of $k_d, K, K^*$ shown in \eqref{Defilk} and \eqref{DefiK*}. We need only consider $k$ in the form
		\begin{equation*}
			k = e^{\widetilde{\Phi}}(g_1, P(v))_{L_v^2}
		\end{equation*}
		for $P(v)\in\{1,v_i,v_iv_j,v_i|v|^2\}$, $i,j=1,2,3$. By the Cauchy--Schwarz inequality,
		\begin{equation*}
		\begin{aligned}
			\|k\|_{L^2(V_{\delta_0})}^2 &= \int_{\R^3} e^{2\delta_0{\widetilde{\Phi}}}(g_1, P(v))_{L_v^2}^2dx \le \int_{\R^3} e^{2\delta_0{\widetilde{\Phi}}}\Big(\int_{\R^3} e^{\delta_0|v|^2}g_1^2dv \int_{\R^3} P(v)^2e^{-\delta_0|v|^2}dv\Big)dx \lesssim \|e^{\delta_0{\mathsf{H}}}g_1\|_{L_{x, v}^2}^2;\\
			|\mathscr{A}(k)|^2 &= \Big(\int_{\R^6} P(v)g_1dxdv\Big)^2 \le \int_{\R^6} e^{2\delta_0{\widetilde{\Phi}} + \delta_0|v|^2}g_1^2dxdv  \int_{\R^6} P(v)^2e^{-2\delta_0{\widetilde{\Phi}} - \delta_0|v|^2}dxdv \lesssim \|e^{\delta_0{\mathsf{H}}}g_1\|_{L_{x, v}^2}^2.
		\end{aligned}
		\end{equation*}
		This proves the first assertion.

		$(2)$ By definition of $l_b, l_d, L, L^*$ shown in \eqref{Defilk} and \eqref{DefiL*} and recall the definition of operator $\mathsf{T}$ as:
		\begin{equation*}
			\mathsf{T} = v \cdot \nabla_x + \mathsf{R}x \cdot \nabla_x - \mathsf{R}v \cdot \nabla_v - \nabla_x{\widetilde{\Phi}} \cdot \nabla_v.
		\end{equation*}
		We only need to consider $l$ in the form $l_1, l_2$ and $l_3$, where
		\begin{equation*}
			l_1 = e^{\widetilde{\Phi}}((v + \mathsf{R}x) \cdot \nabla_xg_1, P(v))_{L_v^2}, \:\:\:
			l_2 = e^{\widetilde{\Phi}}((\mathsf{R}v + \nabla_x{\widetilde{\Phi}}) \cdot \nabla_vg_1, P(v))_{L_v^2}, \:\:\:
			l_3 = (\mathsf{L}g_1, P(v))_{L_v^2}
		\end{equation*}
		for $P(v) \in \{1,v_i, v_iv_j, v_i|v|^2\},i,j=1,2,3$. Since $\mathscr{A}(\nabla_xl) = \mathscr{A}(\nabla_x{\widetilde{\Phi}} l)$, for $l = l_1, l_2, l_3$ we have
		\begin{equation*}
			|\mathscr{A}(l)|^2 + |\mathscr{A}(\nabla_xl)|^2 \lesssim |\mathscr{A}((|\nabla_x{\widetilde{\Phi}}| + 1)l)|^2.
		\end{equation*}
			$\bullet$ For $l_1$, Lemma \ref{BoundedLambda} gives
			\begingroup\small
			\begin{equation*}
		\begin{aligned}
			\|\Lambda_{\delta}^{-\frac{1}{2}}l_1\|_{L^2(V_{\delta})}^2 &= \|\Lambda_{\delta}^{-\frac{1}{2}}e^{\widetilde{\Phi}}\nabla_x \cdot \int_{\R^3}(v + \mathsf{R}x)P(v)g_1dv\|_{L^2(V_{\delta})}^2 \\
			&\le \|\Lambda_{\delta}^{-\frac{1}{2}}\nabla_x \cdot e^{\widetilde{\Phi}}\int_{\R^3}(v + \mathsf{R}x)P(v)g_1dv\|_{L^2(V_{\delta})}^2 + \|\Lambda_{\delta}^{-\frac{1}{2}}\nabla_x{\widetilde{\Phi}} e^{\widetilde{\Phi}} \cdot \int_{\R^3}(v + \mathsf{R}x)P(v)g_1dv\|_{L^2(V_{\delta})}^2 \\
			&\lesssim \|e^{\widetilde{\Phi}}\int_{\R^3}(v + \mathsf{R}x)P(v)g_1dv\|_{L^2(V_{\delta})}^2 + \|\nabla_x{\widetilde{\Phi}} e^{\widetilde{\Phi}} \cdot \int_{\R^3}(v + \mathsf{R}x)P(v)g_1dv\|_{L^2(V_{\delta})}^2 \\
			&\le \int_{\R^3} e^{2\delta{\widetilde{\Phi}}}(|\nabla_x{\widetilde{\Phi}}|^2 + 1)\big(\int_{\R^3}|(v + \mathsf{R}x)P(v)|g_1dv\big)^2dx \\
			&\lesssim \int_{\R^6} e^{2\delta{\widetilde{\Phi}}}(|\nabla_x{\widetilde{\Phi}}|^2 + 1)(|x|^2 + 1)e^{\delta_0|v|^2}g_1^2dxdv
			\lesssim \|e^{\delta_0{\mathsf{H}}}g_1\|_{L_{x, v}^2}^2;\\
			|\mathscr{A}((|\nabla_x{\widetilde{\Phi}}| + 1)l_1)|^2 &= \big(\int_{\R^3}(|\nabla_x{\widetilde{\Phi}}| + 1)\nabla_x \cdot \big(\int_{\R^3}(v + \mathsf{R}x)P(v)g_1dv\big)dx\big)^2 \\
			&= \big(\int_{\R^6}\nabla_x|\nabla_x{\widetilde{\Phi}}| \cdot (v + \mathsf{R}x)P(v)g_1dxdv\big)^2
			\lesssim \|e^{\delta_0{\mathsf{H}}}g_1\|_{L_{x, v}^2}^2.
			\end{aligned}
			\end{equation*}
			\endgroup
			$\bullet$ For $l_2$, we have
		\begin{equation*}
		\begin{aligned}
			l_2
			&=e^{\widetilde{\Phi}}((\mathsf{R}v + \nabla_x{\widetilde{\Phi}})
			\cdot \nabla_vg_1, P(v))_{L_v^2}\\
			&=-e^{\widetilde{\Phi}}((\mathsf{R}v + \nabla_x{\widetilde{\Phi}})
			\cdot \nabla_vP(v), g_1)_{L_v^2},
		\end{aligned}
		\end{equation*}
		Then
		\begingroup\small
		\begin{equation*}
		\begin{aligned}
			\|\Lambda_{\delta}^{-\frac{1}{2}}l_2\|_{L^2(V_{\delta})}^2 &\lesssim \|l_2\|_{L^2(V_{\delta})}^2 = \int_{\R^3} e^{2\delta{\widetilde{\Phi}}}\big(\int_{\R^3}(Rv + \nabla_x{\widetilde{\Phi}}) \cdot \nabla_vP(v)g_1dv\big)^2 dx\\
			&\lesssim \int e^{2\delta{\widetilde{\Phi}}}(|\nabla_x{\widetilde{\Phi}}|^2 + 1)e^{\delta_0|v|^2}g_1^2dxdv \lesssim \|e^{\delta_0{\mathsf{H}}}g_1\|_{L_{x, v}^2}^2;\\
			|\mathscr{A}((|\nabla_x{\widetilde{\Phi}}| + 1)l_2)|^2 &= \big(\int(|\nabla_x{\widetilde{\Phi}}| + 1)(Rv + \nabla_x{\widetilde{\Phi}}) \cdot \nabla_vP(v)g_1dxdv\big)^2 \lesssim \|e^{\delta_0{\mathsf{H}}}g_1\|_{L_{x, v}^2}^2.
		\end{aligned}
		\end{equation*}
		\endgroup
		$\bullet$ For $l_3$, we apply Lemma \ref{UpperBoundL0} to obtain that
		\begin{equation*}
			l_3^2 \lesssim \int_{\R^3} e^{\delta|v|^2}g_1^2dv, \:\:\: \forall\delta > 0,
		\end{equation*}
		then
		\begin{equation*}
		\begin{aligned}
			\|\Lambda_{\delta}^{-\frac{1}{2}}l_3\|_{L^2(V_{\delta})}^2 &\lesssim \|l_3\|_{L^2(V_{\delta})}^2 = \int_{\R^3} e^{-2(1 - \delta){\widetilde{\Phi}}}l_3^2dx \lesssim \int_{\R^3} e^{-2(1 - \delta){\widetilde{\Phi}} + \delta_0|v|^2}g_1^2 dvdx\lesssim \|e^{\delta_0{\mathsf{H}}}g_1\|_{L_{x, v}^2}^2;\\
			|\mathscr{A}((|\nabla_x{\widetilde{\Phi}}| + 1)l_3)|^2 &= \big(\int_{\R^3}(|\nabla_x{\widetilde{\Phi}}| + 1)l_3e^{-{\widetilde{\Phi}}}dx\big)^2 
			\le \int_{\R^3} e^{2\delta_0{\widetilde{\Phi}}}l_3^2dx \int_{\R^3}(|\nabla_x{\widetilde{\Phi}}| + 1)^2e^{-2(1 + \delta_0){\widetilde{\Phi}} }dx\\
			&\lesssim \int _{\R^6}e^{2\delta_0{\widetilde{\Phi}} + \delta_0|v|^2}g_1^2dxdv = \|e^{\delta_0{\mathsf{H}}}g_1\|_{L_{x, v}^2}^2.
		\end{aligned}
		\end{equation*}
	This completes the proof.
	\end{proof}
	
	We can now prove Lemma \ref{Estimateanbncn}.
	\begin{proof}[Proof of Lemma \ref{Estimateanbncn}]
		With the help of the Lemma \ref{MicroBound}, together with Lemma \ref{015}, Lemma \ref{016} and Lemma \ref{017}, we can conclude Lemma \ref{Estimateanbncn} directly.
	\end{proof}

%%%%%%%%%%%%%%%%%%%%%%%%%%%%%%%%%%%%%%%%%%%%%%%%%%

	\subsection{Estimates for the finite-dimensional remainder}
	Now we have obtained the energy estimate for $a_\mathsf{n}, b_\mathsf{n}, c_\mathsf{n}$ and we will estimate the difference terms between $a_\mathsf{n}, b_\mathsf{n}, c_\mathsf{n}$ and $a, b, c$. Recall the definition of $a_\mathsf{n}, b_\mathsf{n}, c_\mathsf{n}$ in \eqref{Defianbncn} and rewrite it as
	\begin{equation}\label{041}
	\begin{gathered}
		ce^{\widetilde{\Phi}} = c_\mathsf{n}e^{\widetilde{\Phi}} + \mathscr{A}(ce^{\widetilde{\Phi}});\:\:\:\:\:\:\:\:\:\:\:\:
		be^{\widetilde{\Phi}} = b_\mathsf{n}e^{\widetilde{\Phi}} + \mathscr{A}(be^{\widetilde{\Phi}}) + B(t)x + \frac{1}{3}\mathscr{A}(\nabla_x \cdot (be^{\widetilde{\Phi}}))x;\\
		ae^{\widetilde{\Phi}} = a_\mathsf{n}e^{\widetilde{\Phi}} + 2({\widetilde{\Phi}} - \mathscr{A}({\widetilde{\Phi}}))\mathscr{A}(ce^{\widetilde{\Phi}}) + \mathscr{A}(\nabla_x(ae^{\widetilde{\Phi}})) \cdot x - (|x|^2 - \mathscr{A}(|x|^2))A(t) - T(t)x \cdot x + \mathscr{A}(T(t)x \cdot x).
	\end{gathered}
	\end{equation}
	where $B(t) := \mathscr{A}(\nabla_x^{skew}(be^{\widetilde{\Phi}}))$ is skew-symmetric, $T(t) := \frac{1}{2}(\mathsf{R}B(t) + B(t)\mathsf{R})$ is symmetric and $A(t)$ is defined in \eqref{defiAt}.
	We introduce notations
	\begin{equation}\label{051}
		c_\mathfrak{r}(t) = \mathscr{A}(ce^{\widetilde{\Phi}}), \:\:\: b_\mathfrak{r}(t) = \mathscr{A}(be^{\widetilde{\Phi}}), \:\:\: b_\mathfrak{R}(t) = \frac{1}{3}\mathscr{A}(\nabla_x \cdot (be^{\widetilde{\Phi}})), \:\:\: a_\mathfrak{R}(t) = \mathscr{A}(\nabla_x(ae^{\widetilde{\Phi}})).
	\end{equation}
	Then \eqref{041} becomes
	\begin{equation}\label{abcremainingform}
	\begin{gathered}
		ce^{\widetilde{\Phi}} = c_\mathsf{n}e^{\widetilde{\Phi}} + c_\mathfrak{r}(t);\:\:\:\:\:\:\:\:\:\:\:\: be^{\widetilde{\Phi}} = b_\mathsf{n}e^{\widetilde{\Phi}} + b_\mathfrak{r}(t) + B(t)x + b_\mathfrak{R}(t)x;\\
		ae^{\widetilde{\Phi}} = a_\mathsf{n}e^{\widetilde{\Phi}} + 2({\widetilde{\Phi}} - \mathscr{A}({\widetilde{\Phi}}))c_\mathfrak{r}(t) + a_\mathfrak{R}(t) \cdot x - (|x|^2 - \mathscr{A}(|x|^2))A(t) - T(t)x \cdot x + \mathscr{A}(T(t)x \cdot x).
	\end{gathered}
	\end{equation}
	We refer to $c_\mathfrak r$, $b_\mathfrak r$, $b_\mathfrak R$, $a_\mathfrak R$, $A$, and $B$ as the remaining macroscopic modes. They depend only on time. We estimate both these modes and their time derivatives. Since $2T(t)=\mathsf RB(t)+B(t)\mathsf R$, one has $|T(t)|\lesssim|B(t)|$, and hence
	\begin{equation*}
	\begin{aligned}
		\|ce^{\widetilde{\Phi}}\|_{L^2(V_{\delta_1})} + \|be^{\widetilde{\Phi}}\|_{L^2(V_{\delta_2})} + \|ae^{\widetilde{\Phi}}\|_{L^2(V_{\delta_3})}
		&\lesssim \|c_\mathsf{n}e^{\widetilde{\Phi}}\|_{L^2(V_{\delta_1})} + \|b_\mathsf{n}e^{\widetilde{\Phi}}\|_{L^2(V_{\delta_2})} + \|a_\mathsf{n}e^{\widetilde{\Phi}}\|_{L^2(V_{\delta_3})} \\
		&+ |c_\mathfrak{r}(t)| + |b_\mathfrak{r}(t)| + |b_\mathfrak{R}(t)| + |a_\mathfrak{R}(t)| + |A(t)| + |B(t)|.
	\end{aligned}
	\end{equation*}
	Thus if we denote total energy by
	\begin{equation}\label{DefiE}
	\begin{aligned}
		\mathsf{E}^2 := \|e^{\delta_0{\mathsf{H}}}g_1\|_{L_{x, v}^2}^2 + \|c_\mathsf{n}e^{\widetilde{\Phi}}\|_{L^2(V_{\delta_1})}^2 + \|b_\mathsf{n}e^{\widetilde{\Phi}}\|_{L^2(V_{\delta_2})}^2 + \|a_\mathsf{n}e^{\widetilde{\Phi}}\|_{L^2(V_{\delta_3})}^2 \\
		+ |c_\mathfrak{r}(t)|^2 + |b_\mathfrak{r}(t)|^2 + |b_\mathfrak{R}(t)|^2 + |a_\mathfrak{R}(t)|^2 + |A(t)|^2 + |B(t)|^2,
		\end{aligned}
	\end{equation}
then we have
	\begin{equation*}
		\|ce^{\widetilde{\Phi}}\|_{L^2(V_{\delta_1})} + \|be^{\widetilde{\Phi}}\|_{L^2(V_{\delta_2})} + \|ae^{\widetilde{\Phi}}\|_{L^2(V_{\delta_3})} \lesssim \mathsf{E}.
	\end{equation*}
	Our goal in this subsection is to prove the following lemma:
	\begin{lemma}[Finite-dimensional macroscopic remainder]\label{EstimatecrbrbsasAB}
		There exists a function $\mathcal{J}_0 = \mathcal{J}_0(t)$ satisfying $|\mathcal{J}_0(t)| \lesssim \mathsf{E}^2$ and
	\begin{equation*}
	\begin{aligned}
			&\frac{d}{dt}\mathcal{J}_0 + |c_\mathfrak{r}(t)|^2 + |b_\mathfrak{r}(t)|^2 + |b_\mathfrak{R}(t)|^2 + |a_\mathfrak{R}(t)|^2 + |A(t)|^2 + |B(t)|^2 \\
			\lesssim& \|e^{\delta_0{\mathsf{H}}}g_1\|_{L_{x, v}^2}^2 + \|b_\mathsf{n}e^{\widetilde{\Phi}}\|_{L^2(V_{\delta_2})}^2 + \|a_\mathsf{n}e^{\widetilde{\Phi}}\|_{L^2(V_{\delta_3})}^2 + \|a_\mathsf{n}e^{\widetilde{\Phi}}\|_{L^2(V_{\delta_3})}\mathsf{E}.
		\end{aligned}
\end{equation*}
	\end{lemma}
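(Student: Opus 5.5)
The remaining modes $c_\mathfrak r,b_\mathfrak r,b_\mathfrak R,a_\mathfrak R,A,B$ are functions of $t$ alone. I would control them in three groups. The first group is controlled by the five conservation laws \eqref{Conserveg}. The second group is controlled by their ODEs, obtained by applying $\mathscr A$ to \eqref{pta}--\eqref{pxc}. The third group is controlled by a few explicit test-function identities. The functional $\mathcal J_0$ will be a finite sum of products of time-dependent coefficients with moments of $g_1$, or with $\mathscr A(K)$, $\mathscr A(k_d)$, $\mathscr A(K^*)$. By Lemma \ref{MicroBound}(1), each such term is bounded by $\mathsf E^2$.

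\textbf{Conservation laws.} I insert $g=g_0+g_1$ and the decomposition \eqref{abcremainingform} into \eqref{Conserveg}.

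The mass law gives $\mathscr A(ae^{\widetilde\Phi})=0$. This is already built into $a_\mathsf n$, so it carries no new information.

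The energy law gives a linear relation of the form
\[
c_\mathfrak r\,\kappa_1+A(t)\kappa_2+(\text{terms in }B)=\text{moments of }a_\mathsf n,b_\mathsf n,c_\mathsf n,g_1 .
\]
Here $\kappa_1,\kappa_2$ are explicit constants built from $\mathscr A(\widetilde\Phi^2)$, $\mathscr A(|x|^2\widetilde\Phi)$, and similar averages. The terms $\mathsf Rx\cdot v$ couple to $b$.

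The three angular-momentum laws involve $x\wedge(v+\mathsf Rx)$. Tested against $b\,e^{\widetilde\Phi}$, they give linear relations for $B(t)$, and through $\mathsf R$ for $b_\mathfrak R$. The right-hand sides are bounded by $\|b_\mathsf n e^{\widetilde\Phi}\|_{L^2(V_{\delta_2})}+\|a_\mathsf ne^{\widetilde\Phi}\|+\|e^{\delta_0\mathsf H}g_1\|$, using Lemma \ref{lemma5.7}(2) to absorb the polynomial weights.

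\textbf{ODEs from the moment equations.} Averaging \eqref{ptb} gives \eqref{Energyan6}, i.e.
\[
\partial_t b_\mathfrak r=-a_\mathfrak R-\mathscr A(\mathsf R b e^{\widetilde\Phi}).
\]
Equation \eqref{Energyan7} gives
\[
3\partial_t b_\mathfrak R=6A+\mathscr A(\nabla\cdot l_b),
\]
and similarly $\partial_t B$ is expressed through skew averages and $\mathscr A(\nabla^{skew}l_b)$. Averaging \eqref{ptc} gives $\partial_t c_\mathfrak r=-b_\mathfrak R+\mathscr A(\dots)$. Taking the trace of \eqref{ptcI} gives
\[
3\partial_t c_\mathfrak r+3b_\mathfrak R=\partial_t\mathscr A(\operatorname{tr}K)+\mathscr A(\operatorname{tr}L).
\]

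These identities are used in the standard way. Testing them against the coefficient one wants to bound produces an exact derivative, which goes into $\mathcal J_0$, plus a good square. For example:
\begin{itemize}
\item Multiplying $\partial_t b_\mathfrak R$'s equation by $A$ is not directly useful. Instead I write $6A=3\partial_tb_\mathfrak R-\mathscr A(\nabla\cdot l_b)$ and pair it with $A$, which produces $\frac{d}{dt}(b_\mathfrak R A)$ minus $b_\mathfrak R\partial_tA$.
\item To handle $\partial_t A$, I use the energy conservation law, which expresses $A$ linearly in $c_\mathfrak r$ and coercive terms. Then $\partial_tA$ reduces to $\partial_t c_\mathfrak r$ plus time derivatives of coercive quantities.
\end{itemize}

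Concretely, I would do the steps in this order:
\begin{enumerate}
\item Obtain $|b_\mathfrak R|^2$ from the trace identity: pair it with $b_\mathfrak R$ after moving $\partial_t(c_\mathfrak r-\mathscr A(\operatorname{tr}K)/3)$ into a time derivative. This uses the energy law to make $c_\mathfrak r$ a combination of $A$ and coercive terms.
\item Obtain $|a_\mathfrak R|^2$ from $\partial_t b_\mathfrak r$'s equation, after moving $b_\mathfrak r\cdot\partial_t$ into $\mathcal J_0$.
\item Obtain $|b_\mathfrak r|^2$ and $|c_\mathfrak r|^2$ through the parity/average identities. For $b_\mathfrak r$, averaging \eqref{pxc}-type relations against $x$ gives $b_\mathfrak r$ in terms of $\partial_t$ of averages of $k_d$.
\item Obtain $|B|^2$ from the angular-momentum laws.
\item Obtain $|A|^2$ from the energy law together with $c_\mathfrak r$.
\end{enumerate}

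Each step leaves cross terms with coercive fields. These are absorbed by Young's inequality with small parameters chosen in order.

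\textbf{Main obstacle.} The step I expect to be hardest is verifying that the $5\times5$ linear system relating $(c_\mathfrak r,A,B)$ to the conservation constraints is nondegenerate. In particular, the $\mathsf R$-coupling for $p>2$ mixes $B$ with the $\mathsf Rx\cdot v$ energy term, and its determinant is a Gram-type expression in the weights $e^{-\widetilde\Phi}$. I would first try to show it is a Gram matrix of linearly independent functions, in the spirit of Proposition \ref{onetoone2}, which would give invertibility.

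The $a_\mathsf n$ contributions that cannot be squared off come from $\partial_t$ of $a$-moments. When $\partial_t$ falls on $a_\mathsf n$-terms in $\mathcal J_0$, I would replace it using \eqref{pta}, which expresses $\partial_t a$ through $\nabla\cdot b$ and $\mathsf Rx\cdot\nabla a$. These become pairings $\|a_\mathsf ne^{\widetilde\Phi}\|_{L^2(V_{\delta_3})}$ times linear terms in remaining modes, which are bounded by $\|a_\mathsf n e^{\widetilde\Phi}\|\,\mathsf E$. This accounts for that term on the right-hand side of the statement.
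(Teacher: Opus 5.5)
There is a genuine gap. Everything you extract from the moment system is its $\mathscr A$-average plus the conservation laws. Modulo coercive and microscopic errors, that information is a closed \emph{conservative} system for the remaining modes, so no choice of multipliers can put their squares on the left.

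\textbf{The conservation laws.} After the mass law is spent on $\mathscr A(a_{\mathsf n}e^{\widetilde\Phi})=0$, only four constraints remain for the five unknowns $c_{\mathfrak r},A,B_{12},B_{13},B_{23}$. So there is no $5\times5$ system, and $A$ cannot be obtained from them; this is why Lemma \ref{EstimatecrB} keeps $|A|$ on the right. Also, $b_{\mathfrak R}$ does not enter these constraints at all, since $x\wedge x=0$ and $\mathsf Rx\cdot x=0$.

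\textbf{The averaged identities are two undamped oscillators.} For $\mathsf R=0$ they read $c_{\mathfrak r}'=-b_{\mathfrak R}$, $b_{\mathfrak R}'=2A+\frac13\mathscr A(\nabla_x\cdot l_b)$, $b_{\mathfrak r}'=-a_{\mathfrak R}$, and $a_{\mathfrak R}'=\mathscr A(\nabla_x^2\widetilde\Phi)\,b_{\mathfrak r}+\mathscr A(\nabla_x^2\widetilde\Phi\,b_{\mathsf n}e^{\widetilde\Phi})$. The energy law \eqref{crBeq2} gives $A=\rho\,c_{\mathfrak r}+O(\|a_{\mathsf n}e^{\widetilde\Phi}\|_{L^2(V_{\delta_3})})$ with $\rho>0$. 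These are the breathing and center-of-mass oscillators. Your step (1) and your route to $|A|^2$ are the same virial identity read twice. Each yields $\rho|b_{\mathfrak R}|^2-2|A|^2=\frac{d}{dt}(\cdots)+\text{errors}$, which bounds neither quantity.

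\textbf{Two further steps fail.} The trace of \eqref{ptcI} adds nothing: $\operatorname{tr}K=0$ and $\operatorname{tr}L=3l_c$, so it is just three times \eqref{ptc}. The identity in your step (3) does not exist, because \eqref{pxc} contains no $b$.

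\textbf{A decisive test.} Nothing in your plan uses $p\neq2$. Yet for $\widetilde\Phi=|x|^2/2$, $\mathsf R=0$, the function $g=(x_1\cos t-v_1\sin t)e^{-\widetilde\Phi}\mu$ solves \eqref{Lineareqg}. It has all five moments in \eqref{Conserveg} zero, $g_1\equiv0$, $a_{\mathsf n}=b_{\mathsf n}=c_{\mathsf n}=0$, and $|a_{\mathfrak R}|^2+|b_{\mathfrak r}|^2\equiv1$. Integrating the claimed inequality in time then gives a contradiction.

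\textbf{The missing idea is Lemma \ref{052}.} You must substitute \eqref{abcremainingform} into the full, non-averaged continuity equation \eqref{pta}. Using $c_{\mathfrak r}'=-b_{\mathfrak R}$ and the skew-symmetry cancellations, this yields \eqref{brbsAB+}. There, $b_{\mathfrak r}$, $b_{\mathfrak R}$, $A'$, $B'_{12},B'_{13},B'_{23}$ and $a_{\mathfrak R}'+\mathsf Rb_{\mathfrak r}'$ multiply eleven spatial profiles $e_i$. These profiles are linearly independent in $L^2(V_{\delta_4})$ precisely because $\nabla_x\widetilde\Phi-\mathsf R^2x$ is not a multiple of $x$ and $2\widetilde\Phi+\nabla_x\widetilde\Phi\cdot x$ is not a quadratic polynomial. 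This is exactly where anharmonicity enters.

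\textbf{How the paper then closes.}
\begin{enumerate}
\item Pairing \eqref{brbsAB+} with a dual basis $\tilde e_j$ gives $|b_{\mathfrak r}|^2$ and $|b_{\mathfrak R}|^2$ directly. The term $\Lambda_{\delta_4}^{-1/2}\partial_t(a_{\mathsf n}e^{\widetilde\Phi})$ becomes an exact derivative plus a term with $\partial_t$ on the time coefficient, bounded by $\|a_{\mathsf n}e^{\widetilde\Phi}\|_{L^2(V_{\delta_3})}\mathsf E$ via Lemma \ref{crbrbsasAB'}.
\item Pairing with $(\tilde e_9,\tilde e_{10},\tilde e_{11})\cdot b_{\mathfrak r}$ and with $-\tilde e_5b_{\mathfrak R}$, and using \eqref{Remainingeq} and \eqref{Remainingeq+}, gives $|a_{\mathfrak R}+\mathsf Rb_{\mathfrak r}|^2$ and $2|A|^2$ up to exact derivatives. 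The errors involve only the already controlled $|b_{\mathfrak r}|^2$ and $|b_{\mathfrak R}|^2$.
\item Only afterwards are the four remaining conservation laws used. They bound $|c_{\mathfrak r}|+|B|$ by $\|b_{\mathsf n}e^{\widetilde\Phi}\|_{L^2(V_{\delta_2})}+\|a_{\mathsf n}e^{\widetilde\Phi}\|_{L^2(V_{\delta_3})}+|A|$, through the $2\times2$ determinant for $(B_{12},c_{\mathfrak r})$ and Lemma \ref{LagrangeIneq}.
\end{enumerate}
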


%%%%%%%%%%%%%%%%%%%%%%%%%%%%%%%%%%%%%%%%%%%%%%%%%%
\medskip Let us do some preliminary preparations first.
	\subsubsection{Estimates for the derivatives of the remaining modes}
	\begin{lemma}\label{crbrbsasAB'}
		The following estimate holds:
		\begin{equation}\label{042}
			|c'_\mathfrak{r}(t)| + |b'_\mathfrak{r}(t)| + |b'_\mathfrak{R}(t)| + |a'_\mathfrak{R}(t)| + |A'(t)| + |B'(t)| \lesssim \mathsf{E}.
		\end{equation}
	\end{lemma}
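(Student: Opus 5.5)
Each of the six time-dependent modes is a weighted average $\mathscr A(\cdot)$ of a macroscopic field, possibly after one spatial derivative. The plan is therefore to differentiate each definition in \eqref{051}, \eqref{defiAt} and $B(t)=\mathscr A(\nabla_x^{skew}(be^{\widetilde\Phi}))$ in time, substitute the moment equations \eqref{pta}--\eqref{pxc}, and move every spatial derivative onto the weight by integration by parts. Concretely, we use $\mathscr A(\nabla_x f)=\mathscr A((\nabla_x\widetilde\Phi)f)$, $\mathscr A((\nabla_x f)e^{\widetilde\Phi})=0$ and $\mathscr A(\mathsf Rx\cdot\nabla_x f)=0$. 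Each derivative then becomes a finite sum of terms $\mathscr A(P(x)\,h\,e^{\widetilde\Phi})$ or $\mathscr A(P(x)\,l)$. Here $P$ is built from $\widetilde\Phi$, its derivatives and polynomials in $x$, $h\in\{a,b,c\}$, and $l$ is one of the source terms $l_b,l_c,L,l_d$.

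The first step handles $c_\mathfrak r'$, $b_\mathfrak r'$, $b_\mathfrak R'$ and $B'$, which are already explicit. By \eqref{ptc}, $c_\mathfrak r'=\mathscr A(l_c)=0$, because the divergence and $\mathsf Rx\cdot\nabla$ terms average to zero. By \eqref{Energyan6}, $b_\mathfrak r'=-a_\mathfrak R-\mathscr A(\mathsf Rbe^{\widetilde\Phi})$. By \eqref{Energyan7}, $b_\mathfrak R'=2A+\frac13\mathscr A(\nabla_x\cdot l_b)$, and $B'$ is a sum of averages of $b$-terms plus $\mathscr A(\nabla_x^{skew}l_b)$. Each $\mathscr A(\cdot)$ of a field term is bounded by Cauchy--Schwarz in the form
\[
|\mathscr A(P\,he^{\widetilde\Phi})|\lesssim \|he^{\widetilde\Phi}\|_{L^2(V_{\delta_3})}\Big(\int P^2e^{-2\delta_3\widetilde\Phi}dx\Big)^{1/2},
\]
where the last factor is finite because $\widetilde\Phi$ is confining. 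The field norms are then bounded by $\mathsf E$ through the inequality displayed just before Lemma \ref{EstimatecrbrbsasAB}. The source averages $\mathscr A(l)$ and $\mathscr A(\nabla_x l)$ are bounded by $\|e^{\delta_0\mathsf H}g_1\|_{L^2_{x,v}}\le\mathsf E$ using Lemma \ref{MicroBound}(2).

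The second step handles $a_\mathfrak R'$ and $A'$, which need a second time derivative of $a$. For $a_\mathfrak R'=\partial_t\mathscr A(\nabla_x(ae^{\widetilde\Phi}))=\mathscr A((\nabla\widetilde\Phi)\partial_t(ae^{\widetilde\Phi}))$ we insert \eqref{pta} and integrate by parts once more. This produces $\mathscr A(P\,be^{\widetilde\Phi})$ terms with $P$ involving $\nabla^2\widetilde\Phi$ and $\nabla\widetilde\Phi\otimes\nabla\widetilde\Phi$. For $A'$ the terms are $\mathscr A((\Delta\widetilde\Phi)\partial_t(ce^{\widetilde\Phi}))$, $\mathscr A(\Delta(\partial_t(ae^{\widetilde\Phi})))$, and the time derivatives of the two $b$-terms in \eqref{defiAt}. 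Into these we insert \eqref{ptc}, \eqref{pta} and \eqref{ptb}. The last of these brings in $\nabla_x c$ and $l_b$. We rewrite $(\nabla_xc)e^{\widetilde\Phi}=\nabla_x(ce^{\widetilde\Phi})-(\nabla\widetilde\Phi)ce^{\widetilde\Phi}$ and again move all derivatives onto the weight. The resulting weights $P$ contain up to third (and, for the $\mathsf Rx\cdot\nabla_x b$ term, higher) derivatives of $\widetilde\Phi$ times polynomials in $x$.

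The main obstacle is these weights. For $1<p<2$, $\widetilde\Phi=\frac{\alpha}{p}\langle x\rangle^p$ is smooth with all derivatives of polynomial growth. For $p>2$, however, $|x|^p$ has only finitely many classical derivatives at the origin, so third and fourth derivatives may be singular there, for instance like $|x|^{p-4}$ when $p<4$. I would avoid differentiating $\widetilde\Phi$ more than twice by keeping one derivative on the field, for example writing $\mathscr A(\Delta\widetilde\Phi\,\nabla\cdot b\,e^{\widetilde\Phi})$-type terms. The remaining $\nabla_x(be^{\widetilde\Phi})$ factors can then be traded, through \eqref{ptcI} for the symmetric part and through the definition of $B$ for the skew part, for the time-dependent modes and $\mathsf E$-controlled quantities. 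The singularities $|x|^{p-2}$ and $|x|^{p-3}$ are locally integrable in $\mathbb R^3$ since $p>2$, so the Cauchy--Schwarz weight integrals stay finite. At worst, the rapidly decaying smooth approximations from the semigroup framework justify the integrations by parts, and the bound passes to the limit. Collecting all terms gives $\lesssim\mathsf E$ for every derivative, which is \eqref{042}.
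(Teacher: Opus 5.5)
Your overall strategy is the paper's. You apply $\mathscr A$ to \eqref{pta}--\eqref{ptc}, move every spatial derivative onto the weight, and close with Cauchy--Schwarz against $\|ce^{\widetilde\Phi}\|_{L^2(V_{\delta_1})}+\|be^{\widetilde\Phi}\|_{L^2(V_{\delta_2})}+\|ae^{\widetilde\Phi}\|_{L^2(V_{\delta_3})}\lesssim\mathsf E$ and Lemma \ref{MicroBound}. Two points need fixing.

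First, $c'_\mathfrak r$ is not zero. The identity $\mathscr A((\nabla_x f)e^{\widetilde\Phi})=0$ holds only when the derivative falls on $f$ alone. In \eqref{ptc} the derivative falls on $be^{\widetilde\Phi}$, and then $\mathscr A(\nabla_x\cdot(be^{\widetilde\Phi}))=C_{\widetilde\Phi}\int_{\mathbb R^3}\nabla_x\widetilde\Phi\cdot b\,dx=3b_\mathfrak R(t)$. Applying $\mathscr A$ to \eqref{ptc} therefore gives $c'_\mathfrak r=-b_\mathfrak R$. This is the relation \eqref{Remainingeq}, which is needed later in Lemma \ref{052}. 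The bound $|c'_\mathfrak r|\lesssim\mathsf E$ still holds, but the identity you state is false.

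Second, the detour you propose for $A'$ would fail if you actually relied on it.
\begin{itemize}
\item $\mathsf E$ contains no derivatives of $a,b,c$. So a term like $\mathscr A(\Delta_x\widetilde\Phi\,(\nabla_x\cdot b)e^{\widetilde\Phi})$ cannot be estimated without integrating by parts.
\item Trading $\nabla_x^{sym}(be^{\widetilde\Phi})$ through \eqref{ptcI} reintroduces $\partial_tK$ and $\partial_t(ce^{\widetilde\Phi})$. These again contain derivatives of $g_1$ or of the fields, and are not bounded by $\mathsf E$ at a fixed time.
\item $B(t)$ is only the \emph{average} of $\nabla_x^{skew}(be^{\widetilde\Phi})$. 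It cannot absorb the pointwise skew part against a nonconstant weight.
\end{itemize}

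The detour is also unnecessary, because no fourth derivatives of $\widetilde\Phi$ ever appear. Moving all derivatives onto the weight produces at most weights such as $\nabla_x\Delta_x\widetilde\Phi$ and $\nabla_x^2\widetilde\Phi\,\nabla_x\widetilde\Phi$. In the $\mathsf Rx\cdot\nabla_x$ terms, axial symmetry even gives $\nabla_x^2\widetilde\Phi\,\mathsf Rx=\mathsf R\nabla_x\widetilde\Phi$.

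Near the origin $|\nabla_x^3\widetilde\Phi|\lesssim|x|^{p-3}$. What Cauchy--Schwarz needs is local integrability of the \emph{square} of the weight, and $|x|^{2(p-3)}$ is locally integrable in $\mathbb R^3$ for $p>3/2$. This is exactly the paper's remark about $|\nabla_x(\Delta_x\widetilde\Phi)|^2$.

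With that observation, your argument becomes the paper's proof. You also need to rerun the integration by parts of Lemma \ref{MicroBound} for weighted source averages $\mathscr A(P\,l)$, such as $\mathscr A(\Delta_x\widetilde\Phi\,l_c)$.
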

	\begin{proof}
		For the first three terms on the left-hand side of \eqref{042}, apply $\mathscr A$ to \eqref{ptc} and \eqref{ptb}. Since $\mathscr A(l_b)=\mathscr A(l_c)=0$ and $\mathscr A(Rx\cdot\nabla_xf)=\mathscr A((\nabla_xf)e^{\widetilde\Phi})=0$,
		\begin{equation}\label{Remainingeq}
			c'_\mathsf{r}(t) + b_\mathfrak{R}(t) = 0; \:\:\: b'_\mathsf{r}(t) + a_\mathfrak{R}(t) + \mathsf{R}b_\mathfrak{r}(t) = 0.
		\end{equation}
		Besides, it follows from \eqref{Energyan7} that
		\begin{equation}\label{Remainingeq+}
			3b'_\mathsf{R}(t) = 6A(t) + \mathscr{A}(\nabla_x \cdot l_b).
		\end{equation}
		Therefore, due to \eqref{DefiE} and Lemma \ref{MicroBound}, we have
		\begin{equation*}
			|c'_\mathsf{r}(t)| + |b'_\mathfrak{r}(t)| + |b'_\mathfrak{R}(t)|\ls |b_\mathfrak{R}(t)|+|a_\mathfrak{R}(t)| +|b_\mathfrak{r}(t)|+|A(t)|+|\mathscr{A}(\nabla_x \cdot l_b)|\lesssim \mathsf{E}.
		\end{equation*}
		
		For the last three terms on the left hand side of \eqref{042}, we provide estimates separately.\\
		$\bullet$ Estimates for $|a'_\mathfrak{R}(t)|$: We recall $C_{\widetilde{\Phi}} = (\int e^{-{\widetilde{\Phi}}}dx)^{-1}$, then by \eqref{pta} we have
		\begin{equation*}
		\begin{aligned}
			a'_\mathfrak{R}(t) &= \partial_t\mathscr{A}(\nabla_x(ae^{\widetilde{\Phi}})) = \partial_t\mathscr{A}((\nabla_x{\widetilde{\Phi}})(ae^{\widetilde{\Phi}})) = \mathscr{A}((\nabla_x{\widetilde{\Phi}})\partial_t(ae^{\widetilde{\Phi}})) \\
			&= -\mathscr{A}((\nabla_x{\widetilde{\Phi}})(\nabla_x \cdot b)e^{\widetilde{\Phi}}) - \mathscr{A}((\nabla_x{\widetilde{\Phi}})(\mathsf{R}x \cdot \nabla_xae^{\widetilde{\Phi}})) \\
			&= C_{\widetilde{\Phi}}\int_{\R^3}(\nabla_x^2{\widetilde{\Phi}})bdx + C_{\widetilde{\Phi}}\int_{\R^3}(\nabla_x^2{\widetilde{\Phi}})(\mathsf{R}xa)dx.
		\end{aligned}
		\end{equation*}
		Therefore, by the Cauchy--Schwarz inequality,
		\begin{equation*}
		\begin{aligned}
			|a'_\mathfrak{R}(t)|^2 &\lesssim \int_{\R^3} e^{2\delta_2{\widetilde{\Phi}}}|b|^2dx  \int_{\R^3}|\nabla_x^2{\widetilde{\Phi}}|^2e^{-2\delta_2{\widetilde{\Phi}}}dx + \int_{\R^3} e^{2\delta_3{\widetilde{\Phi}}}a^2dx \times \int_{\R^3}|\mathsf{R}x|^2|\nabla_x^2{\widetilde{\Phi}}|^2e^{-2\delta_3{\widetilde{\Phi}}}dx \\
		&\lesssim \int_{\R^3} e^{2\delta_2{\widetilde{\Phi}}}|b|^2dx + \int_{\R^3} e^{2\delta_3{\widetilde{\Phi}}}a^2dx = \|be^{\widetilde{\Phi}}\|_{L^2(V_{\delta_2})} + \|ae^{\widetilde{\Phi}}\|_{L^2(V_{\delta_3})} \lesssim \mathsf{E}.
		\end{aligned}
		\end{equation*}
	%	In the following proof, we will omit the detailed usage of Cauchy-Schwartz inequality.\\
		$\bullet$ Estimates for $|B'(t)|$: Recall that $B(t) = \mathscr{A}(\nabla_x^{skew}(be^{\widetilde{\Phi}}))$, then it follows from \eqref{Energyan7} that
		\begin{equation*}
			|B'(t)| \lesssim \sum_{i, j = 1}^3(|\mathscr{A}(\partial_i((\mathsf{R}b)_je^{\widetilde{\Phi}}))| + |\mathscr{A}(\partial_i(\mathsf{R}x \cdot \nabla_x(b_je^{\widetilde{\Phi}})))| + |\mathscr{A}(\partial_il_{b, j})|).
		\end{equation*}
	Due to Lemma \ref{MicroBound}, we have that $|\mathscr{A}(\partial_il_{b, j})| \lesssim \|e^{\delta_0{\mathsf{H}}}g_1\|_{L_{x, v}^2} \lesssim \mathsf{E}$. For the rest terms, we have 
		\begin{equation*}
		\begin{aligned}
			|\mathscr{A}(\partial_i((\mathsf{R}b)_je^{\widetilde{\Phi}}))| &= |\mathscr{A}((\partial_i{\widetilde{\Phi}})(\mathsf{R}b)_je^{\widetilde{\Phi}})| = C_{\widetilde{\Phi}}\Big|\int_{\R^3}(\partial_i{\widetilde{\Phi}})(\mathsf{R}b)_jdx\Big| \lesssim \|be^{\widetilde{\Phi}}\|_{L^2(V_{\delta_2})} \lesssim \mathsf{E};\\
			|\mathscr{A}(\partial_i(\mathsf{R}x \cdot \nabla_x(b_je^{\widetilde{\Phi}})))| &= |\mathscr{A}((\partial_i{\widetilde{\Phi}})(\mathsf{R}x \cdot \nabla_xb_j)e^{\widetilde{\Phi}})| = C_{\widetilde{\Phi}}\Big|\int_{\R^3}(\partial_i{\widetilde{\Phi}})(\mathsf{R}x \cdot \nabla_xb_j)dx\Big| \\
			&= C_{\widetilde{\Phi}}\Big|\int_{\R^3}\nabla_x(\partial_i{\widetilde{\Phi}}) \cdot \mathsf{R}xb_jdx\Big| \lesssim \|be^{\widetilde{\Phi}}\|_{L^2(V_{\delta_2})} \lesssim \mathsf{E}.
		\end{aligned}
		\end{equation*}
		Thus we conclude that $|B'(t)| \lesssim \mathsf{E}$.\\
		$\bullet$ Estimates for $|A'(t)|$: Recall the definition of $A(t)$ in \eqref{defiAt}, we have that
		\begin{equation*}
			6A'(t) = 2\mathscr{A}((\Delta_x{\widetilde{\Phi}})\pa_t(ce^{\widetilde{\Phi}})) - \mathscr{A}(\Delta_x\pa_t(ae^{\widetilde{\Phi}})) - \mathscr{A}(\nabla_x{\widetilde{\Phi}} \cdot \mathsf{R}\pa_tbe^{\widetilde{\Phi}}) - \mathscr{A}(\nabla_x{\widetilde{\Phi}} \cdot (\mathsf{R}x \cdot \nabla_x\pa_tbe^{\widetilde{\Phi}})).
		\end{equation*}
		It suffices to estimate one representative term; the others are analogous. By \eqref{ptc},
		\begin{equation*}
		\begin{aligned}
			&|\mathscr{A}((\Delta_x{\widetilde{\Phi}})\partial_t(ce^{\widetilde{\Phi}}))| \lesssim |\mathscr{A}((\Delta_x{\widetilde{\Phi}})\nabla_x \cdot (be^{\widetilde{\Phi}}))| + |\mathscr{A}((\Delta_x{\widetilde{\Phi}})\mathsf{R}x \cdot \nabla_x(ce^{\widetilde{\Phi}}))|+ |\mathscr{A}((\Delta_x{\widetilde{\Phi}})l_c)|.
		\end{aligned}
		\end{equation*}
		For the first term and the second term on the right hand side, by integration by parts, we have that
			\begingroup\small
			\begin{equation*}
			\begin{aligned}
				&|\mathscr{A}((\Delta_x{\widetilde{\Phi}})\nabla_x \cdot (be^{\widetilde{\Phi}}))| + |\mathscr{A}((\Delta_x{\widetilde{\Phi}})\mathsf{R}x \cdot \nabla_x(ce^{\widetilde{\Phi}}))|\\
				\lesssim& \Big|\int_{\R^3}(\Delta_x{\widetilde{\Phi}})(\nabla_x{\widetilde{\Phi}} \cdot b + \nabla_x \cdot b)dx\Big| + \Big|\int_{\R^3}(\Delta_x{\widetilde{\Phi}})(\mathsf{R}x \cdot \nabla_xc)dx\Big| \\
				\lesssim& \Big|\int_{\R^3}(\Delta_x{\widetilde{\Phi}})(\nabla_x{\widetilde{\Phi}} \cdot b)dx\Big| + \Big|\int_{\R^3}(\nabla_x(\Delta_x{\widetilde{\Phi}}) \cdot b)dx\Big| + \Big|\int_{\R^3}\nabla_x(\Delta_x{\widetilde{\Phi}}) \cdot (\mathsf{R}xc)dx\Big| \\
				\lesssim& \|be^{\widetilde{\Phi}}\|_{L^2(V_{\delta_2})} + \|ce^{\widetilde{\Phi}}\|_{L^2(V_{\delta_1})} \lesssim \mathsf{E}.
			\end{aligned}
			\end{equation*}
			\endgroup
			Here we emphasize that for $p \in (1, 2)$, function $|\nabla_x(\Delta_x{\widetilde{\Phi}})|^2$ has no singularity at $x=0$. As for $p \in (2, \infty)$, although function $|\nabla_x(\Delta_x{\widetilde{\Phi}})|^2$ has singularity, it is still locally integrable
		in $\mathbb{R}^3$.
		
		For the third term, we can use the similar method as in Lemma \ref{MicroBound} to get that	\begin{equation*}
	\begin{aligned}
		|\mathscr{A}((\Delta_x{\widetilde{\Phi}})l_c)|\ls \mathsf{E}.
	\end{aligned}
\end{equation*}
	 We omit the estimates for other terms and conclude that $|A'(t)|\ls \mathsf{E}.$ 

Combining the preceding estimates completes the proof.
	\end{proof}

%%%%%%%%%%%%%%%%%%%%%%%%%%%%%%%%%%%%%%%%%%%%%%%%%%

	\subsubsection{Estimates for $\partial_t\nabla_x(a_\mathsf{n}, b_\mathsf{n}, c_\mathsf{n})$}
	We next estimate the terms on the right-hand side of Lemma \ref{Estimateanbncn}.
	\begin{lemma}[Time derivatives of the coercive components]\label{Estimateptanbncn}
		One has
		\begin{equation*}
			\|\Lambda_{\delta_4}^{-1}\partial_t\nabla_x(c_\mathsf{n}e^{\widetilde{\Phi}})\|_{L^2(V_{\delta_4})}
			+ \|\Lambda_{\delta_4}^{-1}\partial_t\nabla_x^{sym}(b_\mathsf{n}e^{\widetilde{\Phi}})\|_{L^2(V_{\delta_4})}
			+ \|\Lambda_{\delta_4}^{-1}\partial_t\nabla_x(a_\mathsf{n}e^{\widetilde{\Phi}})\|_{L^2(V_{\delta_4})}
			\lesssim \mathsf{E}.
		\end{equation*}
	\end{lemma}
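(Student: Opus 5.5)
The plan is to compute each time derivative from the moment equations \eqref{pta}--\eqref{ptc} and the definitions \eqref{Defianbncn}. Each time derivative is then written as a sum of spatial derivatives of the macroscopic fields, microscopic source terms, and time derivatives of the remaining modes. The operator $\Lambda_{\delta_4}^{-1}\nabla_x$, or $\Lambda_{\delta_4}^{-1}$ composed with one more derivative, then absorbs up to two derivatives, using the boundedness of $\Lambda^{-1}\nabla^2$, $\Lambda^{-1}\langle\nabla V\rangle\nabla$ and $\Lambda^{-1}\langle\nabla V\rangle^2$ in Lemma \ref{BoundedLambda}. Because $\delta_4$ is smaller than $\delta_1,\delta_2,\delta_3$, Lemma \ref{lemma5.7}(2) absorbs the polynomial factors $\langle x\rangle^m$ coming from $\nabla\widetilde\Phi$, $\mathsf{R}x$ and $x$. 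The target bounds are therefore $\|ce^{\widetilde\Phi}\|_{L^2(V_{\delta_1})}$, $\|be^{\widetilde\Phi}\|_{L^2(V_{\delta_2})}$, $\|ae^{\widetilde\Phi}\|_{L^2(V_{\delta_3})}$ and the remainder moduli, all of which are $\lesssim\mathsf E$.

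\emph{The $c_{\mathsf n}$ term.} Since $c_{\mathsf n}e^{\widetilde\Phi}=ce^{\widetilde\Phi}-c_{\mathfrak r}(t)$, we have $\partial_t\nabla_x(c_{\mathsf n}e^{\widetilde\Phi})=\nabla_x\partial_t(ce^{\widetilde\Phi})$. By \eqref{ptc} this equals $-\tfrac13\nabla_x\nabla_x\cdot(be^{\widetilde\Phi})-\nabla_x(\mathsf{R}x\cdot\nabla_x(ce^{\widetilde\Phi}))+\nabla_x l_c$. The first term is handled by $\Lambda^{-1}\nabla^2$. The second is $\nabla_x\nabla_x\cdot(\mathsf{R}x\, ce^{\widetilde\Phi})$, because $\mathsf{R}$ is skew so $\nabla\cdot(\mathsf{R}x)=0$; it is bounded by $\|xce^{\widetilde\Phi}\|_{L^2(V_{\delta_4})}\lesssim\|ce^{\widetilde\Phi}\|_{L^2(V_{\delta_1})}$. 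For $l_c=-\tfrac16e^{\widetilde\Phi}(\mathsf{T}g_1,|v|^2-3)$, we write the transport part as $e^{\widetilde\Phi}\nabla_x\cdot(\cdot)$ plus velocity-integrated terms. We then move $e^{\widetilde\Phi}$ through the divergence as in Lemma \ref{MicroBound}. This produces terms of the form $\nabla^2$ or $\nabla\langle\nabla V\rangle$ acting on functions bounded in $L^2(V_{\delta_4})$ by $\|e^{\delta_0\mathsf H}g_1\|$, and the same operator bounds apply.

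\emph{The $b_{\mathsf n}$ term.} By \eqref{bnbeq}, $\partial_t\nabla^{sym}_x(b_{\mathsf n}e^{\widetilde\Phi})=\nabla_x^{sym}\partial_t(be^{\widetilde\Phi})-\tfrac13\mathbb I\, b_{\mathfrak R}'(t)$. Lemma \ref{crbrbsasAB'} gives $|b'_{\mathfrak R}|\lesssim\mathsf E$, and $\|1\|_{L^2(V_{\delta_4})}<\infty$, so the constant term is harmless. For the first term we substitute \eqref{ptb}: $\partial_t(be^{\widetilde\Phi})=-\nabla_x(ae^{\widetilde\Phi})-2(\nabla_x c)e^{\widetilde\Phi}-\mathsf{R}be^{\widetilde\Phi}-\mathsf{R}x\cdot\nabla_x(be^{\widetilde\Phi})+l_b$. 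We rewrite $(\nabla_xc)e^{\widetilde\Phi}=\nabla_x(ce^{\widetilde\Phi})-\nabla\widetilde\Phi\, ce^{\widetilde\Phi}$. One more symmetric gradient gives at most two derivatives, each possibly multiplied by $\nabla\widetilde\Phi$, $\nabla^2\widetilde\Phi$ or $x$. The terms $\nabla^2(ae^{\widetilde\Phi})$, $\nabla(\nabla\widetilde\Phi\,ce^{\widetilde\Phi})$ and $\nabla\nabla\cdot(\mathsf{R}x\,be^{\widetilde\Phi})$ are bounded via Lemma \ref{BoundedLambda} by $\|ae^{\widetilde\Phi}\|_{L^2(V_{\delta_3})}$, $\|ce^{\widetilde\Phi}\|_{L^2(V_{\delta_1})}$ and $\|be^{\widetilde\Phi}\|_{L^2(V_{\delta_2})}$. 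The term $\nabla l_b$ is treated exactly as $\nabla l_c$.

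\emph{The $a_{\mathsf n}$ term.} Differentiating \eqref{anaeq} in time gives $\partial_t\nabla_x(a_{\mathsf n}e^{\widetilde\Phi})=\nabla_x\partial_t(ae^{\widetilde\Phi})-2\nabla\widetilde\Phi\, c'_{\mathfrak r}-a'_{\mathfrak R}+2A'x+2T'x$. The last four terms are controlled by Lemma \ref{crbrbsasAB'}, using $|T'|\lesssim|B'|$ and $\|\langle x\rangle^{p}\|_{L^2(V_{\delta_4})}<\infty$. For the first term, \eqref{pta} gives $\partial_t(ae^{\widetilde\Phi})=-(\nabla\cdot b)e^{\widetilde\Phi}-\mathsf{R}x\cdot\nabla_x(ae^{\widetilde\Phi})$. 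We write $(\nabla\cdot b)e^{\widetilde\Phi}=\nabla\cdot(be^{\widetilde\Phi})-\nabla\widetilde\Phi\cdot be^{\widetilde\Phi}$, which again leaves at most two derivatives with polynomial weights.

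I expect the main obstacle to be verifying rigorously that every second-order term, including the mixed $\nabla(\nabla\widetilde\Phi\,f)$ terms and those carrying the unbounded factor $\mathsf{R}x$, fits one of the bounded compositions in Lemma \ref{BoundedLambda} once the weight is changed from $V_{\delta_i}$ to $V_{\delta_4}$. The operators $\Lambda_{\delta_4}^{-1}\nabla^2$ are bounded on $L^2(V_{\delta_4})$. The input norms, however, live in $L^2(V_{\delta_i})$, so the plan uses Lemma \ref{lemma5.7}(2) to pass from $\langle x\rangle^m f$ in $L^2(V_{\delta_4})$ to $f$ in $L^2(V_{\delta_i})$ with $\delta_i>\delta_4$. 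For $p>2$ the singularity of $\nabla^2\widetilde\Phi$ at the origin must be checked to be harmless. That is the point where I expect to need a commutator argument, writing $\Lambda^{-1}\nabla(\nabla\widetilde\Phi f)=\Lambda^{-1}\nabla\cdot(\nabla\widetilde\Phi\otimes f)$ rather than expanding the product.
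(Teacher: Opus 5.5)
Your argument is correct, but it organizes the operator bounds differently from the paper.

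\emph{The paper's route.} The paper never lets two derivatives act at once. It first uses the boundedness of $\Lambda_{\delta_4}^{-1}\nabla_x\Lambda_{\delta_4}^{1/2}$ from Lemma \ref{BoundedLambda}, for instance
\[
\|\Lambda_{\delta_4}^{-1}\nabla_x\partial_t(ce^{\widetilde\Phi})\|_{L^2(V_{\delta_4})}\lesssim\|\Lambda_{\delta_4}^{-1/2}\partial_t(ce^{\widetilde\Phi})\|_{L^2(V_{\delta_4})}.
\]
It does the same for $b$ and $a$, after subtracting the time-only pieces exactly as you do. Only then does it substitute \eqref{ptc}, \eqref{ptb} and \eqref{pta}. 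Every resulting term then carries at most one derivative or one factor $\nabla_x\widetilde\Phi$. These are handled by $\Lambda^{-1/2}\nabla$ and $\Lambda^{-1/2}\langle\nabla V\rangle$, via the ``trick'' \eqref{trick} of writing $(\nabla_xc)e^{\widetilde\Phi}=\nabla_x(ce^{\widetilde\Phi})-\nabla_x\widetilde\Phi\,ce^{\widetilde\Phi}$ without expanding further. The source terms are bounded directly by $\|\Lambda_{\delta_4}^{-1/2}l_b\|$ and $\|\Lambda_{\delta_4}^{-1/2}l_c\|$ as in Lemma \ref{MicroBound}.

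\emph{Comparison.} Your route through $\Lambda^{-1}\nabla^2$ and $\Lambda^{-1}\langle\nabla V\rangle\nabla$ also works. However, it forces you to redo the microscopic analysis for $\nabla_x l_b$ and $\nabla_x l_c$, and to handle product-rule terms. The factorization $\Lambda^{-1}\nabla=(\Lambda^{-1}\nabla\Lambda^{1/2})\Lambda^{-1/2}$ is exactly the ``do not expand'' device you offer as a fallback. Adopting it from the start removes the obstacle you anticipate.

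\emph{The anticipated singularity.} There is no singularity to worry about here. For $p>2$, $|\nabla_x^2\widetilde\Phi|\lesssim\langle x\rangle^{p-2}$ and it is continuous at the origin. The singular object in the paper is $\nabla_x\Delta_x\widetilde\Phi$, which enters only in Lemma \ref{crbrbsasAB'}.

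\emph{A small slip.} Since $b_{\mathfrak R}$ already contains the factor $\frac13$, \eqref{bnbeq} gives $\partial_t\nabla_x^{sym}(b_\mathsf{n}e^{\widetilde\Phi})=\nabla_x^{sym}\partial_t(be^{\widetilde\Phi})-\mathbb I_{3\times3}\,b'_{\mathfrak R}(t)$, not $-\frac13\mathbb I_{3\times3}\,b'_{\mathfrak R}(t)$. This does not affect the bound.
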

	\begin{proof}
		We give the proof one to one.\\
		$\bullet$ Estimation for $c_\mathsf{n}$: Note that
		\begin{equation*}
			\partial_t\nabla_x(c_\mathsf{n}e^{\widetilde{\Phi}}) = \partial_t\nabla_x(ce^{\widetilde{\Phi}}) = \nabla_x\partial_t(c_\mathsf{n}e^{\widetilde{\Phi}}),
		\end{equation*}
		thus by the first and last inequality in Lemma \ref{BoundedLambda}, Lemma \ref{MicroBound} and \eqref{ptc}, we have
		\begin{equation*}
		\begin{aligned}
			\|\Lambda_{\delta_4}^{-1}\partial_t\nabla_x(c_\mathsf{n}e^{\widetilde{\Phi}})\|_{L^2(V_{\delta_4})}
			&= \|\Lambda_{\delta_4}^{-1}\nabla_x\partial_t(ce^{\widetilde{\Phi}})\|_{L^2(V_{\delta_4})}
			\lesssim \|\Lambda_{\delta_4}^{-\frac{1}{2}}\partial_t(ce^{\widetilde{\Phi}})\|_{L^2(V_{\delta_4})} \\
			&\lesssim \|\Lambda_{\delta_4}^{-\frac{1}{2}}\nabla_x \cdot (be^{\widetilde{\Phi}})\|_{L^2(V_{\delta_4})} + \|\Lambda_{\delta_4}^{-\frac{1}{2}}Rx \cdot \nabla_x(ce^{\widetilde{\Phi}})\|_{L^2(V_{\delta_4})} + \|\Lambda_{\delta_4}^{-\frac{1}{2}}l_c\|_{L^2(V_{\delta_4})} \\
			&\lesssim \|be^{\widetilde{\Phi}}\|_{L^2(V_{\delta_2})} + \|ce^{\widetilde{\Phi}}\|_{L^2(V_{\delta_1})} + \|e^{\delta_0{\mathsf{H}}}g_1\|_{L_{x, v}^2} \lesssim \mathsf{E}.
		\end{aligned}
		\end{equation*}
		$\bullet$ Estimation for $b_\mathsf{n}$: We recall \eqref{bnbeq} as
		\begin{equation*}
			\nabla_x^{sym}(b_\mathsf{n}e^{\widetilde{\Phi}}) = \nabla_x^{sym}(be^{\widetilde{\Phi}}) - \frac{1}{3}\mathbb{I}_{3 \times 3}\mathscr{A}(\nabla_x \cdot (be^{\widetilde{\Phi}})) = \nabla_x^{sym}(be^{\widetilde{\Phi}}) - \mathbb{I}_{3 \times 3}b_\mathfrak{R}(t).
		\end{equation*}
		Again apply the first and last inequality in Lemma \ref{BoundedLambda}, Lemma \ref{MicroBound}, Lemma \ref{crbrbsasAB'} and \eqref{ptb}, we have
		\begin{equation*}
		\begin{aligned}
			\|\Lambda_{\delta_4}^{-1}\partial_t\nabla_x^{sym}(b_\mathsf{n}e^{\widetilde{\Phi}})\|_{L^2(V_{\delta_4})}
			&\lesssim \|\Lambda_{\delta_4}^{-1}\nabla_x^{sym}\partial_t(be^{\widetilde{\Phi}})\|_{L^2(V_{\delta_4})} + |b'_\mathsf{R}(t)|
			\lesssim \|\Lambda_{\delta_4}^{-\frac{1}{2}}\partial_t(be^{\widetilde{\Phi}})\|_{L^2(V_{\delta_4})} + \mathsf{E} \\
			&\lesssim \|\Lambda_{\delta_4}^{-\frac{1}{2}}\nabla_x(ae^{\widetilde{\Phi}})\|_{L^2(V_{\delta_4})} + \|\Lambda_{\delta_4}^{-\frac{1}{2}}(\nabla_xc)e^{\widetilde{\Phi}}\|_{L^2(V_{\delta_4})} + \|\Lambda_{\delta_4}^{-\frac{1}{2}}Rbe^{\widetilde{\Phi}}\|_{L^2(V_{\delta_4})} \\
			&+ \|\Lambda_{\delta_4}^{-\frac{1}{2}}Rx \cdot \nabla_x(be^{\widetilde{\Phi}})\|_{L^2(V_{\delta_4})} + \|\Lambda_{\delta_4}^{-\frac{1}{2}}l_b\|_{L^2(V_{\delta_4})} + \mathsf{E} \\
			&\lesssim \|ae^{\widetilde{\Phi}}\|_{L^2(V_{\delta_3})} + \|be^{\widetilde{\Phi}}\|_{L^2(V_{\delta_2})} + \|ce^{\widetilde{\Phi}}\|_{L^2(V_{\delta_1})} + \|e^{\delta_0{\mathsf{H}}}g_1\|_{L_{x, v}^2} + \mathsf{E} \lesssim \mathsf{E}.
		\end{aligned}
		\end{equation*}
		Where we have used the trick
		\begin{multline}\label{trick}
			\|\Lambda_{\delta_4}^{-\frac{1}{2}}(\nabla_xc)e^{\widetilde{\Phi}}\|_{L^2(V_{\delta_4})} = \|\Lambda_{\delta_4}^{-\frac{1}{2}}(\nabla_x(ce^{\widetilde{\Phi}}) - (\nabla_x{\widetilde{\Phi}})ce^{\widetilde{\Phi}})\|_{L^2(V_{\delta_4})} \\
			\lesssim \|\Lambda_{\delta_4}^{-\frac{1}{2}}\nabla_x(ce^{\widetilde{\Phi}})\|_{L^2(V_{\delta_4})} + \|\Lambda_{\delta_4}^{-\frac{1}{2}}(\nabla_x{\widetilde{\Phi}})ce^{\widetilde{\Phi}}\|_{L^2(V_{\delta_4})} \lesssim \|ce^{\widetilde{\Phi}}\|_{L^2(V_{\delta_1})}.
		\end{multline}
		$\bullet$ Estimation for $a_\mathsf{n}$: We recall \eqref{anaeq} as
		\begin{equation*}
		\begin{aligned}
			\nabla_x(a_\mathsf{n}e^{\widetilde{\Phi}}) &= \nabla_x(ae^{\widetilde{\Phi}}) - 2(\nabla_x{\widetilde{\Phi}})\mathscr{A}(ce^{\widetilde{\Phi}}) - \mathscr{A}(\nabla_x(ae^{\widetilde{\Phi}})) + 2A(t)x + 2T(t)x \\
			&= \nabla_x(ae^{\widetilde{\Phi}}) - 2(\nabla_x{\widetilde{\Phi}})c_\mathfrak{r}(t) - a_\mathfrak{R}(t) + 2A(t)x + 2T(t)x.
		\end{aligned}
		\end{equation*}
		First we apply the last inequality in Lemma \ref{BoundedLambda}, and get
		\begin{equation*}
		\begin{aligned}
			\|\Lambda_{\delta_4}^{-1}\partial_t\nabla_x(a_\mathsf{n}e^{\widetilde{\Phi}})\|_{L^2(V_{\delta_4})}
			&\lesssim \|\Lambda_{\delta_4}^{-1}\nabla_x\partial_t(ae^{\widetilde{\Phi}})\|_{L^2(V_{\delta_4})} + |c'_\mathsf{r}(t)| + |a'_\mathsf{R}(t)| + |A'(t)| + |T'(t)| \\
			&\lesssim \|\Lambda_{\delta_4}^{-\frac{1}{2}}\partial_t(ae^{\widetilde{\Phi}})\|_{L^2(V_{\delta_4})} + \mathsf{E} \\
			&\lesssim \|\Lambda_{\delta_4}^{-\frac{1}{2}}(\nabla_x \cdot b)e^{\widetilde{\Phi}}\|_{L^2(V_{\delta_4})} + \|\Lambda_{\delta_4}^{-\frac{1}{2}}Rx \cdot \nabla_x(ae^{\widetilde{\Phi}})\|_{L^2(V_{\delta_4})} + \mathsf{E} \\
			&\lesssim \|be^{\widetilde{\Phi}}\|_{L^2(V_{\delta_2})} + \|ae^{\widetilde{\Phi}}\|_{L^2(V_{\delta_3})} + \mathsf{E} \lesssim \mathsf{E},
		\end{aligned}
		\end{equation*}
		where we have used the same trick as in \eqref{trick}:
		\begin{multline*}
			\|\Lambda_{\delta_4}^{-\frac{1}{2}}(\nabla_x \cdot b)e^{\widetilde{\Phi}}\|_{L^2(V_{\delta_4})} = \|\Lambda_{\delta_4}^{-\frac{1}{2}}(\nabla_x \cdot (be^{\widetilde{\Phi}}) - (\nabla_x{\widetilde{\Phi}}) \cdot be^{\widetilde{\Phi}})\|_{L^2(V_{\delta_4})} \\
			\lesssim \|\Lambda_{\delta_4}^{-\frac{1}{2}}\nabla_x \cdot (be^{\widetilde{\Phi}})\|_{L^2(V_{\delta_4})} + \|\Lambda_{\delta_4}^{-\frac{1}{2}}(\nabla_x{\widetilde{\Phi}}) \cdot be^{\widetilde{\Phi}}\|_{L^2(V_{\delta_4})} \lesssim \|be^{\widetilde{\Phi}}\|_{L^2(V_{\delta_2})}.
		\end{multline*}
		We complete the proof of this lemma.
	\end{proof}

%%%%%%%%%%%%%%%%%%%%%%%%%%%%%%%%%%%%%%%%%%%%%%%%%%

	\subsubsection{Evolution equations for the remaining modes}
We first derive the evolution equations needed to estimate the remaining modes.
	\begin{lemma}[Evolution system for the remaining modes]\label{052}
		Let $c_\mathfrak r$, $b_\mathfrak r$, $b_\mathfrak R$, $a_\mathfrak R$, and $A$ be defined by \eqref{defiAt} and \eqref{051}, and set $B(t):=\mathscr A(\nabla_x^{skew}(be^{\widetilde\Phi}))$. Then
				\begin{multline}\label{brbsAB+}
			(e_1,e_2,e_3)(x) \cdot b_\mathfrak{r}(t) + e_4(x)b_\mathfrak{R}(t) + e_5(x)A'(t) + e_6(x)B'_{12}(t) + e_7(x)B'_{13}(t) + e_8(x)B'_{23}(t) \\+ (e_9,e_{10},e_{11})(x) \cdot (a'_\mathfrak{R}(t) + \mathsf{R}b'_\mathfrak{r}(t)) = \Lambda_{\delta_4}^{-\frac{1}{2}}\partial_t(a_\mathsf{n}e^{\widetilde{\Phi}}) + \Lambda_{\delta_4}^{-\frac{1}{2}}((\nabla_x \cdot b_\mathsf{n})e^{\widetilde{\Phi}}) + \Lambda_{\delta_4}^{-\frac{1}{2}}(\mathsf{R}x \cdot \nabla_x(a_\mathsf{n}e^{\widetilde{\Phi}})),
		\end{multline}
		where $e_i = e_i(x)$ is defined as
		\begin{equation*}
			\begin{aligned}
				&(e_1,e_2,e_3)(x) = \Lambda_{\delta_4}^{-\frac{1}{2}}(\nabla_x{\widetilde{\Phi}} - \mathsf{R}^2x); \:\:\:
				&&e_4(x) = \Lambda_{\delta_4}^{-\frac{1}{2}}(2{\widetilde{\Phi}} - 2\mathscr{A}({\widetilde{\Phi}}) + \nabla_x{\widetilde{\Phi}} \cdot x - 3);\\
				&e_5(x) = \Lambda_{\delta_4}^{-\frac{1}{2}}(|x|^2 - \mathscr{A}(|x|^2));\:\:\:
				&&e_6(x) = -r\Lambda_{\delta_4}^{-\frac{1}{2}}(x_1^2 + x_2^2 - \mathscr{A}(x_1^2 + x_2^2));\\
				&e_7(x) = -r\Lambda_{\delta_4}^{-\frac{1}{2}}(x_2x_3); \:\:\:
				&&e_8(x) = r\Lambda_{\delta_4}^{-\frac{1}{2}}(x_1x_3); \:\:\:\:\:\:\:\:\:\:\:\:
				(e_9,e_{10},e_{11})(x) = -\Lambda_{\delta_4}^{-\frac{1}{2}}x
			\end{aligned}
		\end{equation*}
		with $\mathsf{R}$ and $r$ showed in \eqref{Rform}. Moreover, $\{e_i(x)\}_{i = 1}^{11}$ are linearly independent in $L^2(V_{\delta_4})$ space.
	\end{lemma}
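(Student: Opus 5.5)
The identity \eqref{brbsAB+} will come from the $a$-moment equation \eqref{pta}, with every remaining-mode term isolated on the left, after which we apply $\Lambda_{\delta_4}^{-1/2}$. Independence is a separate, purely spatial argument. Both parts are planned below.

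\emph{Step 1: rewrite \eqref{pta}.} Write \eqref{pta} as
\[
\partial_t(ae^{\widetilde\Phi})+(\nabla_x\cdot b)e^{\widetilde\Phi}+\mathsf Rx\cdot\nabla_x(ae^{\widetilde\Phi})=0 .
\]
Then substitute the decompositions \eqref{abcremainingform} for $ae^{\widetilde\Phi}$ and $be^{\widetilde\Phi}$.

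For the divergence term, use $(\nabla_x\cdot b)e^{\widetilde\Phi}=\nabla_x\cdot(be^{\widetilde\Phi})-\nabla_x\widetilde\Phi\cdot be^{\widetilde\Phi}$. Inserting $be^{\widetilde\Phi}=b_\mathsf ne^{\widetilde\Phi}+b_\mathfrak r+Bx+b_\mathfrak Rx$ gives:
\begin{itemize}
\item the coercive piece $(\nabla_x\cdot b_\mathsf n)e^{\widetilde\Phi}$;
\item $3b_\mathfrak R-\nabla_x\widetilde\Phi\cdot(b_\mathfrak r+Bx+b_\mathfrak Rx)$, where $\nabla_x\widetilde\Phi\cdot Bx$ contributes $B$-terms of the form $x_ix_j$.
\end{itemize}

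For the time-derivative term, $\partial_t(ae^{\widetilde\Phi})$ produces:
\begin{itemize}
\item $\partial_t(a_\mathsf ne^{\widetilde\Phi})$;
\item $2(\widetilde\Phi-\mathscr A(\widetilde\Phi))c_\mathfrak r'$, with $c_\mathfrak r'=-b_\mathfrak R$ by \eqref{Remainingeq};
\item $a_\mathfrak R'\cdot x$, to be combined with $\mathsf Rb_\mathfrak r'$ via \eqref{Remainingeq};
\item $-(|x|^2-\mathscr A(|x|^2))A'$;
\item $-(T'x\cdot x-\mathscr A(T'x\cdot x))$, written in terms of $B'_{12},B'_{13},B'_{23}$ using $\mathsf R$ from \eqref{Rform}.
\end{itemize}

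The transport term $\mathsf Rx\cdot\nabla_x$ acting on the polynomial pieces needs care. It kills radial functions (in particular $\widetilde\Phi$ and $|x|^2$), and $\mathsf Rx\cdot x=0$. Its action on $a_\mathfrak R\cdot x$ gives $a_\mathfrak R\cdot\mathsf Rx=-\mathsf Ra_\mathfrak R\cdot x$. On $T x\cdot x$ it gives a quadratic commutator form.

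Now collect coefficients:
\begin{itemize}
\item $b_\mathfrak r$ appears with $-\nabla_x\widetilde\Phi$; combined with the $\mathsf R$-terms coming from $b'_\mathfrak r=-a_\mathfrak R-\mathsf Rb_\mathfrak r$, this should produce $\nabla_x\widetilde\Phi-\mathsf R^2x$ up to sign.
\item $b_\mathfrak R$ appears with $2(\widetilde\Phi-\mathscr A\widetilde\Phi)+\nabla_x\widetilde\Phi\cdot x-3$, up to sign.
\item The $B$-terms, both undifferentiated and differentiated, reorganize using the explicit $\mathsf R$ into $e_6,e_7,e_8$.
\end{itemize}
Throughout, I would check signs against the stated $e_i$ and absorb the overall sign convention. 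The terms involving $a_\mathsf n$ and $b_\mathsf n$ go to the right-hand side. Applying $\Lambda_{\delta_4}^{-1/2}$, which is linear and commutes with multiplication by constants in $t$, then yields \eqref{brbsAB+}.

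\emph{Step 2: linear independence.} Because $\Lambda_{\delta_4}^{-1/2}$ is injective on $L^2(V_{\delta_4})$, it suffices to show that the eleven preimage functions are linearly independent. These are:
\begin{itemize}
\item $\nabla_x\widetilde\Phi-\mathsf R^2x$, which equals $|x|^{p-2}x$ for $p>2$ and $\alpha\langle x\rangle^{p-2}x$ for $p<2$;
\item $2\widetilde\Phi+\nabla\widetilde\Phi\cdot x+\text{const}$;
\item $|x|^2-\text{const}$;
\item $x_1^2+x_2^2-\text{const}$, $x_2x_3$, $x_1x_3$;
\item $x_1,x_2,x_3$.
\end{itemize}

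Separate by parity first. The odd functions are the three components of $|x|^{p-2}x$ (or $\langle x\rangle^{p-2}x$) and $x_1,x_2,x_3$. They are independent because $p\neq2$, so $|x|^{p-2}$ is not constant; checking component by component along coordinate axes suffices.

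Among the even functions, $x_2x_3$ and $x_1x_3$ are the only non-diagonal ones. For the rest, the relevant question is whether $1$, $|x|^2$, $x_1^2+x_2^2$ and $2\widetilde\Phi+\nabla\widetilde\Phi\cdot x$ are independent modulo constants. The function $\nabla\widetilde\Phi\cdot x$ contains $|x|^p$ (respectively $\langle x\rangle^{p-2}|x|^2$) together with an $r^2(x_1^2+x_2^2)$ correction. Since $p\neq2$, it is not a quadratic polynomial, and comparing growth along rays finishes this case.

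\emph{Main obstacle.} The hardest part is the bookkeeping in Step 1: getting the $B$ and $B'$ contributions from $T(t)$ and $\mathsf Rx\cdot\nabla_x$ to collapse exactly into $e_6,e_7,e_8$ with the stated signs. I would do this by writing $B$ in coordinates, $B=\begin{pmatrix}0&B_{12}&B_{13}\\-B_{12}&0&B_{23}\\-B_{13}&-B_{23}&0\end{pmatrix}$, computing $T=\tfrac12(\mathsf RB+B\mathsf R)$ explicitly, and expanding.

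One further point needs checking: in the $r=0$ case the functions $e_6,e_7,e_8$ vanish, so "independence" must there be read as concerning only the nonzero $e_i$.
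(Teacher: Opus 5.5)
There is a gap in Step 1, exactly where you locate the main obstacle. Otherwise your route is the paper's: substitute \eqref{abcremainingform} into \eqref{pta}, use \eqref{Remainingeq}, expand $T=\frac12(\mathsf RB+B\mathsf R)$ in coordinates, and apply $\Lambda_{\delta_4}^{-1/2}$.

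\textbf{The undifferentiated $B$-terms.} In \eqref{brbsAB+} the functions $e_6,e_7,e_8$ multiply only $B'_{12},B'_{13},B'_{23}$. They come solely from $-T'x\cdot x+\mathscr A(T'x\cdot x)$ inside $\partial_t(ae^{\widetilde\Phi})$. Two further terms contain $B$ itself: $-\nabla_x\widetilde\Phi\cdot Bx$ from the divergence term, and $-2\,\mathsf Rx\cdot Tx$ from $\mathsf Rx\cdot\nabla_x(-Tx\cdot x)$. These cannot be ``reorganized into $e_6,e_7,e_8$'' as you propose. No relation converts $B$ into $B'$, and even the monomial pattern differs: there $B_{13}$ sits with $x_1x_3$, whereas in $e_7$ $B'_{13}$ sits with $x_2x_3$. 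If these terms survived, the identity as stated would be false. The missing idea is that they cancel exactly:
\[
\nabla_x\widetilde\Phi\cdot B(t)x+2\,\mathsf Rx\cdot T(t)x=0 .
\]
\begin{itemize}
\item For $1<p<2$ this is immediate: $\mathsf R=0$, $\nabla_x\widetilde\Phi\parallel x$, and $x\cdot Bx=0$.
\item For $p>2$, use $\nabla_x\widetilde\Phi=|x|^{p-2}x+\mathsf R^2x$ together with $x\cdot Bx=0$, $\mathsf Rx\cdot B\mathsf Rx=0$ and $\mathsf R^2x\cdot Bx=-\mathsf Rx\cdot\mathsf RBx$. Explicitly, $\nabla_x\widetilde\Phi\cdot Bx$ and $-2\,\mathsf Rx\cdot Tx$ both equal $-r^2(B_{13}x_1x_3+B_{23}x_2x_3)$.
\end{itemize}
An honest coordinate expansion would reveal this, but your plan aims it at the wrong target. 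A smaller point: $\mathsf Rx\cdot\nabla_x\widetilde\Phi=0$ does not hold because $\widetilde\Phi$ is radial; it is not radial when $r>0$. It holds because $\widetilde\Phi$ depends only on $x_1^2+x_2^2$ and $x_3$.

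\textbf{Independence and the case $r=0$.} Your argument (injectivity of $\Lambda_{\delta_4}^{-1/2}$, parity splitting, growth along rays using $p\neq2$) is correct. It is a more explicit version of the paper's bare assertion that $\dim\operatorname{span}\{1,\Lambda_{\delta_4}^{1/2}e_i\}=12$. Your caveat about $r=0$ is also correct and exposes a flaw in the statement. When $r=0$, which includes every $1<p<2$, one has $e_6=e_7=e_8=0$, the span above has dimension $9$, and the eleven functions are dependent. Those three functions multiply zero in \eqref{brbsAB+}, and Lemma \ref{EstimatebrbsasA} only uses dual functions for $e_1,\dots,e_5,e_9,e_{10},e_{11}$. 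So restricting to the nonzero $e_i$ is the right repair.
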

	\begin{proof}
It follows from \eqref{pta} that
	\begin{equation*}
		\partial_t(ae^{\widetilde{\Phi}}) + (\nabla_x \cdot b)e^{\widetilde{\Phi}} + \mathsf{R}x \cdot \nabla_x(ae^{\widetilde{\Phi}}) = 0.
	\end{equation*}
	According to \eqref{abcremainingform}, we compute
	\begin{equation*}
	\begin{gathered}
		\mathsf{R}x \cdot \nabla_x(ae^{\widetilde{\Phi}}) = \mathsf{R}x \cdot \nabla_x(a_ne^{\widetilde{\Phi}}) + \mathsf{R}x \cdot a_\mathfrak{R}(t) - 2\mathsf{R}x \cdot T(t)x;\\
		(\nabla_x \cdot b)e^{\widetilde{\Phi}} = (\nabla_x \cdot b_\mathsf{n})e^{\widetilde{\Phi}} - \nabla_x{\widetilde{\Phi}} \cdot b_\mathfrak{r}(t) - \nabla_x{\widetilde{\Phi}} \cdot B(t)x + 3b_\mathfrak{R}(t) - (\nabla_x{\widetilde{\Phi}} \cdot x)b_\mathfrak{R}(t);\\
		\partial_t(ae^{\widetilde{\Phi}}) = \partial_t(a_\mathsf{n}e^{\widetilde{\Phi}}) + 2({\widetilde{\Phi}} - \mathscr{A}({\widetilde{\Phi}}))c'_\mathfrak{r}(t) + a'_\mathfrak{R}(t) \cdot x - (|x|^2 - \mathscr{A}(|x|^2))A'(t) - T'(t)x \cdot x + \mathscr{A}(T'(t)x \cdot x),
	\end{gathered}
	\end{equation*}
	where we use the fact that $\mathsf{R}x \cdot x = \mathsf{R}x \cdot \nabla_x{\widetilde{\Phi}} = 0$ since $R$ is skew-symmetric. Now \eqref{pta} becomes
	\begin{multline*}
		\partial_t(a_\mathsf{n}e^{\widetilde{\Phi}}) + 2({\widetilde{\Phi}} - \mathscr{A}({\widetilde{\Phi}}))c'_\mathfrak{r}(t) + a'_\mathfrak{R}(t) \cdot x - (|x|^2 - \mathscr{A}(|x|^2))A'(t) - T'(t)x \cdot x + \mathscr{A}(T'(t)x \cdot x) + (\nabla_x \cdot b_\mathsf{n})e^{\widetilde{\Phi}} \\
		- \nabla_x{\widetilde{\Phi}} \cdot b_\mathfrak{r}(t) - \nabla_x{\widetilde{\Phi}} \cdot B(t)x + 3b_\mathfrak{R}(t) - (\nabla_x{\widetilde{\Phi}} \cdot x)b_\mathfrak{R}(t) + \mathsf{R}x \cdot \nabla_x(a_\mathsf{n}e^{\widetilde{\Phi}}) + \mathsf{R}x \cdot a_\mathfrak{R}(t) - 2\mathsf{R}x \cdot T(t)x = 0.
	\end{multline*}
	For $p\in(1,2)$, one has $\widetilde\Phi=\alpha\langle x\rangle^p/p$ and $\mathsf R=0$. Since $B(t)$ is skew-symmetric,
	\begin{equation*}
		\nabla_x{\widetilde{\Phi}} \cdot B(t)x + 2\mathsf{R}x \cdot T(t)x = 0.
	\end{equation*}
	For $p\in(2,\infty)$, recall that $\widetilde\Phi=|x|^p/p-|\mathsf Rx|^2/2$, and hence $\nabla_x\widetilde\Phi=|x|^{p-2}x+\mathsf R^2x$. Since $\mathsf R$ and $B(t)$ are skew-symmetric matrices,
	\begin{multline*}
		\nabla_x{\widetilde{\Phi}} \cdot B(t)x + 2\mathsf{R}x \cdot T(t)x = (|x|^{p - 2}x + \mathsf{R}^2x) \cdot B(t)x + \mathsf{R}x \cdot (\mathsf{R}B(t)x + B(t)\mathsf{R}x) \\
		= \mathsf{R}^2x \cdot B(t)x + \mathsf{R}x \cdot \mathsf{R}B(t)x = 0.
	\end{multline*}
	We use \eqref{Remainingeq} to write $c'_\mathsf{r}(t) = -b_\mathfrak{R}(t)$, together with the above observation, we have
	\begin{multline*}
		-(2{\widetilde{\Phi}} - 2\mathscr{A}({\widetilde{\Phi}}) + \nabla_x{\widetilde{\Phi}} \cdot x - 3)b_\mathfrak{R}(t) + a'_\mathfrak{R}(t) \cdot x - (|x|^2 - \mathscr{A}(|x|^2))A'(t) - T'(t)x \cdot x + \mathscr{A}(T'(t)x \cdot x) \\- \nabla_x{\widetilde{\Phi}} \cdot b_\mathfrak{r}(t) + \mathsf{R}x \cdot a_\mathfrak{R}(t) = -\partial_t(a_\mathsf{n}e^{\widetilde{\Phi}}) - (\nabla_x \cdot b_\mathsf{n})e^{\widetilde{\Phi}} - \mathsf{R}x \cdot \nabla_x(a_\mathsf{n}e^{\widetilde{\Phi}}).
	\end{multline*}
	Again we use \eqref{Remainingeq} to write
	\begin{equation*}
	\begin{aligned}
		a'_\mathfrak{R}(t) \cdot x - \nabla_x{\widetilde{\Phi}} \cdot b_\mathfrak{r}(t) + \mathsf{R}x \cdot a_\mathfrak{R}(t) &= a'_\mathfrak{R}(t) \cdot x - \nabla_x{\widetilde{\Phi}} \cdot b_\mathfrak{r}(t) - \mathsf{R}x \cdot b'_\mathfrak{r}(t) - \mathsf{R}x \cdot \mathsf{R}b_\mathfrak{r}(t) \\
		&= x \cdot (a'_\mathfrak{R}(t) + \mathsf{R}b'_\mathfrak{r}(t)) - (\nabla_x{\widetilde{\Phi}} - \mathsf{R}^2x) \cdot b_\mathfrak{r}(t).
	\end{aligned}
	\end{equation*}
	Now we get
	\begin{multline*}
		(\nabla_x{\widetilde{\Phi}} - \mathsf{R}^2x) \cdot b_\mathfrak{r}(t) + (2{\widetilde{\Phi}} - 2\mathscr{A}({\widetilde{\Phi}}) + \nabla_x{\widetilde{\Phi}} \cdot x - 3)b_\mathfrak{R}(t) + (|x|^2 - \mathscr{A}(|x|^2))A'(t) + T'(t)x \cdot x - \mathscr{A}(T'(t)x \cdot x) \\- x \cdot (a'_\mathfrak{R}(t) + \mathsf{R}b'_\mathfrak{r}(t)) = \partial_t(a_\mathsf{n}e^{\widetilde{\Phi}}) + (\nabla_x \cdot b_\mathsf{n})e^{\widetilde{\Phi}} + \mathsf{R}x \cdot \nabla_x(a_\mathsf{n}e^{\widetilde{\Phi}}).
	\end{multline*}
	Recall the form of matrix $\mathsf{R}$ in \eqref{Rform}, we compute
	\begin{equation*}
		T(t) = \frac{1}{2}(\mathsf{R}B(t) + B(t)\mathsf{R}) = \frac{r}{2}\begin{pmatrix} -2B_{12}(t) & 0 & B_{23}(t) \\ 0 & -2B_{12}(t) & -B_{13}(t) \\ B_{23}(t) & -B_{13}(t) & 0 \end{pmatrix},
	\end{equation*}
	and
	\begin{equation}\label{ComputeTxx}
		T(t)x \cdot x = -r(x_1^2 + x_2^2)B_{12}(t) - rx_2x_3B_{13}(t) + rx_1x_3B_{23}(t).
	\end{equation}
	Note the fact that $\mathscr{A}(x_2x_3) = \mathscr{A}(x_1x_3) = 0$, we finally get
	\begin{multline*}
		(\nabla_x{\widetilde{\Phi}} - \mathsf{R}^2x) \cdot b_\mathfrak{r}(t) + (2{\widetilde{\Phi}} - 2\mathscr{A}({\widetilde{\Phi}}) + \nabla_x{\widetilde{\Phi}} \cdot x - 3)b_\mathfrak{R}(t) + (|x|^2 - \mathscr{A}(|x|^2))A'(t) \\- r(x_1^2 + x_2^2 - \mathscr{A}(x_1^2 + x_2^2))B'_{12}(t) - rx_2x_3B'_{13}(t) + rx_1x_3B'_{23}(t) - x \cdot (a'_\mathsf{R}(t) + Rb'_\mathsf{r}(t)) \\= \partial_t(a_\mathsf{n}e^{\widetilde{\Phi}}) + (\nabla_x \cdot b_\mathsf{n})e^{\widetilde{\Phi}} + \mathsf{R}x \cdot \nabla_x(a_\mathsf{n}e^{\widetilde{\Phi}}),
	\end{multline*}
	which is our desired result.

Next, we will prove the linear independence. Otherwise,  $\{\Lambda_{\delta_4}^{\frac{1}{2}}e_i(x)\}$ are linearly dependent and furthermore $\{1, \Lambda_{\delta_4}^{\frac{1}{2}}e_i(x)\}$ are linearly dependent. Thus we consider linear subspace $X := \mathrm{span}\{1, \Lambda_{\delta_4}^{\frac{1}{2}}e_i(x)\}_{i = 1}^{11}$. One may check that
	\begin{equation*}
		X = \mathrm{span}\{1, x, x_1x_3, x_2x_3, x_1^2 + x_2^2, x_3^2, \nabla_x{\widetilde{\Phi}} - \mathsf{R}^2x, 2{\widetilde{\Phi}} + \nabla_x{\widetilde{\Phi}} \cdot x\}.
	\end{equation*}
	More precisely, for $p \in (1, 2)$ we have
	\begin{equation*}
		\nabla_x{\widetilde{\Phi}} - R^2x = \alpha\langle x \rangle^{p - 2}x, \:\:\: 2{\widetilde{\Phi}} + \nabla_x{\widetilde{\Phi}} \cdot x = \alpha(1 + \frac{2}{p})\langle x \rangle^p - \alpha\langle x \rangle^{p - 2}.
	\end{equation*}
	As for $p \in (2, \infty)$, we compute
	\begin{equation*}
		\nabla_x{\widetilde{\Phi}} - \mathsf{R}^2x = |x|^{p - 2}x, \:\:\: 2{\widetilde{\Phi}} + \nabla_x{\widetilde{\Phi}} \cdot x = (1 + \frac{2}{p})|x|^p - 2r^2(x_1^2 + x_2^2).
	\end{equation*}
	Thus in either cases, we have $\dim X = 12$, which means $\{1, \Lambda_{\delta_4}^{\frac{1}{2}}e_i(x)\}$ are linearly independent, and so are $\{e_i(x)\}_{i = 1}^{11}$.  Then we complete the proof of this lemma.
\end{proof}

%%%%%%%%%%%%%%%%%%%%%%%%%%%%%%%%%%%%%%%%%%%%%%%%%%
We begin with estimates for $b_\mathfrak r$, $b_\mathfrak R$, $a_\mathfrak R$, and $A$.
	\subsubsection{Estimates for $|b_\mathfrak{r}(t)| + |b_\mathfrak{R}(t)| + |a_\mathfrak{R}(t)| + |A(t)|$}
	\begin{lemma}[Evolution-controlled finite-dimensional modes]\label{EstimatebrbsasA}
		There exists a function $\mathcal{J}_0 = \mathcal{J}_0(t)$ satisfying $|\mathcal{J}_0(t)| \lesssim \mathsf{E}^2$ and
			\begin{equation*}
	\begin{aligned}
		&\frac{d}{dt}\mathcal{J}_0 + |b_\mathfrak{r}(t)|^2 + |b_\mathfrak{R}(t)|^2 + |a_\mathfrak{R}(t)|^2 + |A(t)|^2 \\
		\lesssim& \|e^{\delta_0{\mathsf{H}}}g_1\|_{L_{x, v}^2}^2 + \|b_\mathsf{n}e^{\widetilde{\Phi}}\|_{L^2(V_{\delta_2})}^2 + \|a_\mathsf{n}e^{\widetilde{\Phi}}\|_{L^2(V_{\delta_3})}^2 + \|a_\mathsf{n}e^{\widetilde{\Phi}}\|_{L^2(V_{\delta_3})}\mathsf{E}.
	\end{aligned}
\end{equation*}
	\end{lemma}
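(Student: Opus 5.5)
The natural tool is the identity \eqref{brbsAB+} of Lemma \ref{052}. Its left-hand side is a finite linear combination, with time-dependent coefficients, of the eleven fixed functions $e_1,\dots,e_{11}$, which are linearly independent in $L^2(V_{\delta_4})$. The right-hand side involves only the coercive components and $\partial_t(a_\mathsf n e^{\widetilde\Phi})$. I would let $G=((e_i,e_j)_{L^2(V_{\delta_4})})_{i,j}$ be the Gram matrix of the $e_i$, which is invertible by linear independence, and take its inverse to get dual functions $e_i^*\in\operatorname{span}\{e_j\}$ with $(e_i,e_j^*)=\delta_{ij}$. Pairing \eqref{brbsAB+} with $e_j^*$ then isolates each coefficient. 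Writing $y(t)=(b_\mathfrak r,b_\mathfrak R,A',B',a'_\mathfrak R+\mathsf R b'_\mathfrak r)$, each component of $y$ equals
\[
(\Lambda_{\delta_4}^{-1/2}\partial_t(a_\mathsf ne^{\widetilde\Phi}),e_j^*)_{L^2(V_{\delta_4})}+(\Lambda_{\delta_4}^{-1/2}[(\nabla_x\cdot b_\mathsf n)e^{\widetilde\Phi}+\mathsf Rx\cdot\nabla_x(a_\mathsf ne^{\widetilde\Phi})],e_j^*)_{L^2(V_{\delta_4})}.
\]
The second pairing I would bound by $\|b_\mathsf ne^{\widetilde\Phi}\|_{L^2(V_{\delta_2})}+\|a_\mathsf ne^{\widetilde\Phi}\|_{L^2(V_{\delta_3})}$, using Lemma \ref{BoundedLambda}, the trick \eqref{trick}, and Lemma \ref{lemma5.7}(2) to absorb the factor $x$. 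For the first pairing I would write it as $\frac{d}{dt}(\Lambda_{\delta_4}^{-1/2}(a_\mathsf ne^{\widetilde\Phi}),e_j^*)$. This is a time derivative of a quantity bounded by $\|a_\mathsf ne^{\widetilde\Phi}\|_{L^2(V_{\delta_3})}$.

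This yields $|b_\mathfrak r|^2+|b_\mathfrak R|^2$ directly. I would multiply the $b_\mathfrak r$ identity by $b_\mathfrak r$ and the $b_\mathfrak R$ identity by $b_\mathfrak R$, and move the time derivative onto the product, so that $\mathcal J_0$ acquires the terms $-(\Lambda^{-1/2}_{\delta_4}(a_\mathsf ne^{\widetilde\Phi}),e_j^*)\,b_\mathfrak r$. The time derivative then lands on $b_\mathfrak r$ or $b_\mathfrak R$, and by Lemma \ref{crbrbsasAB'} this produces $\|a_\mathsf ne^{\widetilde\Phi}\|_{L^2(V_{\delta_3})}\mathsf E$, which is exactly the cross term permitted in the statement.

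For $a_\mathfrak R$ and $A$ I would use the exact relations \eqref{Remainingeq}--\eqref{Remainingeq+}. These give $a_\mathfrak R=-b'_\mathfrak r-\mathsf Rb_\mathfrak r$ and $6A=3b'_\mathfrak R-\mathscr A(\nabla_x\cdot l_b)$, so the task reduces to controlling $|b'_\mathfrak r|^2$ and $|b'_\mathfrak R|^2$. For these I would use a second corrector of the form $\mathcal J=-b_\mathfrak r\cdot b'_\mathfrak r$, whose derivative is $-|b'_\mathfrak r|^2-b_\mathfrak r\cdot b''_\mathfrak r$. The term $b''_\mathfrak r$ is $-a'_\mathfrak R-\mathsf Rb'_\mathfrak r$, which is the last coefficient in $y$ and therefore already controlled modulo a time derivative. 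The same applies to $b''_\mathfrak R=2A'+\frac13\partial_t\mathscr A(\nabla_x\cdot l_b)$, where $A'$ is a coefficient of $y$. The delicate piece is the time derivative of $\mathscr A(\nabla_x\cdot l_b)$. I would handle it by not differentiating: I would pair $A$ with the $b'_\mathfrak R$ relation and use the bound from Lemma \ref{MicroBound}.

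I would then combine all the correctors with small weights so that $|\mathcal J_0|\lesssim\mathsf E^2$, which follows from Lemma \ref{crbrbsasAB'}. The terms $|b_\mathfrak r||y_j|$ I would absorb by Young's inequality, with the small weight placed on the lower-order functional.

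The main obstacle is this second-order step: making sure that every time derivative falling on $(\Lambda_{\delta_4}^{-1/2}(a_\mathsf ne^{\widetilde\Phi}),e_j^*)$ or on the remaining modes is paid for only by $\|a_\mathsf n e^{\widetilde\Phi}\|_{L^2(V_{\delta_3})}\mathsf E$ plus absorbable squares. I would order the weights hierarchically, with $b_\mathfrak r,b_\mathfrak R$ first and then $a_\mathfrak R,A$, and choose them so that each stage absorbs the cross terms generated by the next.
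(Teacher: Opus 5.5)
Your proposal is correct and is essentially the paper's proof. The paper also pairs \eqref{brbsAB+} with a biorthogonal system $\tilde e_i$ and gets $|b_\mathfrak r|^2$, $|b_\mathfrak R|^2$ by testing against $(\tilde e_1,\tilde e_2,\tilde e_3)\cdot b_\mathfrak r$ and $\tilde e_4 b_\mathfrak R$. It then gets $|a_\mathfrak R|^2$ and $|A|^2$ from the correctors $(a_\mathfrak R+\mathsf R b_\mathfrak r)\cdot b_\mathfrak r=-b'_\mathfrak r\cdot b_\mathfrak r$ and $A\,b_\mathfrak R$ via \eqref{Remainingeq}--\eqref{Remainingeq+}, which is exactly your fix of never differentiating $\mathscr A(\nabla_x\cdot l_b)$, with the larger weight placed on the $b_\mathfrak r$, $b_\mathfrak R$ inequalities since the $a_\mathfrak R$ and $A$ inequalities produce $|b_\mathfrak r|^2$ and $|b_\mathfrak R|^2$ on their right-hand sides.
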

	\begin{proof}
			As a direct corollary of Lemma \ref{052}, there exist $\tilde{e}_i(x) \in \mathrm{span}\{e_j(x)\}_{j = 1}^{11}$, $i=1,\ldots,11$, such that
		\begin{equation}\label{053}
			(e_i(x), \tilde{e}_j(x))_{L^2(V_{\delta_4})} = \de_{ij},\qquad i,j=1,\ldots,11.
		\end{equation}\\
		$\bullet$ Estimates for $b_\mathfrak{r}(t)$: In \eqref{brbsAB+}, we take inner product with $(\tilde{e}_{1},\tilde{e}_{2},\tilde{e}_{3})(x) \cdot b_\mathfrak{r}(t)$ in $L^2(V_{\delta_4})$ space. Then by \eqref{053}, we have
		\begin{equation*}
			|b_\mathfrak{r}(t)|^2 = (\Lambda_{\delta_4}^{-\frac{1}{2}}\partial_t(a_\mathsf{n}e^{\widetilde{\Phi}}) + \Lambda_{\delta_4}^{-\frac{1}{2}}(\nabla_x \cdot b_\mathsf{n})e^{\widetilde{\Phi}} + \Lambda_{\delta_4}^{-\frac{1}{2}}\mathsf{R}x \cdot \nabla_x(a_\mathsf{n}e^{\widetilde{\Phi}}), (\tilde{e}_{1},\tilde{e}_{2},\tilde{e}_{3})(x) \cdot b_\mathfrak{r}(t))_{L^2(V_{\delta_4})}
		\end{equation*}
		Note that
		\begin{equation*}
		\begin{aligned}
			(\Lambda_{\delta_4}^{-\frac{1}{2}}(\nabla_x \cdot b_\mathsf{n})e^{\widetilde{\Phi}}, (\tilde{e}_{1},\tilde{e}_{2},\tilde{e}_{3})(x) \cdot b_\mathfrak{r}(t))_{L^2(V_{\delta_4})} &\le C\|b_\mathsf{n}e^{\widetilde{\Phi}}\|_{L^2(V_{\delta_2})}^2 + \frac{1}{4}|b_\mathfrak{r}(t)|^2;\\
			(\Lambda_{\delta_4}^{-\frac{1}{2}}\mathsf{R}x \cdot \nabla_x(a_\mathsf{n}e^{\widetilde{\Phi}}), (\tilde{e}_{1},\tilde{e}_{2},\tilde{e}_{3})(x) \cdot b_\mathfrak{r}(t))_{L^2(V_{\delta_4})} &\le C\|a_\mathsf{n}e^{\widetilde{\Phi}}\|_{L^2(V_{\delta_3})}^2 + \frac{1}{4}|b_\mathfrak{r}(t)|^2.
		\end{aligned}
		\end{equation*}
		And we write
		\begin{multline*}
			(\Lambda_{\delta_4}^{-\frac{1}{2}}\partial_t(a_\mathsf{n}e^{\widetilde{\Phi}}), (\tilde{e}_{1},\tilde{e}_{2},\tilde{e}_{3})(x) \cdot b_\mathfrak{r}(t))_{L^2(V_{\delta_4})} \\
			= \frac{d}{dt}(\Lambda_{\delta_4}^{-\frac{1}{2}}(a_\mathsf{n}e^{\widetilde{\Phi}}), (\tilde{e}_{1},\tilde{e}_{2},\tilde{e}_{3})(x) \cdot b_\mathfrak{r}(t))_{L^2(V_{\delta_4})} - (\Lambda_{\delta_4}^{-\frac{1}{2}}(a_\mathsf{n}e^{\widetilde{\Phi}}), (\tilde{e}_{1},\tilde{e}_{2},\tilde{e}_{3})(x) \cdot b'_\mathfrak{r}(t))_{L^2(V_{\delta_4})}.
		\end{multline*}
		By Lemma \ref{crbrbsasAB'},
		\begin{equation*}
		- (\Lambda_{\delta_4}^{-\frac{1}{2}}(a_\mathsf{n}e^{\widetilde{\Phi}}), (\tilde{e}_{1},\tilde{e}_{2},\tilde{e}_{3})(x) \cdot b'_\mathfrak{r}(t))_{L^2(V_{\delta_4})} \lesssim \|a_\mathsf{n}e^{\widetilde{\Phi}}\|_{L^2(V_{\delta_3})}\mathsf{E},
		\end{equation*}
		Thus we get
		\begin{equation}\label{Estimatebr}
			\frac{d}{dt}\mathcal{J}_{0, 1} + |b_\mathfrak{r}(t)|^2 \lesssim \|b_\mathsf{n}e^{\widetilde{\Phi}}\|_{L^2(V_{\delta_2})}^2 + \|a_\mathsf{n}e^{\widetilde{\Phi}}\|_{L^2(V_{\delta_3})}^2 + \|a_\mathsf{n}e^{\widetilde{\Phi}}\|_{L^2(V_{\delta_3})}\mathsf{E},
		\end{equation}
		with
		\begin{equation*}
			|\mathcal{J}_{0, 1}| = 2|(\Lambda_{\delta_4}^{-\frac{1}{2}}(a_\mathsf{n}e^{\widetilde{\Phi}}), (\tilde{e}_{1},\tilde{e}_{2},\tilde{e}_{3})(x) \cdot b_\mathfrak{r}(t))_{L^2(V_{\delta_4})}| \lesssim \|a_\mathsf{n}e^{\widetilde{\Phi}}\|_{L^2(V_{\delta_3})}|b_\mathfrak{r}(t)| \lesssim \mathsf{E}^2.
		\end{equation*}\\
		$\bullet$ Estimates for $b_\mathfrak{R}(t)$: Similarly, in \eqref{brbsAB+}, we take inner product with $\tilde{e}_4(x)b_\mathfrak{R}(t)$ in $L^2(V_{\delta_4})$ space to get that
		\begin{equation*}
		\begin{aligned}
			|b_\mathfrak{R}(t)|^2 &= (\Lambda_{\delta_4}^{-\frac{1}{2}}\partial_t(a_\mathsf{n}e^{\widetilde{\Phi}}) + \Lambda_{\delta_4}^{-\frac{1}{2}}(\nabla_x \cdot b_\mathsf{n})e^{\widetilde{\Phi}} + \Lambda_{\delta_4}^{-\frac{1}{2}}\mathsf{R}x \cdot \nabla_x(a_\mathsf{n}e^{\widetilde{\Phi}}), \tilde{e}_4(x)b_\mathfrak{R}(t))_{L^2(V_{\delta_4})} \\
			&= \frac{d}{dt}(\Lambda_{\delta_4}^{-\frac{1}{2}}(a_\mathsf{n}e^{\widetilde{\Phi}}), \tilde{e}_4(x)b_\mathfrak{R}(t))_{L^2(V_{\delta_4})} - (\Lambda_{\delta_4}^{-\frac{1}{2}}(a_\mathsf{n}e^{\widetilde{\Phi}}), \tilde{e}_4(x)b'_\mathfrak{R}(t))_{L^2(V_{\delta_4})} \\
			&+ (\Lambda_{\delta_4}^{-\frac{1}{2}}(\nabla_x \cdot b_\mathsf{n})e^{\widetilde{\Phi}} + \Lambda_{\delta_4}^{-\frac{1}{2}}\mathsf{R}x \cdot \nabla_x(a_\mathsf{n}e^{\widetilde{\Phi}}), \tilde{e}_4(x)b_\mathfrak{R}(t))_{L^2(V_{\delta_4})} \\
			&\le \frac{d}{dt}(\Lambda_{\delta_4}^{-\frac{1}{2}}(a_\mathsf{n}e^{\widetilde{\Phi}}), \tilde{e}_4(x)b_\mathfrak{R}(t))_{L^2(V_{\delta_4})} \\
			&+ C(\|b_\mathsf{n}e^{\widetilde{\Phi}}\|_{L^2(V_{\delta_2})}^2 + \|a_\mathsf{n}e^{\widetilde{\Phi}}\|_{L^2(V_{\delta_3})}^2 + \|a_\mathsf{n}e^{\widetilde{\Phi}}\|_{L^2(V_{\delta_3})}\mathsf{E}) + \frac{1}{2}|b_\mathfrak{R}(t)|^2.
		\end{aligned}
		\end{equation*}
		Thus we get
		\begin{equation}\label{Estimatebs}
			\frac{d}{dt}\mathcal{J}_{0, 2} + |b_\mathfrak{R}(t)|^2 \lesssim \|b_\mathsf{n}e^{\widetilde{\Phi}}\|_{L^2(V_{\delta_2})}^2 + \|a_\mathsf{n}e^{\widetilde{\Phi}}\|_{L^2(V_{\delta_3})}^2 + \|a_\mathsf{n}e^{\widetilde{\Phi}}\|_{L^2(V_{\delta_3})}\mathsf{E},
		\end{equation}
		with
		\begin{equation*}
			|\mathcal{J}_{0, 2}| = 2|(\Lambda_{\delta_4}^{-\frac{1}{2}}(a_\mathsf{n}e^{\widetilde{\Phi}}), \tilde{e}_4(x)b_\mathfrak{R}(t))_{L^2(V_{\delta_4})}| \lesssim \|a_\mathsf{n}e^{\widetilde{\Phi}}\|_{L^2(V_{\delta_3})}|b_\mathfrak{R}(t)| \lesssim \mathsf{E}^2.
		\end{equation*}\\
		$\bullet$ Estimates for $a_\mathfrak{R}(t)$: Set
		\[
		\widetilde e_{\mathfrak r}(t,x)
		=(\tilde e_9,\tilde e_{10},\tilde e_{11})(x)\cdot b_\mathfrak r(t).
		\]
		Testing \eqref{brbsAB+} against $\widetilde e_{\mathfrak r}$ in $L^2(V_{\delta_4})$ gives
		\begin{equation*}
		\begin{aligned}
			&(a'_\mathfrak{R}(t)+\mathsf{R}b'_\mathfrak{r}(t))b_\mathfrak{r}(t)\\
			&\quad=\big(\Lambda_{\delta_4}^{-\frac12}\partial_t(a_\mathsf n e^{\widetilde\Phi})
			+\Lambda_{\delta_4}^{-\frac12}(\nabla_x\cdot b_\mathsf n)e^{\widetilde\Phi}\\
			&\hspace{25mm}
			+\Lambda_{\delta_4}^{-\frac12}\mathsf R x\cdot\nabla_x(a_\mathsf n e^{\widetilde\Phi}),
			\widetilde e_{\mathfrak r}\big)_{L^2(V_{\delta_4})}\\
			&\quad=\frac d{dt}
			\big(\Lambda_{\delta_4}^{-\frac12}(a_\mathsf n e^{\widetilde\Phi}),
			\widetilde e_{\mathfrak r}\big)_{L^2(V_{\delta_4})}\\
			&\qquad-\big(\Lambda_{\delta_4}^{-\frac12}(a_\mathsf n e^{\widetilde\Phi}),
			(\tilde e_9,\tilde e_{10},\tilde e_{11})(x)\cdot b'_\mathfrak r(t)\big)_{L^2(V_{\delta_4})}\\
			&\qquad+\big(\Lambda_{\delta_4}^{-\frac12}(\nabla_x\cdot b_\mathsf n)e^{\widetilde\Phi}
			+\Lambda_{\delta_4}^{-\frac12}\mathsf R x\cdot\nabla_x(a_\mathsf n e^{\widetilde\Phi}),
			\widetilde e_{\mathfrak r}\big)_{L^2(V_{\delta_4})}\\
			&\quad\le \frac d{dt}
			\big(\Lambda_{\delta_4}^{-\frac12}(a_\mathsf n e^{\widetilde\Phi}),
			\widetilde e_{\mathfrak r}\big)_{L^2(V_{\delta_4})} \\
				&\quad + C\big(\|b_\mathsf{n}e^{\widetilde{\Phi}}\|_{L^2(V_{\delta_2})}^2
				+ \|a_\mathsf{n}e^{\widetilde{\Phi}}\|_{L^2(V_{\delta_3})}^2\\
				&\hspace{37mm}
				+ \|a_\mathsf{n}e^{\widetilde{\Phi}}\|_{L^2(V_{\delta_3})}\mathsf{E}
				+ |b_\mathfrak{r}(t)|^2\big).
		\end{aligned}
		\end{equation*}
		By \eqref{Remainingeq},
		\begin{equation*}
		\begin{aligned}
			(a'_\mathfrak{R}(t) + \mathsf{R}b'_\mathfrak{r}(t))b_\mathfrak{r}(t) &= \frac{d}{dt}[(a_\mathfrak{R}(t) + \mathsf{R}b_\mathfrak{r}(t))b_\mathfrak{r}(t)] - (a_\mathfrak{R}(t) + \mathsf{R}b_\mathfrak{r}(t))b'_\mathfrak{r}(t) \\
			&= \frac{d}{dt}[(a_\mathfrak{R}(t) + \mathsf{R}b_\mathfrak{r}(t))b_\mathfrak{r}(t)] + |a_\mathfrak{R}(t) + \mathsf{R}b_\mathfrak{r}(t)|^2 \\
			&\ge \frac{d}{dt}[(a_\mathfrak{R}(t) + \mathsf{R}b_\mathfrak{r}(t))b_\mathfrak{r}(t)] + \frac{1}{2}|a_\mathfrak{R}(t)|^2 - 2|\mathsf{R}b_\mathfrak{r}(t)|^2.
		\end{aligned}
		\end{equation*}
		Thus we get
		\begin{equation}\label{Estimateas}
			\frac{d}{dt}\mathcal{J}_{0, 3} + |a_\mathfrak{R}(t)|^2 \lesssim \|b_\mathsf{n}e^{\widetilde{\Phi}}\|_{L^2(V_{\delta_2})}^2 + \|a_\mathsf{n}e^{\widetilde{\Phi}}\|_{L^2(V_{\delta_3})}^2 + \|a_\mathsf{n}e^{\widetilde{\Phi}}\|_{L^2(V_{\delta_3})}\mathsf{E} + |b_\mathfrak{r}(t)|^2,
		\end{equation}
		with
		\begin{equation*}
		\begin{aligned}
			|\mathcal{J}_{0, 3}| &= 2|(\Lambda_{\delta_4}^{-\frac{1}{2}}(a_\mathsf{n}e^{\widetilde{\Phi}}), (\tilde{e}_{9},\tilde{e}_{10},\tilde{e}_{11})(x) \cdot b_\mathfrak{r}(t))_{L^2(V_{\delta_4})} - (a_\mathfrak{R}(t) + \mathsf{R}b_\mathfrak{r}(t))b_\mathfrak{r}(t)| \\
			&\lesssim \|a_\mathsf{n}e^{\widetilde{\Phi}}\|_{L^2(V_{\delta_3})}|b_\mathfrak{r}(t)| + |a_\mathfrak{R}(t)|^2 + |b_\mathfrak{r}(t)|^2 \lesssim \mathsf{E}^2.
		\end{aligned}
		\end{equation*}\\
		$\bullet$ Estimates for $A(t)$: In \eqref{brbsAB+}, we take inner product with $-\tilde{e}_5(x)b_\mathfrak{R}(t)$ in $L^2(V_{\delta_4})$ space.
		\begin{equation*}
		\begin{aligned}
			-A'(t)b_\mathfrak{R}(t) &= -(\Lambda_{\delta_4}^{-\frac{1}{2}}\partial_t(a_\mathsf{n}e^{\widetilde{\Phi}}) + \Lambda_{\delta_4}^{-\frac{1}{2}}(\nabla_x \cdot b_\mathsf{n})e^{\widetilde{\Phi}} + \Lambda_{\delta_4}^{-\frac{1}{2}}\mathsf{R}x \cdot \nabla_x(a_\mathsf{n}e^{\widetilde{\Phi}}), \tilde{e}_5(x)b_\mathfrak{R}(t))_{L^2(V_{\delta_4})} \\
			&= -\frac{d}{dt}(\Lambda_{\delta_4}^{-\frac{1}{2}}(a_\mathsf{n}e^{\widetilde{\Phi}}), \tilde{e}_5(x)b_\mathfrak{R}(t))_{L^2(V_{\delta_4})} + (\Lambda_{\delta_4}^{-\frac{1}{2}}(a_\mathsf{n}e^{\widetilde{\Phi}}), \tilde{e}_5(x)b'_\mathfrak{R}(t))_{L^2(V_{\delta_4})} \\
			&- (\Lambda_{\delta_4}^{-\frac{1}{2}}(\nabla_x \cdot b_\mathsf{n})e^{\widetilde{\Phi}} + \Lambda_{\delta_4}^{-\frac{1}{2}}\mathsf{R}x \cdot \nabla_x(a_\mathsf{n}e^{\widetilde{\Phi}}), \tilde{e}_5(x)b_\mathfrak{R}(t))_{L^2(V_{\delta_4})} \\
			&\le -\frac{d}{dt}(\Lambda_{\delta_4}^{-\frac{1}{2}}(a_\mathsf{n}e^{\widetilde{\Phi}}), \tilde{e}_5(x)b_\mathfrak{R}(t))_{L^2(V_{\delta_4})} \\
			&+ C(\|b_\mathsf{n}e^{\widetilde{\Phi}}\|_{L^2(V_{\delta_2})}^2 + \|a_\mathsf{n}e^{\widetilde{\Phi}}\|_{L^2(V_{\delta_3})}^2 + \|a_\mathsf{n}e^{\widetilde{\Phi}}\|_{L^2(V_{\delta_3})}\mathsf{E} + |b_\mathfrak{R}(t)|^2).
		\end{aligned}
		\end{equation*}
		By \eqref{Remainingeq+},
		\begin{equation*}
		\begin{aligned}
			-A'(t)b_\mathfrak{R}(t) &= -\frac{d}{dt}[A(t)b_\mathfrak{R}(t)] + A(t)b'_\mathfrak{R}(t) = -\frac{d}{dt}[A(t)b_\mathfrak{R}(t)] + 2|A(t)|^2 + \frac{1}{3}\mathscr{A}(\nabla_x \cdot l_b)A(t) \\
			&\le -\frac{d}{dt}[A(t)b_\mathfrak{R}(t)] + |A(t)|^2 - C\|e^{\delta_0{\mathsf{H}}}g_1\|_{L_{x, v}^2}^2.
		\end{aligned}
		\end{equation*}
		Thus we get
		\begin{equation}\label{EstimateA}
			\frac{d}{dt}\mathcal{J}_{0, 4} + |A(t)|^2 \lesssim \|e^{\delta_0{\mathsf{H}}}g_1\|_{L_{x, v}^2}^2 + \|b_\mathsf{n}e^{\widetilde{\Phi}}\|_{L^2(V_{\delta_2})}^2 + \|a_\mathsf{n}e^{\widetilde{\Phi}}\|_{L^2(V_{\delta_3})}^2 + \|a_\mathsf{n}e^{\widetilde{\Phi}}\|_{L^2(V_{\delta_3})}\mathsf{E} + |b_\mathfrak{R}(t)|^2,
		\end{equation}
		with
		\begin{equation*}
		\begin{aligned}
			|\mathcal{J}_{0, 4}| &= 2|(\Lambda_{\delta_4}^{-\frac{1}{2}}(a_\mathsf{n}e^{\widetilde{\Phi}}), \tilde{e}_5(x) \cdot b_\mathfrak{R}(t))_{L^2(V_{\delta_4})} - A(t)b_\mathfrak{R}(t)| \\
			&\lesssim \|a_\mathsf{n}e^{\widetilde{\Phi}}\|_{L^2(V_{\delta_3})}|b_\mathfrak{R}(t)| + |A(t)|^2 + |b_\mathfrak{R}(t)|^2 \lesssim \mathsf{E}^2.
		\end{aligned}
		\end{equation*}

Patching together \eqref{Estimatebr}, \eqref{Estimatebs}, \eqref{Estimateas} and \eqref{EstimateA}, we conclude that there exists a constant $C>0$ such that
		\begin{equation*}
		\begin{aligned}
			&\frac{d}{dt}\mathcal{J}_{0, 1} + |b_\mathfrak{r}(t)|^2 \le C(\|b_\mathsf{n}e^{\widetilde{\Phi}}\|_{L^2(V_{\delta_2})}^2 + \|a_\mathsf{n}e^{\widetilde{\Phi}}\|_{L^2(V_{\delta_3})}^2 + \|a_\mathsf{n}e^{\widetilde{\Phi}}\|_{L^2(V_{\delta_3})}\mathsf{E});\\
			&\frac{d}{dt}\mathcal{J}_{0, 2} + |b_\mathfrak{R}(t)|^2 \le C(\|b_\mathsf{n}e^{\widetilde{\Phi}}\|_{L^2(V_{\delta_2})}^2 + \|a_\mathsf{n}e^{\widetilde{\Phi}}\|_{L^2(V_{\delta_3})}^2 + \|a_\mathsf{n}e^{\widetilde{\Phi}}\|_{L^2(V_{\delta_3})}\mathsf{E});\\
			&\frac{d}{dt}\mathcal{J}_{0, 3} + |a_\mathfrak{R}(t)|^2 \le C(\|b_\mathsf{n}e^{\widetilde{\Phi}}\|_{L^2(V_{\delta_2})}^2 + \|a_\mathsf{n}e^{\widetilde{\Phi}}\|_{L^2(V_{\delta_3})}^2 + \|a_\mathsf{n}e^{\widetilde{\Phi}}\|_{L^2(V_{\delta_3})}\mathsf{E} + |b_\mathfrak{r}(t)|^2);\\
			&\frac{d}{dt}\mathcal{J}_{0, 4} + |A(t)|^2 \le C(\|e^{\delta_0{\mathsf{H}}}g_1\|_{L_{x, v}^2}^2 + \|b_\mathsf{n}e^{\widetilde{\Phi}}\|_{L^2(V_{\delta_2})}^2 + \|a_\mathsf{n}e^{\widetilde{\Phi}}\|_{L^2(V_{\delta_3})}^2 + \|a_\mathsf{n}e^{\widetilde{\Phi}}\|_{L^2(V_{\delta_3})}\mathsf{E} + |b_\mathfrak{R}(t)|^2).
		\end{aligned}
		\end{equation*}
		Set
		\begin{equation*}
			\mathcal{J}_0 = (C + 1)(\mathcal{J}_{0, 1} + \mathcal{J}_{0, 2}) + \mathcal{J}_{0, 3} + \mathcal{J}_{0, 4}.
		\end{equation*}
		Then $|\mathcal{J}_0| \lesssim \mathsf{E}^2$ and one may check that
			\begin{equation*}
		\begin{aligned}
			&\frac{d}{dt}\mathcal{J}_0 + |b_\mathfrak{r}(t)|^2 + |b_\mathfrak{R}(t)|^2 + |a_\mathfrak{R}(t)|^2 + |A(t)|^2 \\
			\lesssim& \|e^{\delta_0{\mathsf{H}}}g_1\|_{L_{x, v}^2}^2 + \|b_\mathsf{n}e^{\widetilde{\Phi}}\|_{L^2(V_{\delta_2})}^2 + \|a_\mathsf{n}e^{\widetilde{\Phi}}\|_{L^2(V_{\delta_3})}^2 + \|a_\mathsf{n}e^{\widetilde{\Phi}}\|_{L^2(V_{\delta_3})}\mathsf{E}.
		\end{aligned}
     \end{equation*}
	This completes the proof.
	\end{proof}

%%%%%%%%%%%%%%%%%%%%%%%%%%%%%%%%%%%%%%%%%%%%%%%%%%
So far, we have used the evolution equation \eqref{brbsAB+} to obtain the estimates for $b_\mathfrak{r},b_\mathfrak{R},a_\mathfrak{R}$ and $A$. Note that, up to this point, apart from using the conservation law \(\int_{\mathbb{R}^6} g(t, x, v) \, dx \, dv = 0\) to show that \(\mathscr{A}(a_\mathsf{n} e^{\widetilde{\Phi}}) = 0\), we still have four conservation equalities left(see \eqref{054}). We will use them to estimate the remaining four terms, i.e. $c_\mathfrak{r}$ and $B$.

\subsubsection{Estimates for $|c_\mathfrak{r}(t)| + |B(t)|$}
	\begin{lemma}[Conservation-controlled finite-dimensional modes]\label{EstimatecrB}
	It holds that
		\begin{equation*}
			|c_\mathfrak{r}(t)| + |B(t)| \lesssim \|b_\mathsf{n}e^{\widetilde{\Phi}}\|_{L^2(V_{\delta_2})} + \|a_\mathsf{n}e^{\widetilde{\Phi}}\|_{L^2(V_{\delta_3})} + |A(t)|.
		\end{equation*}
	\end{lemma}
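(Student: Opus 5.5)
The plan is to use the four conservation laws in \eqref{Conserveg} that have not yet been used, namely energy and the three angular momenta, and to express them through the macroscopic fields. After substituting the decomposition \eqref{abcremainingform}, we obtain a linear $4\times4$ algebraic system for $(c_\mathfrak r, B_{12}, B_{13}, B_{23})$. Its source terms are weighted integrals of $a_\mathsf n, b_\mathsf n$ and the already controlled modes.

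\textbf{Step 1 (energy law).} Only $g_0$ contributes to these moments. Indeed, the test functions $1$, $v$, $|v|^2$ times functions of $x$ are collision invariants, and $g_1$ is orthogonal to them. The energy law then reads
\[
\int\big[(\widetilde\Phi+|\mathsf Rx|^2)a+\mathsf Rx\cdot b+\tfrac12(3a+6c)\big]dx=0 .
\]
Writing $a=e^{-\widetilde\Phi}(ae^{\widetilde\Phi})$, this is an $\mathscr A$-average. Substituting \eqref{abcremainingform}, the odd terms vanish: $\mathscr A(x)=0$, $\mathscr A(x_ix_j)=0$ for $i\ne j$, and $\mathsf Rx\cdot x=0$. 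What remains expresses $c_\mathfrak r$ times a coefficient $\kappa_c$, plus a linear combination of $A$ and $B_{12}$ (through $T x\cdot x$ and $\mathsf Rx\cdot B x$), in terms of $\mathscr A$-integrals of $a_\mathsf n e^{\widetilde\Phi}$ and $b_\mathsf n e^{\widetilde\Phi}$ against polynomially growing weights. By Lemma \ref{lemma5.7}(2) and Cauchy--Schwarz, these integrals are $\lesssim\|a_\mathsf ne^{\widetilde\Phi}\|_{L^2(V_{\delta_3})}+\|b_\mathsf ne^{\widetilde\Phi}\|_{L^2(V_{\delta_2})}$.

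\textbf{Step 2 (angular momentum).} The moment $\int x\wedge(v+\mathsf Rx)g\,dv$ equals $x\wedge b+(x\wedge\mathsf Rx)(a+\dots)$, where the $c$-part comes from $|v|^2$-free terms; precisely, $x\wedge\mathsf Rx$ multiplies $a$. Substituting $be^{\widetilde\Phi}=b_\mathsf ne^{\widetilde\Phi}+b_\mathfrak r+Bx+b_\mathfrak R x$, the term $\mathscr A(x\wedge Bx)$ yields $B$ contracted with the positive definite matrix $\mathscr A(xx^\tau)$. The terms $x\wedge b_\mathfrak r$ and $x\wedge x$ vanish. The term $x\wedge\mathsf Rx\,(ae^{\widetilde\Phi})$ is a quadratic-in-$x$ weight times $a$; inserting the $a$-decomposition produces $c_\mathfrak r$, $A$ and $B$ terms through even moments such as $\mathscr A(x_1^2\widetilde\Phi)$, together with $a_\mathsf n$ contributions.

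\textbf{Step 3 (invertibility).} Combining Steps 1 and 2 gives $\mathbb M\,(c_\mathfrak r,B_{12},B_{13},B_{23})^\tau=O(\|a_\mathsf n\|+\|b_\mathsf n\|+|A|)$. The main obstacle is showing that the $4\times4$ matrix $\mathbb M$ is invertible. For $r=0$ it is diagonal, with entries $\kappa_c$ and $\mathscr A(x_i^2+x_j^2)>0$. Here one needs $\kappa_c=\mathscr A\big(2\widetilde\Phi(\widetilde\Phi-\mathscr A\widetilde\Phi)\big)+3\neq0$, which holds since it equals $2\,\mathrm{Var}(\widetilde\Phi)+3>0$. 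For $r>0$, $B_{13}$ and $B_{23}$ decouple by the reflection symmetries $x_3\mapsto-x_3$ and rotation in the $(x_1,x_2)$ plane, leaving a $2\times2$ block in $(c_\mathfrak r,B_{12})$. For that block I would compute the determinant with the integration-by-parts identities analogous to \eqref{I0I1I2}, and show it is positive by a Cauchy--Schwarz (Lagrange-type, Lemma \ref{LagrangeIneq}) argument. If this direct approach fails, the fallback is to argue that a nontrivial kernel would produce a nonzero stationary mode with vanishing conserved moments, which contradicts the positive definiteness of $G_M$ in Proposition \ref{onetoone2}. Inverting $\mathbb M$ then gives the lemma.
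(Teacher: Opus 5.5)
Your proposal is correct and follows the paper's proof. The paper also uses the four remaining conservation laws and solves the $(2,3)$ and $(1,3)$ angular-momentum laws directly for $B_{23}$ and $B_{13}$; for instance, the coefficient of $B_{23}$ is $\mathscr A(x_2^2+x_3^2+r^2x_1^2x_3^2)>0$. It then reduces $(c_\mathfrak r,B_{12})$ to a $2\times2$ system whose determinant is positive by the covariance Cauchy--Schwarz inequality of Lemma \ref{LagrangeIneq}.

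The differences are cosmetic. The paper first adds $r$ times the $(1,2)$ angular-momentum law to the energy law to obtain $\int(\widetilde\Phi a+3c)\,dx=0$. Neither integration-by-parts identities like \eqref{I0I1I2} nor your fallback argument are needed.
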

	\begin{proof}
		Recall our conserved conditions \eqref{Conserveg} and \eqref{g0form}, we have
		\begin{equation}\label{054}
			\int_{\R^3}( ({\widetilde{\Phi}}(x) + |\mathsf{R}x|^2)a + \mathsf{R}x \cdot b + 3c, x \wedge b + x \wedge \mathsf{R}xa)dx = 0.
		\end{equation}\\
		$\bullet$ Estimates for $B_{13}(t)$ and $B_{23}(t)$. From \eqref{054} and \eqref{abcremainingform},
		\begin{equation*}
		\begin{aligned}
			0 &= C_{\widetilde{\Phi}}\int_{\R^3}(x \wedge b + x \wedge \mathsf{R}xa)_{23}dx = C_{\widetilde{\Phi}}\int_{\R^3}(x_2b_3 - x_3b_2 + x_2(\mathsf{R}x)_3a - x_3(\mathsf{R}x)_2a)dx \\
			&= C_{\widetilde{\Phi}}\int_{\R^3}(x_2b_3 - x_3b_2 + rx_1x_3a)dx = \mathscr{A}(x_2b_3e^{\widetilde{\Phi}}) - \mathscr{A}(x_3b_2e^{\widetilde{\Phi}}) + r\mathscr{A}(x_1x_3ae^{\widetilde{\Phi}}) \\
			&= \mathscr{A}(x_2b_{\mathsf{n}, 3}e^{\widetilde{\Phi}}) + B_{32}(t)\mathscr{A}(x_2^2) - \mathscr{A}(x_3b_{\mathsf{n}, 2}e^{\widetilde{\Phi}}) - B_{23}\mathscr{A}(x_3^2) +  r\mathscr{A}(x_1x_3a_\mathsf{n}e^{\widetilde{\Phi}}) - r\mathscr{A}(x_1x_3Tx \cdot x).
		\end{aligned}
		\end{equation*}	
		Recall \eqref{ComputeTxx}, we have
		\begin{equation}\label{ComputeTxx+}
			T(t)x \cdot x = -r(x_1^2 + x_2^2)B_{12}(t) - rx_2x_3B_{13}(t) + rx_1x_3B_{23}(t).
		\end{equation}
		Then we can derive that
		\begin{equation*}
			B_{23}(t) = \frac{\mathscr{A}(x_2b_{\mathsf{n}, 3}e^{\widetilde{\Phi}}) - \mathscr{A}(x_3b_{\mathsf{n}, 2}e^{\widetilde{\Phi}}) + r\mathscr{A}(x_1x_3a_\mathsf{n}e^{\widetilde{\Phi}})}{\mathscr{A}(x_2^2 + x_3^2 + r^2x_1^2x_3^2)},
		\end{equation*}
		which implies that $|B_{23}(t)| \lesssim \|b_\mathsf{n}e^{\widetilde{\Phi}}\|_{L^2(V_{\delta_2})} + \|a_\mathsf{n}e^{\widetilde{\Phi}}\|_{L^2(V_{\delta_3})}$ since $\mathscr{A}(x_2^2 + x_3^2 + r^2x_1^2x_3^2)>0$. By the similar calculations, the same estimates holds for $B_{13}(t)$ and we omit the details.\\
		$\bullet$ Estimates for $B_{12}(t)$ and $c_\mathfrak{r}(t)$. Conserved conditions \eqref{054} show that
		\begin{equation}
		\begin{aligned}
			0 &= \int_{\R^3}(x \wedge b + x \wedge \mathsf{R}xa)_{12}dx = \int_{\R^3}(x_1b_2 - x_2b_1 - r(x_1^2 + x_2^2)a)dx;\\\label{055}
			0 &= \int_{\R^3}({\widetilde{\Phi}} a + r^2(x_1^2 + x_2^2)a + r(x_2b_1 - x_1b_2) + 3c)dx.
		\end{aligned}
		\end{equation}
		Simplify to obtain that
		\begin{equation}\label{056}
			0 = \int_{\R^3}(x_1b_2 - x_2b_1 - r(x_1^2 + x_2^2)a)dx = \int_{\R^3}({\widetilde{\Phi}} a + 3c)dx.
		\end{equation}
		Now we use the first equality in \eqref{055}, \eqref{abcremainingform} and \eqref{ComputeTxx+} to compute that
		\begin{equation*}
		\begin{aligned}
			0 &= \mathscr{A}(x_1b_2e^{\widetilde{\Phi}}) - \mathscr{A}(x_2b_1e^{\widetilde{\Phi}}) - r\mathscr{A}((x_1^2 + x_2^2)ae^{\widetilde{\Phi}}) \\
			&= \mathscr{A}(x_1b_{\mathsf{n}, 2}e^{\widetilde{\Phi}}) + B_{21}(t)\mathscr{A}(x_1^2) - \mathscr{A}(x_2b_{\mathsf{n}, 1}e^{\widetilde{\Phi}}) - B_{12}(t)\mathscr{A}(x_2^2) - r\mathscr{A}((x_1^2 + x_2^2)a_\mathsf{n}e^{\widetilde{\Phi}}) \\
			&- 2r(\mathscr{A}((x_1^2 + x_2^2){\widetilde{\Phi}}) - \mathscr{A}(x_1^2 + x_2^2)\mathscr{A}({\widetilde{\Phi}}))c_\mathfrak{r}(t) + r(\mathscr{A}((x_1^2 + x_2^2){\widetilde{\Phi}}) - \mathscr{A}(x_1^2 + x_2^2)\mathscr{A}({\widetilde{\Phi}}))A(t) \\
			& -r^2B_{12}(t)\mathscr{A}((x_1^2 + x_2^2)^2) + r^2B_{12}(t)|\mathscr{A}(x_1^2 + x_2^2)|^2.
		\end{aligned}
		\end{equation*}
It leads to
		\begin{multline}\label{crBeq1}
			[r^2(\mathscr{A}((x_1^2 + x_2^2)^2) - |\mathscr{A}(x_1^2 + x_2^2)|^2) + \mathscr{A}(x_1^2 + x_2^2)]B_{12}(t) + 2r(\mathscr{A}((x_1^2 + x_2^2){\widetilde{\Phi}}) - \mathscr{A}(x_1^2 + x_2^2)\mathscr{A}({\widetilde{\Phi}}))c_\mathfrak{r}(t) \\= \mathscr{A}(x_1b_{\mathsf{n}, 2}e^{\widetilde{\Phi}}) - \mathscr{A}(x_2b_{\mathsf{n}, 1}e^{\widetilde{\Phi}}) - r\mathscr{A}((x_1^2 + x_2^2)a_ne^{\widetilde{\Phi}}) + r(\mathscr{A}((x_1^2 + x_2^2){\widetilde{\Phi}}) - \mathscr{A}(x_1^2 + x_2^2)\mathscr{A}({\widetilde{\Phi}}))A(t).
		\end{multline}
		Next we use \eqref{056}, \eqref{abcremainingform} and \eqref{ComputeTxx+} to compute that
		\begin{equation*}
		\begin{aligned}
			0 &= \mathscr{A}({\widetilde{\Phi}} ae^{\widetilde{\Phi}}) + 3\mathscr{A}(ce^{\widetilde{\Phi}}) = \mathscr{A}(a_\mathsf{n}e^{\widetilde{\Phi}}) + (2\mathscr{A}({\widetilde{\Phi}}^2) - 2|\mathscr{A}({\widetilde{\Phi}})|^2 + 3)c_\mathfrak{r}(t) \\
			&- (\mathscr{A}(|x|^2{\widetilde{\Phi}}) - \mathscr{A}(|x|^2)\mathscr{A}({\widetilde{\Phi}}))A(t) + rB_{12}(t)\mathscr{A}((x_1^2 + x_2^2){\widetilde{\Phi}}) - rB_{12}(t)\mathscr{A}(x_1^2 + x_2^2)\mathscr{A}({\widetilde{\Phi}}).
		\end{aligned}
		\end{equation*}
	It leads to
		\begin{multline}\label{crBeq2}
			r(\mathscr{A}((x_1^2 + x_2^2){\widetilde{\Phi}}) - \mathscr{A}(x_1^2 + x_2^2)\mathscr{A}({\widetilde{\Phi}}))B_{12}(t) + (2\mathscr{A}({\widetilde{\Phi}}^2) - 2|\mathscr{A}({\widetilde{\Phi}})|^2 + 3)c_\mathfrak{r}(t) \\= -\mathscr{A}(a_\mathsf{n}e^{\widetilde{\Phi}}) + (\mathscr{A}(|x|^2{\widetilde{\Phi}}) - \mathscr{A}(|x|^2)\mathscr{A}({\widetilde{\Phi}}))A(t).
		\end{multline}
		Put together \eqref{crBeq1} and \eqref{crBeq2}. Observing that the right hand side can be bounded by $\|b_\mathsf{n}e^{\widetilde{\Phi}}\|_{L^2(V_{\delta_2})} + \|a_\mathsf{n}e^{\widetilde{\Phi}}\|_{L^2(V_{\delta_3})} + |A(t)|$, thus we need to prove that the determinant of their coefficient matrix is non-zero, i.e.,
		\begin{multline}\label{crBeq3}
			[r^2(\mathscr{A}((x_1^2 + x_2^2)^2) - |\mathscr{A}(x_1^2 + x_2^2)|^2) + \mathscr{A}(x_1^2 + x_2^2)][2\mathscr{A}({\widetilde{\Phi}}^2) - 2|\mathscr{A}({\widetilde{\Phi}})|^2 + 3] \\> 2r^2[\mathscr{A}((x_1^2 + x_2^2){\widetilde{\Phi}}) - \mathscr{A}(x_1^2 + x_2^2)\mathscr{A}({\widetilde{\Phi}})]^2.
		\end{multline}
		For every $f=f(x)$, the Cauchy--Schwarz inequality gives
		\begin{equation*}
			\mathscr{A}(f^2) - |\mathscr{A}(f)|^2 = \frac{\int_{\R^3} f^2e^{-{\widetilde{\Phi}}}dx}{\int_{\R^3} e^{-{\widetilde{\Phi}}}dx} - \Big(\frac{\int_{\R^3} fe^{-{\widetilde{\Phi}}}dx}{\int_{\R^3} e^{-{\widetilde{\Phi}}}dx}\Big)^2 = \frac{\int_{\R^3} f^2e^{-{\widetilde{\Phi}}}dx\int_{\R^3} e^{-{\widetilde{\Phi}}}dx - (\int_{\R^3} fe^{-{\widetilde{\Phi}}}dx)^2}{(\int_{\R^3} e^{-{\widetilde{\Phi}}}dx)^2} \ge 0.
		\end{equation*}
		Thus \eqref{crBeq3} holds for $r = 0$. If $r> 0$, it is sufficient to prove
		\begin{equation}\label{101}
			[\mathscr{A}((x_1^2 + x_2^2)^2) - |\mathscr{A}(x_1^2 + x_2^2)|^2][\mathscr{A}({\widetilde{\Phi}}^2) - |\mathscr{A}({\widetilde{\Phi}})|^2] \ge [\mathscr{A}((x_1^2 + x_2^2){\widetilde{\Phi}}) - \mathscr{A}(x_1^2 + x_2^2)\mathscr{A}({\widetilde{\Phi}})]^2,
		\end{equation}
		which is equivalent to
		\begin{multline*}
			\Big[\int_{\R^3}(x_1^2 + x_2^2)^2e^{-{\widetilde{\Phi}}}dx\int_{\R^3} e^{-{\widetilde{\Phi}}}dx - \Big(\int_{\R^3}(x_1^2 + x_2^2)e^{-{\widetilde{\Phi}}}dx\Big)^2\Big]\Big[\int_{\R^3}{\widetilde{\Phi}}^2e^{-{\widetilde{\Phi}}}dx\int_{\R^3} e^{-{\widetilde{\Phi}}}dx - \Big(\int_{\R^3}{\widetilde{\Phi}} e^{-{\widetilde{\Phi}}}dx\Big)^2\Big] \\\ge \Big[\int_{\R^3}(x_1^2 + x_2^2){\widetilde{\Phi}} e^{-{\widetilde{\Phi}}}dx\int_{\R^3} e^{-{\widetilde{\Phi}}}dx - \int_{\R^3}(x_1^2 + x_2^2)e^{-{\widetilde{\Phi}}}dx\int_{\R^3}{\widetilde{\Phi}} e^{-{\widetilde{\Phi}}}dx\Big]^2.
		\end{multline*}
		This can be directly obtained from Lemma \ref{LagrangeIneq}. 
		
		 We  complete the proof of this lemma by combining all above estimates.
	\end{proof}
%%%%%%%%%%%%%%%%%%%%%%%%%%%%%%%%%%%%%%%%%%%%%%%%%%
We now prove Lemma \ref{EstimatecrbrbsasAB}.
\begin{proof}[Proof of Lemma \ref{EstimatecrbrbsasAB}]
The conclusion follows by combining Lemmas \ref{EstimatebrbsasA} and \ref{EstimatecrB}.
\end{proof}

	\subsection{Closing the macroscopic coercivity estimate}
We now combine the preceding lemmas to obtain the macroscopic estimate.
	\begin{proposition}[Macroscopic coercivity]\label{060}
	There exists a function $\mathcal{J} = \mathcal{J}(t)$ satisfying $|\mathcal{J}(t)| \lesssim \mathsf{E}^2$ and
	\begin{equation}\label{059}
		\frac{d}{dt}\mathcal{J} + \mathsf{E}^2 \lesssim \|e^{\delta_0{\mathsf{H}}}g_1\|_{L_{x, v}^2}^2,
	\end{equation}
	where $g_1$ is the microscopic part of $g$.
\end{proposition}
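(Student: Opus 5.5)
We build $\mathcal J$ as a weighted sum of the functionals already constructed and close the estimate by a cascade of small constants. Set
\[
\mathcal J:=\eta_1\mathcal J_c+\eta_2\mathcal J_b+\eta_3\mathcal J_a+\eta_4\mathcal J_0,
\]
with $1\gg\eta_1\gg\eta_2\gg\eta_3\gg\eta_4>0$ (or the reverse ordering, depending on which terms feed which, as explained below). We first check $|\mathcal J|\lesssim\mathsf E^2$. Each $\mathcal J_c,\mathcal J_b,\mathcal J_a$ pairs a quantity controlled by Lemma \ref{MicroBound}(1), by $\|b_\mathsf n e^{\widetilde\Phi}\|_{L^2(V_{\delta_2})}$, or by $\|c_\mathsf n e^{\widetilde\Phi}\|_{L^2(V_{\delta_1})}$ against $\Lambda_{\delta_i}^{-1}\nabla_x(\cdot)$ of an $\mathsf E$-controlled field. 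The bound then follows from Lemma \ref{BoundedLambda}, Lemma \ref{lemma5.7}(3), and the same duality trick used in \eqref{027}. For $\mathcal J_0$ the bound is given directly by Lemma \ref{EstimatecrbrbsasAB}.

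Next we treat the right-hand sides of Lemma \ref{Estimateanbncn}. The cross terms involving $\partial_t\nabla_x(\cdot)$ are bounded by $\mathsf E$ via Lemma \ref{Estimateptanbncn}. Each therefore becomes (small quantity)$\times\mathsf E$, for instance $\|e^{\delta_0\mathsf H}g_1\|\,\mathsf E$, which Young's inequality turns into $\epsilon\mathsf E^2+C_\epsilon\|e^{\delta_0\mathsf H}g_1\|^2$. The left-hand dissipations are converted into $L^2$ norms by the weighted inequalities:
\begin{itemize}
\item $c_\mathsf n e^{\widetilde\Phi}$ has zero $\mathscr A$-mean, so Lemma \ref{VPI} gives $\|c_\mathsf n e^{\widetilde\Phi}\|_{L^2(V_{\delta_1})}\lesssim\|\Lambda_{\delta_1}^{-1/2}\nabla_x(c_\mathsf n e^{\widetilde\Phi})\|$;
\item $b_\mathsf n e^{\widetilde\Phi}$ satisfies the orthogonality conditions of Lemma \ref{VPKI};
\item $a_\mathsf n e^{\widetilde\Phi}$ has zero mean by the mass law, so Lemma \ref{VPI} applies to it as well.
\end{itemize}

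The three estimates are triangular. The $c$-estimate has only $g_1$ on its right-hand side. The $b$-estimate has $\|c_\mathsf n\|^2$ on its right-hand side, which is absorbed by taking the $c$-functional with a large weight relative to $b$. The $a$-estimate has $\|c_\mathsf n\|^2+\|b_\mathsf n\|^2$, and is handled the same way. So the weights are ordered as $\eta_c\gg\eta_b\gg\eta_a$.

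The remainder estimate of Lemma \ref{EstimatecrbrbsasAB} contains $\|b_\mathsf n\|^2+\|a_\mathsf n\|^2+\|a_\mathsf n\|\mathsf E$ on its right-hand side. We give it the smallest weight $\eta_0$. The cross term $\eta_0\|a_\mathsf n\|\mathsf E$ is bounded by $\eta_0^{1/2}\|a_\mathsf n\|^2+\eta_0^{3/2}\mathsf E^2$, so the $a_\mathsf n$-dissipation of weight $\eta_a$ absorbs it once $\eta_0\ll\eta_a^2$. Lemma \ref{EstimatecrbrbsasAB} then controls $|c_\mathfrak r|^2,|b_\mathfrak r|^2,|b_\mathfrak R|^2,|a_\mathfrak R|^2,|A|^2,|B|^2$ (with $c_\mathfrak r$ and $B$ coming from Lemma \ref{EstimatecrB}). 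Summing everything gives
\[
\frac{d}{dt}\mathcal J+\kappa\big(\mathsf E^2-\|e^{\delta_0\mathsf H}g_1\|^2\big)\le\epsilon\,\mathsf E^2+C\|e^{\delta_0\mathsf H}g_1\|^2 .
\]
Adding $\|e^{\delta_0\mathsf H}g_1\|^2$ to both sides and choosing $\epsilon\ll\kappa$ yields \eqref{059}.

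The main obstacle is bookkeeping the Young's-inequality $\epsilon\mathsf E^2$ terms. Each $\epsilon$ must be chosen after $\kappa$, and $\kappa$ itself depends on the smallest weight $\eta_0$. The hierarchy therefore has to be fixed first: $\eta_c=1$, then $\eta_b$, $\eta_a$, $\eta_0$ successively small. Only then are the $\epsilon$'s chosen, each relative to $\eta_0$. Every $\epsilon$-term comes paired with a $C_\epsilon\|g_1\|^2$ term, so this choice is always possible.
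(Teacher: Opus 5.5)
Your architecture is the paper's: the same four functionals, the Poincar\'e/Poincar\'e--Korn conversion of the dissipations, Lemma \ref{Estimateptanbncn} for the $\partial_t$ cross terms, and weights ordered $c\gg b\gg a\gg 0$. The gap is in the closing step, which is the whole content of this proof. The cross terms in Lemma \ref{Estimateanbncn} are not all of the form $\|e^{\delta_0\mathsf H}g_1\|\,\mathsf E$. After Lemma \ref{Estimateptanbncn}, the $b$-estimate contains $\|c_\mathsf n e^{\widetilde\Phi}\|_{L^2(V_{\delta_1})}\mathsf E$ and the $a$-estimate contains $\|b_\mathsf n e^{\widetilde\Phi}\|_{L^2(V_{\delta_2})}\mathsf E$. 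Young's inequality turns these into $C_\epsilon\|c_\mathsf n\|^2$ and $C_\epsilon\|b_\mathsf n\|^2$, not $C_\epsilon\|g_1\|^2$. So your assertion that every $\epsilon$-term is paired with a $g_1$-term, and hence that the $\epsilon$'s can be chosen last, is false.

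Moreover, $\mathsf E^2$ contains the remainder modes $c_\mathfrak r,b_\mathfrak r,b_\mathfrak R,a_\mathfrak R,A,B$, which are dissipated only through $\eta_0\mathcal J_0$. Write $\eta_b\|c_\mathsf n\|\mathsf E\le\theta\|c_\mathsf n\|^2+\frac{\eta_b^2}{4\theta}\mathsf E^2$. Absorption requires $\theta\lesssim\eta_c=1$ and $\eta_b^2/\theta\ll\eta_0$, i.e.\ $\eta_b^2\ll\eta_0$. Likewise, $\eta_a\|b_\mathsf n\|\mathsf E$ forces $\eta_a^2\ll\eta_0\eta_b$. A merely ``successively small'' hierarchy does not give this. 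Your own requirement $\eta_0\ll\eta_a^2$ is actually incompatible with it: $\eta_b^2\ll\eta_0\ll\eta_a^2$ would give $\eta_b\ll\eta_a$, contradicting $\eta_a\ll\eta_b$.

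The repair is what the paper does:
\begin{enumerate}
\item Split $\eta_0\|a_\mathsf n\|\mathsf E\le\frac{\eta_a}{4}\|a_\mathsf n\|^2+C\frac{\eta_0^2}{\eta_a}\mathsf E^2$, which needs only $\eta_0\ll\eta_a$ rather than your $\eta_0\ll\eta_a^2$.
\item Split the other two cross terms as $\frac14\|c_\mathsf n\|^2+C\eta_b^2\mathsf E^2$ and $\frac{\eta_b}{4}\|b_\mathsf n\|^2+C\frac{\eta_a^2}{\eta_b}\mathsf E^2$.
\item Choose the power-law hierarchy $\eta_b=\eps^{2/3}$, $\eta_a=\eps^{6/7}$, $\eta_0=\eps$.
\end{enumerate}
Then the $\mathsf E^2$-errors have sizes $\eps^{4/3}$, $\eps^{22/21}$, $\eps^{8/7}$, plus $\eps^2$ from the $g_1$ cross terms. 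All of these are $o(\eps)$. The summed dissipation controls $\frac14\eps\big(\mathsf E^2-\|e^{\delta_0\mathsf H}g_1\|^2\big)$, so the estimate closes.
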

\begin{proof}
Lemmas \ref{Estimateanbncn} and \ref{Estimateptanbncn} give
	\begin{equation}\label{057}
	\begin{aligned}
		&\frac{d}{dt}\mathcal{J}_c + \frac{1}{2}\|\Lambda_{\delta_1}^{-\frac{1}{2}}\nabla_x(c_\mathsf{n}e^{\widetilde{\Phi}})\|_{L^2(V_{\delta_1})}^2
		\lesssim \|e^{\delta_0{\mathsf{H}}}g_1\|_{L_{x, v}^2}^2 + \|e^{\delta_0{\mathsf{H}}}g_1\|_{L_{x, v}^2}\mathsf{E};\\
		&\frac{d}{dt}\mathcal{J}_b + \frac{1}{2}\|\Lambda_{\delta_2}^{-\frac{1}{2}}\nabla_x^{sym}(b_\mathsf{n}e^{\widetilde{\Phi}})\|_{L^2(V_{\delta_2})}^2
		\lesssim \|e^{\delta_0{\mathsf{H}}}g_1\|_{L_{x, v}^2}^2 + \|c_\mathsf{n}e^{\widetilde{\Phi}}\|_{L^2(V_{\delta_1})}^2 \\
		&\qquad\qquad\qquad\qquad\qquad\qquad+ (\|e^{\delta_0{\mathsf{H}}}g_1\|_{L_{x, v}^2} + \|c_\mathsf{n}e^{\widetilde{\Phi}}\|_{L^2(V_{\delta_1})})\mathsf{E};\\
		&\frac{d}{dt}\mathcal{J}_a + \frac{1}{2}\|\Lambda_{\delta_3}^{-\frac{1}{2}}\nabla_x(a_\mathsf{n}e^{\widetilde{\Phi}})\|_{L^2(V_{\delta_3})}^2
		\lesssim \|e^{\delta_0{\mathsf{H}}}g_1\|_{L_{x, v}^2}^2 + \|c_\mathsf{n}e^{\widetilde{\Phi}}\|_{L^2(V_{\delta_1})}^2 + \|b_\mathsf{n}e^{\widetilde{\Phi}}\|_{L^2(V_{\delta_2})}^2\\
		&\qquad\qquad\qquad\qquad\qquad\qquad+(\|e^{\delta_0{\mathsf{H}}}g_1\|_{L_{x, v}^2} + \|b_\mathsf{n}e^{\widetilde{\Phi}}\|_{L^2(V_{\delta_2})})\mathsf{E}.
	\end{aligned}
	\end{equation}
	Moreover, we can also derive that
	\begin{equation}\label{058}
	\begin{aligned}
		|\mathcal{J}_c| &= |(k_d, \Lambda_{\delta_1}^{-1}\nabla_x(c_\mathsf{n}e^{\widetilde{\Phi}}))_{L^2(V_{\delta_1})}| \lesssim \|e^{\delta_0{\mathsf{H}}}g_1\|_{L_{x, v}^2}\|c_\mathsf{n}e^{\widetilde{\Phi}}\|_{L^2(V_{\delta_1})} \lesssim \mathsf{E}^2;\\
		|\mathcal{J}_b| &= |(K - \mathbb{I}_{3 \times 3}c_\mathsf{n}e^{\widetilde{\Phi}}, \Lambda_{\delta_2}^{-1}\nabla_x^{sym}(b_\mathsf{n}e^{\widetilde{\Phi}}))_{L^2(V_{\delta_2})}| \\
		&\lesssim (\|e^{\delta_0{\mathsf{H}}}g_1\|_{L_{x, v}^2} + \|c_\mathsf{n}e^{\widetilde{\Phi}}\|_{L^2(V_{\delta_1})})\|b_\mathsf{n}e^{\widetilde{\Phi}}\|_{L^2(V_{\delta_2})} \lesssim \mathsf{E}^2;\\
		|\mathcal{J}_a| &= |(\mathscr{A}(K^*)x - 2k_d - b_\mathsf{n}e^{\widetilde{\Phi}}, \Lambda_{\delta_3}^{-1}\nabla_x(a_\mathsf{n}e^{\widetilde{\Phi}}))_{L^2(V_{\delta_3})}| \\
		&\lesssim (\|e^{\delta_0{\mathsf{H}}}g_1\|_{L_{x, v}^2} + \|b_\mathsf{n}e^{\widetilde{\Phi}}\|_{L^2(V_{\delta_1})})\|a_\mathsf{n}e^{\widetilde{\Phi}}\|_{L^2(V_{\delta_2})} \lesssim \mathsf{E}^2.
	\end{aligned}
	\end{equation}
	We have already verified the hypotheses of the weighted Poincar\'e and Poincar\'e--Korn inequalities for $c_\mathsf n e^{\widetilde\Phi}$, $b_\mathsf n e^{\widetilde\Phi}$, and $a_\mathsf n e^{\widetilde\Phi}$. Hence \eqref{057} and Lemmas \ref{VPI}--\ref{VPKI} imply
	\begin{equation}\label{090}
		\begin{aligned}
		\frac{d}{dt}\mathcal{J}_c + \|e^{\delta_1{\widetilde{\Phi}}}c_\mathsf{n}\|_{L_x^2}^2 &\le C\|e^{\delta_0{\mathsf{H}}}g_1\|_{L_{x, v}^2}^2 + C\|e^{\delta_0{\mathsf{H}}}g_1\|_{L_{x, v}^2}\mathsf{E};\\
		\frac{d}{dt}\mathcal{J}_b + \|e^{\delta_2{\widetilde{\Phi}}}b_\mathsf{n}\|_{L_x^2}^2 &\le C\|e^{\delta_0{\mathsf{H}}}g_1\|_{L_{x, v}^2}^2 + C\|e^{\delta_1{\widetilde{\Phi}}}c_\mathsf{n}\|_{L_x^2}^2 + (\|e^{\delta_0{\mathsf{H}}}g_1\|_{L_{x, v}^2} + \|e^{\delta_1{\widetilde{\Phi}}}c_\mathsf{n}\|_{L_x^2})\mathsf{E};\\
		\frac{d}{dt}\mathcal{J}_a + \|e^{\delta_3{\widetilde{\Phi}}}a_\mathsf{n}\|_{L_x^2}^2 &\le C\|e^{\delta_0{\mathsf{H}}}g_1\|_{L_{x, v}^2}^2 + C\|e^{\delta_1{\widetilde{\Phi}}}c_\mathsf{n}\|_{L_x^2}^2 + C\|e^{\delta_2{\widetilde{\Phi}}}b_\mathsf{n}\|_{L_x^2}^2 + C(\|e^{\delta_0{\mathsf{H}}}g_1\|_{L_{x, v}^2} + \|e^{\delta_2{\widetilde{\Phi}}}b_\mathsf{n}\|_{L_x^2})\mathsf{E},
		\end{aligned}
	\end{equation}
where $C>0$ is a constant.
	
	On the other hand, Lemma \ref{EstimatecrbrbsasAB} gives
	\begin{equation}\label{MacroEnergy4}
		\frac{d}{dt}\mathcal{J}_0 + \mathcal{R}^2 \le C\|e^{\delta_0{\mathsf{H}}}g_1\|_{L_{x, v}^2}^2 + C\|e^{\delta_2{\widetilde{\Phi}}}b_\mathsf{n}\|_{L_x^2}^2 + C\|e^{\delta_3{\widetilde{\Phi}}}a_\mathsf{n}\|_{L_x^2}^2 + C\|e^{\delta_3{\widetilde{\Phi}}}a_\mathsf{n}\|_{L_x^2}\mathsf{E},
	\end{equation}
	where $|\mathcal{J}_0(t)| \lesssim \mathsf{E}^2$ and 
	\begin{equation}\label{DefiR}
		\mathcal{R}^2 := |c_\mathfrak{r}(t)|^2 + |b_\mathfrak{r}(t)|^2 + |b_\mathfrak{R}(t)|^2 + |a_\mathfrak{R}(t)|^2 + |A(t)|^2 + |B(t)|^2.
	\end{equation}
Choose $\eps>0$ and set $\eps_1=\eps^{2/3}$, $\eps_2=\eps^{6/7}$, and $\eps_3=\eps$. For brevity, write
\[
\mathcal G_0=\|e^{\delta_0\mathsf H}g_1\|_{L_{x,v}^2}^2,
\quad \mathcal C_n=\|e^{\delta_1\widetilde\Phi}c_\mathsf n\|_{L_x^2}^2,
\quad \mathcal B_n=\|e^{\delta_2\widetilde\Phi}b_\mathsf n\|_{L_x^2}^2,
\quad \mathcal A_n=\|e^{\delta_3\widetilde\Phi}a_\mathsf n\|_{L_x^2}^2.
\]
The Cauchy--Schwarz inequality and \eqref{090} give
\begin{equation*}
	\begin{aligned}
	&\frac{d}{dt}\mathcal{J}_c+\mathcal C_n
	\le \left(C+\frac C{\eps\eps_3}\right)\mathcal G_0
	+C\eps\eps_3\mathsf E^2;\\
	&\frac{d}{dt}(\eps_1\mathcal{J}_b)+\eps_1\mathcal B_n
	\le \left(C\eps_1+\frac{C\eps_1^2}{\eps\eps_3}\right)\mathcal G_0
	+\left(C\eps_1+\frac14\right)\mathcal C_n
	+(C\eps\eps_3+C\eps_1^2)\mathsf E^2;\\
	&\frac{d}{dt}(\eps_2\mathcal{J}_a)+\eps_2\mathcal A_n
	\le \left(C\eps_2+\frac{C\eps_2^2}{\eps\eps_3}\right)\mathcal G_0
	+C\eps_2\mathcal C_n+\left(C\eps_2+\frac{\eps_1}{4}\right)\mathcal B_n\\
	&\hspace{57mm}
	+\left(C\eps\eps_3+\frac{C^2\eps_2^2}{\eps_1}\right)\mathsf E^2;\\
	&\frac{d}{dt}(\eps_3\mathcal{J}_0)+\eps_3\mathcal R^2
	\le C\eps_3\mathcal G_0+C\eps_3\mathcal B_n
	+\left(C\eps_3+\frac{\eps_2}{4}\right)\mathcal A_n
	+\frac{C^2\eps_3^2}{\eps_2}\mathsf E^2.
	\end{aligned}
\end{equation*}
Adding these inequalities yields
\begin{equation*}
	\begin{aligned}
		&\frac{d}{dt}(\mathcal{J}_c+\eps_1\mathcal{J}_b
		+\eps_2\mathcal{J}_a+\eps_3\mathcal{J}_0)
		+\left(\frac34-C(\eps_1+\eps_2)\right)\mathcal C_n\\
		&\quad+\left(\frac34\eps_1-C\eps_2-C\eps_3\right)\mathcal B_n
		+\frac{3\eps_2}{4}\mathcal A_n+\eps_3\mathcal R^2\\
		&\le \left(C\eps\eps_3+C\eps_1^2
		+\frac{C^2\eps_2^2}{\eps_1}+\frac{C^2\eps_3^2}{\eps_2}\right)
		(\mathsf E^2-\mathcal G_0)
		+C_{\eps,\eps_1,\eps_2,\eps_3}\mathcal G_0.
	\end{aligned}
\end{equation*}
For sufficiently small $\eps>0$,
\beno
\f34-C(\eps_1+\eps_2)>\f14,~~\f34\eps_1-C\eps_2-C\eps_3>\f14\eps_1.
\eeno
Moreover, it also holds that
\beno
\Big(C\eps\eps_3+C\eps_1^2+\f{C^2\eps_2^2}{\eps_1}+\f{C^2\eps_3^2}{\eps_2}\Big)\leq C\eps_3^{\f{22}{21}}.
\eeno
Then by the definition of total energy $\mathsf{E}$ in \eqref{DefiE}, we can choose $\eps$ suitably small such that 
\beno
\f d {dt}\mathcal{J}+ (\mathsf{E}^2-\|e^{\delta_0{\mathsf{H}}}g_1\|_{L_{x, v}^2}^2)\ls \|e^{\delta_0{\mathsf{H}}}g_1\|_{L_{x, v}^2}^2,
\eeno
where $\mathcal{J}\sim\mathcal{J}_c+\eps_1\mathcal{J}_b+\eps_2\mathcal{J}_a+\eps_3\mathcal{J}_0$. Together with the upper bound of $\mathcal{J}_0$ in Lemma \ref{EstimatebrbsasA} and \eqref{058}, we conclude the desired result.
\end{proof}
%%%%%%%%%%%%%%%%%%%%%%%%%%%%%%%%%%%%%%%%%%%%%%%%%%

	\section{Algebraic relaxation modulo stationary modes}
	We now combine the spatially degenerate microscopic estimate of Section 4 with the transformed macroscopic closure of Section 6. The resulting weighted energy estimate first gives algebraic decay for zero-moment data in the normalized variables. We then return to the original variables and to arbitrary initial moments by means of the stationary projection.

\begin{proposition}[Decay for zero conserved moments]\label{Conv}
Let $M$ be a positive equilibrium of the form \eqref{071} or \eqref{072}, and suppose that
\[
\int_{\mathbb R^6}\chi f_0\,dx\,dv=0.
\]
Let $1/2<\lambda_2<\lambda_1<1$. If $M^{-\lambda_1}f_0\in L^2(\mathbb R^6)$, then the semigroup solution of \eqref{Lineareqf} satisfies, for every $\delta>0$,
\[
\|M^{-\lambda_2}f(t)\|_{L^2}
\le C_{M,B,\lambda_1,\lambda_2,\delta}
\langle t\rangle^{-\frac{(\lambda_1-\lambda_2)(p+2)}{2[1+\delta(p+2)]}}
\|M^{-\lambda_1}f_0\|_{L^2}.
\]
\end{proposition}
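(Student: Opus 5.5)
We reduce to the normalized problem of Section 3: with $g$ defined by \eqref{gform}, $g$ solves \eqref{Lineareqg} with the zero moments \eqref{Conserveg}. Since the transformation is invertible with parameter-dependent Jacobian, $\|M^{-\lambda}f(t)\|_{L^2}\sim\|e^{\lambda\mathsf H}g(ct)\|_{L^2}$ for a fixed $c>0$. It therefore suffices to prove the decay estimate for $\|e^{\lambda_2\mathsf H}g(t)\|_{L^2}$ in terms of $\|e^{\lambda_1\mathsf H}g_0\|_{L^2}$. We argue on strong solutions from the core $\mathscr C_M$ and pass to the limit at the end.

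\textbf{Step 1 (weighted energy identities).} For $\lambda\in(1/2,1)$ we test \eqref{Lineareqg} against $e^{2\lambda\mathsf H}g$. Since $\mathsf T\mathsf H=0$ and $\mathsf T$ is divergence free, the transport term vanishes. The collision term gives
\[
C_M\int e^{(2\lambda-1)\widetilde\Phi}(\mathsf Lg,e^{\lambda|v|^2}g)_{L^2_v}\,dx .
\]
By Lemma \ref{UpperBoundL2}, this is at most
\[
-c\int e^{(2\lambda-1)\widetilde\Phi}\normm{e^{\frac\lambda2|v|^2}g}^2\,dx+C\int e^{(2\lambda-1)\widetilde\Phi}\|g\|_{L^2_v}^2\,dx .
\]
At $\lambda=1/2$, Lemma \ref{UpperBoundL1} gives the clean bound
\[
\frac{d}{dt}\|e^{\mathsf H/2}g\|^2+c\int\normm{e^{\frac14|v|^2}g_1}^2 e^{0}\,dx\le0 .
\]
This shows that both weighted norms $\|e^{\lambda\mathsf H}g(t)\|$ are uniformly bounded in time, with the lower-order term absorbed by interpolation with the $\lambda=1/2$ estimate. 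In particular $\|e^{\lambda_1\mathsf H}g(t)\|\lesssim\|e^{\lambda_1\mathsf H}g_0\|$.

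\textbf{Step 2 (dissipation of the intermediate norm).} Set $\theta=\frac1{p+2}+\delta$ and $\lambda'=\lambda_2$. We form the functional
\[
\mathcal F=\|e^{\lambda'\mathsf H}g\|^2-\eta\int (x\cdot v)e^{2(\lambda'-\theta)\mathsf H}g^2+\eta'\mathcal J ,
\]
where $\mathcal J$ comes from Proposition \ref{060}, adapted to the weight $e^{\delta_0\mathsf H}$ with $\delta_0<\lambda'-\theta$. The time derivative of the middle term produces $\int\mathsf T(x\cdot v)e^{2(\lambda'-\theta)\mathsf H}g^2$ plus collision terms. Corollary \ref{WeightTransferLemma} then converts the microscopic dissipation $N\langle v\rangle^{-6}e^{(2\lambda'-1)\widetilde\Phi+\lambda'|v|^2}|g|^2$ into a full lower bound $C_1\|e^{(\lambda'-\theta)\mathsf H}g\|^2$, up to the $C_2\|g\|^2$ remainder. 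This requires the microscopic dissipation to control the macroscopic part too. Proposition \ref{060} supplies this, since the macroscopic part $g_0$ is bounded by $\mathsf E$ and $\mathsf E^2$ is dominated by $\|e^{\delta_0\mathsf H}g_1\|^2$, which the dissipation controls. Since $|x\cdot v|\lesssim\mathsf H^{\frac12+\frac1p}$ is absorbed at the cost of the $\delta$ loss, $\mathcal F\sim\|e^{\lambda'\mathsf H}g\|^2$. We aim for
\[
\frac{d}{dt}\mathcal F+c\|e^{(\lambda'-\theta)\mathsf H}g\|^2\le0 .
\]
Closing this inequality, with the $C_2$ and lower-order $L^2$ remainders swallowed by the macroscopic coercivity, is the main obstacle. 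We would first try handling the compactly supported low-energy region through Proposition \ref{060} and the cutoff region $|x|<M_\varepsilon$ of Lemmas \ref{WeightTransferLemma1}–\ref{WeightTransferLemma2}.

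\textbf{Step 3 (interpolation).} Set $X=\|e^{\lambda_2\mathsf H}g\|^2$, $Y=\|e^{(\lambda_2-\theta)\mathsf H}g\|^2$, and $Z=\|e^{\lambda_1\mathsf H}g\|^2\le C Z_0$. Hölder's inequality in the weight gives
\[
X\le Y^{\frac{\lambda_1-\lambda_2}{\lambda_1-\lambda_2+\theta}}Z^{\frac{\theta}{\lambda_1-\lambda_2+\theta}} .
\]
Substituting into $X'\le -cY$ yields $X'\le -cX^{1+\kappa}Z_0^{-\kappa}$ with $\kappa=\theta/(\lambda_1-\lambda_2)$. Hence
\[
X(t)\lesssim Z_0\langle t\rangle^{-1/\kappa},
\qquad
\|e^{\lambda_2\mathsf H}g\|\lesssim\langle t\rangle^{-\frac{\lambda_1-\lambda_2}{2\theta}}\|e^{\lambda_1\mathsf H}g_0\| .
\]
With $\theta=\frac{1+\delta(p+2)}{p+2}$, this is exactly the exponent in the statement. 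Undoing the normalization and the approximation argument completes the proof.
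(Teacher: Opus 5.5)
Your architecture is the paper's: normalization, the $e^{\lambda\mathsf H}$ energy, the virial corrector with Corollary~\ref{WeightTransferLemma}, macroscopic coercivity from Proposition~\ref{060}, and interpolation between $\lambda_1$ and $\lambda_2$. Your Step~3 reproduces the paper's final interpolation with the correct exponent. However, the inequality $\frac{d}{dt}\mathcal F+c\|e^{(\lambda-\theta)\mathsf H}g\|^2\le0$, which carries the whole proof, is only announced in your Step~2, and what you do write would not yield it.

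First, the corrector has the wrong sign. With $w=e^{2(\lambda-\theta)\mathsf H}$ one has $\frac{d}{dt}\int(x\cdot v)wg^2=\int\mathsf T(x\cdot v)\,wg^2+(\text{collision terms})$. The quantity bounded below in Corollary~\ref{WeightTransferLemma} is $-\mathsf T(x\cdot v)\approx|x|^p-|v|^2$. So the corrector must enter $\mathcal F$ with a plus sign; the paper uses $N\|e^{\lambda\mathsf H}g\|^2+\int(x\cdot v)wg^2$. Your $-\eta\int(x\cdot v)wg^2$ instead produces growth in the far field.

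Second, the uniform bound $\|e^{\lambda_1\mathsf H}g(t)\|\lesssim\|e^{\lambda_1\mathsf H}g_0\|$ does not follow from Step~1. The remainder $C\|e^{(\lambda-\frac12)\widetilde\Phi}g\|^2$ of Lemma~\ref{UpperBoundL2} involves the full $g$, macroscopic part included, with a growing spatial weight. The $\lambda=\frac12$ estimate only dissipates $g_1$, with no spatial weight. Bounding the remainder by the energy gives Gronwall growth. That bound is itself an output of the closed inequality run at $\lambda=\lambda_1$.

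The missing closing step goes as follows. Since $e^{\mathsf H}$ exactly cancels the prefactor $e^{-\widetilde\Phi}$, the $\lambda=\frac12$ identity (Lemma~\ref{UpperBoundL1}) gives spatially undegenerate control of $\|g_1\|_{L^2_{x,v}}^2$, and this is what must be fed back.

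Set $Y=\|e^{(\lambda-\theta)\mathsf H}g\|^2$. The transfer estimate gives $\frac{d}{dt}\mathcal F_N+Y\lesssim\|e^{(\lambda-\frac12)\widetilde\Phi}g\|^2+\|g\|^2$. Because $\lambda-\theta>\lambda-\frac12$, H\"older in the weight gives $\|e^{(\lambda-\frac12)\widetilde\Phi}g\|^2\le\varepsilon Y+C_\varepsilon\|g\|^2$. Then $\|g\|^2\lesssim\mathsf E^2$, and Proposition~\ref{060} gives $\frac{d}{dt}\mathcal J+\mathsf E^2\lesssim\|e^{\delta_0\mathsf H}g_1\|^2$.

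You assert that this last term is controlled by the dissipation. But the only spatially undegenerate dissipation, from $\lambda=\frac12$, carries no weight $e^{2\delta_0\widetilde\Phi}$. One must instead interpolate $\|e^{\delta_0\mathsf H}g_1\|^2\le\|e^{2\delta_0\mathsf H}g_1\|\,\|g_1\|\le\varepsilon Y+C_\varepsilon\|g_1\|^2$, using $2\delta_0<\frac16<\lambda-\theta$. The leftover $\|g_1\|^2$ is then absorbed by $C_2$ times the $\lambda=\frac12$ dissipation.

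Hence the functional must be $\mathcal F=\mathcal F_N+C_1\mathcal J+C_2\|e^{\frac12\mathsf H}g\|^2$. Your $\mathcal F$ lacks the last term, and without it the $\|g_1\|^2$ remainder cannot be closed. Since $|\mathcal J|\lesssim\mathsf E^2\lesssim\|e^{\frac12\mathsf H}g\|^2$, one still has $\mathcal F\sim\|e^{\lambda\mathsf H}g\|^2$ and $\frac{d}{dt}\mathcal F+\frac14Y\le0$. Applying this at both $\lambda_1$ and $\lambda_2$ then feeds your Step~3.
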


\begin{theorem}[Normalized zero-moment decay]\label{082}
Let $\mathcal{M}=C_Me^{-\widetilde{\Phi}(x)}\mu(v)$, where $C_M$ and $\widetilde{\Phi}$ are defined in \eqref{083} and \eqref{013}. Suppose that $g_0$ satisfies \eqref{Conserveg} at $t=0$. Let $1/2<\lambda_2<\lambda_1<1$ and assume that $\mathcal{M}^{-\lambda_1}g_0\in L^2(\mathbb R^6)$. Then the semigroup solution of \eqref{Lineareqg} satisfies, for every $\delta>0$,
\[
\|\mathcal{M}^{-\lambda_2}g(t)\|_{L^2}
\le C\langle t\rangle^{-\frac{(\lambda_1-\lambda_2)(p+2)}{2[1+\delta(p+2)]}}
\|\mathcal{M}^{-\lambda_1}g_0\|_{L^2}.
\]
\end{theorem}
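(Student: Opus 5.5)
We combine a weighted microscopic energy estimate with the macroscopic coercivity of Proposition \ref{060} and the far-field weight transfer of Corollary \ref{WeightTransferLemma}, and then interpolate between the two exponential weights. Write $\mathsf H=\widetilde\Phi+\frac12|v|^2$, so that $\mathcal M^{-\lambda}g\sim e^{\lambda\mathsf H}g$ up to constants depending on $M$.

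\textbf{Step 1: weighted energy identity.} Since $\mathsf T\mathsf H=0$ (we have $\mathsf R x\cdot\nabla_x\widetilde\Phi=0$ and $\mathsf R v\cdot v=0$), testing \eqref{Lineareqg} against $e^{2\lambda\mathsf H}g$ gives
\[
\frac12\frac{d}{dt}\|e^{\lambda\mathsf H}g\|_{L^2}^2=C_M\int e^{(2\lambda-1)\widetilde\Phi}\,(\mathsf Lg,e^{\lambda|v|^2}g)_{L^2_v}\,dx .
\]
By Lemma \ref{UpperBoundL2}, the right-hand side is bounded by
\[
-c\int e^{(2\lambda-1)\widetilde\Phi}\normm{e^{\frac\lambda2|v|^2}g}^2+C\int e^{(2\lambda-1)\widetilde\Phi}\|g\|_{L^2_v}^2 .
\]
We will run this at $\lambda=\lambda_2$ and also at $\lambda=\lambda_1$; the latter yields $\|e^{\lambda_1\mathsf H}g(t)\|$ bounded uniformly in time once the lower-order term is absorbed, by the argument below. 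The term $C\|g\|^2$ with the smaller weight is split as follows. Its macroscopic part $g_0$ is controlled by $\mathsf E^2$. Its microscopic part is controlled by Lemma \ref{UpperBoundL1} in the collision norm, because $\normm{\cdot}\gtrsim\|\langle v\rangle^{\gamma/2}\cdot\|$ and the loss of $\langle v\rangle$ powers is paid by the extra factor $e^{(\lambda-1/2)|v|^2}$.

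\textbf{Step 2: weight transfer.} The collision dissipation carries the spatial factor $e^{(2\lambda-1)\widetilde\Phi}$ and therefore does not control the full norm $\|e^{\lambda'\mathsf H}g\|$. To compensate, we add the corrector $-\epsilon\int (x\cdot v)e^{2\lambda'\mathsf H}g^2$, where $\lambda'=\lambda-\frac1{p+2}-\delta$. Since $\mathsf T$ is a first-order operator annihilating $\mathsf H$, its time derivative produces
\[
\epsilon\int\bigl(-\mathsf T(x\cdot v)\bigr)e^{2\lambda'\mathsf H}g^2
\]
plus a collision cross term, which is bounded through the upper bound in Lemma \ref{UpperBoundL2}. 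Corollary \ref{WeightTransferLemma}, with $N$ absorbed into the dissipation (the factor $\langle v\rangle^{-6}$ is dominated by the $\normm{\cdot}$ weight), then yields the dissipation $c\|e^{\lambda'\mathsf H}g\|^2-C_2\|g\|^2_{L^2}$. We add $\kappa\mathcal J$ from Proposition \ref{060} with $\delta_0$ small; the microscopic term $\|e^{\delta_0\mathsf H}g_1\|^2$ appearing there is dominated by the collision dissipation, since $\delta_0<\lambda-\frac12$. The $\mathsf E^2$ dissipation controls $\|g_0\|$, which absorbs the lower-order terms $C_2\|g\|^2$ and $C\|g\|^2$. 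The resulting Lyapunov functional $\mathcal F\sim\|e^{\lambda_2\mathsf H}g\|^2$ satisfies
\[
\frac{d}{dt}\mathcal F+c\|e^{\lambda_2'\mathsf H}g\|^2\le0,\qquad \lambda_2'=\lambda_2-\tfrac1{p+2}-\delta .
\]
The \textbf{main obstacle} is this step: the corrector must be kept small relative to $\|e^{\lambda_2\mathsf H}g\|^2$, whereas $|x\cdot v|$ is unbounded. We first try to bound $|x\cdot v|e^{2\lambda'\mathsf H}\lesssim e^{2\lambda\mathsf H}$, using that $\lambda-\lambda'>\frac1{p+2}>0$. Another delicate point is that the cross terms between the microscopic energy and the macroscopic functional must close with the conservation laws \eqref{Conserveg}, which Proposition \ref{060} already uses.

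\textbf{Step 3: interpolation.} Split the phase space into $\{\mathsf H\le R\}$ and $\{\mathsf H>R\}$. This gives
\[
\|e^{\lambda_2\mathsf H}g\|^2\le e^{2(\lambda_2-\lambda_2')R}\|e^{\lambda_2'\mathsf H}g\|^2+e^{-2(\lambda_1-\lambda_2)R}\|e^{\lambda_1\mathsf H}g\|^2 ,
\]
with $\|e^{\lambda_1\mathsf H}g(t)\|\lesssim\|e^{\lambda_1\mathsf H}g_0\|$ by Step 1 run at $\lambda_1$. Optimizing in $R$ turns the differential inequality into
\[
\mathcal F'\lesssim -\mathcal F^{1+\theta}\,\|e^{\lambda_1\mathsf H}g_0\|^{-2\theta},\qquad \theta=\frac{\lambda_2-\lambda_2'}{\lambda_1-\lambda_2},
\]
and integrating gives $\mathcal F\lesssim\langle t\rangle^{-1/\theta}\|e^{\lambda_1\mathsf H}g_0\|^2$. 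Since $\lambda_2-\lambda_2'=\frac{1+\delta(p+2)}{p+2}$, taking square roots produces exactly the exponent $\frac{(\lambda_1-\lambda_2)(p+2)}{2[1+\delta(p+2)]}$. Finally, the estimates are justified on the core $\mathscr C_M$ and passed to semigroup solutions by density.
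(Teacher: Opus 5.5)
Your architecture matches the paper's: a level-$\lambda$ energy, the virial corrector with weight $e^{2\lambda'\mathsf H}$ combined with Corollary \ref{WeightTransferLemma}, the functional of Proposition \ref{060}, and then interpolation between $\lambda_1$ and $\lambda_2$. Step 3 is fine, since optimizing your level-set splitting in $R$ is equivalent to the paper's H\"older interpolation and gives the stated exponent.

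The gap is in closing the Lyapunov inequality, where two of your absorption steps fail.

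\emph{First}, the remainder from Lemma \ref{UpperBoundL2} is $C\|e^{(\lambda-\frac12)\widetilde\Phi}g\|^2$, whose spatial weight grows at infinity, but you then treat it as $C\|g\|^2$. Neither of your devices controls this weighted quantity. $\mathsf E$ carries only the weights $e^{\delta_i\widetilde\Phi}$ with $\delta_i<\frac1{12}$ on $(a,b,c)$, so it misses $\|e^{(\lambda-\frac12)\widetilde\Phi}g_0\|$ as soon as $\lambda-\frac12>\delta_3$. Lemma \ref{UpperBoundL1} gives only $x$-unweighted control of $g_1$. You must first interpolate, using $\lambda'=\lambda-\frac1{p+2}-\delta>\lambda-\frac12$:
\[
\|e^{(\lambda-\frac12)\widetilde\Phi}g\|^2\le\varepsilon\|e^{\lambda'\mathsf H}g\|^2+C_\varepsilon\|g\|^2 .
\]

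\emph{Second}, absorbing $\kappa\|e^{\delta_0\mathsf H}g_1\|^2$ into the level-$\lambda$ collision dissipation cannot work. That dissipation comes with the fixed constants of Lemma \ref{UpperBoundL2} and has no small parameter, so absorption forces $\kappa\lesssim1$. Yet $\kappa\mathsf E^2$ must absorb $C_\varepsilon\|g_0\|^2$ from the interpolation above, and $C_\varepsilon$ is large: the weight-transfer dissipation is proportional to the small corrector coefficient, so $C_\varepsilon$ blows up as that coefficient shrinks.

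The missing ingredient is the endpoint energy at $\lambda=\frac12$, which you never set up:
\[
\frac{d}{dt}\|e^{\frac12\mathsf H}g\|_{L^2_{x,v}}^2+c\normm{e^{\frac14|v|^2}g_1}^2_{L^2_x}\le0 .
\]
This is the only level where $e^{(2\lambda-1)\widetilde\Phi}\equiv1$ and the coercivity of Lemma \ref{UpperBoundL1} carries no remainder. Add it with a large multiplier $K$.

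Handle the source term from Proposition \ref{060} by interpolation, not by the level-$\lambda$ dissipation:
\[
\|e^{\delta_0\mathsf H}g_1\|^2\le\|e^{2\delta_0\mathsf H}g_1\|\,\|g_1\|\le\varepsilon\|e^{\lambda'\mathsf H}g\|^2+C_\varepsilon\|g_1\|^2 ,
\]
which is valid since $2\delta_0<\frac16<\lambda'$. Then fix constants in this order:
\begin{enumerate}
\item the weight-transfer ratio $N$;
\item the multiplier of $\mathcal J$, absorbing $\|g\|^2\lesssim\mathsf E^2$;
\item finally $K$, absorbing every $\|g_1\|^2\lesssim\normm{e^{\frac14|v|^2}g_1}^2_{L^2_x}$.
\end{enumerate}
This is how the paper closes Step 3. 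Because $|\mathcal J|\lesssim\mathsf E^2\lesssim\|e^{\frac12\mathsf H}g\|^2$, the total functional stays equivalent to $\|e^{\lambda\mathsf H}g\|^2$.

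Two smaller corrections:
\begin{itemize}
\item The corrector must enter with a plus sign. One has $\frac{d}{dt}\int(x\cdot v)e^{2\lambda'\mathsf H}g^2=\int\mathsf T(x\cdot v)\,e^{2\lambda'\mathsf H}g^2+(\text{collision term})$, and it is $-\mathsf T(x\cdot v)$ that is positive in the far field.
\item In the density step, the approximating core data must be corrected by finitely many core functions so that they satisfy \eqref{Conserveg} exactly, since Proposition \ref{060} relies on these constraints.
\end{itemize}
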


\subsection{Proof of the normalized zero-moment theorem}
	
	\begin{proof}[Proof of Theorem \ref{082}]
\medskip\noindent\textit{Step 1: Microscopic energy estimate.}
 We begin with the normalized equation \eqref{Lineareqg} for $g$:
	\begin{equation}\label{Lineareqg+}
			\partial_tg + \mathsf{T}g = C_Me^{-{\widetilde{\Phi}}}\mathsf{L}g, \qquad \mathsf{T} = v \cdot \nabla_x + \mathsf{R}x \cdot \nabla_x - \mathsf{R}v \cdot \nabla_v - \nabla_x{\widetilde{\Phi}} \cdot \nabla_v.
	\end{equation}
	Recall ${\mathsf{H}}(x,v)={\widetilde{\Phi}}(x)+|v|^2/2$ and note that $\mathsf{T}\mathsf{H}=0$. Multiplying the equation by $e^{\mathsf{H}}g$ and integrating gives
	\begin{equation*}
		\frac{1}{2}\frac{d}{dt}\|e^{\frac{1}{2}{\mathsf{H}}}g\|_{L_{x, v}^2}^2 = C_M(\mathsf{L}g, e^{\frac{1}{2}|v|^2}g)_{L_{x, v}^2}.
	\end{equation*}
	By Lemma \ref{UpperBoundL1}, there exists $c>0$ such that
	\begin{equation}\label{MicroEnergy}
		\frac{d}{dt}\|e^{\frac{1}{2}{\mathsf{H}}}g\|_{L_{x, v}^2}^2 + c\normm{e^{\frac{1}{4}|v|^2}g_1}^2_{L^2_x} \le 0,
	\end{equation}
	where $\normm{\cdot}_{L^2_x}:=\Big(\int_{\R^3}\normm{\cdot}^2dx\Big)^{\f12}$.
%%%%%%%%%%%%%%%%%%%%%%%%%%%%%%%%%%%%%%%%%%%%%%%%%%

\medskip\noindent\textit{Step 2: Weighted energy and transfer estimate.}
	For $\lambda\in(1/2,1)$, multiply both sides of \eqref{Lineareqg+} by $e^{2\lambda\mathsf H}g$ and integrate to obtain
	\begin{equation*}
		\frac{1}{2}\frac{d}{dt}\|e^{\lambda{\mathsf{H}}}g\|_{L_{x, v}^2}^2 = C_M\int_{\R^3} e^{(2\lambda - 1){\widetilde{\Phi}}}(\mathsf{L}g, e^{\lambda|v|^2}g)_{L_v^2}dx.
	\end{equation*}
	By Lemma \ref{UpperBoundL2}, there exist $c,C>0$ such that
	\begin{equation}\label{ProofThm31}
		\frac{d}{dt}\|e^{\lambda{\mathsf{H}}}g\|_{L_{x, v}^2}^2 + c\|e^{(\lam-\f12)\widetilde{\Phi}}e^{\lambda |v|^2}g\|_{L^2_x,B} \le C \|e^{(\lambda - \frac{1}{2}){\widetilde{\Phi}}}g\|_{L_{x, v}^2}^2.
	\end{equation}
	Next, fix $0<\delta<1/3-1/(p+2)$, multiply \eqref{Lineareqg+} by $(x\cdot v)e^{2(\lambda-1/(p+2)-\delta)\mathsf H}g$, and integrate. Since
	\begin{equation*}
		(\mathsf{T}g, (x \cdot v)e^{2(\lambda - \frac{1}{p + 2} - \delta){\mathsf{H}}}g)_{L_{x, v}^2}
		= -(g, \mathsf{T}(x \cdot v) \times e^{2(\lambda - \frac{1}{p + 2} - \delta){\mathsf{H}}}g)_{L_{x, v}^2}
		- (g, (x \cdot v)e^{2(\lambda - \frac{1}{p + 2} - \delta){\mathsf{H}}}\mathsf{T}g)_{L_{x, v}^2},
	\end{equation*}
we obtain
	\begin{equation*}
		(\mathsf{T}g, (x \cdot v)e^{2(\lambda - \frac{1}{p + 2} - \delta){\mathsf{H}}}g)_{L_{x, v}^2} = -\frac{1}{2}(g, \mathsf{T}(x \cdot v) \times e^{2(\lambda - \frac{1}{p + 2} - \delta){\mathsf{H}}}g)_{L_{x, v}^2}.
	\end{equation*}
Consequently,
	\begin{multline*}
		\frac{d}{dt}\int_{\R^6}(x \cdot v)e^{2(\lambda - \frac{1}{p + 2} - \delta){\mathsf{H}}}g^2dxdv - \int_{\R^6}\mathsf{T}(x \cdot v)e^{2(\lambda - \frac{1}{p + 2} - \delta){\mathsf{H}}}g^2dxdv \\= 2C_M\int_{\R^6} e^{(2\lambda - \frac{2}{p + 2} - 2\delta - 1){\widetilde{\Phi}}}(\mathsf{L}g, x\cdot ve^{(\lambda - \frac{1}{p + 2} - \delta)|v|^2}g)_{L_v^2}dx.
	\end{multline*}
	A second application of Lemma \ref{UpperBoundL2} gives
	\begin{equation}\label{ProofThm32}
		\frac{d}{dt}\int_{\R^6}(x \cdot v)e^{2(\lambda - \frac{1}{p + 2} - \delta){\mathsf{H}}}g^2dxdv - \int_{\R^6}\mathsf{T}(x \cdot v)e^{2(\lambda - \frac{1}{p + 2} - \delta){\mathsf{H}}}g^2dxdv \le C \normm{e^{(\lambda-\f1{p+2}-\f12)\widetilde{\Phi}}e^{(\f \lambda 2-\f1{2(p+2)})|v|^2}g}^2_{L^2_x}.
	\end{equation}
	Observe that
	\beno
	&&\|\<v\>^{-3}e^{(\lam-\f12)\widetilde{\Phi}}e^{\lambda |v|^2}g\|_{L^2_{x,v}}\ls \normm{e^{(\lam-\f12)\widetilde{\Phi}}e^{\lambda |v|^2}g}_{L^2_x},\\ &&\normm{e^{(\lambda-\f1{p+2}-\f12)\widetilde{\Phi}}e^{(\f \lambda 2-\f1{2(p+2)})|v|^2}g}_{L^2_x}\ls \normm{e^{(\lam-\f12)\widetilde{\Phi}}e^{\lambda |v|^2}g}_{L^2_x}.
	\eeno 
	Combining \eqref{ProofThm31} and \eqref{ProofThm32} and applying Corollary \ref{WeightTransferLemma}, we find that for every $0<\delta<1/3-1/(p+2)$ there exists a sufficiently large $N$ such that
	\begin{equation}\label{ProofThm33}
		\frac{d}{dt}\mathcal{F}_N + \|e^{(\lambda - \frac{1}{p + 2} - \delta){\mathsf{H}}}g\|_{L_{x, v}^2}^2 \le C \|e^{(\lambda - \frac{1}{2}){\widetilde{\Phi}}}g\|_{L_{x, v}^2}^2 +C \|g\|_{L_{x, v}^2}^2,
	\end{equation}
	where
	\begin{equation*}
		\mathcal{F}_N = N\|e^{\lambda{\mathsf{H}}}g\|_{L_{x, v}^2}^2 + \int_{\R^6}(x \cdot v)e^{2(\lambda - \frac{1}{p + 2} - \delta){\mathsf{H}}}g^2dxdv.
	\end{equation*}
	Moreover,
	\begin{equation*}
		\Big|\int(x \cdot v)e^{2(\lambda - \frac{1}{p + 2} - \delta){\mathsf{H}}}g^2dxdv\Big| \lesssim \|e^{\lambda{\mathsf{H}}}g\|_{L_{x, v}^2}^2.
	\end{equation*}
	Choose $N$ sufficiently large that $\|e^{\lambda\mathsf H}g\|_{L_{x,v}^2}^2\le\mathcal F_N\le2N\|e^{\lambda\mathsf H}g\|_{L_{x,v}^2}^2$. Since $\lambda-1/(p+2)-\de>\lambda-1/2$, H\"older's inequality applied to the right-hand side of \eqref{ProofThm33} gives
	\begin{equation*}
		\|e^{(\lambda - \frac{1}{2}){\widetilde{\Phi}}}g\|_{L_{x, v}^2} \le \|e^{(\lambda - \frac{1}{p + 2} - \delta){\widetilde{\Phi}}}g\|_{L_{x, v}^2}^\theta\|g\|_{L_{x, v}^2}^{1 - \theta} \lesssim \|e^{(\lambda - \frac{1}{p + 2} - \delta){\mathsf{H}}}g\|_{L_{x, v}^2}^\theta\|g\|_{L_{x, v}^2}^{1 - \theta}.
	\end{equation*}
	where $\theta := \frac{\lambda - \frac{1}{2}}{\lambda - \frac{1}{p + 2} - \delta}$.
	Moreover, by Young's inequality, for any $\varepsilon > 0$, we have
	\begin{equation*}
		\|e^{(\lambda - \frac{1}{2}){\widetilde{\Phi}}}g\|_{L_{x, v}^2}^2 \le \varepsilon\|e^{(\lambda - \frac{1}{p + 2} - \delta){\mathsf{H}}}g\|_{L_{x, v}^2}^2 + C_\varepsilon\|g\|_{L_{x, v}^2}^2.
	\end{equation*}
	Choosing $\eps>0$ sufficiently small in \eqref{ProofThm33}, we obtain
	\begin{equation}\label{ProofThm34}
		\frac{d}{dt}\mathcal{F}_N + \frac{1}{2}\|e^{(\lambda - \frac{1}{p + 2} - \delta){\mathsf{H}}}g\|_{L_{x, v}^2}^2 \lesssim \|g\|_{L_{x, v}^2}^2, \: ~~\text{with}~~ \: \|e^{\lambda{\mathsf{H}}}g\|_{L_{x, v}^2}^2 \le \mathcal{F}_N \le 2N\|e^{\lambda{\mathsf{H}}}g\|_{L_{x, v}^2}^2.
	\end{equation}

%%%%%%%%%%%%%%%%%%%%%%%%%%%%%%%%%%%%%%%%%%%%%%%%%%

\medskip\noindent\textit{Step 3: Macro--micro energy closure.}
	By Proposition \ref{060}, there exists a function $\mathcal J=\mathcal J(t)$ satisfying $|\mathcal J(t)|\lesssim\mathsf E^2$ and
	\begin{equation*}
		\frac{d}{dt}\mathcal{J} + \mathsf{E}^2 \lesssim \|e^{\delta_0{\mathsf{H}}}g_1\|_{L_{x, v}^2}^2.
	\end{equation*}
	The definition of $\mathsf E$ in \eqref{DefiE} implies
	\begin{equation*}
		\|g\|_{L_{x, v}^2}^2 \le \|g_0\|_{L_{x, v}^2}^2 + \|g_1\|_{L_{x, v}^2}^2 \lesssim \|a\|_{L_x^2}^2 + \|b\|_{L_x^2}^2 + \|c\|_{L_x^2}^2 + \|g_1\|_{L_{x, v}^2}^2 \lesssim \mathsf{E}^2.
	\end{equation*}
	Thus there exists a constant $C>0$ such that
	\begin{equation*}
	\begin{gathered}
		\frac{d}{dt}\mathcal{F}_N + \frac{1}{2}\|e^{(\lambda - \frac{1}{p + 2} - \delta){\mathsf{H}}}g\|_{L_{x, v}^2}^2 \le C\mathsf{E}^2;\quad
		\frac{d}{dt}\mathcal{J} + \mathsf{E}^2 \le C\|e^{\delta_0{\mathsf{H}}}g_1\|_{L_{x, v}^2}^2.
	\end{gathered}
	\end{equation*}
Therefore,
	\begin{equation}\label{ProofThm35}
		\frac{d}{dt}(\mathcal{F}_N + C_1\mathcal{J}) + \frac{1}{2}\|e^{(\lambda - \frac{1}{p + 2} - \delta){\mathsf{H}}}g\|_{L_{x, v}^2}^2 \le C_1C\|e^{\delta_0{\mathsf{H}}}g_1\|_{L_{x, v}^2}^2.
	\end{equation}
	By the Cauchy--Schwarz inequality,
	\begin{equation*}
		\|e^{\delta_0{\mathsf{H}}}g_1\|_{L_{x, v}^2}^2 \le \|e^{2\delta_0{\mathsf{H}}}g_1\|_{L_{x, v}^2}\|g_1\|_{L_{x, v}^2}.
	\end{equation*}
	By the choices of $\delta_0$, $\delta$, and $\lambda$, one has $2\delta_0<1/6<\lambda-1/(p+2)-\delta$. Thus
	\begin{equation*}
		\|e^{2\delta_0{\mathsf{H}}}g_1\|_{L_{x, v}^2} \le \|e^{2\delta_0{\mathsf{H}}}g_0\|_{L_{x, v}^2} + \|e^{2\delta_0{\mathsf{H}}}g\|_{L_{x, v}^2} \lesssim \|e^{(\lambda - \frac{1}{p + 2} - \delta){\mathsf{H}}}g\|_{L_{x, v}^2}.
	\end{equation*}
	Now we have
	\begin{equation*}
		\|e^{\delta_0{\mathsf{H}}}g_1\|_{L_{x, v}^2}^2 \le \varepsilon\|e^{(\lambda - \frac{1}{p + 2} - \delta){\mathsf{H}}}g\|_{L_{x, v}^2}^2 + C_\varepsilon\|g_1\|_{L_{x, v}^2}^2,
	\end{equation*}
	and \eqref{ProofThm35} becomes
	\begin{equation*}
		\frac{d}{dt}(\mathcal{F}_N + C_1\mathcal{J}) + \frac{1}{4}\|e^{(\lambda - \frac{1}{p + 2} - \delta){\mathsf{H}}}g\|_{L_{x, v}^2}^2 \ls \|g_1\|_{L_{x, v}^2}^2 \le C \|e^{\frac{1}{4}|v|^2}g_1\|_{L_{x, v}^2}^2.
	\end{equation*}
	Together with our microscopic estimates \eqref{MicroEnergy}, there exists sufficiently large $C_2$ such that
	\begin{equation*}
		\frac{d}{dt}(\mathcal{F}_N + C_1\mathcal{J} + C_2\|e^{\frac{1}{2}{\mathsf{H}}}g\|_{L_{x, v}^2}^2) + \frac{1}{4}\|e^{(\lambda - \frac{1}{p+2} - \delta){\mathsf{H}}}g\|_{L_{x, v}^2}^2 \le 0.
	\end{equation*}
	The definition of $\mathsf E$ in \eqref{DefiE} gives
\begin{equation*}
	\begin{aligned}
		\mathsf{E}^2 &\lesssim \|e^{\delta_0{\mathsf{H}}}(\mathsf{I} - \mathsf{P})g\|_{L_{x, v}^2}^2 + \|e^{\delta_0{\widetilde{\Phi}}}a\|_{L_x^2}^2 + \|e^{\delta_0{\widetilde{\Phi}}}b\|_{L_x^2}^2 + \|e^{\delta_0{\widetilde{\Phi}}}c\|_{L_x^2}^2 \\
		&\lesssim \|e^{\frac{1}{2}{\mathsf{H}}}(\mathsf{I} - \mathsf{P})g\|_{L_{x, v}^2}^2 + \|e^{\frac{1}{2}{\mathsf{H}}}\mathsf{P}g\|_{L_{x, v}^2}^2 = \|e^{\frac{1}{2}{\mathsf{H}}}g\|_{L_{x, v}^2}^2.
			\end{aligned}
\end{equation*}

	Since $|\mathcal J(t)|\lesssim\mathsf E^2$, choose $C_2$ sufficiently large that
	\begin{equation*}
		\mathcal{F}_N \le \mathcal{F} := \mathcal{F}_N + C_1\mathcal{J} + C_2\|e^{\frac{1}{2}{\mathsf{H}}}g\|_{L_{x, v}^2}^2 \le \mathcal{F}_N + 2C_2\|e^{\frac{1}{2}{\mathsf{H}}}g\|_{L_{x, v}^2}^2.
	\end{equation*}
	Since $\|e^{\lambda\mathsf H}g\|_{L_{x,v}^2}^2\le\mathcal F_N\le2N\|e^{\lambda\mathsf H}g\|_{L_{x,v}^2}^2$, it follows that $\|e^{\lambda\mathsf H}g\|_{L_{x,v}^2}^2\le\mathcal F\lesssim\|e^{\lambda\mathsf H}g\|_{L_{x,v}^2}^2$.
	
	\medskip
	We conclude that for every $\lambda\in(1/2,1)$ and fixed $\delta\in(0,1/3-1/(p+2))$, there exist $C_\lambda>0$ and an energy $\mathcal F_\lambda(t)$ such that $\|e^{\lambda\mathsf H}g\|_{L_{x,v}^2}^2\le\mathcal F_\lambda\le C_\lambda\|e^{\lambda\mathsf H}g\|_{L_{x,v}^2}^2$ and
	\begin{equation*}
		\frac{d}{dt}\mathcal{F}_\lambda + \|e^{(\lambda - \frac{1}{p +2} - \delta){\mathsf{H}}}g\|_{L_{x, v}^2}^2 \le 0.
	\end{equation*}

\medskip\noindent\textit{Step 4: Algebraic decay by interpolation.}
Fix $1/2<\lam_2<\lam_1<1$. Then
	\begin{equation*}
		\frac{d}{dt}\mathcal{F}_{\lambda_1} + \|e^{(\lambda_1 - \frac{1}{p + 2} - \delta){\mathsf{H}}}g\|_{L_{x, v}^2}^2 \le 0, \:\:\: \frac{d}{dt}\mathcal{F}_{\lambda_2} + \|e^{(\lambda_2 - \frac{1}{p + 2} - \delta){\mathsf{H}}}g\|_{L_{x, v}^2}^2 \le 0.
	\end{equation*}
	The first inequality gives $\mathcal F_{\lambda_1}(t)\le\mathcal F_{\lambda_1}(0)$. For the second inequality, H\"older's inequality implies
	\begin{equation*}
		\|e^{\lambda_2{\mathsf{H}}}g\|_{L_{x, v}^2} \le \|e^{\lambda_1{\mathsf{H}}}g\|_{L_{x, v}^2}^{1 - \theta}\|e^{(\lambda_2 - \frac{1}{p + 2} - \delta){\mathsf{H}}}g\|_{L_{x, v}^2}^\theta,
	\end{equation*}
	where 
	\begin{equation*}
		\lambda_2 = (1 - \theta)\lambda_1 + \theta(\lambda_2 - \frac{1}{p + 2} - \delta), \:\:\:
		\theta = \frac{\lambda_1 - \lambda_2}{\lambda_1 - \lambda_2 + \frac{1}{p + 2} + \delta}.
	\end{equation*}
	Since $\|e^{\lambda_1\mathsf H}g\|_{L_{x,v}^2}^2\le\mathcal F_{\lambda_1}\le\mathcal F_{\lambda_1}(0)$,
	\begin{equation*}
		C_{\lambda_2}^{-1}\mathcal{F}_{\lambda_2} \le \|e^{\lambda_2{\mathsf{H}}}g\|_{L_{x, v}^2}^2 \le \mathcal{F}_{\lambda_1}(0)^{1 - \theta}\|e^{(\lambda_2 - \frac{1}{p + 2} - \delta){\mathsf{H}}}g\|_{L_{x, v}^2}^{2\theta},
	\end{equation*}
	then
	\begin{equation*}
		\|e^{(\lambda_2 - \frac{1}{p + 2} - \delta){\mathsf{H}}}g\|_{L_{x, v}^2}^2 \ge C_{\lambda_2}^{-\frac{1}{\theta}}\mathcal{F}_{\lambda_1}(0)^{-\frac{1 - \theta}{\theta}}\mathcal{F}_{\lambda_2}^{\frac{1}{\theta}}.
	\end{equation*}
	Consequently,
	\begin{equation*}
		\frac{d}{dt}\mathcal{F}_{\lambda_2} + C \mathcal{F}_{\lambda_1}(0)^{-\frac{1 - \theta}{\theta}}\mathcal{F}_{\lambda_2}^{\frac{1}{\theta}} \le 0,
	\end{equation*}
	which shows that
	\begin{equation*}
		\begin{aligned}
		&\|\mathcal{M}^{-\lam_2}g(t)\|^2_{L^2_{x,v}}\ls \|e^{\lambda_2{\mathsf{H}}}g(t)\|_{L_{x, v}^2}^2 \ls \mathcal{F}_{\lambda_2} \lesssim (1 + t)^{-\frac{\theta}{1 - \theta}}\mathcal{F}_{\lambda_1}(0) 
		= (1 + t)^{-\frac{(\lambda_1 - \lambda_2)(p + 2)}{1 + \delta(p + 2)}}\mathcal{F}_{\lambda_1}(0)\\
		&\qquad\qquad\qquad\quad\ls (1 + t)^{-\frac{(\lambda_1 - \lambda_2)(p + 2)}{1 + \delta(p + 2)}}\|e^{\lam_1\mathsf{H}}g_0\|^2_{L^2_{x,v}}\ls (1 + t)^{-\frac{(\lambda_1 - \lambda_2)(p + 2)}{1 + \delta(p + 2)}}\|\mathcal{M}^{-\lam_1}g_0\|^2_{L^2_{x,v}},
		\end{aligned}
		\end{equation*}
		where $\mathcal M^{-1}\sim e^{\mathsf H}$. 

\iffalse
The preceding argument was written for the technical range
$0<\delta<1/3-1/(p+2)$. Given an arbitrary prescribed $\delta>0$, choose
$0<\widetilde\delta\le\delta$ in this admissible range and apply the estimate
with $\widetilde\delta$. Since the decay exponent obtained with
$\widetilde\delta$ is no smaller than the one displayed with $\delta$, the
same bound follows for every $\delta>0$ after changing the constant.
\fi

\medskip\noindent\textit{Step 5: Passage from the core to mild solutions.}
Let $g_0$ satisfy the assumptions of the theorem. By the standard core property of the normalized semigroup realization, choose $\widehat g_{0,n}$ in the smooth core such that
\[
\|\mathcal M^{-\lambda_1}(\widehat g_{0,n}-g_0)\|_{L^2}\longrightarrow0.
\]
The five moment functionals in \eqref{Conserveg} are continuous on $\mathcal X_{1/2}(\mathcal M)$: this follows from Cauchy--Schwarz because each transformed collision invariant multiplied by $\mathcal M^{1/2}$ belongs to $L^2(\mathbb R^6)$. We may therefore correct $\widehat g_{0,n}$ by a linear combination of five fixed smooth core functions whose moment matrix is invertible. The resulting data $g_{0,n}$ still converge to $g_0$ in $\mathcal X_{\lambda_1}(\mathcal M)$ and satisfy \eqref{Conserveg} exactly.

Let $g_n(t)$ be the corresponding strong solutions. Steps 1--4 apply to $g_n$ and give, with a constant independent of $n$,
\[
\|\mathcal M^{-\lambda_2}g_n(t)\|_{L^2}
\le C\langle t\rangle^{-\frac{(\lambda_1-\lambda_2)(p+2)}{2[1+\delta(p+2)]}}
\|\mathcal M^{-\lambda_1}g_{0,n}\|_{L^2}.
\]
The core approximation and strong continuity of the semigroup give $g_n\to g$ in $C([0,T];\mathcal X_{1/2}(\mathcal M))$ for every $T>0$, where $g$ is the semigroup solution issued from $g_0$. For each fixed $t\ge0$, the preceding estimate makes $g_n(t)$ bounded in $\mathcal X_{\lambda_2}(\mathcal M)$. Any weakly convergent subsequence in that space has limit $g(t)$, because the embedding $\mathcal X_{\lambda_2}(\mathcal M)\hookrightarrow\mathcal X_{1/2}(\mathcal M)$ is continuous. Weak lower semicontinuity of the norm and convergence of $g_{0,n}$ in $\mathcal X_{\lambda_1}(\mathcal M)$ yield the asserted estimate for $g(t)$. The moment identities also pass to the limit by their continuity on the base space. This proves Theorem \ref{082}.
	\end{proof}

\subsection{Return to the original variables}
Applying the reductions of Section 3 and using the equivalence of the corresponding weighted norms gives Proposition \ref{Conv}.

\subsection{Proof of the general linear stability theorem}
\begin{proof}[Proof of Theorem \ref{linearstabilityp<2}]
	Set $h=\Pi_Mf_0$ and $g=f-h$. Proposition \ref{onetoone2} shows that $h$ is stationary for the full linearized equation, while \eqref{coef2} gives
	\[
	\int_{\mathbb R^6}\chi g(0)\,dx\,dv=0.
	\]
	Hence
	\[
	\partial_tg+v\cdot\nabla_xg-\nabla_x\Phi\cdot\nabla_vg
	=Q(M,g)+Q(g,M),
	\qquad g(0)=f_0-\Pi_Mf_0.
	\]
	The estimate now follows from Proposition \ref{Conv}. To identify the kernel, let $f$ be stationary in $\mathcal X_\lambda(M)$ for some $1/2<\lambda<1$. Then $f-\Pi_Mf$ is also stationary and has zero conserved moments. Applying Proposition \ref{Conv} to this time-independent function gives $f=\Pi_Mf$. Thus the stationary space is exactly the range of $\Pi_M$.
	\end{proof}

%%%%%%%%%%%%%%%%%%%%%%%%%%%%%%%%%%%%%%%%%%%%%%%%%%
\section{Nonlinear obstruction for $1<p<2$}
\iffalse
\subsection{Nonlinear equilibria and non-integrable neutral directions}
Assume $1<p<2$. The nonlinear equilibrium manifold is
\[
\mathscr M_{p<2}
=\left\{m\exp\left[-\alpha\left(\Phi(x)+\frac12|v|^2\right)\right]:m>0,\ \alpha>0\right\},
\]
and hence has a two-dimensional tangent space at each equilibrium. By contrast, Theorem \ref{linearstabilityp<2} shows that the stationary space of the linearized equation is five-dimensional. The three additional directions are the angular-momentum modes $(x_iv_j-x_jv_i)M$.

To see why these directions do not generate nearby nonlinear equilibria, fix $R\in\mathfrak{so}(3)\setminus\{0\}$ and formally set
\[
M_{\varepsilon R}(x,v)
=m\exp\left\{-\alpha\left(\Phi(x)+\frac12|v|^2\right)+\varepsilon Rx\cdot v\right\}.
\]
Integration in $v$ gives
\[
\int_{\mathbb R^3}M_{\varepsilon R}(x,v)\,dv
=m\left(\frac{2\pi}{\alpha}\right)^{3/2}
\exp\left\{-\alpha\Phi(x)+\frac{\varepsilon^2|Rx|^2}{2\alpha}\right\}.
\]
Because $\Phi(x)\sim |x|^p$ with $p<2$, this function is not integrable in $x$ for any $\varepsilon\ne0$, although
\[
\left.\frac{d}{d\varepsilon}M_{\varepsilon R}\right|_{\varepsilon=0}
=(Rx\cdot v)M
\]
is an integrable stationary solution of the linearized equation. Thus the angular modes are neutral directions of the linearized problem that cannot be integrated into the nonlinear equilibrium manifold.
\fi

\subsection{Angular-momentum topologies and quantitative separation}
We first make the functional setting precise. For every signed function $G$ for which the integral is absolutely convergent, set
\[
J(G):=\int_{\mathbb R^6}(x\wedge v)G(x,v)\,dx\,dv
\in\mathbb R^{3\wedge3}.
\]
Here $x\wedge v=xv^\tau-vx^\tau$, $\mathbb R^{3\wedge3}$ is the space of real $3\times3$ skew-symmetric matrices, and $\|\cdot\|_2$ below denotes the Frobenius norm.
Let $X$ be a normed linear space of signed functions such that $\mathscr M_{p<2}\subset X$. We say that $X$ \emph{controls angular momentum} if $J$ is bounded on $X$, and we denote its operator norm by
\[
C_X:=\|J\|_{\mathcal L(X,\mathbb R^{3\wedge3})}.
\]
The corresponding closed zero-angular-momentum subspace and the distance to a subset $\mathcal A\subset X$ are
\[
\ker_XJ:=\{G\in X:J(G)=0\},
\qquad
\operatorname{dist}_X(G,\mathcal A)
:=\inf_{H\in\mathcal A}\|G-H\|_X.
\]
The basic concrete example used below is
\[
X_J:=L^1\!\left(\mathbb R^6,
  (1+\|x\wedge v\|_2)\,dx\,dv\right),
\qquad
\|G\|_{X_J}:=
\int_{\mathbb R^6}(1+\|x\wedge v\|_2)|G|\,dx\,dv.
\]
Indeed, $\|J(G)\|_2\le\int\|x\wedge v\|_2|G|\le\|G\|_{X_J}$.

To state the dynamical consequence, fix a class $\mathfrak S_X$ of global nonnegative solutions of \eqref{Boltzmann} whose trajectories belong to $X$. We call $\mathscr M_{p<2}$ \emph{locally attractive near} $M\in\mathscr M_{p<2}$ relative to $\mathfrak S_X$ if there exists $\eta>0$ such that
\[
F\in\mathfrak S_X,\quad \|F_0-M\|_X<\eta
\quad\Longrightarrow\quad
\operatorname{dist}_X(F(t),\mathscr M_{p<2})\longrightarrow0.
\]
We call it \emph{Lyapunov stable near} $M$ relative to $\mathfrak S_X$ if, for every $\epsilon>0$, there exists $\eta>0$ such that
\[
F\in\mathfrak S_X,\quad \|F_0-M\|_X<\eta
\quad\Longrightarrow\quad
\sup_{t\ge0}\operatorname{dist}_X(F(t),\mathscr M_{p<2})<\epsilon.
\]
Local asymptotic stability near $M$ means that both properties hold. The next result rules out local attraction, provided the solution class contains global trajectories with nonzero angular momentum; it makes no claim about Lyapunov stability.

\begin{proposition}[Quantitative separation from the equilibrium manifold]\label{nonlinear-angular-obstruction}
Let $1<p<2$, let $X$ control angular momentum, and suppose that $F(t)\in X$ is a global solution of \eqref{Boltzmann} whose angular momentum is conserved. If $J(F_0)\ne0$, then $C_X>0$ and, for every $t\ge0$,
\begin{equation}\label{abstract-angular-separation}
\operatorname{dist}_X(F(t),\ker_XJ)
\ge \frac{\|J(F_0)\|_2}{C_X},
\qquad
\operatorname{dist}_X(F(t),\mathscr M_{p<2})
\ge \frac{\|J(F_0)\|_2}{C_X}.
\end{equation}
Consequently, there are no sequences $t_n\to\infty$ and $M_n\in\mathscr M_{p<2}$ such that $\|F(t_n)-M_n\|_X\to0$.

In the concrete space $X_J$, the following sharper moment estimate holds for every $t\ge0$:
\begin{equation}\label{weighted-angular-separation}
\begin{aligned}
\inf_{G\in\ker_{X_J}J}
\int_{\mathbb R^6}\|x\wedge v\|_2|F(t)-G|\,dx\,dv
&\ge \|J(F_0)\|_2,\\
\inf_{M_*\in\mathscr M_{p<2}}
\int_{\mathbb R^6}\|x\wedge v\|_2|F(t)-M_*|\,dx\,dv
&\ge \|J(F_0)\|_2.
\end{aligned}
\end{equation}
Finally, if $\mathfrak S_X$ contains solutions with nonzero angular momentum whose initial data lie arbitrarily close in $X$ to some $M\in\mathscr M_{p<2}$, then $\mathscr M_{p<2}$ is not locally attractive near $M$ and hence is not locally asymptotically stable there relative to $\mathfrak S_X$.
\end{proposition}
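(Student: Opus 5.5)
The argument is essentially linear algebra on the bounded functional $J$, combined with conservation of angular momentum and the fact (Theorem \ref{EISform}(2)) that every element of $\mathscr M_{p<2}$ has zero angular momentum.

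First I record that $\mathscr M_{p<2}\subset\ker_XJ$. Each $M_*=m\exp[-\alpha(\Phi+\frac12|v|^2)]$ is even in $v$ for fixed $x$, while $x\wedge v$ is odd in $v$. Hence $J(M_*)=0$, and the integral converges absolutely because $M_*$ has Gaussian decay in $v$ and $e^{-\alpha\Phi}$ decay in $x$. Next, conservation gives $J(F(t))=J(F_0)\neq0$ for all $t$. Since $J$ is bounded, $\|J(F_0)\|_2\le C_X\|F_0\|_X$, so $C_X=0$ would force $J(F_0)=0$; therefore $C_X>0$.

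For any $G\in\ker_XJ$ we have
\[
\|J(F_0)\|_2=\|J(F(t))-J(G)\|_2=\|J(F(t)-G)\|_2\le C_X\|F(t)-G\|_X .
\]
Taking the infimum over $G$ gives the first inequality in \eqref{abstract-angular-separation}. The second follows because $\mathscr M_{p<2}\subset\ker_XJ$ means the distance to the smaller set is at least as large. The statement about sequences is then immediate: $\|F(t_n)-M_n\|_X\ge\|J(F_0)\|_2/C_X>0$ for every $n$.

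For \eqref{weighted-angular-separation} in $X_J$, I use the pointwise bound $\|J(H)\|_2\le\int\|x\wedge v\|_2|H|\,dx\,dv$, which is Minkowski's inequality for the Frobenius norm. Applying it with $H=F(t)-G$, where $G\in\ker_{X_J}J$ or $G=M_*\in\mathscr M_{p<2}$, gives $\|J(F_0)\|_2\le\int\|x\wedge v\|_2|F(t)-G|$. This holds without the additive $1$ in the weight, which is why the constant is sharper. One small point to check is that $\mathscr M_{p<2}\subset X_J$: the weight $\|x\wedge v\|_2\lesssim|x||v|$ is integrable against $M_*$.

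Finally, for non-attraction, suppose attraction held with radius $\eta$. Choose $F\in\mathfrak S_X$ with $\|F_0-M\|_X<\eta$ and $J(F_0)\neq0$. Then $\operatorname{dist}_X(F(t),\mathscr M_{p<2})\to0$, which contradicts the uniform lower bound $\|J(F_0)\|_2/C_X>0$. Hence $\mathscr M_{p<2}$ is not locally attractive near $M$, and local asymptotic stability fails with it. There is no real obstacle in this argument. The only care needed is to confirm absolute convergence of $J$ on the relevant sets, and that angular-momentum conservation is part of the hypothesis rather than something to be derived.
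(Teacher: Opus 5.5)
Your proof is correct and follows the paper's argument. Both establish $\mathscr M_{p<2}\subset\ker_XJ$ by symmetry, apply $\|J(F_0)\|_2=\|J(F(t)-G)\|_2\le C_X\|F(t)-G\|_X$ and take infima, stop the same chain before the full $X_J$-norm to get the weighted bound, and contradict local attraction via the uniform lower bound. Your explicit checks that $C_X>0$ and that $\mathscr M_{p<2}\subset X_J$ fill in small points the paper leaves implicit.
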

\begin{proof}
Every $M_*\in\mathscr M_{p<2}$ is radial in both $x$ and $v$, and therefore $J(M_*)=0$. Thus
\[
\mathscr M_{p<2}\subset\ker_XJ.
\]
For every $G\in\ker_XJ$, conservation and boundedness of $J$ give
\[
\|J(F_0)\|_2
=\|J(F(t))-J(G)\|_2
=\|J(F(t)-G)\|_2
\le C_X\|F(t)-G\|_X.
\]
Taking the infimum over $G\in\ker_XJ$ proves the first inequality in \eqref{abstract-angular-separation}; the inclusion above proves the second. It also follows immediately that convergence to a time-dependent sequence of equilibria is impossible.

For $X_J$, the same argument before applying the full $X_J$-norm yields
\[
\|J(F_0)\|_2
\le\int_{\mathbb R^6}\|x\wedge v\|_2|F(t)-G|\,dx\,dv
\qquad(G\in\ker_{X_J}J).
\]
Taking the two infima gives \eqref{weighted-angular-separation}.

It remains to verify the stability conclusion. Let $F\in\mathfrak S_X$ have $J(F_0)\ne0$. The second estimate in \eqref{abstract-angular-separation} implies
\[
\inf_{t\ge0}\operatorname{dist}_X(F(t),\mathscr M_{p<2})
\ge \frac{\|J(F_0)\|_2}{C_X}>0,
\]
so this trajectory is not attracted to the equilibrium manifold. If such trajectories start arbitrarily close to $M$, local attraction near $M$ is impossible.
\end{proof}

\section{Appendix: an auxiliary inequality}
We record an auxiliary inequality used in the proofs of Lemmas \ref{010} and \ref{EstimatecrB}.

	\begin{lemma}\label{LagrangeIneq}
		For $a_i,b_i,c_i\in\mathbb R$, $1\le i\le n$, one has
		\begin{equation}\label{061}
			\begin{aligned}
			&\Big[\Big(\sum_{i = 1}^na_i^2\Big)\Big(\sum_{i = 1}^nb_i^2\Big) - \Big(\sum_{i = 1}^na_ib_i\Big)^2\Big]\Big[\Big(\sum_{i = 1}^na_i^2\Big)\Big(\sum_{i = 1}^nc_i^2\Big) - \Big(\sum_{i = 1}^na_ic_i\Big)^2\Big] \\
			&\qquad\qquad\qquad\quad\quad\ge \Big[\Big(\sum_{i = 1}^na_i^2\Big)\Big(\sum_{i = 1}^nb_ic_i\Big) - \Big(\sum_{i = 1}^na_ib_i\Big)\Big(\sum_{i = 1}^na_ic_i\Big)\Big]^2.
			\end{aligned}
		\end{equation}
		For smooth functions $f=f(x)$, $g=g(x)$, and $h=h(x)$, one also has
		\begin{equation*}
				\begin{aligned}
			&\Big[\int_{\R^3} f^2dx\int_{\R^3} g^2dx - \Big(\int_{\R^3} fgdx\Big)^2\Big]\Big[\int_{\R^3} f^2dx\int_{\R^3} h^2dx - \Big(\int_{\R^3} fhdx\Big)^2\Big] \\
			&\qquad\qquad\qquad\quad\quad\qquad\quad\ge \Big[\int_{\R^3} f^2dx\int_{\R^3} ghdx - \int_{\R^3} fgdx\int_{\R^3} fhdx\Big]^2.
				\end{aligned}
		\end{equation*}
	\end{lemma}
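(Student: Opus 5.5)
This is the Lagrange-type inequality, and I would prove it by recognizing it as the nonnegativity of a $2\times2$ Gram determinant, then obtaining the integral version by the same argument in $L^2$.

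Set $A=\sum a_i^2$. If $A=0$, then $a=0$, both sides vanish, and the inequality is trivial. Otherwise, project $b$ and $c$ onto the orthogonal complement of $a$:
\[
\tilde b=b-\frac{(a\cdot b)}{A}a,\qquad \tilde c=c-\frac{(a\cdot c)}{A}a.
\]
A direct expansion gives
\[
A|\tilde b|^2=A\,|b|^2-(a\cdot b)^2,\qquad A|\tilde c|^2=A\,|c|^2-(a\cdot c)^2,\qquad A\,\tilde b\cdot\tilde c=A\,(b\cdot c)-(a\cdot b)(a\cdot c).
\]
These are exactly the three brackets in \eqref{061}, each divided by $A$.

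Multiplying through by $A^2$, the inequality \eqref{061} becomes
\[
A^2|\tilde b|^2|\tilde c|^2\ge A^2(\tilde b\cdot\tilde c)^2,
\]
which is the Cauchy--Schwarz inequality for $\tilde b$ and $\tilde c$. An equivalent route is to note that the $3\times3$ Gram matrix of $a,b,c$ is positive semidefinite, so its Schur complement with respect to the $(a,a)$ entry is positive semidefinite. The determinant of that Schur complement, multiplied by $A^2$, is precisely left side minus right side.

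The integral version is the same computation in the Hilbert space $L^2(\mathbb R^3)$ with $a,b,c$ replaced by $f,g,h$. The case $\int f^2=0$ is again trivial, since then $f=0$ almost everywhere. One assumes the relevant integrals are finite, which is the case in all applications in the paper.

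For those applications, where the measure is weighted by $e^{-\widetilde\Phi}$ or by the weight defining $\mathcal I$, one takes $f=w^{1/2}$, $g=w^{1/2}u$, $h=w^{1/2}v$ for the corresponding weight $w$. This choice recovers \eqref{100} and \eqref{101}.

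There is no real obstacle here. The only point needing care is the routine algebraic identity relating the projected inner products to the brackets.
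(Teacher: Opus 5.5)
Your proof is correct, but it follows a different route from the paper's.

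\textbf{The paper's route.} The paper proves \eqref{061} with Lagrange's identity. It writes each factor on the left as a sum of squares $\sum_{i<j}(a_ib_j-a_jb_i)^2$, and likewise with $c$. It identifies the bracket on the right as the Binet--Cauchy sum $\sum_{i<j}(a_ib_j-a_jb_i)(a_ic_j-a_jc_i)$. It then applies Cauchy--Schwarz over the index pairs $i<j$. The integral version is only asserted to follow ``by approximation.''

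\textbf{Your route.} You project $b,c$ onto $a^\perp$, which is equivalent to taking the Schur complement of the $3\times3$ Gram matrix. This reduces everything to Cauchy--Schwarz for $\tilde b,\tilde c$. Your algebraic identities for $A|\tilde b|^2$, $A|\tilde c|^2$ and $A\,\tilde b\cdot\tilde c$ check out, and so does the degenerate case $A=0$.

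\textbf{What each approach buys.} Your argument is coordinate-free, so it holds verbatim in any real inner product space. In particular, the $L^2(\mathbb R^3)$ version needs no approximation step at all; it only needs $f,g,h\in L^2$, which you rightly make explicit. It also shows the inequality is the nonnegativity of a Gram determinant, which gives the equality case for free.

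The paper's route gives an explicit sum-of-squares identity, but its passage to integrals is left informal. If you wanted to keep that route rigorous, the natural fix is the continuous Lagrange identity
$\int f^2\int g^2-(\int fg)^2=\frac12\iint (f(x)g(y)-f(y)g(x))^2\,dx\,dy$,
together with its Binet--Cauchy analogue, followed by Cauchy--Schwarz on $\mathbb R^3\times\mathbb R^3$.

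Your reduction of \eqref{100} and \eqref{101} to the integral form, using $f=w^{1/2}$ and normalizing by $\int w$ in the case of $\mathscr A$, is also correct.
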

	\begin{proof}
	 We first introduce Lagrange's identity as
		\begin{equation*}
			\Big(\sum_{i = 1}^na_i^2\Big)\Big(\sum_{i = 1}^nb_i^2\Big) - \Big(\sum_{i = 1}^na_ib_i\Big)^2 = \sum_{1 \le i < j \le n}(a_ib_j - a_jb_i)^2.
		\end{equation*}
		The right-hand side of \eqref{061} satisfies
		\begin{equation*}
			\begin{aligned}
				&\Big[\Big(\sum_{i = 1}^na_i^2\Big)\Big(\sum_{i = 1}^nb_ic_i\Big) - \Big(\sum_{i = 1}^na_ib_i\Big)\Big(\sum_{i = 1}^na_ic_i\Big)\Big]^2\\
				=& \big[\sum_{i = 1}^na_i^2b_ic_i + \sum_{1 \le i < j \le n}(a_i^2b_jc_j + a_j^2b_ic_i) - \sum_{i = 1}^na_i^2b_ic_i - \sum_{1 \le i < j \le n}(a_ib_ia_jc_j - a_jb_ja_ic_i)\big]^2 \\
				=& \big[\sum_{1 \le i < j \le n}(a_ib_j - a_jb_i)(a_ic_j - a_jc_i)\big]^2 \le \sum_{1 \le i < j \le n}(a_ib_j - a_jb_i)^2\sum_{1 \le i < j \le n}(a_ic_j - a_jc_i)^2 \\
				=&\big[(\sum_{i = 1}^na_i^2)(\sum_{i = 1}^nb_i^2) - (\sum_{i = 1}^na_ib_i)^2\big]\big[(\sum_{i = 1}^na_i^2)(\sum_{i = 1}^nc_i^2) - (\sum_{i = 1}^na_ic_i)^2\big].
			\end{aligned}
		\end{equation*}
		The integral inequality follows from the finite-dimensional one by approximation.
\end{proof}

{\bf Conflict of interested.} The authors declare that they have no conflict of interest.

{\bf Data availability statement.} Data sharing not applicable to this article as no datasets were generated or analyzed.

%\cite{carrapatoso_weighted_2022}\cite{bardos_global_2016}

\section*{Acknowledgements}
The research of L.-B. He was supported by the National Natural Science Foundation of China under Grant No.~11771236 and by the New Cornerstone Investigator Program under Grant No.~100001127. J. Ji was supported by the Jiangsu Funding Program for Excellent Postdoctoral Talent and the Basic Research Program of Jiangsu Province under Grant No.~BK20251376.

\bibliographystyle{abbrv}

\bibliography{references}

%\begin{thebibliography}{99}

%\bibitem{KJFSC}
%K. Carrapatoso, J. Dolbeault, F. H\'erau, S. Mischler and C. Mouhot,
%{ Weighted Korn and Poincar\'e-Korn inequalities in the Euclidean space and associated operators,}
%Archive for Rational Mechanics and Analysis, 243(2022), 1565-1596 .

%\end{thebibliography}
\end{document}